\documentclass[11pt]{amsart}

\usepackage[a4paper,margin=1in]{geometry}
\usepackage{amsmath,amssymb,amsthm,mathtools}
\numberwithin{equation}{section}
\usepackage{bm}
\usepackage[numbers,sort&compress]{natbib}
\usepackage{microtype}
\usepackage{longtable,booktabs,array,calc}
\usepackage{xcolor}
\usepackage[colorlinks=true,linkcolor=blue!45!black,citecolor=blue!45!black,urlcolor=blue!55!black]{hyperref}
\usepackage{bookmark}

\hypersetup{
  pdftitle={The Polya--Szego conjecture for convex polygons with many sides},
  pdfauthor={Zhuo Cheng, Changfeng Gui, Yeyao Hu, Qinfeng Li},
  pdfsubject={Global Dirichlet spectral minimality and rigidity in the convex polygonal class for all sufficiently large side counts},
  pdfkeywords={Faber--Krahn inequality, Dirichlet eigenvalue, convex polygon, Fourier energy, Sobolev quadrature, spectral rigidity}
}

\allowdisplaybreaks[2]
\newcommand{\Tphys}{\mathbb T_{2\pi}}
\newcommand{\Tunit}{\mathbb T_{1}}

\newtheorem{theorem}{Theorem}[section]
\newtheorem{proposition}[theorem]{Proposition}
\newtheorem{lemma}[theorem]{Lemma}
\newtheorem{corollary}[theorem]{Corollary}
\theoremstyle{definition}
\newtheorem{definition}[theorem]{Definition}
\theoremstyle{remark}

\title[The polygonal Faber--Krahn inequality]
{The P\'olya--Szeg\H{o} conjecture for convex polygons with many sides}

\author{Zhuo Cheng}
\address{School of Mathematics and Statistics, HNP-LAMA, Central South University, Changsha, Hunan 410083, P. R. China}
\email{zhuo@outlook.cz}

\author{Changfeng Gui}
\address{Department of Mathematics, University of Macau, Macau SAR, P. R. China; Zhuhai UM Science and Technology Research Institute, Hengqin, Guangdong 519031, P. R. China}
\email{changfenggui@um.edu.mo}

\author{Yeyao Hu}
\address{School of Mathematics and Statistics, HNP-LAMA, Central South University, Changsha, Hunan 410083, P. R. China}
\email{huyeyao@gmail.com}

\author{Qinfeng Li}
\address{School of Mathematics, Hunan University, Changsha, P. R. China}
\email{liqinfeng1989@gmail.com}

\date{}
\keywords{Faber--Krahn inequality, Dirichlet eigenvalue, convex polygon, shape optimization, spectral stability}
\subjclass[2020]{Primary 35P15; Secondary 49Q10, 52A40, 35J25}

\begin{document}

\begin{abstract}
For every sufficiently large $N$, we prove that the regular $N$-gon uniquely minimizes the first Dirichlet eigenvalue among convex polygons of prescribed area with at most $N$ sides, up to rigid motions. Quantitative Faber--Krahn stability localizes minimizers near the disk but leaves their discrete geometry undetermined. We encode that geometry by a centered measure of exterior angles and prove an inverse-cubic Fourier inequality whose equality case is the uniform root configuration. Its defect gives a lower bound for the quadratic spectral excess over the regular polygon. Cyclic cancellation and a common material-coordinate comparison make the nonlinear difference small relative to the same defect. Exact constraint coordinates extend this comparison to arbitrarily small positive defects. Minimality then forces zero defect, which characterizes the regular polygon.
\end{abstract}

\maketitle

\section{Introduction}\label{sec:introduction}

The Faber--Krahn inequality is one of the basic principles of spectral geometry: among planar domains of prescribed area, the disk uniquely minimizes the first Dirichlet eigenvalue \citep{Henrot06}. A natural polygonal analogue was proposed by P\'olya and Szeg\H{o}: among polygons with a prescribed number of sides and fixed area, the regular polygon should minimize the first Dirichlet eigenvalue; see \citep{PS51}. The conjecture has remained open for more than seventy years and has become a model problem for spectral shape optimization under a finite-sided, hence combinatorial, geometric constraint. Its appeal comes partly from the contrast between its elementary formulation and the analytic difficulty of the problem: even though an $N$-gon is determined by finitely many parameters, the quantity being optimized is the ground-state energy of a PDE on a nonsmooth moving domain.

This finite-dimensional appearance is deceptive. The symmetrizations behind the classical Faber--Krahn inequality \citep{Polya48} do not preserve the number of sides, and there is no known geometric deformation which simultaneously regularizes an arbitrary polygon and controls the first eigenvalue in the required direction. Moreover, corners introduce singularities into the eigenfunction and into shape derivatives, while first-order optimality conditions by themselves do not distinguish the regular polygon from other possible critical configurations. In particular, a local Hessian analysis near the regular polygon does not by itself address the global minimization problem. These difficulties explain why, despite sustained interest, the conjecture was classically known only for triangles and quadrilaterals; see the discussion in \citep{BB24}. Recent work of Bogosel and Bucur developed a finite-dimensional shape-Hessian framework and reduced the conjecture for each fixed $N\ge5$ to finitely many certified numerical computations \citep{BB24}; a subsequent validated-computing argument establishes local minimality for $N=5,6$ \citep{BB24local}. Related large-$N$ and regular-polygon results include \citep{AF06,Nitsch14,Indrei24,BGMR24,DGP26}; see also \citep{GHL26} for a P\'olya--Szeg\H{o}-type extremal result for torsional rigidity in the tangential class. For sharp spectral gap bounds on convex domains, see \citep{AC11}.

The purpose of the present paper is to prove the conjecture analytically in the asymptotic regime of many sides. More precisely, we show that there exists $N_0$ such that the regular $N$-gon is the unique global minimizer among all bounded convex polygons with at most $N$ effective sides whenever $N\ge N_0$. Here an effective side means a maximal nondegenerate line segment in the boundary. For $A>0$, set
\[
 \mathcal P_{\le N}(A)=\left\{\operatorname{int}K:\begin{array}{l}
 K\subset\mathbb R^2\text{ is a bounded convex polygon},\ |K|=A,\\
 K\text{ has at most }N\text{ effective sides}
 \end{array}\right\},
\]
and let $R_N$ denote the regular $N$-gon of area $\pi$.

\begin{theorem}[Large-$N$ polygonal Faber--Krahn theorem]\label{thm:main}
There exists $N_0\in\mathbb N$ such that, for every $N\ge N_0$ and every $P\in\mathcal P_{\le N}(\pi)$,
\[
 \lambda_1(P)\ge\lambda_1(R_N).
\]
Equality holds if and only if $P$ is a rigid image of $R_N$. Equivalently, for every bounded convex polygon $P$ of positive area with at most $N$ effective sides,
\[
 |P|\lambda_1(P)\ge|R_N|\lambda_1(R_N),
\]
with equality precisely for polygons similar to $R_N$.
\end{theorem}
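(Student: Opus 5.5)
The plan follows the route sketched in the abstract. We first reduce to a minimizer near the disk, then compare it with $R_N$ through a Fourier defect that vanishes exactly for the regular polygon.

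\textbf{Step 1: existence and localization.} Existence of a minimizer in $\mathcal P_{\le N}(\pi)$ follows from a compactness argument. Convex sets of fixed area with bounded $\lambda_1$ have bounded diameter by the John ellipse and the monotonicity of $\lambda_1$ under inclusion. Hausdorff limits of convex polygons with at most $N$ sides are again such polygons, possibly with fewer sides, and $\lambda_1$ is continuous under Hausdorff convergence of convex sets. Let $P$ be a minimizer. Since $\lambda_1(P)\le\lambda_1(R_N)=\lambda_1(B)+O(N^{-2})$, the quantitative Faber--Krahn inequality (Brasco--De Philippis--Velichkov) gives Fraenkel asymmetry $O(N^{-1})$. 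For convex sets this upgrades to Hausdorff closeness $P\approx B$ after translation. Hence every exterior angle is small, and the support function is a small Lipschitz perturbation of $1$. We next rule out minimizers with strictly fewer than $N$ effective sides. For $M<N$ the constant $\lambda_1(R_M)-\lambda_1(B)\sim c\,M^{-2}$ exceeds the $N$ value, so this follows once the rest of the argument is established for all $M$ in a range $[N_0,N]$. Alternatively, splitting a vertex strictly lowers $\lambda_1$ by a Hadamard argument, since cutting a corner and rescaling helps when the corner angle is less than $\pi$.

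\textbf{Step 2: angle encoding and the Fourier inequality.} Record the exterior angles $\theta_j$ at outward normal directions $\phi_j$ as the measure $\mu=\sum_j\theta_j\delta_{\phi_j}$ on $\Tphys$, with total mass $2\pi$. Center it by subtracting the uniform measure. Closure forces the first Fourier coefficient to vanish. Writing the support function $h$ via $h''+h=\mu$, the quadratic part of the excess is, by the second-order expansion of $\lambda_1$ around the disk, a Sobolev-type energy $\sum_k w_k|\hat\mu(k)|^2$ with $w_k\sim k^{-3}$ for $|k|\ge2$. The key combinatorial fact, the one I would prove first, is an inverse-cubic inequality. Among atomic measures with at most $N$ atoms and mass $2\pi$, the energy $\sum_{k\neq0}|k|^{-3}|\hat\mu(k)|^2$ is minimized uniquely, up to rotation, by the uniform root configuration. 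Its defect $D(\mu)$ should control the distance to that configuration. I would prove this via a sampling/quadrature argument. The regular configuration integrates trigonometric polynomials of degree $<N$ exactly, so its spectrum lives on $N\mathbb Z$. For a general configuration, aliasing forces mass at some frequency $|k|\le N$. The cubic weight makes the low aliases the cheapest, which yields a strictly positive $D$ unless the atoms form a coset of the $N$-th roots of unity.

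\textbf{Step 3: nonlinear comparison; the main obstacle.} We must show that $\lambda_1(P)-\lambda_1(R_N)\ge c\,D(\mu_P)-o(D(\mu_P))$ uniformly, including for arbitrarily small positive $D$. The naive second-order expansion about the disk has remainder of size $O(N^{-3})$, while $D$ may be far smaller. So we would instead expand about $R_N$ itself, in common material coordinates. Map $P$ to $R_N$ by a piecewise-affine map on the triangles formed by the center and each side. Pulling back the Rayleigh quotient then expresses $\lambda_1(P)-\lambda_1(R_N)$ through the eigenfunction $u_N$ of $R_N$. Cyclic cancellation, meaning the $\mathbb Z_N$-symmetry of $u_N$, kills first-order terms and all cross terms not supported on $N\mathbb Z$ frequencies. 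What remains is the quadratic form bounded below by Step 2, plus cubic errors. To control those errors relative to $D$ rather than to the distance to the disk, I would use exact constraint coordinates: area, closure, and rotation are parametrized so that the perturbation from $R_N$ is measured by the defect alone. Corner singularities of $u_N$ (regularity $r^{\pi/\alpha}$ with $\alpha$ close to $\pi$) enter only through bounded Sobolev quadrature constants. I expect this uniformity in small $D$ to be the hardest part.

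\textbf{Step 4: conclusion.} Minimality gives $0\ge\lambda_1(P)-\lambda_1(R_N)\ge c D/2$, so $D=0$ and $P$ is a rigid image of $R_N$. The scale-invariant form of the statement follows by dilation.
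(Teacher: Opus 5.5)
Your outline matches the paper at the level of Steps 1, 2 and 4. The gap is in Step 3, where you assert that after expanding about $R_N$ and using $\mathbb Z_N$-symmetry, ``what remains is the quadratic form bounded below by Step 2.''

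\textbf{Coercivity at $R_N$ is assumed, not proved.} Cyclic symmetry only block-diagonalizes the second variation at $R_N$ by DFT residue; it gives no sign for the blocks. That second variation is also not the disk Hessian composed with your angle-to-boundary map. Relative to the disk expansion it contains two further pieces:
\begin{itemize}
\item the cubic disk coefficient with one argument equal to the regular profile $\rho_{R_N}$;
\item the second variation of the error between the true radial graph and the linear turning potential.
\end{itemize}
For mesh-scale perturbations the Lipschitz norm is not small compared with the $H^{1/2}$ energy. So both pieces are of the same order as the inverse-cubic defect. For the principal cubic symbol $T_N(\nu)=\sum_{a,b\ge1}\widehat\nu(a)\widehat\nu(b)\overline{\widehat\nu(a+b)}/(ab(a+b)^2)$, direct estimates give only $T_N(\nu)-T_N(\sigma_N)=O(E_N)$, which is enough to destroy the gap.

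The missing idea is the cyclic Ward cancellation. A degree-zero phase interpolation of the perturbed lattice produces Fourier columns with $c_m*c_n=c_{m+n}$ whose first moments vanish. This kills the linear terms of the expanded denominator and upgrades the difference to $o(E_N)$. The geometric correction needs a separate second-order comparison along a common material coordinate.

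Uniform positivity of the polygonal Hessian at $R_N$ is essentially the local-minimality question that Bogosel--Bucur settled only by certified computation for $N=5,6$, so it cannot be taken as given. Even in the paper's tiny-defect regime, the second-order coefficients along each ray come from applying the annular estimate at one comparison point, not from a direct Hessian computation.

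\textbf{The expansion about $R_N$ does not reach the minimizer.} The available third-derivative bounds for $\lambda_1$ in vertex coordinates grow like powers of $N$. So cubic Taylor errors are dominated by the quadratic term only in a polynomially small chart, roughly $N^4E_N<N^{-K}$. Step 1 gives only Hausdorff closeness to the disk, which controls no individual weight, angle or gap. Nothing in your plan puts the minimizer in that chart, and Step 4 applies an inequality where it has not been proved. The hard regime is not small $D$ but the range between disk-scale localization and the chart.

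The paper bridges this range with a bootstrap (with $h=2\pi/N$):
\begin{enumerate}
\item A coarse energy bound and discrepancy give $E_N\le Ch^4$.
\item Cyclic quadrature shows $N^4T_N=\zeta(4)/2+o(1)$ on the whole critical class. The common leading cubic term at $P$ and $R_N$ therefore cancels, giving $E_N=o(h^4)$.
\item Microscopic lattice rigidity then yields a quasi-uniform mesh. This needs the quantitative remainder of a positive Fej\'er--Poisson certificate; an aliasing heuristic does not supply it.
\item The relative cubic estimate gives $N^4E_N\le Ch\log^3(2N)$.
\item The material comparison covers $e^{-N^{1/3}}\le N^4E_N\le Ch\log^3(2N)$.
\item Only below that does the exact constraint chart take over.
\end{enumerate}

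\textbf{Smaller corrections.}
\begin{itemize}
\item $\lambda_1(R_N)-\lambda_1(\mathbb D)=O(N^{-3})$, not $O(N^{-2})$.
\item Your support-function equation $h''+h=\mu$ produces the surface-area measure (edge lengths at normal directions, total mass equal to the perimeter), not a measure of exterior angles.
\item With exterior-angle masses, closure does not force $\widehat\mu(1)=0$. The paper obtains this by translating to the polar center.
\end{itemize}
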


The proof has two features which we believe are as important as the large-$N$ theorem itself. The first is a global-to-local mechanism: global minimality forces an arbitrary convex minimizer into a small neighborhood of the disk, without assuming any a priori regularity of its side lengths, angles, or vertex spacing. The second, and more structural, feature is a perhaps unexpected Fourier bridge between the continuous spectral geometry of the disk and the discrete geometry of a polygon. This bridge identifies the correct microscopic defect of a near-circular polygon and ultimately singles out the regular $N$-gon.

We now describe this connection. Let $P$ be a convex polygon which is already known to be close to the unit disk and translate it to a distinguished polar center. If $z_j$ are its vertices, write
\[
 \psi_j=\arg z_j
\]
for their directions from the center and let $\omega_j$ be the corresponding exterior angles. Since $\sum_j\omega_j=2\pi$, the normalized turning angles
\[
 w_j=\frac{\omega_j}{2\pi}
\]
define a probability measure on the circle,
\begin{equation}\label{eq:intro-turning-measure}
 \nu_P=\sum_jw_j\delta_{\psi_j},\qquad \widehat\nu_P(1)=0.
\end{equation}
Thus the polygon is encoded by the places at which the normal direction jumps and by the amount of turning carried by each jump. For the regular $N$-gon this measure is, up to rotation,
\[
 \sigma_N=\frac1N\sum_{j=0}^{N-1}\delta_{2\pi j/N},
\]
the uniform measure on the $N$th roots of unity. Its Fourier transform has the rigid form
\begin{equation}\label{eq:intro-root-fourier}
 \widehat\sigma_N(k)=\mathbf1_{\{N\mid k\}}.
\end{equation}
At first sight, the atomic measure \eqref{eq:intro-turning-measure} belongs to discrete convex geometry, whereas the first eigenvalue is a PDE quantity. The main observation is that, near the disk, the two are governed by the same Fourier scale. Set
\[
 F(\Omega)=|\Omega|\lambda_1(\Omega),
\]
which is invariant under dilations. Because the disk is rotationally symmetric, the Hessian of $F/F(\mathbb D)$ at the disk is diagonal in angular Fourier modes. If $f$ is a radial boundary perturbation, its quadratic contribution has the form
\begin{equation}\label{eq:intro-disk-hessian}
 Q(f)=4\sum_{k\ge2}\beta_k|\widehat f(k)|^2,\qquad
 \beta_k=1+j\frac{J_k'(j)}{J_k(j)},
\end{equation}
where $j=j_{0,1}$ is the first positive zero of $J_0$. The relevant point is the asymptotic size
\begin{equation}\label{eq:intro-bessel-order}
 \beta_k\sim k.
\end{equation}
To connect \eqref{eq:intro-disk-hessian} with the polygonal data, write the boundary of $P$ in polar form
\[
 r(\theta)=e^{v(\theta)},
\]
and let $p(\theta)=\operatorname{Arg}(e^{i\theta}\overline{n_P(\theta)})\in(-\pi/2,\pi/2)$ be the angle between the radial direction and the outward normal. Polygonal geometry gives the exact identities
\begin{equation}\label{eq:intro-turning-equation}
 v'=\tan p,\qquad Dp=d\theta-2\pi\nu_P.
\end{equation}
The second identity describes the unit rotation of the radial angle along each edge and the jump of the normal by the exterior angle at each vertex. Linearizing the first relation near the circle and solving \eqref{eq:intro-turning-equation} in Fourier variables gives a linear radial potential $v_L$ satisfying
\begin{equation}\label{eq:intro-turning-potential}
 \widehat v_L(k)=\frac{\widehat\nu_P(k)}{k^2},\qquad k\ne0,\qquad \widehat v_L(0)=0.
\end{equation}

In other words, passing from the turning distribution of the polygon to its radial displacement involves two angular integrations. Substituting \eqref{eq:intro-turning-potential} into \eqref{eq:intro-disk-hessian} yields
\[
 Q(v_L)=4\sum_{k\ge2}\frac{\beta_k}{k^4}|\widehat\nu_P(k)|^2
 \sim\sum_{k\ge2}\frac{|\widehat\nu_P(k)|^2}{k^3}.
\]
This is the Fourier bridge underlying the proof: the disk Hessian transforms the polygonal turning measure into an inverse-cubic Fourier energy.

The same inverse-cubic energy has an independent discrete rigidity property. Since \eqref{eq:intro-root-fourier} gives
\[
 \sum_{k\ge1}\frac{|\widehat\sigma_N(k)|^2}{k^3}=\frac{\zeta(3)}{N^3},
\]
we introduce the defect
\begin{equation}\label{eq:intro-defect}
 E_N(\nu)=\sum_{k\ge1}\frac{|\widehat\nu(k)|^2}{k^3}-\frac{\zeta(3)}{N^3},\qquad E_N(P):=E_N(\nu_P).
\end{equation}
A sharp Fej\'er--Poisson argument shows that
\begin{equation}\label{eq:intro-atomic-minimum}
 E_N(\nu)\ge0
\end{equation}
for every positive probability measure supported on at most $N$ points, with equality if and only if $\nu$ is a rotation of $\sigma_N$. Thus \eqref{eq:intro-defect} has two simultaneous meanings: it is, up to the Bessel correction in \eqref{eq:intro-disk-hessian}, the quadratic spectral excess over the regular polygon measured by the disk Hessian, and it is a discrete rigidity functional which vanishes only on the regular $N$-point configuration. This coincidence is the first main mechanism of the paper.

If the spectral comparison were exhausted by its quadratic part, the proof would essentially end here. The difficulty is that the polygonal scale is singular in $N$: the leading nonlinear interaction is not automatically negligible compared with \eqref{eq:intro-defect}. More precisely, after the quadratic term has been isolated, the principal cubic contribution is represented by
\begin{equation}\label{eq:intro-cubic-functional}
 T_N(\nu)=\sum_{a,b\ge1}\frac{\widehat\nu(a)\widehat\nu(b)\overline{\widehat\nu(a+b)}}{ab(a+b)^2}.
\end{equation}
A direct estimate controls the difference $T_N(\nu)-T_N(\sigma_N)$ only by $O(E_N)$, which is too large to preserve the positive quadratic gap. The relevant issue is therefore not the absolute size of the cubic term, but its structure relative to the regular polygon.

This is where the cyclic symmetry of the regular $N$-gon enters. Once minimality and the atomic defect have forced the weights and angular gaps into a quasi-uniform regime, the Fourier frequencies can be grouped as
\[
 k=mN+r,
\]
where $r$ measures the deviation from the lattice $N\mathbb Z$ associated with the regular configuration. A phase interpolation of the perturbed lattice produces Fourier columns $c_m$ obeying the exact convolution law
\begin{equation}\label{eq:intro-phase-convolution}
 c_m*c_n=c_{m+n}.
\end{equation}
In the zero-winding phase model, when the denominator in \eqref{eq:intro-cubic-functional} is expanded in the residue variables, the potentially dangerous first-order corrections are precisely first moments of the correlations generated by \eqref{eq:intro-phase-convolution}; these moments vanish. This finite cyclic Ward identity, together with the comparison estimates described below, improves the cubic comparison from the natural $O(E_N)$ scale to
\begin{equation}\label{eq:intro-relative-cubic}
 \operatorname{Re}[T_N(\nu_P)-T_N(\sigma_N)]=o(E_N(P))
\end{equation}
on the relevant near-regular regime. In this way the two symmetries in the problem play complementary roles: rotational symmetry of the disk diagonalizes the quadratic response, while cyclic symmetry of the regular polygon suppresses the leading nonlinear interaction.

Let us finally indicate how these ideas enter the global proof. A minimizer exists by compactness. A corner-cutting variation shows that every minimizer uses all $N$ available sides. Since $R_N$ is admissible and its eigenvalue is close to that of the disk, the quantitative Faber--Krahn inequality \citep{BDPV15}, together with convexity, places every minimizer in a canonical near-disk polar chart. The fixed-disk expansion then gives the exact decomposition
\begin{equation}\label{eq:intro-exact-splitting}
 \frac{F(P)-F(R_N)}{F(\mathbb D)}
 =Q_N^\circ(P)+D_N^\circ(P)+\widetilde R_N^\circ(P).
\end{equation}
Here $Q_N^\circ$ is the Bessel-corrected quadratic term arising from the Fourier bridge, $D_N^\circ$ is the geometric correction between the true radial graph and the linear atomic potential, and $\widetilde R_N^\circ$ is the nonlinear fixed-disk remainder, each taken relative to $R_N$, with $X^\circ(P)=X(P)-X(R_N)$. The inverse-cubic rigidity, together with the exact Bessel comparison, gives a uniform coercive estimate
\[
 Q_N^\circ(P)\ge cE_N(P)
\]
with an absolute $c>0$. A first bootstrap turns global minimality into microscopic quasi-uniformity: starting from $E_N=O(N^{-4})$, local multiplicity and sampling bounds give $N^4T_N(\nu_P)=\zeta(4)/2+o(1)$ before any minimum-gap bound. The spectral expansions at a minimizer and at the regular polygon therefore have the same leading cubic term. Subtracting this common term in the minimizing inequality yields $E_N=o(N^{-4})$, and atomic rigidity controls the weights and gaps. The cyclic Ward cancellation \eqref{eq:intro-relative-cubic}, transferred from its zero-winding phase model to weighted columns through finite cyclic completion and return and weight-residual estimates, first applies on the centered near-regular defect annulus in Proposition~\ref{prop:principal-cubic-relative}. Geometric and material comparison estimates then control the full correction in the refined minimizer regime of Proposition~\ref{prop:minimizer-localization}, above the cutoff. Below it, the exact constraint chart permits comparison along rays. The estimate at a point above the cutoff controls the quadratic coefficients; polynomial bounds for the third derivatives then control the Taylor errors at smaller defects. Together these estimates yield, uniformly down to arbitrarily small positive defects,
\[
 |D_N^\circ(P)+\widetilde R_N^\circ(P)|\le o(1)E_N(P).
\]
Consequently,
\[
 \frac{F(P)-F(R_N)}{F(\mathbb D)}\ge(c-o(1))E_N(P).
\]
For a global minimizer the left-hand side is nonpositive, and hence $E_N(P)=0$ for all sufficiently large $N$. The equality case in \eqref{eq:intro-atomic-minimum} identifies the turning measure with the uniform root measure, and the exact turning equations \eqref{eq:intro-turning-equation} then reconstruct the regular polygon.

The role of the large-$N$ assumption is therefore precise. It is not used to replace the polygon by the disk or to deduce the theorem from the classical Faber--Krahn inequality. Rather, it first places every global minimizer in a regime where the disk provides a universal continuous model, and then allows the remaining polygonal degrees of freedom to be resolved at their natural microscopic scale. The proof may thus be viewed as a two-scale rigidity argument: macroscopic Faber--Krahn stability produces the disk, while microscopic Fourier rigidity crystallizes the turning measure into the regular $N$-point lattice.

This viewpoint also suggests a broader use of the method. The inverse-cubic defect is not introduced ad hoc; it is forced by the order of the disk Hessian together with the two integrations which recover boundary position from turning. The same principle---identify a continuous symmetric limit, encode the discrete boundary geometry by an atomic measure, and compare the resulting Fourier energy with the appropriate discrete lattice---may be useful in other polygonal shape-optimization problems in which the regular polygon is expected to be extremal. For related distribution, harmonic-analysis, and discrete-energy problems, see \citep{IN15,ET48,KV23,Nagel25}. The cyclic Ward mechanism further shows how a discrete symmetry can control nonlinear interactions at a scale invisible to a purely quadratic stability argument.

The paper is organized as follows. Section~\ref{sec:compactness} contains the global reduction: existence of minimizers, activation of all sides, stability of the polar center, and the passage to a radial graph over the disk. Section~\ref{sec:spectral-rigidity} combines the main estimates and proves Theorem~\ref{thm:main}. The fixed-disk analytic expansion is developed in Section~\ref{sec:disk-perturbation}. Section~\ref{sec:curvature-coordinates} contains the atomic inverse-power rigidity and the Bessel-corrected coercivity. Section~\ref{sec:polar-geometry} develops the polar representation. Sections~\ref{sec:cubic-coefficient}--\ref{sec:relative-cubic} develop the cubic coefficient and finite cyclic calculus, and prove the relative cubic Ward cancellation and the defect bootstrap. Sections~\ref{sec:polygonal-transport}--\ref{sec:exact-constraints} supply the geometric and material comparison and the exact near-regular chart needed to close the argument at arbitrarily small positive defect. Appendix~\ref{sec:mesh-tools} contains the periodic mesh estimates.

\section{Geometric reduction of the variational problem}\label{sec:compactness}

We begin by bringing a global minimizer into the range of the disk expansion. Sides may disappear under compactness, so the variational class must allow fewer than $N$ effective sides. A corner-cutting argument then shows that a minimizer uses the full side budget. Comparison with the regular polygon gives a small radial graph over the disk, and centering the exterior-angle measure fixes translation. None of these reductions requires control of individual edge lengths or turning angles.

We use the standard trace and extension theory for Lipschitz domains,
compact Dirichlet embeddings, and common-domain perturbation theory
\citep{AdamsFournier03,Daners03,BucurButtazzo05,Kato76}. The corner and
nonsmooth shape-calculus inputs are taken from
\citep{Grisvard85,Dauge88,NovruziPierre02,LamboleyNovruziPierre16,Laurain20}.
Estimates requiring uniformity in the polygonal mesh are derived below.

Write $\mathbb D=B=B_1(0)$, $\mathbb T=\Tphys$, and $j=j_{0,1}$.
For $L>0$, let $\mathbb T_L=\mathbb R/(L\mathbb Z)$, with normalized
measure $dm_L(t)=dt/L$.  We use

\begin{equation}\label{eq:compactness-001}
 \widehat f_L(k)=\int_{\mathbb T_L}f(t)e^{-2\pi ikt/L}\,dm_L(t),
 \qquad
 \widehat\nu_L(k)=\int_{\mathbb T_L}e^{-2\pi ikt/L}\,d\nu(t).
\end{equation}
Pullback by $x=\theta/(2\pi)$ identifies the Fourier coefficients on
$\Tphys$ and $\Tunit$.

\subsection{Existence and the number of effective sides}

\begin{proposition}\label{prop:existence}

For every \(N\ge3\), the variational problem

\[
\min\{\lambda _1(P):P\in\mathcal P_{\le N}(\pi)\}
\]

has a minimizer.

\end{proposition}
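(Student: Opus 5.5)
We will use the direct method in the space of convex bodies with the Hausdorff metric. Let $m=\inf\{\lambda_1(P):P\in\mathcal P_{\le N}(\pi)\}$; it is finite, since $R_N$ is admissible, and positive by the classical Faber--Krahn inequality. Take a minimizing sequence $P_n=\operatorname{int}K_n$. Translate each $K_n$ so that, say, its centroid is at the origin, and then bound the diameters uniformly. For a convex body of area $\pi$ and diameter $d$, the minimal width $w$ satisfies $w\lesssim \pi/d$. Such a body lies in a strip of width $w$, so by domain monotonicity and the eigenvalue of an infinite strip, $\lambda_1(K_n)\ge \pi^2/w^2\gtrsim d^2$. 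Hence $\operatorname{diam}K_n$ is bounded along the minimizing sequence, all $K_n$ lie in a fixed ball, and Blaschke's selection theorem gives a Hausdorff-convergent subsequence $K_n\to K$.

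Next, the limit stays admissible. Area is continuous under Hausdorff convergence of convex bodies, so $|K|=\pi$, and $K$ has nonempty interior. For the side count, write each $K_n$ as the intersection of at most $N$ closed half-planes $\{x\cdot u_{n,i}\le h_{n,i}\}$, padding with repeated constraints if there are fewer sides. The normals $u_{n,i}\in S^1$ and the supports $h_{n,i}$ are bounded, since the bodies lie in a fixed ball and contain the origin in their interior up to a uniform margin; this margin follows from the area lower bound plus the bounded diameter via the inradius estimate $r\gtrsim |K|/\operatorname{diam}K$. Extract convergent subsequences $u_{n,i}\to u_i$ and $h_{n,i}\to h_i$. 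Then $K$ equals $\bigcap_i\{x\cdot u_i\le h_i\}$: one inclusion follows by passing to the limit in $K_n\subset\{x\cdot u_{n,i}\le h_{n,i}\}$. For the reverse, note that the limit intersection is the Hausdorff limit of the $K_n$, because intersections of finitely many half-planes with a uniform interior ball vary continuously. So $K$ is a convex polygon with at most $N$ effective sides. Sides may collapse in the limit, and this is exactly why the class $\mathcal P_{\le N}$ is used.

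Finally, we need continuity of $\lambda_1$. For convex domains containing a common ball and contained in a common ball, Hausdorff convergence implies $\lambda_1(K_n)\to\lambda_1(K)$. One route is to take a homothety $t_nK\subset K_n\subset s_nK$ (after translation) with $t_n,s_n\to1$, which exists by convexity and the uniform interior ball. Monotonicity and scaling then give $s_n^{-2}\lambda_1(K)\le\lambda_1(K_n)\le t_n^{-2}\lambda_1(K)$. Hence $\lambda_1(\operatorname{int}K)=m$, and $\operatorname{int}K$ is a minimizer.

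The main obstacle is the uniform diameter bound, i.e.\ ruling out degeneration of the minimizing sequence into thin long shapes. This is handled by the strip comparison together with the width–diameter–area relation for planar convex bodies. The closedness of the side constraint and the eigenvalue continuity are then routine.
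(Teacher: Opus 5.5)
Your proof is correct and follows the paper's own argument. The shared steps are: the strip bound $\lambda_1\ge\pi^2/w^2$ together with $|K|\ge\frac12 Dw$ bounds the diameter, Blaschke selection gives a limit of area $\pi$, the side budget passes to the limit, and $\lambda_1$ is continuous along the sequence. The differences are cosmetic. The paper keeps the side constraint via convex hulls of $N$ converging points, which needs no interior ball, and cites Daners for eigenvalue continuity; you use limits of $N$ half-planes and a homothety sandwich instead. One small fix: your uniform ball about the centroid does not follow from the inradius estimate alone. It comes from the planar centroid bound $h_K(u)\ge w_K(u)/3$ for $K$ with centroid at the origin, or you can simply translate to an incenter.
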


\begin{proof}
Let $(P_m)$ be a minimizing sequence of area $\pi$, translated to have centroid zero. A uniform eigenvalue bound controls both geometric obstructions to compactness. The one-dimensional estimate
\begin{equation}\label{eq:compactness-002}
 \lambda_1(K)\ge \frac{\pi^2}{w(K)^2},
\end{equation}
prevents the minimal width from tending to zero. A diameter of length $D$ and the two extreme transverse support lines determine two triangles contained in the polygon, so that
\begin{equation}\label{eq:compactness-003}
 |K|\ge \frac12D\,w(K).
\end{equation}
The area constraint now bounds the diameter. Blaschke compactness produces a convex Hausdorff limit of area $\pi$.

To retain the combinatorial constraint, represent each $P_m$ as the convex hull of $N$ points, allowing repetitions. A further subsequence makes all $N$ points converge, and their convex hull is the same limit. It has at most $N$ effective sides. The Dirichlet eigenvalues converge under this convex-domain convergence \citep{Daners03}; the limit is therefore a minimizer.
\end{proof}

\begin{lemma}\label{lem:convex-corner-eigenfunction-decay}

Let \(0<\omega<\pi\), \(\alpha=\pi/\omega>1\), and

\[
W_{\omega,R}=\{(r,\theta):0<r<R,\ 0<\theta<\omega\}.
\]

Suppose \(u\in H^1(W_{\omega,R})\) is nonnegative, solves
\(-\Delta u=\lambda u\) weakly with \(\lambda\ge0\), and has zero trace
on the two radial sides. For every \(1<\beta<\alpha\) there are
\(r_1\in(0,R)\) and \(C<\infty\) such that

\[
0\le u(r,\theta)\le Cr^\beta,
\]

for almost every \((r,\theta)\in W_{\omega,r_1}\).

\end{lemma}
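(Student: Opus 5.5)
Throughout, the plan is to prove the bound by a comparison (maximum) principle against an explicit supersolution on a slightly wider sector, as is standard for corner regularity.

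\textbf{Step 1: local boundedness.} Fix $r_0\in(0,R)$. Since $u\ge0$ is a weak solution with zero trace on the radial sides, extend $u$ by zero across those sides. The extension is a nonnegative weak subsolution of $-\Delta v\le\lambda v$ in $B_{r_0'}(0)$ for any $r_0'<R$. By the Moser/De Giorgi local maximum principle, $u\in L^\infty(W_{\omega,r_0})$; call the bound $M$.

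\textbf{Step 2: the barrier.} Choose $\omega'\in(\omega,\pi)$ with $\pi/\omega'>\beta$, which is possible because $\beta<\alpha=\pi/\omega$. Set $\gamma=\pi/\omega'\in(\beta,\alpha)$ and shift $\theta_0=(\omega'-\omega)/2$, so that $\varphi(\theta)=\sin(\gamma(\theta+\theta_0))\ge c_0>0$ on $[0,\omega]$. Consider
\[
\Phi(r,\theta)=r^\beta\sin\bigl(\beta(\theta+\theta_0)\bigr),
\]
where, since $\beta<\gamma$, we have $\beta(\theta+\theta_0)\in(0,\pi)$ on $[0,\omega]$, so $\Phi\ge c_1r^\beta$ there. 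The function $r^\beta\sin(\beta\cdot)$ is harmonic, so $-\Delta\Phi=0$. This is not yet a strict supersolution of $-\Delta-\lambda$. Hence use instead
\[
\Psi=r^\beta\sin\bigl(\beta(\theta+\theta_0)\bigr)-r^{\beta+1}\ \text{(times a constant)}\ \text{or}\ \Psi=\Phi\,(1-Kr^{2}).
\]
A direct computation gives $-\Delta\Psi-\lambda\Psi\ge c\,r^{\beta}\bigl(K(4\beta+4)-\lambda\bigr)\sin(\cdot)-O(r^{\beta+2})>0$ for $r<r_1$ small with $K$ large. We choose $r_1$ so that additionally $\Psi\ge \tfrac{c_1}{2}r^\beta$ on $W_{\omega,r_1}$. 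Note that the first eigenvalue of $-\Delta-\lambda$ on $W_{\omega,r_1}$ is positive for $r_1$ small, by Poincaré, so the weak maximum principle holds there.

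\textbf{Step 3: comparison.} Let $w=u-C\Psi$ with $C=2M/(c_1r_1^\beta)$. Then $w$ is a weak subsolution of $-\Delta w-\lambda w\le0$ in $W_{\omega,r_1}$. On the radial sides $w\le0$ in the trace sense, because $u=0$ and $\Psi\ge0$. On the arc $r=r_1$, $w\le M-2M\le0$. Testing with $w^+\in H^1_0(W_{\omega,r_1})$ and using positivity of the first Dirichlet eigenvalue of $-\Delta$ on the small sector, which exceeds $\lambda$, gives $w^+=0$. Hence $u\le C\Psi\le C' r^\beta$ almost everywhere.

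\textbf{Main obstacle.} The delicate point is Step 3's justification that $w^+\in H^1_0$. The arc-boundary inequality must hold in the trace sense, which Step 1's $L^\infty$ bound provides only almost everywhere nearby. Handling this requires applying the argument on $W_{\omega,r}$ for a.e. $r\le r_1$, or using a cutoff. The barrier computation is routine.
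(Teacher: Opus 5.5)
Your barrier argument is correct, and it takes a genuinely different route from the paper. The paper proves the lemma in one step by citing the corner expansion of Grisvard and Dauge. There the Dirichlet indicial exponents at the vertex are $\pm k\pi/\omega$, an $H^1$ solution has no negative-exponent components, and so $|u|\le C_\beta r^\beta$ for every $\beta<\pi/\omega$.

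You instead prove the bound directly. First you get local boundedness from De Giorgi--Moser applied to the zero extension, which is a nonnegative subsolution; this is the only place $u\ge0$ is used. Then you compare on a small sector, whose first Dirichlet eigenvalue $j_{\pi/\omega,1}^2/r_1^2$ exceeds $\lambda$, against $\Psi=r^\beta\sin(\beta(\theta+\theta_0))(1-Kr^2)$. This $\Psi$ satisfies $-\Delta\Psi-\lambda\Psi=\Phi\,\bigl(4K(\beta+1)-\lambda+\lambda Kr^2\bigr)\ge0$ once $K\ge\lambda/(4(\beta+1))$. The hypothesis $\beta<\alpha$ enters exactly where it should: it lets the rotated harmonic function $r^\beta\sin(\beta(\theta+\theta_0))$ stay above $c_1r^\beta$ on the closed interval $[0,\omega]$, including the radial sides where $u$ vanishes.

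What your route buys is self-containedness. It needs only the weak maximum principle and local boundedness, no corner-asymptotics theory, and it works for every $0<\beta<\alpha$. What the paper's citation buys is brevity and more information: a genuine singular-function expansion, no sign assumption, and in principle derivative bounds. The paper does not actually need those derivative bounds, since it handles gradients afterwards by Caccioppoli.

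Two small corrections to your write-up:
\begin{itemize}
\item With $\Psi\ge\frac{c_1}{2}r_1^\beta$ on the arc and $C=2M/(c_1r_1^\beta)$, you get $w\le M-M=0$ there, not $M-2M$.
\item The trace issue you flag as the main obstacle is not a real one. Since $\min(u,M)=u$ a.e. near the arc $r=r_1$, and traces commute with Lipschitz truncations, the trace of $u$ on that arc is at most $M$. Hence $\operatorname{Tr}w\le0$ on the whole boundary of $W_{\omega,r_1}$, and $w^+\in H^1_0(W_{\omega,r_1})$ directly, with no averaging over $r$ or cutoff needed.
\end{itemize}
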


\begin{proof}
The positive indicial exponents of the angular Dirichlet problem are
\[
 \frac{k\pi}{\omega},\qquad k=1,2,\ldots.
\]
An $H^1$ solution has no negative-power component. The corner expansion \citep{Grisvard85,Dauge88} consequently implies, on a smaller sector,
\[
 |u(r,\theta)|\le C_\beta r^\beta
 \qquad\text{for every }\beta<\frac{\pi}{\omega}.
\]
Convexity gives $\pi/\omega>1$. Choose $\beta$ with $1<\beta<\pi/\omega$.
\end{proof}

\begin{proposition}\label{prop:full-activation}

For every \(N\ge3\), every global minimizer of

\[
\min\{\lambda _1(P):P\in\mathcal P_{\le N}(\pi)\}
\]

has exactly \(N\) effective sides.

\end{proposition}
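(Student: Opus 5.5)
We argue by contradiction with a corner-cutting variation. Suppose $P$ is a global minimizer with $M<N$ effective sides, and let $u>0$ be its first Dirichlet eigenfunction, normalized in $L^2(P)$. Since $P$ is a convex polygon, every vertex $z$ has interior angle $\omega\in(0,\pi)$. We will produce a polygon $P_\varepsilon$ with at most $M+1\le N$ effective sides, $|P_\varepsilon|=\pi$ after rescaling, and $\lambda_1(P_\varepsilon)<\lambda_1(P)$. This contradicts minimality.

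\textbf{Construction.} Fix a vertex $z$ of $P$. For small $\varepsilon>0$, let $P'_\varepsilon\subset P$ be obtained by cutting off the corner at $z$ along a segment joining the two points at distance $\varepsilon$ from $z$ on the adjacent sides. Then $P'_\varepsilon$ is convex and has $M+1$ effective sides. The removed triangle $\tau_\varepsilon$ has area $c_\omega\varepsilon^2$ and lies in $W_{\omega,C\varepsilon}$, a sector centered at $z$. Set $P_\varepsilon=tP'_\varepsilon$ with $t^2=\pi/|P'_\varepsilon|$, so that $t^2=1+O(\varepsilon^2)$ and $\lambda_1(P_\varepsilon)=\lambda_1(P'_\varepsilon)/t^2$.

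\textbf{Comparison.} For an upper bound on $\lambda_1(P'_\varepsilon)$, we cannot simply restrict $u$ to $P'_\varepsilon$, because $u$ does not vanish on the new side. Instead we take the test function $u_\varepsilon=u\,\eta_\varepsilon$. Here $\eta_\varepsilon$ is a cutoff equal to $0$ on $\tau_\varepsilon$ and to $1$ outside $B_{2C\varepsilon}(z)$, with $|\nabla\eta_\varepsilon|\lesssim\varepsilon^{-1}$. By Lemma~\ref{lem:convex-corner-eigenfunction-decay}, applied in a small sector $W_{\omega,R}$ at $z$, we have $u\le Cr^\beta$ for some $\beta>1$, together with the matching gradient bound $|\nabla u|\lesssim r^{\beta-1}$. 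The gradient bound comes from interior elliptic estimates on dyadic annular pieces of the sector, after reflection across the straight sides. These give
\[
\int|\nabla u_\varepsilon|^2-\lambda_1(P)\!\int u_\varepsilon^2
\le \int_{B_{2C\varepsilon}}\bigl(|\nabla u|^2+\varepsilon^{-2}u^2\bigr)\lesssim \varepsilon^{2\beta}.
\]
The integral of $u_\varepsilon^2$ is $1-O(\varepsilon^{2\beta+2})$. Hence
\[
\lambda_1(P_\varepsilon)\le\bigl(\lambda_1(P)+C\varepsilon^{2\beta}\bigr)\bigl(1-c'\varepsilon^2\bigr),
\]
where we used $|P'_\varepsilon|=\pi-c_\omega\varepsilon^2$, so that $t^{-2}=1-c_\omega\varepsilon^2/\pi$. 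Since $\beta>1$, the gain $\lambda_1(P)c'\varepsilon^2$ dominates the loss $C\varepsilon^{2\beta}$ for small $\varepsilon$. This gives the strict inequality $\lambda_1(P_\varepsilon)<\lambda_1(P)$, as required.

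\textbf{Main obstacle.} The key step is the energy estimate near the corner. The crude bound on $\int|\nabla u|^2$ over $B_{2C\varepsilon}$ must be $o(\varepsilon^2)$, and Lemma~\ref{lem:convex-corner-eigenfunction-decay} by itself bounds only $u$, not $\nabla u$. I would handle this by a Caccioppoli inequality on the sector $W_{\omega,4C\varepsilon}$, which is valid because $u$ has zero trace on both radial sides. It gives
\[
\int_{W_{\omega,2C\varepsilon}}|\nabla u|^2\lesssim \varepsilon^{-2}\int_{W_{\omega,4C\varepsilon}}u^2+\lambda\int u^2\lesssim\varepsilon^{2\beta}.
\]
This avoids any pointwise gradient bound.

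The only other point to check is that the cut polygon keeps every original side nondegenerate, so that the count $M+1\le N$ is correct. This holds once $\varepsilon$ is smaller than half the shortest side of $P$.
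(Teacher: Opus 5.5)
Your proposal is correct and follows essentially the same route as the paper. Both arguments cut off a corner at distance $\varepsilon$, use the corner decay $u\le Cr^{\beta}$ with $\beta>1$ together with a Caccioppoli bound to show that the cutoff test function costs only $O(\varepsilon^{2\beta})$ in energy, and rescale so that the $c_\omega\varepsilon^2$ area gain dominates. As a small aside, the exact identity $\int|\nabla(u\eta)|^2-\lambda_1(P)\int(u\eta)^2=\int u^2|\nabla\eta|^2$, obtained by testing the eigenvalue equation with $u\eta^2$, shows that the Caccioppoli step is not needed at all.
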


\begin{proof}
Assume that an admissible minimizer $P$ has fewer than $N$ effective sides. Choose one of its vertices as the origin, and let $\omega<\pi$ be the interior angle. The positive $L^2$-normalized ground state has the corner bound
\begin{equation}\label{eq:compactness-004}
 u(x)\le C|x|^\beta
\end{equation}
for some $\beta>1$. Applying Caccioppoli's inequality on a slightly larger corner neighborhood gives
\begin{equation}\label{eq:compactness-005}
 \int_{P\cap B_{3\varepsilon}}|\nabla u|^2\le C\varepsilon^{2\beta},
 \qquad
 \int_{P\cap B_{3\varepsilon}}u^2\le C\varepsilon^{2\beta+2}.
\end{equation}

Join the two points at distance $\varepsilon$ from the vertex along its incident edges and remove the corner triangle. This produces a convex polygon $P_\varepsilon$ with one additional effective side and
\begin{equation}\label{eq:compactness-006}
 |P|-|P_\varepsilon|=c_\omega\varepsilon^2,
 \qquad c_\omega=\tfrac12\sin\omega>0.
\end{equation}
Use a cutoff of $u$ which vanishes near that triangle and agrees with $u$ outside $B_{3\varepsilon}$. The preceding estimates control the cutoff terms and give
\[
 \int_{P_\varepsilon}|\nabla v_\varepsilon|^2
 \le\lambda_1(P)+C\varepsilon^{2\beta},
 \qquad
 \int_{P_\varepsilon}v_\varepsilon^2
 \ge1-C\varepsilon^{2\beta+2}.
\]
Hence
\begin{equation}\label{eq:compactness-007}
 \lambda_1(P_\varepsilon)
 \le\lambda_1(P)+C\varepsilon^{2\beta}.
\end{equation}

The cost in eigenvalue is of order $\varepsilon^{2\beta}$, whereas the area removed has order $\varepsilon^2$. Restore the area by the dilation $s_\varepsilon=(\pi/|P_\varepsilon|)^{1/2}$. Then
\begin{equation}\label{eq:compactness-008}
\begin{aligned}
 \lambda_1(s_\varepsilon P_\varepsilon)
 &\le \left(1-\frac{c_\omega}{\pi}\varepsilon^2\right)
       \left(\lambda_1(P)+C\varepsilon^{2\beta}\right)\\
 &=\lambda_1(P)-\frac{c_\omega\lambda_1(P)}{\pi}\varepsilon^2
   +o(\varepsilon^2).
\end{aligned}
\end{equation}
Since $2\beta>2$, this competitor has a smaller eigenvalue for small $\varepsilon$. It still has at most $N$ sides, a contradiction.
\end{proof}

\subsection{Near-disk geometry and the polar center}

\begin{lemma}\label{lem:convex-cap}

There are absolute constants \(\delta _0,C>0\) such that, whenever \(K\)
is a planar convex body satisfying

\[
|K|=\pi,
\qquad
|K\triangle\mathbb D|\le\delta\le\delta _0,
\]

one has

\[
\sup_{e\in\mathbb S^1}|h_K(e)-1|
\le C\delta^{2/3}.
\]

Consequently,

\[
B_{1-C\delta^{2/3}}
\subset K
\subset B_{1+C\delta^{2/3}},
\qquad
d_H(K,\mathbb D)\le C\delta^{2/3}.
\]

\end{lemma}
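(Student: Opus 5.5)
We argue by the elementary cap estimate for convex sets, proving the two one-sided bounds for the support function separately.

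\emph{Outward bound.} Let \(t=\sup_e h_K(e)-1\), with the supremum attained at \(e_0\). Pick a point \(x\in K\) with \(x\cdot e_0=1+t\). Since \(|K\triangle\mathbb D|\le\delta_0\) is small, \(K\) must contain a point of \(B_{1/2}\); otherwise \(|\mathbb D\setminus K|\ge\pi/4\). Convexity then gives that \(K\) contains the convex hull of \(x\) and a small disk about such a point. We want a stronger statement, namely that \(K\) contains the convex hull of \(x\) and a large portion of \(\mathbb D\cap K\). This portion is most of \(\mathbb D\), because \(|\mathbb D\setminus K|\le\delta\). The part of this hull lying outside \(\mathbb D\) contains a region near the tangent cap: a "spike" from \(x\) to the unit circle. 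Its area is at least \(c\,t^{3/2}\). The reason is that the tangent lines from \(x\) to \(\mathbb D\) touch the circle at angular half-width \(\arccos(1/(1+t))\approx\sqrt{2t}\), and the region between the circle and the two tangent segments has area comparable to \(t\cdot\sqrt t\). This region lies in \(K\setminus\mathbb D\), so \(c\,t^{3/2}\le\delta\) and hence \(t\le C\delta^{2/3}\).

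The main technical point here is robustness. The hull uses \(K\cap\mathbb D\) rather than \(\mathbb D\), so the chords may miss a set of area at most \(\delta\). To handle this, I would replace \(\mathbb D\) by a disk \(B_{1-s}\) with \(s\sim\delta^{2/3}\), and argue that \(K\) contains many points of \(B_{1-s}\) near every direction; failing that, a whole cap of \(\mathbb D\) would be missing, with area too large.

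\emph{Inward bound.} Let \(s=1-\inf_e h_K(e)\), with the infimum attained at \(e_1\). Then \(K\) lies in the half-plane \(\{x\cdot e_1\le1-s\}\), so \(\mathbb D\setminus K\) contains the circular cap \(\{x\in\mathbb D:x\cdot e_1>1-s\}\). This cap has area comparable to \(s^{3/2}\), which forces \(s\le C\delta^{2/3}\). This direction is the easy one and needs no convexity beyond the support-line description.

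\emph{Consequences.} The inclusions follow from the support-function bounds. For a convex body, \(K=\bigcap_e\{x\cdot e\le h_K(e)\}\), so \(h_K\le1+\eta\) gives \(K\subset B_{1+\eta}\). Likewise \(h_K\ge1-\eta\) gives \(B_{1-\eta}\subset K\), since a point outside \(K\) is separated from it by a support line. Finally, \(d_H(K,\mathbb D)=\sup_e|h_K(e)-1|\). The constants \(\delta_0\) and \(C\) are absolute, since every quantity involved concerns caps of the unit disk.

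The expected obstacle is the outward estimate, specifically making rigorous that the spike region actually lies in \(K\). I would first try the cleanest route: show that \(K\supset B_{1-C\delta^{2/3}}\) from the inward bound, which is stronger than what the inward bound gives directly but follows by the same cap argument applied to every direction. Then \(K\) contains the convex hull of \(x\) and \(B_{1-C\delta^{2/3}}\). The area of that hull outside \(\mathbb D\) is at least \(c\,(t-C'\delta^{2/3})_+^{3/2}\), which closes the argument.
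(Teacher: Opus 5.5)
Your proposal is correct, and its final form is essentially the paper's argument: a cap estimate gives the inward bound, hence $B_{1-C\delta^{2/3}}\subset K$, and convexity with the far point $x$ then gives an exterior-area lower bound of order $t^{3/2}$. The paper proves the hull-area bound you only assert, using a chord of $B_{1-\varepsilon}$ at height $1-t/4$ with half-length $a\ge c\sqrt t$ (available once $t>8\varepsilon$), whose triangle with the far point has exterior area at least $c\,at\ge ct^{3/2}$, and it rules out protrusions with $t>1/2$ separately by a fixed exterior area; your bound $c(t-C'\delta^{2/3})_+^{3/2}$ likewise needs $t$ bounded, since the hull's exterior area grows only linearly for large $t$.
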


\begin{proof}
The exponent $2/3$ comes from the area of a small circular cap. If a support line moves inward by $t$, with $0<t\le1/2$, the missing cap has area at least
\begin{equation}\label{eq:compactness-009}
 2\int_0^t\sqrt{2u-u^2}\,du\ge\frac43t^{3/2}.
\end{equation}
The symmetric-difference bound thus gives
\begin{equation}\label{eq:compactness-010}
 B_{1-\varepsilon}\subset K,
 \qquad \varepsilon=C\delta^{2/3}.
\end{equation}

For the opposite inclusion, take $y\in K$ with $y\cdot e=1+t$. Only $t>8\varepsilon$ needs consideration. In $B_{1-\varepsilon}$ use the chord
\[
 I=\{x_0e+ze^\perp:|z|\le a\},
\]
with $x_0=1-t/4$ and $a=((1-\varepsilon)^2-x_0^2)^{1/2}$. When $t$ is small and $t>8\varepsilon$, its half-length satisfies $a\ge c\sqrt t$. The triangle with this base and vertex $y$ lies in $K$ by convexity; its area outside the unit disk is at least $cat$. It follows that
\begin{equation}\label{eq:compactness-011}
 \delta\ge |K\setminus\mathbb D|\ge c\,a t\ge ct^{3/2}.
\end{equation}
A protrusion with $t>1/2$ would give a fixed positive exterior area, which is excluded by choosing $\delta_0$ small. Thus both support displacements are $O(\delta^{2/3})$.
\end{proof}

\begin{definition}\label{def:polar-center-functional}

Let \(P\subset\mathbb R^2\) be a nondegenerate convex polygon with
vertices \(z_1,\ldots,z_m\) in cyclic order and positive exterior angles
\(\omega_1,\ldots,\omega_m\), so that \(\sum_j\omega_j=2\pi\). For
\(a\notin\{z_1,\ldots,z_m\}\) define

\[
F_P(a)=\sum_{j=1}^m\omega_j|z_j-a|,
\qquad
\Phi_P(a)=-\nabla F_P(a)
=\sum_{j=1}^m\omega_j\frac{z_j-a}{|z_j-a|}.
\]

Whenever \(F_P\) has a unique minimizer, that minimizer is denoted by
\(c_{\mathrm{pol}}(P)\) and is called the polar center of \(P\).

\end{definition}

\begin{lemma}\label{lem:polar-center-stability}

There are \(\varepsilon _0,C>0\) such that, for every nondegenerate
convex polygon \(P\subset\mathbb R^2\) with vertices and exterior angles
as in
Definition~\ref{def:polar-center-functional},
if

\[
B_{1-\varepsilon}\subset P\subset B_{1+\varepsilon},
\qquad
0<\varepsilon\le\varepsilon _0,
\]

then

\[
|c_{\mathrm{pol}}(P)|\le C\varepsilon.
\]

After translation to the polar center,

\[
B_{1-C\varepsilon}
\subset P-c_{\mathrm{pol}}(P)
\subset B_{1+C\varepsilon}.
\]

\end{lemma}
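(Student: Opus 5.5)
Since $F_P$ is convex on the plane (a positive combination of the distance functions $a\mapsto|z_j-a|$), the plan is to show three things: it is strictly convex, its minimizer exists and is unique, and that minimizer lies within $C\varepsilon$ of the origin. The second inclusion is then immediate. If $|c|\le C\varepsilon$, then $B_{1-\varepsilon-C\varepsilon}\subset P-c\subset B_{1+\varepsilon+C\varepsilon}$, and the constant is renamed.

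\emph{Existence and uniqueness.} Coercivity comes from $F_P(a)\ge 2\pi(|a|-1-\varepsilon)$, so a minimizer exists. A nondegenerate polygon has at least three vertices that are not collinear. Hence for any two points $a\ne b$, some vertex $z_j$ does not lie on the line through $a$ and $b$. The function $t\mapsto|z_j-(a+t(b-a))|$ is then strictly convex, and with it $F_P$ along that segment. This makes the minimizer unique, so $c_{\mathrm{pol}}(P)$ is well defined.

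\emph{Localization.} I will compare $F_P$ at a point $a$ with its value at $0$. Because $B_{1-\varepsilon}\subset P\subset B_{1+\varepsilon}$, every vertex satisfies $\bigl||z_j|-1\bigr|\le\varepsilon$. Compare $F_P$ with the model $G(a)=\sum_j\omega_j|e_j-a|$, where $e_j=z_j/|z_j|$. Then $|F_P-G|\le 2\pi\varepsilon$ uniformly, so it suffices to prove a quadratic lower bound
\[
G(a)-G(0)\ \ge\ -C_1\varepsilon|a|+c\min(|a|^2,|a|).
\]
The minimizer satisfies $F_P(c)\le F_P(0)$, which then gives $c|c|^2\lesssim\varepsilon|c|+\varepsilon$. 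Used directly, this only yields $|c|\lesssim\sqrt\varepsilon$. I will therefore bootstrap in two steps. First, the crude bound puts $c$ in a small ball. Second, I will work with the gradient $\Phi_P$ and the Hessian on that ball. Expanding $|e_j-a|=1-e_j\cdot a+\tfrac12|a^\perp_j|^2+O(|a|^3)$ shows that $\nabla G(0)=-\sum_j\omega_je_j$ and that the Hessian is $\sum_j\omega_j(I-e_j\otimes e_j)$.

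Two inputs are needed. The first is that $\bigl|\sum_j\omega_je_j\bigr|\le C\varepsilon$. By the exterior-angle relation, $\sum_j\omega_j e_j$ approximates $\int_0^{2\pi}e^{i\theta}\,d\theta=0$. More precisely, the vertex directions interlace with the outward normals of the edges. Since $P$ is $\varepsilon$-close to the disk, each edge normal lies within angle $O(\sqrt\varepsilon)$ of the directions of its endpoints. This first-order comparison alone gives $O(\sqrt\varepsilon)$. The improvement to $O(\varepsilon)$ should come from summation by parts: $\sum_j\omega_je^{i\psi_j}-\int e^{i\theta}d\theta$ telescopes against the edge arcs, and the errors from consecutive edges cancel to first order. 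The second input is that the Hessian is $\ge c\,I$ with $c$ absolute. This holds because no direction carries more than about half the turning mass, since each $\omega_j\le C\sqrt\varepsilon$ for a polygon this close to the disk.

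With both inputs, the implicit-function (Newton) estimate gives $|c|\le C\,|\nabla F_P(0)|/c\le C\varepsilon$, once the difference $\nabla F_P-\nabla G$ is also shown to be $O(\varepsilon)$ near $0$. That difference is controlled by $\bigl|z_j/|z_j-a|-e_j\bigr|$-type terms, which are $O(\varepsilon+|a|)$. The $|a|$ part is absorbed into the Hessian.

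\emph{Main obstacle.} The key step is the bound $\bigl|\sum_j\omega_jz_j/|z_j|\bigr|=O(\varepsilon)$ rather than $O(\sqrt\varepsilon)$. I will first try the identity $\sum_j\omega_j n$-averaging: write $\omega_j$ as the arc between consecutive edge normals $n_{j-1}$ and $n_j$. This gives $\int_{\text{arc}_j}e^{i\theta}d\theta=\omega_je^{i\bar\phi_j}+O(\omega_j^3)$, where $\bar\phi_j$ is the midpoint normal angle. The remaining task is to show that the midpoint normal angle differs from $\psi_j$ by an amount whose weighted sum is $O(\varepsilon)$. This uses the support-function bound $h_P=1+O(\varepsilon)$ together with $|z_j|=1+O(\varepsilon)$: the offset between $\bar\phi_j$ and $\psi_j$ is governed by the difference of the support values on the two adjacent edges divided by $\omega_j$. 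Summed with weights $\omega_j$, these offsets telescope to $O(\varepsilon)$.
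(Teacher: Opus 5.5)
Your proof is correct in outline, but you obtain the key bound $\bigl|\sum_j\omega_je^{i\psi_j}\bigr|=O(\varepsilon)$ by a different route than the paper.

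\emph{The paper's route.} It works on the polar side with the turning function. The identity $Dp=d\theta-\sum_j\omega_j\delta_{\psi_j}$ gives $\sum_j\omega_je^{ik\psi_j}=ik\int_0^{2\pi}p\,e^{ik\theta}\,d\theta$. Since $p=v'+O(\varepsilon^{3/2})$ with $v=\log r=O(\varepsilon)$, one integration by parts gives $O(\varepsilon)$ for $k=1$ and $k=2$ at once. Hence $D\Phi_P(0)=-\pi I+O(\varepsilon)$, and the quantitative inverse function theorem finishes.

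\emph{Your route.} You work on the edge side. With $\delta_j=\psi_j-\bar\phi_j$, consecutive support numbers satisfy the exact relation $2r_j\sin\delta_j\sin(\omega_j/2)=d_j-d_{j-1}$. This gives $\omega_j\delta_j=(d_j-d_{j-1})(1+O(\varepsilon))+O(\varepsilon^{3/2}\omega_j)$. Your ``telescoping'' is then the Abel summation
\[
\sum_j(d_j-d_{j-1})e^{i\bar\phi_j}=\sum_j(d_j-1)\bigl(e^{i\bar\phi_j}-e^{i\bar\phi_{j+1}}\bigr)=O(\varepsilon),
\]
which uses $|d_j-1|\le\varepsilon$ and the fact that $e^{i\bar\phi}$ has total variation $2\pi$.

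This is the discrete dual of the paper's integration by parts, with support numbers in place of $\log r$. Your version is more elementary and uses only the annulus hypothesis. The paper's version treats both moments in one line and sets up the turning identity reused throughout the later sections. When you write yours out, control the $(1+O(\varepsilon))$ error through $\sum_j|d_j-d_{j-1}|\le C\sqrt\varepsilon$, which follows from the same identity. Bounding it termwise by $2\varepsilon$ would lose a factor of the number of vertices. You also prove the strict convexity that the paper only asserts.

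Two points need tightening.

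\emph{The Hessian lower bound.} Your justification does not work as stated: $\omega_j\le C\sqrt\varepsilon$ by itself does not prevent the turning mass from clustering near one axis. What you need is equidistribution. The normal arcs $[\phi_{j-1},\phi_j]$ tile the circle, and each lies within $C\sqrt\varepsilon$ of $\psi_j$. Hence $\sum_j\omega_je^{2i\psi_j}=\int_0^{2\pi}e^{2i\theta}\,d\theta+O(\sqrt\varepsilon)=O(\sqrt\varepsilon)$, and so $\sum_j\omega_j(I-e_j\otimes e_j)=\pi I+O(\sqrt\varepsilon)$. Alternatively, rerun your Abel summation with $e^{2i\bar\phi_j}$ to get $O(\varepsilon)$, as the paper does.

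\emph{The crude bootstrap.} The $\sqrt\varepsilon$ step and the model $G$ are unnecessary. You have $\nabla F_P(0)=-\sum_j\omega_je_j$ exactly. Once $D^2F_P\ge cI$ on a fixed ball, convexity along rays already places the minimizer within $|\nabla F_P(0)|/c$ of the origin.
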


\begin{proof}
The center equation is controlled by the first two angular moments of the boundary. Write $\partial P$ as $r(\theta)e^{i\theta}$ and set $v=\log r$. The annular bounds give $\|v\|_\infty\le C\varepsilon$, while the supporting-line relation gives
\begin{equation}\label{eq:compactness-012}
 v'=\tan p,
 \qquad
 \|p\|_\infty\le C\varepsilon^{1/2}.
\end{equation}
The turning function has slope one along an edge and a jump $-\omega_j$ at $\psi_j$. Testing its distributional derivative against $e^{ik\theta}$ gives
\begin{equation}\label{eq:compactness-013}
 \sum_j\omega_je^{ik\psi_j}
 =ik\int_0^{2\pi}p(\theta)e^{ik\theta}\,d\theta,
\end{equation}
for $k\ne0$. For $k=1,2$, the relation $p=v'+O(p^3)$ and integration by parts place both moments at order $\varepsilon$. In particular,
\begin{equation}\label{eq:compactness-014}
 |\Phi_P(0)|\le C\varepsilon,
 \qquad
 \left|\sum_j\omega_je^{2i\psi_j}\right|\le C\varepsilon.
\end{equation}

The first of these estimates bounds the residual at the origin. The second bounds the anisotropic part of its derivative; the bound $|z_j|=1+O(\varepsilon)$ controls the denominators. Thus
\begin{equation}\label{eq:compactness-015}
 D\Phi_P(0)
 =-\sum_j\frac{\omega_j}{|z_j|}(I-e_j\otimes e_j)
 =-\pi I+O(\varepsilon),
 \qquad e_j=z_j/|z_j|.
\end{equation}
On a fixed smaller disk, the distance to every vertex is bounded below. Together with $\sum_j\omega_j=2\pi$, this gives a uniform bound for $D^2\Phi_P$. The quantitative inverse-function theorem provides a zero within $C\varepsilon$ of the origin. Strict convexity of $F_P$ makes this zero the unique polar center. The translation required to reach it changes the annular radii by at most $C\varepsilon$.
\end{proof}

\begin{proposition}\label{prop:global-minimizer-radial-entry}

There are constants \(C<\infty\) and \(N_{\rm ent}\) such that the
following holds. Let \(N\ge N_{\rm ent}\) and let \(P_N^*\) be any
global minimizer in \(\mathcal P_{\le N}(\pi)\). After translating
either to its area centroid or to its polar center, the polygon
satisfies

\[
B_{1-C/N}\subset P_N^*\subset B_{1+C/N}.
\]

In either normalization its boundary has a unique radial representation

\[
\partial P_N^*
 =\{(1+\rho_N(\theta))e^{i\theta}:\theta\in\mathbb T\},
\]

and, with \(v_N=\log(1+\rho_N)\) and \(v_N'=\tan p_N\),

\[
\|\rho_N\|_{L^\infty}\le CN^{-1},
\]

\[
\|p_N\|_{L^\infty}
 +\|v_N'\|_{L^\infty}
 +\|\rho_N'\|_{L^\infty}
 \le CN^{-1/2}.
\]

In particular \(\|\rho_N\|_{W^{1,\infty}}\to0\). In the polar-center
normalization, if \(z_j\) are the vertices, \(\omega_j\) their exterior
angles,

\[
\psi_j=\arg z_j,
\qquad
w_j=\frac{\omega_j}{2\pi},
\qquad
\nu_{P_N^*}=\sum_jw_j\delta_{\psi_j},
\]

then the centering constraint is exact:

\[
\widehat\nu_{P_N^*}(1)=0.
\]

\end{proposition}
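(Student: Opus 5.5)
The plan is to obtain the sharp annulus $B_{1\mp C/N}$ by feeding an $O(N^{-3})$ energy gap into quantitative Faber--Krahn and then into Lemma~\ref{lem:convex-cap}. Since $\delta^{2/3}$ must equal $N^{-1}$, we need $|P_N^*\triangle(\mathbb D+x)|\le CN^{-3/2}$. The sharp quantitative Faber--Krahn inequality \citep{BDPV15} gives this as soon as
\[
 F(P_N^*)-F(\mathbb D)\le F(R_N)-F(\mathbb D)\le CN^{-3}.
\]
Here the first inequality is global minimality. The whole difficulty is therefore the upper bound $F(R_N)-F(\mathbb D)\le CN^{-3}$. The inradius/domain-monotonicity comparison only gives $N^{-2}$, which would lead to the weaker $N^{-2/3}$ annulus, so a genuine second-order argument is required.

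\textbf{Main step: the bound $F(R_N)-F(\mathbb D)\le CN^{-3}$.} Write $\partial R_N=\{(1+f(\theta))e^{i\theta}\}$. One computes that $\|f\|_\infty=O(N^{-2})$ and $\|f'\|_\infty=O(N^{-1})$, and that $f$ is $2\pi/N$-periodic, so $\widehat f(k)\neq0$ only for $N\mid k$ (plus the mean). Moreover, summing $|\widehat f(k)|\lesssim N^{-2}(k/N)^{-2}$ over $k=mN$ gives the weighted bound $\sum_{k}|k|\,|\widehat f(k)|^2\lesssim N^{-3}$.

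I would build a test function by transport. Let $h$ be the harmonic extension of $f$ into $\mathbb D$, so that $|\nabla h|\lesssim N^{-1}r^{N-1}$ and $\int_{\mathbb D}|\nabla h|^2=\pi\sum|k||\widehat f(k)|^2=O(N^{-3})$. Set $\Phi(x)=x(1+h(x))$, which maps $\mathbb D$ diffeomorphically onto $R_N$ for large $N$, and use the test function $u_0\circ\Phi^{-1}$, with $u_0$ the disk ground state. Expanding the Rayleigh quotient and the area in powers of $h$:
\begin{itemize}
\item The first-order term is the Hadamard derivative $\int_{\partial\mathbb D}(-|\partial_nu_0|^2+\text{const})f$, which only sees $\widehat f(0)$. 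It is cancelled in the scale-invariant combination $F=|\Omega|\lambda_1$.
\item The quadratic terms are bounded by $\int|\nabla h|^2+\int h^2|\nabla u_0|^2$, plus $\int_{\partial\mathbb D}f^2$-type terms. These are $O(N^{-3})$ because $h$ is concentrated in a boundary layer of width $1/N$ with gradient $N^{-1}$, and the mean part is $O(N^{-4})$.
\end{itemize}
Hence $F(R_N)\le F(\mathbb D)+CN^{-3}$. This is the step I expect to be the main obstacle: one must verify that no term of size $\|f'\|_\infty^2=N^{-2}$ survives unweighted by the layer width, which is exactly why the harmonic extension is used instead of a radial stretch.

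\textbf{Passing to the annulus and the two normalizations.} Quantitative Faber--Krahn now yields a disk $\mathbb D+x$ with $|P_N^*\triangle(\mathbb D+x)|\le CN^{-3/2}$, and Lemma~\ref{lem:convex-cap} gives $B_{1-C/N}(x)\subset P_N^*\subset B_{1+C/N}(x)$.
\begin{itemize}
\item For the centroid normalization, the centroid of $P_N^*$ differs from $x$ by $O(|P_N^*\triangle(\mathbb D+x)|)=O(N^{-3/2})$, so translating to it costs $O(N^{-3/2})$ in the radii.
\item For the polar-center normalization, Lemma~\ref{lem:polar-center-stability} with $\varepsilon=C/N$ gives $|c_{\rm pol}-x|\le C/N$ and again an annulus of width $C/N$.
\end{itemize}
By Proposition~\ref{prop:full-activation}, $P_N^*$ is a genuine $N$-gon, so these statements apply to a nondegenerate polygon.

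\textbf{Radial graph, slope bounds, and the centering constraint.} Since $0$ lies in the interior and $P_N^*$ is convex, each ray from $0$ meets $\partial P_N^*$ exactly once. This gives the unique radial representation with $\|\rho_N\|_\infty\le C/N$. At a boundary point $y$, any supporting line has distance at least $1-C/N$ from $0$, while $|y|\le1+C/N$. The angle $p$ between $y/|y|$ and the outward normal therefore satisfies $\cos p\ge(1-C/N)/(1+C/N)$, so $|p|\le CN^{-1/2}$. Then $v_N'=\tan p_N$ and $\rho_N'=(1+\rho_N)v_N'$ give the stated $W^{1,\infty}$ bounds.

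Finally, at the polar center, $\Phi_P(0)=\sum_j\omega_jz_j/|z_j|=\sum_j\omega_je^{i\psi_j}=0$. Conjugating and dividing by $2\pi$ gives $\widehat\nu_{P_N^*}(1)=\sum_jw_je^{-i\psi_j}=0$ exactly.
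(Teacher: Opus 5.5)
Your proposal is correct and follows the paper's route: the $O(N^{-3})$ gap of $R_N$ is fed into quantitative Faber--Krahn, then the convex-cap lemma with $\delta\sim N^{-3/2}$, polar-center stability, the support-line bound $\cos p\ge(1-C/N)/(1+C/N)$, and $\Phi_P(0)=0$; beyond the paper you add an explicit $O(N^{-3/2})$ centroid estimate and a transport proof of $F(R_N)-F(\mathbb D)\le CN^{-3}$, which the paper only asserts. In that transport proof, your pointwise bound $|\nabla h|\lesssim N^{-1}r^{N-1}$ fails near the vertices, where the jumps of $f'$ make $\partial_r h$ blow up like $N^{-1}\log(1/(N(1-r)))$; this is harmless, because $r\partial_r h$ is harmonic with boundary trace $\Lambda_\theta f\ge -C/N$, so $\det D\Phi\ge 1-C/N$ and the second-order remainder is still bounded by $C\int_{\mathbb D}(h^2+|\nabla h|^2)(1+|\nabla h|)^2\,dx=O(N^{-3})$.
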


\begin{proof}
The regular comparison polygon satisfies
\[
 0\le\lambda_1(R_N)-\lambda_1(\mathbb D)\le CN^{-3}.
\]
Since a minimizer has no larger eigenvalue, quantitative Faber--Krahn stability \citep{BDPV15} supplies a translation $x_N$ such that
\[
 |P_N^*\triangle(\mathbb D+x_N)|\le CN^{-3/2}.
\]
Applying the convex-cap estimate gives
\begin{equation}\label{eq:compactness-016}
 B_{1-C/N}+x_N\subset P_N^*\subset B_{1+C/N}+x_N.
\end{equation}
Polar-center stability moves this reference point by at most $C/N$, preserving these bounds with a different constant.

For an edge with support number $d_e$ and normal angle $\varphi_e$, the radial equation is
\[
 1+\rho_N(\theta)=d_e\sec(\theta-\varphi_e).
\]
Both $d_e$ and the radial distance belong to $[1-C/N,1+C/N]$. Therefore
\[
 \cos p_N\ge\frac{1-C/N}{1+C/N},
 \qquad p_N=\theta-\varphi_e.
\]
This gives $\|p_N\|_\infty\le CN^{-1/2}$, and differentiation of the edge equation gives the estimates for $v_N'$ and $\rho_N'$. The area is unaffected by translation. At the polar center,
\[
 0=\sum_j\omega_j\frac{z_j}{|z_j|}
  =2\pi\sum_jw_je^{i\psi_j},
\]
so the first Fourier moment vanishes exactly.
\end{proof}

\section{Uniform eigenvalue estimates on a fixed disk}\label{sec:disk-perturbation}

The geometric reduction supplies a small Lipschitz radial graph, with no control of the smallest edge. We formulate the eigenvalue problem on the fixed disk so that the estimates depend on the Lipschitz norm and boundary energy of the deformation. In every higher-order coefficient, two energy factors retain its quadratic size and the remaining Lipschitz factors provide smallness. This distinction will let the later comparison track the defect of an individual polygon.

\begin{definition}
\label{def:fixed-disk}
Let $B=B_1(0)$, $j=j_{0,1}$ be the first positive zero of $J_0$, and
$u_0(r)=c_0J_0(jr)$ the positive $L^2(B)$-normalized ground state.  Thus
$\lambda_1(B)=j^2$; we also use the elementary bound $j^2<6$.  We adopt
\[
 \widehat f_m=\frac1{2\pi}\int_0^{2\pi}f(\theta)e^{-im\theta}\,d\theta,
 \qquad f^\perp=\sum_{|m|\ge2}\widehat f_me^{im\theta},
\]
\[
 \|f\|_{\dot H^{1/2}}^2=\sum_{m\ne0}|m||\widehat f_m|^2,
 \qquad
 \|f\|_{H^{1/2}}^2=\sum_m(1+|m|)|\widehat f_m|^2,
\]
and set
\begin{equation}\label{eq:disk-perturbation-001}
 \beta_m=1+j\frac{J_m'(j)}{J_m(j)},\qquad
 Q_B(f)=2j^2\sum_{|m|\ge2}\beta_{|m|}|\widehat f_m|^2.
\end{equation}
For real $\rho\in W^{1,\infty}(\mathbb S^1)$ with $\|\rho\|_\infty<1$,
write
\[
 \Omega_\rho=\{re^{i\theta}:0\le r<1+\rho(\theta)\},\qquad
 F_{\rm n}(\Omega)=\frac{|\Omega|}{\pi}\lambda_1(\Omega).
\]
Let
\[
 \mathcal X=\left\{U\in W^{1,\infty}(B;\mathbb R^2)\cap H^1(B;\mathbb R^2):
 \int_BU=0\right\},\quad
 L(U)=\|DU\|_\infty,\quad E(U)=\|DU\|_2,
\]
and, for $L(U)<1/2$,
\begin{equation}\label{eq:disk-perturbation-002}
 J_{{\rm sp}}(U)=F_{\rm n}((I+U)(B)).
\end{equation}
With $F_U=I+DU$, $J_U=\det F_U$,
$A_U=J_UF_U^{-1}F_U^{-T}$, and
$s_U=\pi^{-1}\int_BJ_U$, the pullback of $J_{{\rm sp}}(U)$ is the lowest
generalized eigenvalue of
\begin{equation}\label{eq:disk-perturbation-003}
 a_U(v,w)=s_U\int_BA_U\nabla v\cdot\nabla w,
 \qquad m_U(v,w)=\int_BJ_Uvw,
\end{equation}
on the common form domain $H_0^1(B)$.  For $L(U)$ in a fixed compact
subset of $[0,1)$ these forms are uniformly bounded and coercive.
\end{definition}

\subsection{Boundary extension and domain realization}

The spectral problem can be pulled back to the disk only after the boundary displacement has been extended into a genuine domain deformation. The extension below controls its Lipschitz norm and its energy simultaneously, so neither control is lost when the eigenvalue equation is moved to the fixed domain.

\begin{lemma}
\label{lem:fixed-disk-extension-realization}
There is a bounded linear operator
\[
 \operatorname{Ext}:W^{1,\infty}(\mathbb S^1;\mathbb R^2)\cap
 H^{1/2}(\mathbb S^1;\mathbb R^2)
 \longrightarrow W^{1,\infty}(B;\mathbb R^2)\cap H^1(B;\mathbb R^2),
\]
such that
\[
 \operatorname{Tr}(\operatorname{Ext}b)=b,\qquad \int_B\operatorname{Ext}b=0,
\]
\begin{equation}\label{eq:disk-perturbation-012}
 \|D\operatorname{Ext}b\|_\infty\le C\|b\|_{W^{1,\infty}},\qquad
 \|D\operatorname{Ext}b\|_2^2\le C\|b\|_{H^{1/2}}^2.
\end{equation}
Moreover, if $V\in\mathcal X$, $\|DV\|_\infty<1$, and
$\operatorname{Tr}V=\rho e_r$ with $\|\rho\|_\infty<1$, then $I+V$ is
bi-Lipschitz and
\begin{equation}\label{eq:disk-perturbation-013}
 (I+V)(B)=\Omega_\rho.
\end{equation}
\end{lemma}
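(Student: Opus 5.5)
We build $\operatorname{Ext}$ as a mollified harmonic (Poisson-type) extension, corrected by a constant to enforce zero mean. Fix a radial cutoff $\chi\in C^\infty([0,1])$ with $\chi=0$ on $[0,1/4]$ and $\chi=1$ on $[1/2,1]$. For $b\in W^{1,\infty}\cap H^{1/2}$ on $\mathbb S^1$ set
\[
 \widetilde b(re^{i\theta})=\chi(r)\,(P_{r}*b)(\theta),
\]
where $P_r$ is the Poisson kernel, and define $\operatorname{Ext}b=\widetilde b-\pi^{-1}\int_B\widetilde b$. The map is linear and has trace $b$, because $P_r*b\to b$ uniformly for continuous $b$. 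Subtracting a constant does not change $D\operatorname{Ext}b$, and the trace is preserved because the correction $\pi^{-1}\int_B\widetilde b$ is a constant vector that we absorb. To keep the trace exact, we instead subtract $c\,\eta$ with $\eta\in C_c^\infty(B)$, $\int_B\eta=1$, and $c=\int_B\widetilde b$. Then $|c|\le C\|b\|_{L^1}$, so this correction is harmless in both norms.

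The energy bound follows from Fourier series. Writing $b=\sum_m\widehat b_me^{im\theta}$, the harmonic extension is $\sum_m\widehat b_mr^{|m|}e^{im\theta}$, and
\[
\int_B|\nabla(\cdot)|^2=2\pi\sum_m|m||\widehat b_m|^2 .
\]
The cutoff adds at most $C\|b\|_{L^2}^2$. Hence $\|D\operatorname{Ext}b\|_2^2\le C\|b\|_{H^{1/2}}^2$.

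The Lipschitz bound is the main obstacle, since the harmonic extension of a Lipschitz function need not be Lipschitz: the conjugate function appears in $\partial_r$. We therefore replace the Poisson extension by a convolution extension at scale $1-r$:
\[
 \widetilde b(re^{i\theta})=\chi(r)\int\phi(s)\,b(\theta+(1-r)s)\,ds
\]
with $\phi\in C_c^\infty(\mathbb R)$ and $\int\phi=1$.

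The tangential derivative is then $\chi\,\phi_{1-r}*b'$, which is bounded by $\|b'\|_\infty$. The radial derivative is $-\chi\int s\phi(s)\,b'(\theta+(1-r)s)\,ds$ plus a cutoff term, and both are bounded by $C\|b\|_{W^{1,\infty}}$.

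For the energy of this modified extension we use Fourier multipliers again. The extension has coefficients $\widehat b_m\widehat\phi((1-r)m)$, and
\[
\int_0^1\big(m^2|\widehat\phi((1-r)m)|^2+m^2|\widehat\phi'((1-r)m)|^2\big)\,dr\le C|m|
\]
after the substitution $t=(1-r)|m|$, using the Schwartz decay of $\widehat\phi$. This gives the $H^{1/2}$ energy bound, and the trace is $b$ by continuity.

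For the realization statement, suppose $\|DV\|_\infty<1$. Then $x\mapsto x+V(x)$ on the convex set $B$ satisfies
\[
|(x+V(x))-(y+V(y))|\ge(1-\|DV\|_\infty)|x-y|,
\]
so it is injective and bi-Lipschitz onto its image. Its image is therefore an open Lipschitz domain whose boundary is the image of $\partial B$, namely the curve $\{(1+\rho(\theta))e^{i\theta}\}$. This curve is a Jordan curve because $\|\rho\|_\infty<1$, and it bounds $\Omega_\rho$. By invariance of domain and the Jordan curve theorem, $(I+V)(B)$ is the bounded component of the complement of this curve. Since $\Omega_\rho$ is star-shaped with respect to the origin and is exactly that bounded component, we conclude $(I+V)(B)=\Omega_\rho$.
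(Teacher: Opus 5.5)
Your final construction (mollification at scale $1-r$ near the circle, an interior modification, and subtraction of $c\eta$ with $\int_B\eta=1$ to enforce zero mean), together with the bi-Lipschitz and boundary-curve identification of $(I+V)(B)$, is correct and is essentially the paper's proof; the paper differs only in extending harmonically inside $r=3/4$ where you use a radial cutoff. You should delete the abandoned Poisson-extension paragraph, including the incorrect remark that subtracting a constant preserves the trace, since your later $c\eta$ correction supersedes it.
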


\begin{proof}
Use a smooth even circular mollifier and put $b_\delta=\varphi_\delta*b$. In $3/4<r<1$ take $\widetilde{\operatorname{Ext}}b(r,\theta)=b_{1-r}(\theta)$, and extend harmonically inside the circle $r=3/4$. The radial choice of smoothing scale gives, by convolution estimates and Parseval,
\[
 \|D\widetilde{\operatorname{Ext}}b\|_\infty\le C\|b\|_{W^{1,\infty}},\qquad
 \|D\widetilde{\operatorname{Ext}}b\|_2^2\le C\sum_m(1+|m|)|\widehat b_m|^2.
\]
A fixed function $\chi\in C_c^\infty(B)$ of integral one allows the correction $(\int_B\widetilde{\operatorname{Ext}}b)\chi$. Subtract it to obtain zero mean; neither the trace nor the two bounds changes.

To identify the image domain, observe that
\[
 (1-\|DV\|_\infty)|x-y|\le |(I+V)(x)-(I+V)(y)|
 \le(1+\|DV\|_\infty)|x-y|.
\]
Thus $\|DV\|_\infty<1$ makes $I+V$ bi-Lipschitz. Its boundary image is the simple curve $(1+\rho(\theta))e^{i\theta}$, so its image is precisely the bounded region $\Omega_\rho$ enclosed by that curve.
\end{proof}

\subsection{Common-domain eigenpairs and mixed coefficients}

With the domain fixed, the geometric perturbation enters through the stiffness and mass forms. The reduced resolvent separates the simple ground state from its orthogonal complement. This permits coefficient estimates with constants determined by the disk spectral gap, rather than by a polygonal mesh.

\begin{lemma}
\label{lem:uniform-eigenpair-perturbation}
\label{lem:saddle-inverse}
\label{lem:mixed-eigenpair-coefficients}
\label{lem:two-energy-coefficients}
Let $V\hookrightarrow H$ be a compact dense embedding, $T$ a compact
parameter set, and $(a_{t,Z},m_{t,Z})$ a holomorphic family of form pairs on
the common domain $V$, continuous in $t\in T$.  Assume uniform coercivity and
positivity on the real slice and, at $Z=0$, a simple normalized eigenpair
$(u_0,\lambda_0)$ separated from the rest of the spectrum by a uniform gap.
Then, after shrinking a common complex ball, the normalized eigenpair is
holomorphic in $Z$, uniform in $t$, and its saddle operator
\begin{equation}\label{eq:disk-perturbation-004}
 A_{{\rm eig},t,Z}(w,\mu)=
 \left((a_{t,Z}-\lambda_{t,Z}m_{t,Z})(w,\cdot)
       -\mu m_{t,Z}(u_{t,Z},\cdot),\ \ell_t(w)\right),
\end{equation}
is uniformly invertible.  If the data are $C^2$ in $t$, so are the branch
and inverse, with
\begin{equation}\label{eq:disk-perturbation-005}
 (A_{{\rm eig}}^{-1})'=-A_{{\rm eig}}^{-1}A_{{\rm eig}}'A_{{\rm eig}}^{-1},
 \quad
 (A_{{\rm eig}}^{-1})''=
 2A_{{\rm eig}}^{-1}A_{{\rm eig}}'A_{{\rm eig}}^{-1}A_{{\rm eig}}'A_{{\rm eig}}^{-1}
 -A_{{\rm eig}}^{-1}A_{{\rm eig}}''A_{{\rm eig}}^{-1}.
\end{equation}

For the quantitative mixed-coefficient form, let $Z_i$ be finitely many
amplitude directions and put
\[
 [H]_I=\frac1{|I|!}\left.\partial_{\varepsilon_I}
 H\!\left(\sum_{i\in I}\varepsilon_iZ_i\right)\right|_{\varepsilon=0}.
\]
Then the normalized Leibniz rule is
\begin{equation}\label{eq:disk-perturbation-006}
 [H_1\cdots H_p]_I=
 \sum_{I_1\sqcup\cdots\sqcup I_p=I}
 \frac{\prod_q|I_q|!}{|I|!}\prod_q[H_q]_{I_q}.
\end{equation}
Given $0\le E_i\le\kappa L_i$, set
\begin{equation}\label{eq:disk-perturbation-007}
 L_I=\prod_{i\in I}L_i,\qquad
 U_I=\sum_{i\in I}E_iL_{I\setminus\{i\}},\qquad
 V_I=\sum_{i\ne j}E_iE_jL_{I\setminus\{i,j\}}.
\end{equation}
Assume that, for $a=0,1,2$, every nonempty mixed coefficient of
$B=a-\lambda_0m$ and $M=m$ is bounded by $C_0^{|I|}\delta^aL_I$, its
action on $u_0$ by $C_0^{|I|}\delta^aU_I$, and its scalar value at
$(u_0,u_0)$ by $C_0^{|I|}\delta^aV_I$ for $|I|\ge2$; assume also
$B_{\{i\}}(u_0,u_0)=0$ and
$\|\partial_t^aA_{{\rm eig},t,0}^{-1}\|+\|\partial_t^a\ell_t\|
\le C\delta^a$.  If $u_{I,t}$ and $\lambda_{I,t}$ are the normalized mixed
coefficients of the eigenpair, then, after enlarging $C_0$ by an absolute
factor,
\begin{equation}\label{eq:disk-perturbation-008}
 \|\partial_t^au_{I,t}\|_V\le C_0^{|I|}\delta^aU_I,
 \qquad
 |\partial_t^a\lambda_{I,t}|\le C_0^{|I|}\delta^aV_I,
\end{equation}
with $\lambda_{\{i\},t}=0$.

For a $C^2$ path $U_t$ and $r\ge4$, let
$L_k=L(\partial_t^kU_t)$ and $E_k=E(\partial_t^kU_t)$, $k=0,1,2$, and
\[
\begin{aligned}
 \Theta_r(Z)&=E_0^2L(Z)L_0^{r-3}+2E(Z)E_0L_0^{r-2},\\
 \Theta_r(Z,W)&=E_0^2L(Z)L(W)L_0^{r-4}
 +2E(Z)E_0L(W)L_0^{r-3}\\
 &\quad+2E(W)E_0L(Z)L_0^{r-3}
 +2E(Z)E(W)L_0^{r-2}.
\end{aligned}
\]
If $J_{{\rm sp},r,t}$ is the normalized $r$-linear eigenvalue coefficient
and $f_r(t)=J_{{\rm sp},r,t}(U_t,\ldots,U_t)$, then
\begin{equation}\label{eq:disk-perturbation-009}
\begin{aligned}
 |f_r|&\le C_0^rr^2E_0^2L_0^{r-2},\\
 |f_r'|&\le C_0^rr^2\{\delta E_0^2L_0^{r-2}+r\Theta_r(\dot U_t)\},\\
 |f_r''|&\le C_0^rr^2\{\delta^2E_0^2L_0^{r-2}
 +2r\delta\Theta_r(\dot U_t)+r\Theta_r(\ddot U_t)
 +r(r-1)\Theta_r(\dot U_t,\dot U_t)\}.
\end{aligned}
\end{equation}
\end{lemma}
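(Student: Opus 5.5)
The lemma has three layers, and I will prove them in order: analytic perturbation theory, a combinatorial recursion for mixed coefficients, and the path derivatives of the homogeneous coefficients $f_r$.

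\emph{Step 1: holomorphic eigenpair and invertible saddle operator.} I first realize each form pair as a holomorphic family of operators $V\to V^*$. This is a Kato type (a) family once the forms are written with respect to the Riesz map of $V$. Uniform coercivity on the real slice extends to a small complex ball around $Z=0$, uniformly in $t\in T$ by compactness and continuity.

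The spectral gap at $Z=0$ lets me define a Riesz projection on a fixed contour around $\lambda_0$. The projection is holomorphic in $Z$, continuous in $t$, and of rank one. I then fix the normalization by $\ell_t(u)=1$, or $m(u,u)=1$ continued holomorphically, to get the branch $(u_{t,Z},\lambda_{t,Z})$.

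Invertibility of $A_{{\rm eig}}$ at $Z=0$ follows from a Fredholm alternative. On the $m$-orthogonal complement of $u_0$, the form $a-\lambda_0 m$ is coercive by the gap. The two bordering rows, $\mu\mapsto -\mu m(u_0,\cdot)$ and $\ell_t$, fix the kernel and the cokernel. A Neumann series then extends invertibility to a shrunken ball. The formulas \eqref{eq:disk-perturbation-005} come from differentiating $A^{-1}A=I$ twice.

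\emph{Step 2: mixed coefficients.} The Leibniz rule \eqref{eq:disk-perturbation-006} comes from expanding the product in the multilinear variables $\varepsilon_I$ and counting multinomial weights.

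For the eigenpair coefficients, I write the eigen-equation $(B_Z-(\lambda_Z-\lambda_0)M_Z)u_Z=0$ together with the normalization. Extracting the $[\cdot]_I$ coefficient gives
\[
A_{{\rm eig},t,0}(u_I,\lambda_I)=\text{sum over splittings with strictly smaller pieces}.
\]
I then induct on $|I|$. In each product term $B_{J}u_{K}$ with $J\sqcup K=I$, the bounds $L_J\cdot U_K$ and $U_J\cdot L_K$ both sum into $U_I$, since each monomial contains exactly one energy factor. For the scalar $\lambda_I$, I pair the equation with $u_0$ and use $B(u_0,\cdot)$ symmetry, so that two energy factors appear and give $V_I$. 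The assumption $B_{\{i\}}(u_0,u_0)=0$ gives $\lambda_{\{i\}}=0$.

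The combinatorial constants sum to at most $C^{|I|}$, because the number of set partitions weighted by $\prod|I_q|!/|I|!$ is geometrically bounded. The $t$-derivatives follow by applying $\partial_t^a$ with the product rule, using the assumed $\delta^a$ bounds on the data and on $A^{-1}$.

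\emph{Step 3: bounds on $f_r$.} I specialize Step 2 to $Z_i=U_t$ for all $i$ and apply it to $B=a_U-\lambda_0m_U$, $M=m_U$ from Definition~\ref{def:fixed-disk}. The coefficients of $s_UA_U$ and $J_U$ are polynomials in $DU$, and they satisfy the hypotheses with $L=\|DU\|_\infty$ and $E=\|DU\|_2$. The scalar coefficient then gives $V_I\le r^2E_0^2L_0^{r-2}$, which is the first line of \eqref{eq:disk-perturbation-009}; the factor $r^2$ counts ordered pairs $i\neq j$.

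Differentiating $f_r(t)=J_r(U_t,\dots,U_t)$ gives two kinds of terms. The explicit $t$-dependence costs $\delta$. Replacing one slot by $\dot U_t$ happens in $r$ ways and produces $\Theta_r(\dot U_t)$. There the replaced slot is either an $L$-slot or one of the two $E$-slots, which explains the form of $\Theta_r$. The second derivative adds the terms with $\ddot U$, the pairs of slots with $(\dot U,\dot U)$ giving $r(r-1)\Theta_r(\dot U,\dot U)$, and the mixed terms $2r\delta\Theta_r$.

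The main obstacle I expect is Step 2: keeping the structure "exactly two energy factors in the scalar, one in the vector" through the recursion. That structure requires the first-order term $B_{\{i\}}u_0$ to be measured in $E_i$ and not in $L_i$. I would also check that the denominators from $A^{-1}$ never consume an energy factor.
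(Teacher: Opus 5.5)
Your route is the paper's: uniform invertibility of $A_{{\rm eig}}$ from coercivity of $a_{t,0}-\lambda_0m_{t,0}$ on the $m_{t,0}$-complement of $u_0$; the coefficient recursion $A_{{\rm eig},t,0}(u_I,\lambda_I)=(F_I,0)$; pairing with $u_0$ for the scalar; and counting the slots occupied by $\dot U_t,\ddot U_t$ for $f_r$. Your Step~3 is correct given the mixed-coefficient bounds.

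Building the branch with a Riesz projection rather than the implicit-function theorem is a harmless variant. You must, however, normalize by the fixed linear $\ell_t$: the quadratic normalization $m(u,u)=1$ would give a $Z$-dependent last row, not the displayed saddle operator. Also, the pulled coefficients are not polynomials in $DU$, since $A_U$ contains $J_U^{-1}$. That check is not part of this abstract lemma anyway.

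The gap is in Step~2, where you expected it. You bound only the terms $B_Ju_K$. But $F_I$ also contains the terms $\lambda_PM_Su_C$ with $P\neq\varnothing$ and $(P,S,C)\neq(I,\varnothing,\varnothing)$, and their inductive bounds carry surplus energy factors: $V_PU_C$ has three, while the target $U_I$ has one. You never invoke $E_i\le\kappa L_i$ to remove them.

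Removing them that way, as the paper's equally brief sketch suggests, does not close the induction either. One has $V_P\le\kappa^2|P|(|P|-1)L_P$. For $S=\varnothing$, the term $\lambda_Pm_{t,0}u_C$ comes with $K^{|P|}K^{|C|}=K^{|I|}$ and no spare power of $K$. So the hypotheses $\|u_J\|_V\le K^{|J|}U_J$ and $|\lambda_J|\le K^{|J|}V_J$ only return $\|u_I\|_V\lesssim\kappa^2|I|^3K^{|I|}U_I$, and no choice of $K$ absorbs that factor. The composition weights are not the issue: for fixed block sizes, $\prod_q|I_q|!/|I|!$ is just the uniform probability on ordered set compositions, so those averages are at most $U_I$ or $V_I$.

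A clean repair has two tiers.

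\emph{Coarse bounds.} First record $\|\partial_t^au_J\|_V+|\partial_t^a\lambda_J|\le K_1^{|J|}\delta^aL_J$, with $K_1$ depending only on the data. You can get this from Cauchy estimates for the holomorphic branch on the polydisc $|\varepsilon_i|\le c/(|J|L_i)$, where the factor $1/|J|!$ in $[\cdot]_J$ offsets $|J|^{|J|}$ up to $e^{|J|}$. Alternatively, use a majorant series for the $L$-only recursion, in which $L_AL_C=L_I$ exactly.

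\emph{Refined induction.} Then induct on the refined bounds, keeping energy on only one factor and bounding every other eigenpair factor coarsely:
\begin{itemize}
\item in the state equation, put the energy on $u_C$, or on $M_Su_0$ when $C=\varnothing$;
\item in the scalar equation, put it on $\lambda_P$, or on the pair $B_Au_0$ and $u_C$, using $B_A(u_C,u_0)=B_A(u_0,u_C)$.
\end{itemize}
Every term then carries a factor $(K_1/K)^{|P|}(C_0/K)^{|A|+|S|}$ or $(C_0/K)^{|S|}(K_1/K)^{|C|}$ with positive total exponent. Its average over compositions of given sizes is at most $U_I$ (resp.\ $V_I$). The resulting geometric sum closes for $K\ge C\,\|A_{{\rm eig}}^{-1}\|\,(K_1+C_0)$. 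The same scheme, differentiated in $t$, supplies the factors $\delta^a$.
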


\begin{proof}
Separate the state space into the ground-state direction and its $m_{t,0}$-orthogonal complement. The spectral gap gives uniform coercivity of $a_{t,0}-\lambda_0m_{t,0}$ on the complement, and the normalization fixes the ground-state component. This proves uniform invertibility of the saddle operator. Analytic implicit-function theory on the common form domain \citep{Kato76} then gives the normalized holomorphic branch. A common smaller complex ball has uniform Cauchy bounds. Differentiating $A_{{\rm eig}}A_{{\rm eig}}^{-1}=I$ once and twice gives the inverse-derivative formulas.

The quantitative estimate follows from the coefficient equation, rather than an operator-norm Cauchy estimate that would lose the energy factors. Expand $(a-\lambda_0m)u=(\lambda-\lambda_0)mu$ in the amplitude variables. For a nonempty set $I$,
\begin{equation}\label{eq:disk-perturbation-010}
 A_{{\rm eig},t}(u_{I,t},\lambda_{I,t})=(F_{I,t},0),
\end{equation}
with
\begin{equation}\label{eq:disk-perturbation-011}
\begin{aligned}
F_{I,t}={}&-\!\sum_{A\sqcup C=I,\ A\ne\varnothing}
 \frac{|A|!|C|!}{|I|!}B_{A,t}u_{C,t}\\
&+\!\sum_{\substack{P\sqcup B\sqcup C=I,\ P\ne\varnothing,\\
(P,B,C)\ne(I,\varnothing,\varnothing)}}
 \frac{|P|!|B|!|C|!}{|I|!}\lambda_{P,t}M_{B,t}u_{C,t}.
\end{aligned}
\end{equation}
The factorial weights are those of the normalized Leibniz formula. In the induction on $|I|$, retain one $E_i$ in the state equation and two distinct $E_i$ in the scalar equation. Bound the other factors by $E_i\le\kappa L_i$. The factorial ratios cancel the ordered-partition multiplicities; the residual polynomial factors are absorbed into a larger $C_0$. This gives \eqref{eq:disk-perturbation-008}. The same recurrence differentiated in $t$ retains $\delta^a$ for $a=1,2$.

For $U_t$, differentiate the symmetric multilinear coefficient along the path. The two energy factors can lie in undifferentiated arguments, in one differentiated argument, or in two differentiated arguments. These placements produce the expressions $\Theta_r$ and the bounds \eqref{eq:disk-perturbation-009}.
\end{proof}

\subsection{The disk Hessian and its uniform remainder}

The abstract second-order coefficient must now be identified in boundary variables. Removing the similarity modes exposes the coercive disk Hessian. The remainder estimate is kept in a form that records both the small Lipschitz norm and the quadratic boundary energy.

\begin{theorem}
\label{thm:uniform-fixed-disk-expansion}
There are absolute $\varepsilon_0,C>0$ such that the following hold.

\emph{(i) Stationarity and Hessian.}  The map $U\mapsto J_{{\rm sp}}(U)$ is
analytic for $L(U)<\varepsilon_0$, $J_{{\rm sp}}(0)=j^2$, and
$DJ_{{\rm sp}}(0)=0$.  Its Hessian is symmetric and
\begin{equation}\label{eq:disk-perturbation-014}
 |D^2J_{{\rm sp}}(0)[H,K]|\le CE(H)E(K).
\end{equation}
If $\operatorname{Tr}V=\rho e_r$, then
\begin{equation}\label{eq:disk-perturbation-015}
 \ \frac12D^2J_{{\rm sp}}(0)[V,V]=Q_B(\rho^\perp).\
\end{equation}
In particular, for a smooth zero-mean radial first velocity $f$, the
area-constrained second derivative of $\lambda_1$ is $2Q_B(f^\perp)$.

\emph{(ii) Explicit coercivity.}  For every integer $m\ge2$,
\begin{equation}\label{eq:disk-perturbation-016}
 \frac m2\le\beta_m\le\frac{3m}{2},
\end{equation}
and hence, for every $f\in H^{1/2}(\mathbb S^1)$,
\begin{equation}\label{eq:disk-perturbation-017}
 \ j^2\|f^\perp\|_{\dot H^{1/2}}^2
 \le Q_B(f^\perp)\le3j^2\|f^\perp\|_{\dot H^{1/2}}^2 \
\end{equation}

\emph{(iii) Support-uniform remainder.}  If $L(U)\le\varepsilon_0$, then
\begin{equation}\label{eq:disk-perturbation-018}
 \sup_{0\le t\le1}\left|\frac{d^3}{dt^3}J_{{\rm sp}}(tU)\right|
 \le CL(U)E(U)^2,
\end{equation}
and therefore
\begin{equation}\label{eq:disk-perturbation-019}
 \left|J_{{\rm sp}}(U)-j^2-
 \frac12D^2J_{{\rm sp}}(0)[U,U]\right|
 \le CL(U)E(U)^2 \
\end{equation}
All constants are independent of $\operatorname{supp}DU$.
\end{theorem}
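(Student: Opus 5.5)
We prove the three parts in the order (i), (ii), (iii), working throughout with the common-domain pullback of Definition~\ref{def:fixed-disk}.

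\emph{Part (i).} For $L(U)<\varepsilon_0$ the coefficients $F_U$, $J_U=\det F_U$, $A_U=J_UF_U^{-1}F_U^{-T}$ and $s_U$ are polynomial, or rational with denominator $J_U$ bounded away from zero, in $DU$. Hence they extend holomorphically to a complex ball in $W^{1,\infty}$.

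Lemma~\ref{lem:uniform-eigenpair-perturbation} then makes the normalized ground state and $J_{\rm sp}$ analytic. We have $J_{\rm sp}(0)=j^2$. For stationarity, the first coefficient equals $[a-j^2m]_{\{1\}}(u_0,u_0)$. This expression is the classical Hadamard derivative $-\int_{\partial B}|\partial_ru_0|^2\rho$, corrected by $s_U$ through the area factor. Because $|\partial_ru_0|$ is constant on $\partial B$, the two contributions cancel for every $\rho$; the tangential part of the trace does not contribute at first order. This gives $DJ_{\rm sp}(0)=0$.

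The bound $|D^2J_{\rm sp}(0)[H,K]|\le CE(H)E(K)$ follows from \eqref{eq:disk-perturbation-008} with $|I|=2$. The point is that every first-order coefficient of $a,m$ is linear in $DH$ with bounded multipliers, so its action on $u_0$ is bounded by $E(H)$. This explains why the Hessian is controlled by energies rather than by Lipschitz norms.

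To identify the Hessian, we use the invariance that $J_{\rm sp}$ depends only on the image domain. This lets us replace $V$ by any extension with the same trace, for instance the harmonic extension of $\rho e_r$. The standard second shape derivative of $|\Omega|\lambda_1/\pi$ at the disk is then diagonalized in Fourier modes. The modes $m=0,\pm1$ are dilation and translation and are killed by scale invariance. For $|m|\ge2$ we solve the first-order state equation with the separated ansatz $u'=c\,J_m(jr)e^{im\theta}/J_m(j)$. This gives the factor $j J_m'(j)/J_m(j)$; adding the curvature and area terms produces $2j^2\beta_{|m|}|\widehat\rho_m|^2$, which is $Q_B(\rho^\perp)$.

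\emph{Part (ii).} We write $\beta_m=1+jJ_m'(j)/J_m(j)$. The recurrence $J_m'=J_{m-1}-\tfrac{m}{j}J_m$ gives
\[
\beta_m=1-m+jJ_{m-1}(j)/J_m(j).
\]
Since $j<m$ fails only for $m=2$ (indeed $j\approx2.405$), we handle $m=2,3$ numerically. For $m\ge3$, where $j<m$, we bound the ratio $jJ_{m-1}/J_m$ using the continued fraction
\[
\frac{J_{m-1}}{J_m}=\frac{2m}{j}-\frac{J_{m+1}}{J_m},
\]
together with the elementary bound $0<J_{m+1}(j)/J_m(j)<j/(2m+2-j)$ from the same continued fraction with positive terms. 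This gives
\[
\beta_m\in\Bigl[m+1-\frac{j^2}{2m+2-j},\ m+1\Bigr].
\]
It then suffices to check that $m/2\le m+1-\frac{j^2}{2m+2-j}$, which holds since $j^2<6$, and that $m+1\le 3m/2$ for $m\ge2$. The two-sided bound \eqref{eq:disk-perturbation-017} follows by Parseval.

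\emph{Part (iii).} This is the main obstacle. We apply \eqref{eq:disk-perturbation-009} along the path $U_t=tU$, so that $\dot U_t=U$ and $\ddot U_t=0$. The series $J_{\rm sp}(tU)=\sum_r t^rJ_{{\rm sp},r}(U,\dots,U)$ has $r$-th term bounded by $C_0^r r^2E^2L^{r-2}$. Differentiating three times in $t$, the terms with $r\ge3$ contribute at most
\[
\sum_{r\ge3}r^3\cdot C_0^rr^2E^2L^{r-2}\le CLE^2
\]
once $C_0L(U)\le 1/2$, which fixes $\varepsilon_0$. Taylor's theorem with integral remainder then gives \eqref{eq:disk-perturbation-019}.

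The delicate point is that the $r\ge3$ coefficient bound must keep exactly two energy factors. This rests on two facts: the hypothesis $B_{\{i\}}(u_0,u_0)=0$, which holds by the stationarity in part (i), and the requirement that the action bounds have the structure $U_I$, $V_I$ with constants depending only on the disk spectral gap and on $\|DU\|_\infty$. No constant may depend on $\operatorname{supp}DU$ or on any mesh quantity.
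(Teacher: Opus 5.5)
Your argument is correct, but in each part it takes a somewhat different route from the paper.

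For stationarity you compute the first variation directly: Hadamard's term $-\int_{\partial B}|\partial_r u_0|^2\,U\cdot n$ is cancelled exactly by the area factor, because $|\partial_r u_0|^2\equiv j^2/\pi$ on the circle. The paper instead gets $DJ_{{\rm sp}}(0)=0$ in one line from Faber--Krahn minimality of the disk along both signs of the deformation. Yours is self-contained; the paper's needs no computation.

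For the Hessian, the paper cites the Dambrine--Lamboley second variation and the Novruzi--Pierre structure theorem. You instead reduce to the radial-graph path by trace independence and compute mode by mode. That works, with two provisos:
\begin{itemize}
\item Run the computation for smooth $\rho$ and then pass to Lipschitz traces by $H^{1/2}$ approximation using the $E(H)E(K)$ bound, as the paper does. The harmonic extension of a merely Lipschitz trace need not be Lipschitz, so $J_{{\rm sp}}$ is not defined on it.
\item $\Omega_{t\cos\theta}$ is a translate of the disk only to first order. It is stationarity that kills the second-order discrepancy in the modes $\pm1$, not translation invariance alone.
\end{itemize}

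In (ii), your continued-fraction bound $0<jJ_{m+1}(j)/J_m(j)<j^2/(2m+2-j)$ replaces the paper's alternating-series bound. It already covers $m=2$, so the case split at $j<m$ and the numerical checks are unnecessary.

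In (iii), the paper differentiates the Rayleigh identity three times at every $t\in[0,1]$, controlling $u_t-u_0$, $\dot u_t$, $\ddot u_t$, $\dot\lambda_t$ and $\ddot\lambda_t$. It therefore needs only third-order information along the path. You instead expand at $t=0$ and sum the all-order bound $|J_{{\rm sp},r}(U,\dots,U)|\le C_0^r r^2E^2L^{r-2}$. This is \eqref{eq:disk-perturbation-008} with every $Z_i=U$, not \eqref{eq:disk-perturbation-009}, which is stated for $r\ge4$ and for $t$-dependent form families. It is essentially the argument the paper uses later in Lemma~\ref{lem:analytic-coefficient-chain}.

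Your route only ever tests forms against the smooth $u_0$, which is convenient. But it relies on the geometric-growth recurrence at every order, so you should say why its hypotheses hold for every $r$ rather than list them as requirements. The reason is that the $r$-th form coefficient is a degree-$r$ polynomial in $DU$, coming from the geometric series for $F_U^{-1}$, from $\det F_U$, and from the area scalars. Since $\nabla u_0\in L^\infty(\overline B)$, one or two of those factors can be placed in $L^2$, which is exactly \eqref{eq:disk-perturbation-020}.
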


\begin{proof}
Analyticity is supplied by the common-domain eigenpair construction. The disk minimizes the scale-invariant functional along both signs of a small deformation, hence $DJ_{{\rm sp}}(0)=0$. For a smooth mean-zero normal velocity, the constrained second variation is
\[
 \lambda_1''(0)=4j^2\sum_{m\ne0}
 \left(1+j\frac{J_{|m|}'(j)}{J_{|m|}(j)}\right)|\widehat f_m|^2
\]
by \cite{DambrineLamboley19}, Section~2.3. The first harmonics vanish, and area normalization removes the constant mode without changing the functional. The critical-domain structure theorem \citep{NovruziPierre02} therefore identifies the Hessian with $2Q_B$.

To bound its coefficients, use
\[
 \beta_m=m+1-j\frac{J_{m+1}(j)}{J_m(j)}
\]
and, with $a=j^2/4<3/2$,
\[
 J_m(j)=\frac{(j/2)^m}{m!}
 \sum_{k\ge0}\frac{(-a)^k}{k!(m+1)_k}.
\]
For $m\ge2$, the ratio of successive absolute terms is less than $1/2$. Thus the alternating sum $S_m$ lies in $[1-a/(m+1),1]$, giving
\[
 0<j\frac{J_{m+1}(j)}{J_m(j)}
 \le\frac{j^2}{2(m+1-a)}\le\frac{m+2}{2}\qquad(m\ge2).
\]
The last inequality is equivalent to $4a\le(m+2)(m+1-a)$; it holds for $m\ge2$ at $a=3/2$ and hence throughout the stated range. Summing gives \eqref{eq:disk-perturbation-017}.

The nonlinear estimate must remain valid when the deformation is supported on a small set. Let $(u_t,\lambda_t)$ be the normalized branch along $tU$, with $L=L(U)$ and $E=E(U)$. A pulled-form derivative of order $r\le3$ contains $r$ deformation factors. For arbitrary states estimate them in $L^\infty$; with one ground-state argument keep one in $L^2$, and with two ground-state arguments keep two. The quadratic area factor obeys the same bounds. This yields
\begin{equation}\label{eq:disk-perturbation-020}
\begin{aligned}
 |q_t^{(r)}(v,w)|&\le CL^r\|v\|_{H^1}\|w\|_{H^1},\\
 |q_t^{(r)}(u_0,v)|&\le CL^{r-1}E\|v\|_{H^1},\\
 |q_t^{(r)}(u_0,u_0)|&\le CL^{r-2}E^2\qquad(r=2,3),\\
 |(q_t^{(1)}-q_0^{(1)})(u_0,u_0)|&\le CtE^2.
\end{aligned}
\end{equation}
Writing $u_t=u_0+(u_t-u_0)$ in the first two differentiated equations and applying the saddle inverse gives
\begin{equation}\label{eq:disk-perturbation-021}
 \|u_t-u_0\|_{H^1}+|\lambda_t-j^2|\le CtE,
 \quad \|\dot u_t\|_{H^1}\le CE,
 \quad |\dot\lambda_t|+|\ddot\lambda_t|\le CE^2,
 \quad \|\ddot u_t\|_{H^1}\le CLE.
\end{equation}
In the third derivative of the Rayleigh identity, the terms contain either a third form derivative, a second derivative with $\dot u_t$, or a first derivative with $\ddot u_t$ or two copies of $\dot u_t$. The displayed bounds retain two $L^2$ factors in every case, so their sum is bounded by $CLE^2$. Polarization gives the mixed Hessian estimate; Taylor's integral formula gives the remainder.

For a Lipschitz boundary trace $\rho e_r$, the realization lemma gives $J_{{\rm sp}}(tV)=F_{\rm n}(\Omega_{t\rho})$ for both signs of small $t$. The Hessian consequently depends only on that trace. Approximation in $H^{1/2}$, using the mixed bound and \eqref{eq:disk-perturbation-012}, extends the identity to Lipschitz traces. No constant in this argument depends on the size of the support.
\end{proof}

\subsection{Analytic coefficients with two energy factors}

For the later endpoint comparison, a bound that remembers only the Lipschitz size of a deformation is too coarse. The mixed estimates below retain two energy factors at every order. They also justify differentiating the higher-order tail along the material path used later.

\begin{lemma}\label{lem:analytic-coefficient-chain}

Let \(\mathbb D=B_1(0)\), let \(e_r(\theta)=e^{i\theta}\), and write
\(j^2=\lambda _1(\mathbb D)\). For a real function

\[
f\in W^{1,\infty}(\mathbb S^1)\cap H^{1/2}(\mathbb S^1),
\]

use the extension operator of
Lemma~\ref{lem:fixed-disk-extension-realization} and
set

\begin{equation}\label{eq:cubic-coefficient-001}
U_f:=\operatorname{Ext}(fe_r),
\qquad
L_f:=\|DU_f\|_{L^\infty(\mathbb D)},
\qquad
E_f:=\|DU_f\|_{L^2(\mathbb D)}.
\end{equation}

Thus \(\int_{\mathbb D}U_f=0\) and \(U_f|_{\partial\mathbb D}=fe_r\).
For a zero-mean vector field

\[
V\in\mathcal X:=\left\{V\in W^{1,\infty}(\mathbb D;\mathbb R^2)
 \cap H^1(\mathbb D;\mathbb R^2):\int_{\mathbb D}V=0\right\},
\]

with \(\|DV\|_\infty<1\), define

\begin{equation}\label{eq:cubic-coefficient-002}
J_{{\rm sp}}(V):=\frac1\pi F((I+V)(\mathbb D)),
\qquad
 F(\Omega):=|\Omega|\lambda _1(\Omega).
\end{equation}

The first derivative \(DJ_{{\rm sp}}(0)[V]\) vanishes in every such
direction. Remove the modes \(0,1,-1\) from \(f\) to obtain \(f^\perp\).
By the extension independence in
Theorem~\ref{thm:uniform-fixed-disk-expansion},
the scalar

\[
Q_B(f^\perp):=\frac12D^2J_{{\rm sp}}(0)[U_f,U_f],
\]

depends only on \(f\). Define

\begin{equation}\label{eq:cubic-coefficient-003}
\begin{aligned}
\widetilde{R}(f)
&:=\frac1{j^2}\left(
 J_{{\rm sp}}(U_f)-j^2-Q_B(f^\perp)\right),\\
C_3(f,g,h)
&:=\frac1{6j^2}D^3J_{{\rm sp}}(0)[U_f,U_g,U_h],
\qquad C_3(f):=C_3(f,f,f),\\
R_{\ge4}(f)&:=
 \widetilde{R}(f)-C_3(f).
\end{aligned}
\end{equation}

Because \(F(\mathbb D)=\pi j^2\), the first line is exactly the
fixed-disk normalized remainder of
Definition~\ref{def:disk-normalized-splitting}
whenever its radial chart is \(f\) and its quadratic term is
\(Q(f)=Q_B(f^\perp)/j^2\).

There are absolute constants \(c,C>0\) such that, if \(L_f<c\), then

\[
\widetilde{R}(f)
=C_3(f)+R_{\ge4}(f),
\]

\[
|R_{\ge4}(f)|
\le
C\|f\|_{W^{1,\infty}}^2\|f\|_{H^{1/2}}^2,
\]

and the polarization of the cubic coefficient satisfies

\[
|C_3(f,g,h)|
\le
C\sum_{\rm cyc}
\|f\|_{W^{1,\infty}}
\|g\|_{H^{1/2}}\|h\|_{H^{1/2}}.
\]

The trilinear estimate holds for all real \(f,g,h\) in the displayed
intersection space; no smallness is needed to define the derivative at
the origin.

\end{lemma}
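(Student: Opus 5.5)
The plan is to reduce everything to the analytic structure of $J_{\rm sp}$ from Theorem~\ref{thm:uniform-fixed-disk-expansion} and the two-energy coefficient bounds of Lemma~\ref{lem:two-energy-coefficients}, applied to $U=U_f$. The extension bounds \eqref{eq:disk-perturbation-012} give $L_f\le C\|f\|_{W^{1,\infty}}$ and $E_f\le C\|f\|_{H^{1/2}}$. So it suffices to prove the three claims with $L_f,E_f$ in place of the norms. Vanishing of $DJ_{\rm sp}(0)$ is part (i) of that theorem. The quadratic term depends only on $f$ by the extension-independence statement there. The identity $\widetilde R=C_3+R_{\ge4}$ is a definition. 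Since $J_{\rm sp}(0)=j^2$, $\widetilde R(f)$ is exactly the normalized remainder of order $\ge3$, and no further work is needed for the splitting.

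For the quartic remainder, take $g(t)=J_{\rm sp}(tU_f)$, which is analytic for $L(tU_f)<\varepsilon_0$. By Taylor's formula with integral remainder at order four,
\[
 j^2R_{\ge4}(f)=g(1)-g(0)-\tfrac12g''(0)-\tfrac16g'''(0)=\int_0^1\frac{(1-t)^3}{6}g^{(4)}(t)\,dt .
\]
I then need $|g^{(4)}(t)|\le CL_f^2E_f^2$. I would obtain this from the series $g(t)=\sum_r t^rf_r$, where $f_r=J_{{\rm sp},r}(U_f,\dots,U_f)$, together with the first bound of \eqref{eq:disk-perturbation-009}, $|f_r|\le C_0^rr^2E_f^2L_f^{r-2}$, in the case with no $t$-dependence ($\delta=0$). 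Summing $\sum_{r\ge4}r^4C_0^rE_f^2L_f^{r-2}$ converges when $L_f<c:=1/(2C_0)$, and it is bounded by $CE_f^2L_f^2$.

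An alternative is to extend the Rayleigh-identity argument in part (iii) one order further. In that argument each term retains two $L^2$ (energy) factors, and the remaining factors count Lipschitz powers, which at fourth order gives $L^2E^2$. This needs the additional bounds $\|\dddot u_t\|_{H^1}\le CL^2E$ and $|q^{(4)}(u_0,u_0)|\le CL^2E^2$, and both follow by the same placement rule as in \eqref{eq:disk-perturbation-020}.

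For the trilinear bound, $D^3J_{\rm sp}(0)[U_f,U_g,U_h]$ is $6j^2$ times the mixed coefficient $\lambda_I$ with $|I|=3$, amplitude directions $Z_i=U_f,U_g,U_h$, and $L_i=L(Z_i)$, $E_i=E(Z_i)$. Inequality \eqref{eq:disk-perturbation-008} gives $|\lambda_I|\le C_0^3V_I$, where $V_I=\sum_{i\ne j}E_iE_jL_{I\setminus\{i,j\}}$. This is exactly the cyclic sum $\sum_{\rm cyc}L_fE_gE_h$. The hypothesis $E_i\le\kappa L_i$ of that lemma holds because $\|DU\|_2\le\sqrt\pi\|DU\|_\infty$ on the disk.

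The step needing the most care is verifying the input hypotheses of Lemma~\ref{lem:two-energy-coefficients} for the pulled-back forms $a_U,m_U$. These are that the mixed form coefficients, acting on arbitrary states, on $u_0$, and evaluated at $(u_0,u_0)$, obey the $L_I$, $U_I$ and $V_I$ bounds, and that the single-direction coefficients satisfy $B_{\{i\}}(u_0,u_0)=0$. The coefficients $A_U$, $J_U$ and $s_U$ are polynomial or rational in $DU$, so each mixed derivative is a product of entries of the $DZ_i$. The rule is then to place all but the needed number of them in $L^\infty$ and the rest in $L^2$, pairing with $\nabla u_0,u_0\in L^\infty$. The area factor $s_U$ is quadratic in $DU$ and obeys the same placement. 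The vanishing $B_{\{i\}}(u_0,u_0)=0$ is the first-order stationarity already established.

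Multilinearity extends the trilinear estimate to arbitrary real $f,g,h$ without smallness, since $D^3J_{\rm sp}(0)$ is a bounded trilinear form in the $W^{1,\infty}$ topology.
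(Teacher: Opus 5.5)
Your proposal is correct and is essentially the paper's argument: the paper likewise bounds the Taylor coefficients of $z\mapsto J_{{\rm sp}}(zU_f)$ by $K^rL_f^{r-2}E_f^2$ via the normalized saddle recurrence and sums the tail $r\ge4$ directly (your Taylor-remainder route through $g^{(4)}$ is equivalent), and it gets the trilinear bound from the mixed recurrence, which keeps two distinct energy factors. Only harmless constants are off: because $[H]_I$ carries the factor $1/|I|!$, one has $D^3J_{{\rm sp}}(0)[U_f,U_g,U_h]=6\lambda_I$, so $C_3(f,g,h)=\lambda_I/j^2$; also the fourth-derivative sum carries weight $r^6$ rather than $r^4$, which changes nothing once $C_0L_f<1/2$.
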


\begin{proof}
For $G=DU_f$, $L=\|G\|_\infty$, and $E=\|G\|_2$, the complex-amplitude coefficients are
\begin{equation}\label{eq:cubic-coefficient-004}
 F_z=I+zG,\qquad \delta_z=\det F_z,\qquad
 s_z=\pi^{-1}\!\int_{\mathbb D}\delta_z,
 \qquad \mathbf A_z=s_z\delta_zF_z^{-1}F_z^{-T}.
\end{equation}
The determinant and area factor are polynomials of degree two, with
\begin{equation}\label{eq:cubic-coefficient-005}
 |s_1|\le CE,\qquad |s_2|\le CE^2.
\end{equation}
On $|z|L<c$, the matrix inverses have convergent geometric expansions. At degree $r$, an argument equal to $u_0$ permits one deformation factor to be estimated in $L^2$; evaluation at $(u_0,u_0)$ permits two. All other factors are estimated in $L^\infty$. Hence
\begin{equation}\label{eq:cubic-coefficient-006}
\begin{aligned}
 \|A_r\|+\|M_r\|&\le C_0^rL^r,\\
 \|A_ru_0\|_{H^{-1}}+\|M_ru_0\|_{H^{-1}}
   &\le C_0^rL^{r-1}E,\\
 |A_r(u_0,u_0)|+|M_r(u_0,u_0)|
   &\le C_0^rL^{r-2}E^2\qquad(r\ge2).
\end{aligned}
\end{equation}
The normalized saddle recurrence and disk stationarity give
\begin{equation}\label{eq:cubic-coefficient-007}
 \lambda(z)=j^2+\sum_{r\ge2}\lambda_rz^r,
 \qquad
 |\lambda_r|\le K^rL^{r-2}E^2,
\end{equation}
with a radius and constants independent of $\operatorname{supp}DU_f$. The second coefficient is $Q_B$, and the third defines $C_3$. The remaining series satisfies
\begin{equation}\label{eq:cubic-coefficient-008}
 |R_{\ge4}(f)|
 \le \frac1{j^2}\sum_{r\ge4}K^rL^{r-2}E^2
 \le CL^2E^2
 \le C\|f\|_{W^{1,\infty}}^2\|f\|_{H^{1/2}}^2.
\end{equation}
For distinct amplitude directions the mixed recurrence preserves two separate energy factors:
\begin{equation}\label{eq:cubic-coefficient-009}
 |C_3(f_1,f_2,f_3)|
 \le C\sum_{\rm cyc}L_1E_2E_3
 \le C\sum_{\rm cyc}
 \|f_1\|_{W^{1,\infty}}
 \|f_2\|_{H^{1/2}}\|f_3\|_{H^{1/2}}.
\end{equation}
Since the extension realizes the prescribed radial domain, these coefficients are its spectral coefficients for traces in $W^{1,\infty}\cap H^{1/2}$.
\end{proof}

\section{Positive atomic energies and discrete rigidity}\label{sec:curvature-coordinates}

The discrete comparison is needed in a form that resolves near-minimizers. We first obtain a positive certificate for inverse-power energies. Its remainder gives weight control, low-frequency coercivity, and the microscopic conclusion stated in Theorem~\ref{thm:atomic-rigidity}\textup{(iv)}. The sharp inequality applies to positive atomic measures; centering is imposed for the subsequent quantitative conclusions as specified below. We then compare the inverse-cubic defect with the exact Bessel defect, using constrained local coercivity when an additive error would be too large.

\subsection{Inverse-power and Bessel energies}

\begin{definition}
\label{def:atomic-energies}
Fix $N\ge2$.  An $N$-admissible atomic probability measure is
\[
 \mu=\sum_{\ell=1}^M w_\ell\delta_{\theta_\ell},\qquad
 1\le M\le N,\quad w_\ell>0,\quad \sum_\ell w_\ell=1,
\]
with distinct nodes modulo $2\pi$.  Put
\[
 \widehat\mu(k)=\sum_\ell w_\ell e^{-ik\theta_\ell},\qquad
 \sigma_{N,\gamma}=\frac1N\sum_{j=0}^{N-1}\delta_{\gamma+2\pi j/N},
\]
\[
 E_{{\rm at},p}(\mu)=\sum_{k=2}^\infty\frac{|\widehat\mu(k)|^2}{k^p},
 \quad d_{p,N}=E_{{\rm at},p}(\mu)-\frac{\zeta(p)}{N^p},
 \quad
 P_{{\rm at},q}(\mu)=\sum_{k\ge1}q^k|\widehat\mu(k)|^2,
\]
\begin{equation}\label{eq:curvature-coordinates-001}
 G_N(q;\mu)=P_{{\rm at},q}(\mu)-\frac{q^N}{1-q^N}.
\end{equation}
Thus $\widehat\sigma_{N,0}(k)=\mathbf1_{\{N\mid k\}}$.

Let $j=j_{0,1}$ and, for $k\ge2$, define
\[
 r_k=j\frac{J_{k+1}(j)}{J_k(j)},\qquad d_k=\frac{r_k}{k^4},\qquad
 \beta_k=1+j\frac{J_k'(j)}{J_k(j)}=k+1-r_k,
\]
\begin{equation}\label{eq:curvature-coordinates-002}
 b_k=\frac{\beta_k}{k^4}=\frac1{k^3}+\frac1{k^4}-d_k.
\end{equation}
The fixed-disk estimates give $J_k(j)>0$, $0<r_k<6/(k+1)$,
$d_k=O(k^{-5})$, and $b_k=O(k^{-3})$. Set
\[
 E_{{\rm at},d}(\mu)=\sum_{k\ge2}d_k|\widehat\mu(k)|^2,
 \quad d_{d,N}=E_{{\rm at},d}(\mu)-\sum_{m\ge1}d_{mN},
\]
\[
 E_{{\rm at},b}(\mu)=\sum_{k\ge2}b_k|\widehat\mu(k)|^2,
 \quad d_{B,N}=E_{{\rm at},b}(\mu)-E_{{\rm at},b}(\sigma_{N,0}).
\]
Then
\begin{equation}\label{eq:curvature-coordinates-003}
 E_{{\rm at},b}(\sigma_{N,0})=\frac{\zeta(3)}{N^3}+\frac{\zeta(4)}{N^4}
 -\sum_{m\ge1}d_{mN},\qquad
 d_{B,N}=d_{3,N}+d_{4,N}-d_{d,N}.
\end{equation}
\end{definition}

\subsection{A positive certificate for the uniform root measure}\label{sec:atomic-rigidity}

Choose a polynomial below the pair potential, with nonnegative Fourier coefficients and equality at the nonzero regular root differences. Summing this lower bound over pairs produces squares of Fourier moments and a diagonal weight term; see Section~4 of Cohn and Kumar \citep{CK07} for the underlying harmonic method. In the identity below we keep the interpolation remainder as well. Its sign yields the minimum, and its size will control departures from regular weights and gaps.

\begin{lemma}
\label{lem:atomic-fejer-certificate}
Let $\mu$ be $N$-admissible and $0<q<1$. Put
\[
 u(\theta)=2-2\cos\theta,\qquad a=q+q^{-1}-2,\qquad
 F_N(\theta)=\left(\frac{\sin(N\theta/2)}{\sin(\theta/2)}\right)^2,
\]
with $F_N(0)=N^2$, write $F_N(\theta)=F_N^{\rm alg}(u(\theta))$, and define
\[
 h(u)=\frac{1-F_N^{\rm alg}(u)/F_N^{\rm alg}(-a)}{a+u},\qquad
 h(u(\theta))=c_0+2\sum_{k=1}^{N-2}c_k\cos(k\theta).
\]
Then $h$ is a polynomial of degree at most $N-2$,
\[
 F_N^{\rm alg}(-a)=q^{1-N}\frac{(1-q^N)^2}{(1-q)^2},\qquad
 c_0>c_1>\cdots>c_{N-2}>0,
\]
\[
 d:=a^{-1}-h(0)=\frac{N^2q^N}{(1-q^N)^2},
\]
and
\begin{equation}\label{eq:atomic-rigidity-001}
\begin{aligned}
G_N(q;\mu)
&=\frac{q^{-1}-q}{2}\Bigg[
2\sum_{k=1}^{N-2}c_k|\widehat\mu(k)|^2
+d\left(\sum_{\ell=1}^{M}w_\ell^2-\frac1N\right)\\
&\hspace{22mm}
+\sum_{\ell\ne m}w_\ell w_m
\frac{F_N(\theta_\ell-\theta_m)}
 {F_N^{\rm alg}(-a)[a+u(\theta_\ell-\theta_m)]}\Bigg].
\end{aligned}
\end{equation}
Every term in brackets is nonnegative.  In particular, with
\[
 A_N(q)=\frac{q^{-1}-q}{2}d,\qquad A_{N,k}(q)=(q^{-1}-q)c_k,
\]
\begin{equation}\label{eq:atomic-rigidity-002}
 G_N(q;\mu)\ge A_N(q)\left(\sum_\ell w_\ell^2-\frac1N\right),
 \qquad
 G_N(q;\mu)\ge\sum_{k=1}^{N-2}A_{N,k}(q)|\widehat\mu(k)|^2,
\end{equation}
and, for $1\le k\le N-2$ and $n=N-k$,
\begin{equation}\label{eq:atomic-rigidity-003}
 \frac{A_{N,k}(q)}{q^k}
 =\frac{1-q^{2n}-n(q^{-1}-q)q^n}{(1-q^N)^2}.
\end{equation}
\end{lemma}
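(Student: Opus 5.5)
The plan is to expand the Poisson kernel in the variable $u=u(\theta)$ and split $1/(a+u)$ into the polynomial part $h$ and a nonnegative interpolation remainder. Since $1-2q\cos\theta+q^2=q(a+u)$, we have
\[
 1+2\sum_{k\ge1}q^k\cos(k\theta)=\frac{1-q^2}{1-2q\cos\theta+q^2}=\frac{q^{-1}-q}{a+u(\theta)}.
\]
Evaluating at $\theta=\theta_\ell-\theta_m$ and summing against $w_\ell w_m$ gives
\[
 1+2P_{{\rm at},q}(\mu)=(q^{-1}-q)\sum_{\ell,m}\frac{w_\ell w_m}{a+u(\theta_\ell-\theta_m)}.
\]
By the definition of $h$,
\[
 \frac1{a+u}=h(u)+\frac{F_N^{\rm alg}(u)}{F_N^{\rm alg}(-a)(a+u)}.
\]
The $h$-part contributes $c_0+2\sum_{k=1}^{N-2}c_k|\widehat\mu(k)|^2$. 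In the remainder, the diagonal $\ell=m$ terms have $u=0$ and $F_N(0)=N^2$, so they contribute $\bigl(a^{-1}-h(0)\bigr)\sum_\ell w_\ell^2=d\sum_\ell w_\ell^2$. The off-diagonal terms are exactly the last sum in \eqref{eq:atomic-rigidity-001}.

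\textbf{Structural facts.} Write $F_N(\theta)=N+2\sum_{k=1}^{N-1}(N-k)\cos k\theta$. Then $F_N^{\rm alg}$ is a polynomial of degree $N-1$ in $u$. The numerator $1-F_N^{\rm alg}(u)/F_N^{\rm alg}(-a)$ vanishes at $u=-a$, so $h$ is a polynomial of degree at most $N-2$, which gives the cosine expansion with $k\le N-2$. The point $u=-a$ corresponds to $e^{i\theta}=q$. Using $F_N=|1-z^N|^2/|1-z|^2$ continued analytically as $(1-z^N)(1-z^{-N})/\bigl((1-z)(1-z^{-1})\bigr)$ at $z=q$ gives $F_N^{\rm alg}(-a)=q^{1-N}(1-q^N)^2/(1-q)^2$. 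Since $a=(1-q)^2/q$, this yields $aF_N^{\rm alg}(-a)=q^{-N}(1-q^N)^2$, and hence the stated formula for $d=N^2/\bigl(aF_N^{\rm alg}(-a)\bigr)$.

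\textbf{Normalizing constant.} To eliminate $c_0$ I would apply the identity to $\sigma_{N,0}$. There $\widehat\sigma(k)=0$ for $1\le k\le N-2$, the off-diagonal terms vanish because $F_N$ vanishes at the nonzero root differences, and $\sum w^2=1/N$. Also $P_{{\rm at},q}(\sigma_{N,0})=q^N/(1-q^N)$. This gives
\[
 1+\frac{2q^N}{1-q^N}=(q^{-1}-q)\left(c_0+\frac dN\right).
\]
Subtracting this from the general identity and dividing by $2$ yields \eqref{eq:atomic-rigidity-001}.

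\textbf{Sign of each term.} The weight term is nonnegative because $\sum_\ell w_\ell^2\ge1/M\ge1/N$ by Cauchy--Schwarz. The off-diagonal terms are nonnegative because $F_N\ge0$ and $a+u\ge a>0$. The inequalities \eqref{eq:atomic-rigidity-002} then follow by dropping the other nonnegative terms.

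\textbf{Coefficients $c_k$.} The remaining point, and the main obstacle, is $c_0>c_1>\dots>c_{N-2}>0$ together with \eqref{eq:atomic-rigidity-003}. I would compute $c_k$ as Fourier coefficients of $h(u(\theta))$. The function $1/(a+u)$ has coefficients $q^{|k|}/(q^{-1}-q)$, and $F_N$ has coefficients $(N-|j|)_+$. Hence
\[
 (q^{-1}-q)\,c_k=q^{k}-\frac{1}{F_N^{\rm alg}(-a)}\sum_{|j|<N}(N-|j|)\,q^{|k-j|}.
\]
Summing the geometric--arithmetic series for $0\le k\le N-2$, with $n=N-k$, should give \eqref{eq:atomic-rigidity-003}. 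The exact coefficients of $h$ for $|k|\ge N-1$ must vanish, which serves as a consistency check.

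For positivity, put $g_n(q)=1-q^{2n}-nq^{n-1}+nq^{n+1}$. I would show $g_n>0$ on $(0,1)$ for $n\ge2$ by checking $g_n(1)=g_n'(1)=0$ and showing $g_n/(1-q)^2$ has positive coefficients; the alternative is a convexity argument on $\tfrac12(q^{-n}-q^n)-\tfrac n2(q^{-1}-q)$ in the variable $\log q$.

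Monotonicity in $k$ amounts to comparing $q^kg_{N-k}$ with $q^{k+1}g_{N-k-1}$. I expect this reduces to a similar one-variable inequality, which I would verify by the same factor-out-$(1-q)^2$ method.
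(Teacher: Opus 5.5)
Your route is the paper's own: Poisson kernel as $(q^{-1}-q)/(a+u)$, division by $a+u$ with nonnegative remainder $F_N/(F_N^{\rm alg}(-a)(a+u))$, summation over pairs, and subtraction of the uniform-root value. Your coefficient formula $(q^{-1}-q)c_k=q^k-F_N^{\rm alg}(-a)^{-1}\sum_{|j|<N}(N-|j|)q^{|k-j|}$ is exactly the paper's $A_{N,k}$. The identity \eqref{eq:atomic-rigidity-001}, the values of $F_N^{\rm alg}(-a)$ and $d$, and the signs of the weight and off-diagonal terms are handled correctly.

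The one step that fails as written is your main check of the coefficient inequalities. With $g_n=1-q^{2n}-n(q^{-1}-q)q^n$ one has $g_n(1)=g_n'(1)=g_n''(1)=0$. So $g_n/(1-q)^2$ still vanishes at $q=1$ and cannot have all coefficients positive; for example $g_2/(1-q)^2=1-q^2$. You would need $(1-q)^3$. Your convexity alternative ($q=e^{-s}$, $q^{-n}g_n=2(\sinh ns-n\sinh s)$) does give positivity. However, the strict ordering $c_0>c_1>\cdots>c_{N-2}$ is left as an expectation tied to the same failing method, and \eqref{eq:atomic-rigidity-003} is only asserted.

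All three follow at once if you carry out the sum you set up. Since $F_N^{\rm alg}(-a)=\sum_{|j|<N}(N-|j|)q^{-j}$, for $0\le k\le N-1$ the terms with $j\le k$ cancel against $q^kF_N^{\rm alg}(-a)$, leaving
\[
 A_{N,k}(q)=\frac{1}{F_N^{\rm alg}(-a)}\sum_{m=1}^{n-1}(n-m)\bigl(q^{-m}-q^{m}\bigr),\qquad n=N-k.
\]
This is positive for $n\ge2$, zero for $n=1$ (consistent with $\deg h\le N-2$), and strictly increasing in $n$. That gives $c_0>\cdots>c_{N-2}>0$, with $k=0$ included. Summing the arithmetico-geometric series yields \eqref{eq:atomic-rigidity-003}. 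Equivalently, in your notation, $g_1=0$ and $g_n-qg_{n-1}=(1-q)(1-q^{n-1})(1-q^n)$.
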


\begin{proof}
The factorization
\[
 F_N(\theta)=z^{1-N}\left(\frac{z^N-1}{z-1}\right)^2,
 \qquad z=e^{i\theta}
\]
reduces the construction of $h$ to polynomial division by $a+u$. The quotient satisfies
\[
 (q+q^{-1}-2\cos\theta)h(u(\theta))
 =1-\frac{F_N(\theta)}{F_N^{\rm alg}(-a)},
\]
and its Fourier coefficients solve
\[
 (q+q^{-1})c_k-c_{k-1}-c_{k+1}
 =-\frac{N-k}{F_N^{\rm alg}(-a)}.
\]
Solving the finite recurrence gives the coefficients in the statement. The remainder has the sign needed for the comparison:
\[
 f(u(\theta))-h(u(\theta))
 =\frac{F_N(\theta)}{F_N^{\rm alg}(-a)[a+u(\theta)]}\ge0.
\]

Sum
\[
 (q^{-1}-q)f(u(\theta))=1+2\sum_{k\ge1}q^k\cos(k\theta),
\]
over pairs of atoms, subtracting the value at the uniform roots. The diagonal pairs give $\sum_jw_j^2-1/N$; the off-diagonal pairs retain the nonnegative interpolation remainder. The remaining low-frequency coefficients are
\[
 A_{N,k}(q)=q^k-F_N^{\rm alg}(-a)^{-1}
 \sum_{|j|\le N-1}(N-|j|)q^{|k-j|}.
\]
Their finite-sum formula gives positivity. All other factors are squares or positive denominators, proving the decomposition and both lower bounds.
\end{proof}

The theorem separates the sharp minimum from the information in its positive remainder. The latter will be used twice: low modes and weights enter the preliminary spectral bounds, while the microscopic conclusion becomes available only after the scaled defect tends to zero.

\begin{theorem}
\label{thm:atomic-rigidity}
Let $\mu$ be $N$-admissible.

\smallskip
\noindent\textup{\bfseries (i) Sharp Poisson and inverse powers.}
For every $0<q<1$,
\begin{equation}\label{eq:atomic-rigidity-004}
 P_{{\rm at},q}(\mu)\ge\frac{q^N}{1-q^N}.
\end{equation}
Equality for one $q$ holds if and only if $\mu=\sigma_{N,\gamma}$; in that case equality holds for every $q$.  Moreover, for every $p>1$,
\begin{equation}\label{eq:atomic-rigidity-005}
 \sum_{k\ge1}\frac{|\widehat\mu(k)|^2}{k^p}
 \ge\frac{\zeta(p)}{N^p},
\end{equation}
with the same equality cases.  If $\widehat\mu(1)=0$, the sum starts at
$k=2$.

\smallskip
Assume from now on that $\widehat\mu(1)=0$ and append zero weights if
$M<N$.

\smallskip
\noindent\textup{\bfseries (ii) Weight rigidity.}
\begin{equation}\label{eq:atomic-rigidity-006}
 \sum_{\ell=1}^{N}\left(w_\ell-\frac1N\right)^2
 \le2e^4N^2d_{3,N},
\end{equation}
so
\begin{equation}\label{eq:atomic-rigidity-007}
 N-M\le2e^4N^4d_{3,N},\qquad
 \max_\ell|Nw_\ell-1|^2\le2e^4N^4d_{3,N}.
\end{equation}

\smallskip
\noindent\textup{\bfseries (iii) Low-band coercivity.}
For every fixed $0<\alpha<1$ there are $N_\alpha,C_\alpha$ such that
\begin{equation}\label{eq:atomic-rigidity-008}
 \sum_{k=2}^{\lfloor\alpha N\rfloor}
 \frac{|\widehat\mu(k)|^2}{k^3}\le C_\alpha d_{3,N}
 \qquad(N\ge N_\alpha).
\end{equation}
For $N\ge20$,
\begin{equation}\label{eq:atomic-rigidity-009}
 \sum_{k=2}^{\lfloor N/10\rfloor}
 \frac{|\widehat\mu(k)|^2}{k^3}\le\frac54d_{3,N}.
\end{equation}

\smallskip
\noindent\textup{\bfseries (iv) Microscopic lattice rigidity.}
\label{thm:microscopic-rigidity}
If $N_\nu\to\infty$ and $N_\nu^4d_{3,N_\nu}(\mu_\nu)\to0$, then
for all large $\nu$ there are exactly $N_\nu$ atoms.  In cyclic order,
\[
 \varepsilon_j=Nw_j-1,\qquad
 \theta_{j+1}-\theta_j=\frac{2\pi}{N}(1+\eta_j),
\]
satisfy
\begin{equation}\label{eq:atomic-rigidity-010}
 \max_j(|\varepsilon_j|+|\eta_j|)\to0,
 \qquad
 \sum_j(\varepsilon_j^2+\eta_j^2)
 \le C N^4d_{3,N}.
\end{equation}
After a common rotation and cyclic relabeling, write
\[
 \theta_j=\gamma+jh+y_j,\qquad \sum_jy_j=0,
 \qquad u_j=w_j-N^{-1},\qquad h=2\pi/N.
\]
With $B_N=N^4d_{3,N}$,
\begin{equation}\label{eq:atomic-rigidity-011}
 \sum_j|y_j|^2\le CB_N,\quad
 \max_j|y_j|\le C\sqrt{B_N/N},\quad
 \sum_j|u_j|^2\le CB_N/N^2,\quad
 \max_j|u_j|\le CB_N^{1/2}/N.
\end{equation}
\end{theorem}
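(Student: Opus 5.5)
Everything will be derived from the certificate identity \eqref{eq:atomic-rigidity-001} of Lemma~\ref{lem:atomic-fejer-certificate}, integrated against suitable measures in $q$.

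\textbf{Part (i).} The bracket in \eqref{eq:atomic-rigidity-001} is a sum of nonnegative terms, so $G_N(q;\mu)\ge0$, which is \eqref{eq:atomic-rigidity-004}. If equality holds for one $q$, every term vanishes. The weight term forces $\sum w_\ell^2=1/N$; with at most $N$ atoms and total mass one, Cauchy--Schwarz then gives $M=N$ and $w_\ell=1/N$. The interpolation remainder forces $F_N(\theta_\ell-\theta_m)=0$ for $\ell\ne m$, so every pairwise difference is a nonzero multiple of $2\pi/N$. Hence the $N$ distinct nodes form a coset of the $N$th roots, i.e.\ $\mu=\sigma_{N,\gamma}$. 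Conversely, $\sigma_{N,\gamma}$ gives equality for every $q$ because $|\widehat\sigma(k)|=\mathbf 1_{N\mid k}$.

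For inverse powers, I use the identity
\[
k^{-p}=\Gamma(p)^{-1}\int_0^1 q^k(\log(1/q))^{p-1}\,\frac{dq}{q}.
\]
Integrating \eqref{eq:atomic-rigidity-004} against this measure gives $\sum_k|\widehat\mu(k)|^2k^{-p}\ge\sum_{m\ge1}(mN)^{-p}=\zeta(p)N^{-p}$. Equality forces $G_N(q)=0$ for almost every $q$, and hence, by continuity, equality in (i) for some $q$.

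\textbf{Parts (ii) and (iii).} Here $\widehat\mu(1)=0$, so
\[
d_{3,N}=\tfrac12\int_0^1 G_N(q;\mu)(\log q^{-1})^2\,\frac{dq}{q}.
\]
I restrict the integral to the window $q\in[e^{-2/N},e^{-1/N}]$ and apply the two lower bounds in \eqref{eq:atomic-rigidity-002} there.

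For (ii), on this window $\log q^{-1}\asymp 1/N$ and, from $d=N^2q^N/(1-q^N)^2$, we get $A_N(q)\asymp N\cdot N^2$ up to factors $e^{\pm2}$. Tracking these constants should give $\sum w_\ell^2-1/N\le 2e^4N^2d_{3,N}$. The identity $\sum_\ell(w_\ell-1/N)^2=\sum_\ell w_\ell^2-1/N$, with zero weights appended, gives \eqref{eq:atomic-rigidity-006}. Each missing atom contributes $1/N^2$ to this sum, and each atom contributes $(Nw_\ell-1)^2/N^2$; this yields \eqref{eq:atomic-rigidity-007}.

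For (iii), I use \eqref{eq:atomic-rigidity-003}. When $k\le\alpha N$ we have $n=N-k\ge(1-\alpha)N$, so for $q=e^{-s/N}$ the ratio $A_{N,k}(q)/q^k$ is bounded below by a positive constant $c_\alpha$ uniformly in $N$. This needs care, since the numerator $1-q^{2n}-n(q^{-1}-q)q^n$ is a difference of comparable quantities. It tends to $1-e^{-2t}-2te^{-t}>0$ with $t=s(1-\alpha)$, where positivity follows from $\sinh t>t$. Integrating against $(\log q^{-1})^2\,dq/q$ over all $q$ and comparing with $\int q^k(\log q^{-1})^2\,dq/q=2/k^3$ then gives \eqref{eq:atomic-rigidity-008}. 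The explicit constant $5/4$ at $\alpha=1/10$ would come from evaluating this ratio bound over the full $q$-range.

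\textbf{Part (iv).} By (ii), $N-M\le 2e^4B_N<1$ eventually, so all $N$ atoms are present, and $\max_j|\varepsilon_j|\to0$, $\sum_j\varepsilon_j^2\le CB_N$. The gaps are the main obstacle. I would extract them from the off-diagonal interpolation remainder integrated over the same $q$-window. For cyclically adjacent atoms, $F_N(\theta_{j+1}-\theta_j)\approx N^2\operatorname{sinc}^2(\pi(1+\eta_j))\gtrsim N^2\eta_j^2$ when $\eta_j$ is small. Since $F_N^{\rm alg}(-a)[a+u]\asymp N^2\cdot N^{-2}$ on the window, the contribution of these pairs should be bounded by $CB_N$, giving $\sum_j\eta_j^2\le CB_N$. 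The difficulty is to rule out large $|\eta_j|$ (gaps near $0$ or near $2\cdot2\pi/N$) before linearizing. For this I would use that any gap pattern off the lattice with $O(1)$ deviation forces some pair remainder of size $\gtrsim1$, which is impossible when $B_N\to0$; a compactness argument on local configurations of consecutive atoms gives $\max_j|\eta_j|\to0$.

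Finally, set $y_j=\theta_j-\gamma-jh$ with $\gamma$ chosen so that $\sum_jy_j=0$. Then $y_{j+1}-y_j=h\eta_j$. The mean-zero discrete Poincaré inequality on $\mathbb Z_N$, with constant $\lesssim N^2$, gives $\sum_j|y_j|^2\lesssim N^2h^2\sum_j\eta_j^2\lesssim B_N$. The sup bound on $y_j$ follows from Cauchy--Schwarz along the path, $|y_j|\le h\sqrt N(\sum_j\eta_j^2)^{1/2}\lesssim\sqrt{B_N/N}$. The bounds on $u_j=\varepsilon_j/N$ are immediate from those on $\varepsilon_j$.
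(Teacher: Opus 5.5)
Your parts (i)--(iii) and the quantitative linearization in (iv) follow the paper's proof, up to two slips noted at the end. In (iv) the linearization is the adjacent-pair coefficient $\gtrsim N\eta_j^2$ on the window once $|\eta_j|\le1/4$, followed by cyclic Poincar\'e.

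The genuine gap is the qualitative step $\max_j|\eta_j|\to0$. You claim that every gap deviation of size $O(1)$ forces some off-diagonal remainder of size $\gtrsim1$. This is false exactly for the gaps near $2h,3h,\dots$ that you flagged. $F_N$ vanishes on $\frac{2\pi}{N}\mathbb Z\setminus\{0\}$, so at every $q$ the pair remainders cannot see a locally perfect lattice with a missing point. The compensating compression can then be spread over all $N$ gaps at negligible cost.

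For instance, $\mu_*=\frac1N\sum_{j=0}^{N-1}\delta_{2\pi j/(N+1)}$ has one gap with $\eta=\frac{N-1}{N+1}$ and all others with $\eta=-\frac1{N+1}$. Yet every pairwise difference $\theta=\frac{2\pi k}{N+1}$ has $F_N(\theta)=1$ rather than order $N^2$. Hence the whole pair part of $NG_N(e^{-t/N})$, $1\le t\le2$, is $O(1/N)$; an $O(N^{-2})$ change of weights restores $\widehat\mu(1)=0$ without affecting this.

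Your window only yields $\int_1^2t^2\,NG_N(e^{-t/N})\,dt\le2B_N$. So pair remainders cannot exclude such a hole once $B_N\gg1/N$. No compactness argument on local configurations built on them can work either, because the local limit of a hole configuration is $\mathbb Z$ minus a point, on which all pair remainders vanish. In this example the defect sits entirely in the low-mode terms: $|\widehat\mu_*(k)|=1/N$ for $1\le k\le N$.

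The missing ingredient is the low-frequency control of (iii), used locally. The paper's route:
\begin{itemize}
\item Take $K=\lfloor\alpha N\rfloor$; then $\|F_K^{\rm av}*\mu-1\|_\infty\le C_\alpha N^2d_{3,N}^{1/2}=C_\alpha B_N^{1/2}\to0$.
\item Rescaled pointed limits are separated, relatively dense sets $\Lambda\ni0$ with unit masses satisfying $\sum_{x\in\Lambda}\alpha\operatorname{sinc}^2(\pi\alpha x)=1$ for all $0<\alpha<1$.
\item Letting $\alpha\uparrow1$ with Fatou gives $\Lambda\subset\mathbb Z$.
\item Comparison with the Poisson identity $\sum_{m\in\mathbb Z}\alpha\operatorname{sinc}^2(\pi\alpha m)=1$ at irrational $\alpha$ forbids missing integers, so $\Lambda=\mathbb Z$.
\end{itemize}
A shorter repair of your own argument also works. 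Take a Selberg minorant of degree $K$ for the open gap $(\theta_j,\theta_{j+1})$. Its error $\sum_{2\le k\le K}(K^{-1}+k^{-1})|\widehat\mu(k)|$ is $O_\alpha(B_N^{1/2}/N)$ by (iii) and Cauchy--Schwarz. This gives $1+\eta_j\le N/(K+1)+O_\alpha(B_N^{1/2})$, so $\limsup_N\max_j\eta_j\le\alpha^{-1}-1$ for every $\alpha<1$. Your window pair remainders then exclude $1+\eta_j\in(0,1-c]$, where $F_N\gtrsim_c N^2$, and so $\max_j|\eta_j|\to0$.

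Two smaller slips:
\begin{itemize}
\item On the window, $A_N(q)\asymp N^{-1}\cdot N^2=N$, not $N^3$; the paper uses $A_N\ge e^{-4}N$.
\item In (iii), $A_{N,k}(q)/q^k\to0$ as $q\uparrow1$, like $\frac s3(n/N)^3$ for $q=e^{-s/N}$. So your lower bound holds only for $s$ bounded below, and the small-$s$ region must be discarded. Alternatively, use the paper's frequency-adapted windows $q=e^{-t/k}$, which also produce the constant $\frac54$ via $\int_1^\infty t^2e^{-t}\,dt=5/e$.
\end{itemize}
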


\begin{proof}
A zero Poisson defect forces the diagonal term in the certificate to vanish. There must therefore be $N$ atoms, each of weight $1/N$. Every off-diagonal difference must also be a zero of $F_N$, so the measure is $\sigma_{N,\gamma}$. Its Fourier support proves the converse. Integrate the certificate using
\[
 k^{-p}=\Gamma(p)^{-1}\int_0^\infty s^{p-1}e^{-ks}\,ds,
\]
to obtain the inverse-power inequality. Tonelli applies separately to the two nonnegative Fourier series. Vanishing of the integrated defect, together with positivity and continuity, forces $G_N(e^{-s};\mu)=0$ for all $s>0$ and gives the same equality case.

For the weight estimate, the rescaling at $p=3$ is
\begin{equation}\label{eq:atomic-rigidity-012}
 N^3d_{3,N}=\frac12\int_0^\infty t^2G_N(e^{-t/N};\mu)\,dt.
\end{equation}
Some $t\in[1,2]$ satisfies $G_N(e^{-t/N})\le2N^3d_{3,N}$. On this interval $A_N(e^{-t/N})\ge e^{-4}N$, and the diagonal term gives
\[
 \sum_\ell w_\ell^2-\frac1N\le2e^4N^2d_{3,N}.
\]
Appending zero weights when $M<N$ yields the missing-atom estimate and the bound for the largest weight deviation.

The low modes are controlled by the same certificate. Its coefficients satisfy
\begin{equation}\label{eq:atomic-rigidity-013}
 A_{N,k}(e^{-t/k})\ge c_\alpha e^{-t}
 \quad(2\le k\le\alpha N,\ 1\le t\le2).
\end{equation}
For this estimate put $n=N-k$ in the explicit formula. After division by $q^k$, the numerator is $1-e^{-2nt/k}-n(e^{t/k}-e^{-t/k})e^{-nt/k}$. It is at least $99/100$ for $k\le N/10$ and $t\ge1$. For $k\le\alpha N$, the compact ratio range gives $g((\alpha^{-1}-1)t)+O_\alpha(N^{-2})$, with $g(u)=1-2ue^{-u}-e^{-2u}>0$. For large $N$ this is at least $g(\alpha^{-1}-1)/2$. Integration over $1/k\le s\le2/k$ gives \eqref{eq:atomic-rigidity-008}. On $k\le N/10$, integrate over $s\ge1/k$: the identity $\int_1^\infty t^2e^{-t}\,dt=5/e$ makes the resulting coefficient larger than $4/5$, proving \eqref{eq:atomic-rigidity-009}.

Now suppose $B_N=N^4d_{3,N}\to0$. The weight bounds give $M=N$ for large $N$, $\max_j|\varepsilon_j|\to0$, and $\sum_j\varepsilon_j^2\le CB_N$. Fix $0<\alpha<1$, set $K=\lfloor\alpha N\rfloor$, and take
\[
 F_K^{\rm av}(\theta)=K^{-1}\left(\frac{\sin(K\theta/2)}{\sin(\theta/2)}\right)^2.
\]
Then low-band coercivity implies
\begin{equation}\label{eq:atomic-rigidity-014}
 \|F_K^{\rm av}*\mu-1\|_\infty\le C_\alpha N^2d_{3,N}^{1/2}=o(1).
\end{equation}
Rescale angular distance by $2\pi/N$. With $\alpha>1/2$, the convolution bound excludes collisions. It also excludes gaps tending to infinity, since the convolution at a midpoint of such a gap would tend to zero. The relevant kernel estimates are
\[
 N^{-1}F_K^{\rm av}(2\pi x/N)\longrightarrow
 \alpha\,\operatorname{sinc}^2(\pi\alpha x),
 \qquad
 N^{-1}F_K^{\rm av}(2\pi x/N)\le \frac{C_\alpha}{1+x^2}.
\]
Pointed limits are thus separated, relatively dense sets $\Lambda\subset\mathbb R$ containing zero, with masses converging uniformly to one. Separation makes the majorant summable. First pass to the limit for a countable dense family of $\alpha$, and then use dominated convergence to obtain
\begin{equation}\label{eq:atomic-rigidity-015}
 \sum_{x\in\Lambda}\alpha\,\operatorname{sinc}^2(\pi\alpha x)=1
 \qquad(0<\alpha<1).
\end{equation}
When $\alpha\uparrow1$, the point at zero exhausts the sum. Fatou's lemma puts every other point in $\mathbb Z\setminus\{0\}$. Poisson summation for the triangular Fourier transform (see Chapter~5, Section~3 of \cite{SteinShakarchi03}) gives
\begin{equation}\label{eq:atomic-rigidity-016}
 \sum_{m\in\mathbb Z}\alpha\,\operatorname{sinc}^2(\pi\alpha m)=1
 \qquad(0<\alpha\le1).
\end{equation}
For irrational $\alpha$, all nonzero-integer terms are positive. Comparison excludes missing integers, hence $\Lambda=\mathbb Z$. A gap with relative error bounded away from zero would persist in a pointed limit at its left endpoint. Therefore $\max_j|\eta_j|\to0$.

To quantify this conclusion, retain adjacent pairs in one orientation with $q=e^{-t/N}$, $1\le t\le2$. At $\theta=2\pi(1+\eta)/N$, $|\eta|\le1/4$, their coefficient obeys
\[
 \frac{F_N(\theta)}{2F_N^{\rm alg}(-a)}
 \frac{1-q^2}{1-2q\cos\theta+q^2}\ge cN\eta^2.
\]
Since $Nw_j\to1$ uniformly, $G_N(e^{-t/N})\ge cN^{-1}\sum_j\eta_j^2$. Integration gives $\sum_j\eta_j^2\le CB_N$. Apply cyclic Poincar\'e and the short-arc estimate to $y_{j+1}-y_j=h\eta_j$ for the position bounds, and use $u_j=\varepsilon_j/N$ for the weight-coordinate bounds.
\end{proof}

\subsection{Constrained coercivity of the local Bessel form}

The global comparison will encounter defects too small to absorb an additive error. We first establish a local comparison that has no such additive loss. The relevant variations satisfy the linearized mass and centering constraints, with rotation fixed; these constraints determine the coercive directions.

\begin{lemma}
With $(r_k),(d_k)$ as above, there is a completely monotone $R$ on
$[2,\infty)$ such that
\[
 R(k)=r_k,\qquad 0\le R(s)\le\frac3s.
\]
Consequently $D(s)=R(s)s^{-4}$ is completely monotone,
$D(k)=d_k$, and $0\le D(s)\le3s^{-5}$.

Moreover there is a nonnegative locally integrable $h_d$ such that
\begin{equation}\label{eq:atomic-rigidity-017}
 D(s)=\int_0^\infty e^{-st}h_d(t)\,dt,
 \qquad
 0\le h_d(t)\le\frac{e^3}{2}t^4\quad(0<t\le1).
\end{equation}
\end{lemma}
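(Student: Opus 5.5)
We construct $R$ from a representation of the Bessel ratio $r_k=jJ_{k+1}(j)/J_k(j)$ as a Stieltjes-type function of the order, in the variable $\nu=s$, and then build $D$ and $h_d$ from it. The natural route is the Mittag-Leffler expansion of the ratio in the argument. For fixed real $\nu>-1$ one has
\[
 \frac{J_{\nu+1}(x)}{J_\nu(x)}=\sum_{n\ge1}\frac{2x}{j_{\nu,n}^2-x^2},
\]
where $j_{\nu,n}$ are the positive zeros of $J_\nu$. We therefore set
\[
 R(s)=\sum_{n\ge1}\frac{2j^2}{j_{s,n}^2-j^2}.
\]
This series is positive for $s\ge2$, since $j_{s,1}>j_{2,1}>j$. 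It also has the value $r_k$ at integers.

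Proving complete monotonicity directly from this formula would need monotonicity properties of $s\mapsto j_{s,n}$ beyond what is classical. We therefore prefer a second representation. We use the continued-fraction/Stieltjes structure of $x J_{\nu+1}(x)/J_\nu(x)$ as a function of $\nu$ for fixed $x$. Concretely, there is the integral representation (Grünbaum, Ismail)
\[
 \frac{J_{\nu+1}(x)}{xJ_\nu(x)}=\int_0^\infty\frac{d\mu_x(t)}{\nu+t},
\]
with a positive measure $\mu_x$ supported on $[1,\infty)$ whenever $x<j_{\nu_0,1}$ for the relevant range. Granting it with $x=j$ and $\nu\ge2$, $R(s)=j^2\int d\mu(t)/(s+t)$ is a Stieltjes function, hence completely monotone on $(-1,\infty)\supset[2,\infty)$. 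If that citation is unavailable, the fallback is a series expansion. From the series $J_m(j)=\frac{(j/2)^m}{m!}S_m$ used in Theorem~\ref{thm:uniform-fixed-disk-expansion}, write $R(s)=\frac{j^2}{2(s+1)}\,S_{s+1}/S_s$ with $S_s=\sum_k(-a)^k/(k!\,(s+1)_k)$, $a=j^2/4$. We then expand $S_{s+1}/S_s=1-\sum_k\gamma_k(s)$, where each $\gamma_k$ is a product of factors $1/(s+\text{const})$ with coefficients of controlled sign. Complete monotonicity then follows from closure of the class under sums, products and locally uniform limits. We expect this step to be the main obstacle: establishing complete monotonicity, not merely positivity and decay.

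The bound $0\le R(s)\le 3/s$ follows from the estimate already proved in Theorem~\ref{thm:uniform-fixed-disk-expansion}, with $m$ replaced by real $s\ge2$. Indeed, $R(s)\le j^2/(2(s+1-a))$, and $j^2<6$, $a<3/2$ give $j^2/(2(s+1-a))\le 3/(s-1/2)$. A slightly sharper tail estimate on the alternating sum gives $3/s$ for $s\ge2$. Then $D(s)=R(s)s^{-4}$ is a product of completely monotone functions, hence completely monotone, with $0\le D\le3s^{-5}$.

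By Bernstein's theorem on $[2,\infty)$, shifted so that the representing measure lives on $[0,\infty)$, $D$ is the Laplace transform of a positive measure. We obtain the density explicitly from
\[
 s^{-4}=\int_0^\infty e^{-st}\frac{t^3}{6}\,dt .
\]
If $R=\int_0^\infty e^{-st}\,d\rho(t)$ with $\rho\ge0$, which holds for the Stieltjes form with $d\rho=j^2(\int e^{-ut}d\mu(u))\,dt$, a density, then
\[
 h_d(t)=\int_0^t\frac{(t-\tau)^3}{6}\,g(\tau)\,d\tau ,
\]
which is nonnegative and locally integrable. For $t\le1$ we use $g(\tau)\le j^2\mu(\mathbb R_+)e^{0}$ together with $\int_0^\infty g=\lim_{s\to0}R$. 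The bound $R(s)\le 3/s$ is equivalent to a mass bound only after weighting by $e^{-2\tau}$. We instead apply $R(2)\le 3/2$ to $\int e^{-2\tau}g\le 3/2$. On $[0,1]$ we have $e^{-2\tau}\ge e^{-2}$, so $\int_0^1 g\le \tfrac32e^2$. Hence
\[
 h_d(t)\le\frac{t^3}{6}\cdot\frac32e^2=\frac{e^2}{4}\,t^3 .
\]
To upgrade $t^3$ to the required $t^4$ with constant $e^3/2$, we would use that $g$ itself is bounded near $0$, by $g(0)=j^2\mu(\mathbb R_+)$, together with monotonicity of $g$ (it is itself completely monotone). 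This gives $\int_0^\tau g\le e^{3}\tau$-type bounds and then $h_d\le \frac{e^3}{2}t^4$ after integrating.
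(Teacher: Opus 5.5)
Your proposal has a genuine gap at its central step: complete monotonicity of $R$ is never established. The Gr\"unbaum--Ismail Stieltjes representation in the order variable is only ``granted''. As you state it, with $\mu$ supported in $[1,\infty)$, it cannot hold at $x=j$. Since $J_0(j)=0\neq J_1(j)$, the function $\nu\mapsto jJ_{\nu+1}(j)/J_\nu(j)$ has a pole at $\nu=0$, whereas $\int d\mu(t)/(\nu+t)$ with $\operatorname{supp}\mu\subset[1,\infty)$ would continue analytically to all $\nu>-1$.

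Your fallback, expanding $S_{s+1}/S_s$ into products of factors $1/(s+c)$ with controlled signs, points in the right direction. But the sign control and the convergence are exactly what must be proved, and you give no mechanism for either. The missing idea, which is what the paper uses, is to turn the three-term recurrence $r_k=j^2/(2(k+1)-r_{k+1})$ into a fixed-point problem on $[2,\infty)$. The map $(Tf)(s)=j^2/(2(s+1)-f(s+1))$ sends $\{0\le f\le2\}\subset C_b([2,\infty))$ into itself with $Tf\le j^2/(2s)<3/s$. It is a contraction with constant $j^2/16<3/8$, and
\[
 Tf(s)=\frac{j^2}{2(s+1)}\sum_{m\ge0}\Bigl(\frac{f(s+1)}{2(s+1)}\Bigr)^m
\]
is a uniformly convergent series of products of completely monotone functions whenever $f$ is completely monotone. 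Iterating from $f=0$ gives a completely monotone fixed point $R$. Since $0<r_k<6/(k+1)\le2$, uniqueness of the bounded solution of the recurrence on the integers gives $R(k)=r_k$.

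The same recurrence also gives $R\le3/s$ directly. Your alternating-series bound does not: $j^2/(2(s+1-a))>3/s$ on all of $[2,12]$ (at $s=2$ it is about $1.86$), and the ``slightly sharper tail estimate'' is not supplied.

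The last step has a similar problem. Evaluating the Laplace bound only at $s=2$ gives $h_d(t)\le\frac{e^2}{4}t^3$. Your proposed upgrade to $t^4$ rests on an unquantified $g(0)=j^2\mu(\mathbb R_+)$ and on the unproved Stieltjes structure. The decay $R(s)\le3/s$ already gives the $t^4$ bound if you use it at $s=2+t^{-1}$ instead of $s=2$. By Bernstein, $R(s)=\int_{[0,\infty)}e^{-(s-2)y}\,d\tau(y)$ with $\tau\ge0$, so $e^{-1}\tau([0,t])\le R(2+t^{-1})\le3t$. Then
\[
 h_d(t)=\frac16\int_{[0,t]}e^{2y}(t-y)^3\,d\tau(y)\le\frac{t^3}{6}\,e^{2t}\,\tau([0,t])\le\frac{e^3}{2}t^4\qquad(0<t\le1),
\]
and no monotonicity of any density is needed. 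This is exactly how the paper obtains the constant $e^3/2$.
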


\begin{proof}
Set $z=j^2/4$. The alternating Bessel expansion, with $j^2<6$, gives
\[
 J_k(j)=\frac{(j/2)^k}{k!}A_k,
 \qquad 1-\frac z{k+1}\le A_k\le1,
\]
for $k\ge1$. Hence $J_k(j)>0$ and $0<r_k<6/(k+1)$, and the ratios satisfy
\begin{equation}\label{eq:atomic-rigidity-018}
 r_k=\frac{j^2}{2(k+1)-r_{k+1}}.
\end{equation}
Extend this recurrence by the map
\[
 (Tf)(s)=\frac{j^2}{2(s+1)-f(s+1)}
\]
on $0\le f\le2$ in $C_b([2,\infty))$. It preserves the cone, has $Tf(s)\le3/s$, and contracts by at most $3/8$. Let $R$ be its fixed point.

The expansion
\[
 Tf=j^2[2(s+1)]^{-1}\sum_{m\ge0}
 \left(\frac{f(s+1)}{2(s+1)}\right)^m,
\]
shows that iteration preserves complete monotonicity: this class is closed under products and pointwise limits; see Theorem~1.4 and Corollary~1.6 in \cite{SchillingSongVondracek12}. Iteration from zero gives a completely monotone $R$. The bounded recurrence is unique, so $R(k)=r_k$, and $D(s)=R(s)s^{-4}$ interpolates $d_k$.

Bernstein's theorem gives $R(u+2)=\int e^{-uy}\,d\tau(y)$ with $\tau\ge0$. For $0<t\le1$, $R(2+t^{-1})\le3t$ yields $\tau([0,t])\le3et$. Convolving with the Laplace density of $(u+2)^{-4}$ gives, by Tonelli,
\[
 h_d(t)=\frac16\int_{[0,t]}e^{2y}(t-y)^3\,d\tau(y).
\]
The last measure bound implies $h_d(t)\le e^3t^4/2$.
\end{proof}

\begin{definition}
\label{def:atomic-inverse-cubic-blocks}
For $1\le r\le\lfloor N/2\rfloor$ and $Z,X\in\mathbb C$ define
\begin{equation}\label{eq:atomic-rigidity-019}
\begin{aligned}
H_{3,N}^{(r)}(Z,X)
={}&\mathbf1_{\{r\ge2\}}r^{-3}|Z|^2\\
&+\sum_{m\ge1}\Bigl
 (mN+r)^{-3}|Z+i(mN)X|^2
 +(mN-r)^{-3}|Z-i(mN)X|^2\\
&\hspace{35mm}-2(mN)^{-3}(mN)^2|X|^2\Bigr).
\end{aligned}
\end{equation}
If $N$ is even and $r=N/2$, require $X$ and $Z-irX$ to be real and take one
half of the right-hand side. The grouped series is absolutely convergent.
\end{definition}

\begin{lemma}
\label{lem:pair-energy-kernel-calculus}
Let $a_k=k^{-3}$ or $a_k=b_k$ and
$K_a(s)=\sum_{k\ge2}a_k\cos(ks)$. Then, with
$d(s)=\operatorname{dist}(s,2\pi\mathbb Z)$,
\begin{equation}\label{eq:atomic-rigidity-020}
 |K_a|+|K_a'|\le C,
 \quad |K_a''(s)|\le C(1+\log(2/d(s))),
 \quad |K_a'''(s)|\le C/d(s),
\end{equation}
on the punctured circle. Moreover
\begin{equation}\label{eq:atomic-rigidity-021}
 K_3'''(s)=\frac12\cot(s/2)-\sin s,
\end{equation}
and, with $\kappa(s)=s^{-4}-D(s)$,
\begin{equation}\label{eq:atomic-rigidity-022}
 |\kappa^{(j)}(s)|\le C_js^{-4-j}\quad(s\ge4,\ j=0,1,2),
\end{equation}
so for $M\ge8$, $1\le r\le M/2$,
\begin{equation}\label{eq:atomic-rigidity-023}
\begin{aligned}
 |\kappa(M+r)|+|\kappa(M-r)|&\le CM^{-4},\\
 |\kappa(M-r)-\kappa(M+r)|&\le CrM^{-5},\\
 |\kappa(M+r)+\kappa(M-r)-2\kappa(M)|&\le Cr^2M^{-6}.
\end{aligned}
\end{equation}
Finally, on
\begin{equation}\label{eq:atomic-rigidity-024}
 \|x\|_\infty\le\frac\pi{4N},\qquad \|u\|_\infty\le\frac1{2N},
\end{equation}
the finite-pair energy
\[
 E_{{\rm pair},a}(u,x)=\sum_{j,\ell}(N^{-1}+u_j)(N^{-1}+u_\ell)
 K_a(2\pi(j-\ell)/N+x_j-x_\ell),
\]
is $C^3$ and
\begin{equation}\label{eq:atomic-rigidity-025}
 \max_{1\le q\le3}\|D^qE_{{\rm pair},a}(u,x)\|\le CN^4.
\end{equation}
If
\[
 U_r=\sum_j\alpha_je^{2\pi irj/N},\quad
 X_r=N^{-1}\sum_j\xi_je^{2\pi irj/N},\quad Z_r=U_r+irX_r,
\]
the coefficient of $t^2$ at the regular configuration is the sum over
paired DFT residues of
\begin{equation}\label{eq:atomic-rigidity-026}
\begin{aligned}
 H_{a,N}^{(r)}={}&\mathbf1_{\{r\ge2\}}a_r|Z_r|^2\\
 &+\sum_{m\ge1}\{a_{mN+r}|Z_r+i(mN)X_r|^2
 +a_{mN-r}|Z_r-i(mN)X_r|^2
 -2a_{mN}(mN)^2|X_r|^2\},
\end{aligned}
\end{equation}
with the half-block convention for the Nyquist residue.
\end{lemma}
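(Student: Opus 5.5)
I would split the statement into three independent pieces: the kernel bounds \eqref{eq:atomic-rigidity-020}--\eqref{eq:atomic-rigidity-021}, the tail estimates \eqref{eq:atomic-rigidity-022}--\eqref{eq:atomic-rigidity-023} for $\kappa$, and the pair-energy calculus \eqref{eq:atomic-rigidity-025}--\eqref{eq:atomic-rigidity-026}.

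\textbf{Kernel bounds.} For $a_k=k^{-3}$ the series for $K_3$ and $K_3'$ converge absolutely, which gives the uniform bound $C$. The derivative $K_3''=-\sum_{k\ge2}k^{-1}\cos(ks)$ equals $\log|2\sin(s/2)|+\cos s$, which is bounded by $C(1+\log(2/d))$. Differentiating once more gives $\sum_{k\ge2}\sin(ks)=\tfrac12\cot(s/2)-\sin s$ in the distributional/Abel sense. This proves \eqref{eq:atomic-rigidity-021} and the bound $C/d$.

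For $a_k=b_k$, write $b_k=k^{-3}+k^{-4}-d_k$ as in \eqref{eq:curvature-coordinates-002}. The $k^{-4}$ part is $C^2$ with bounded second derivative, and its third derivative is the Clausen-type sum $\sum k^{-1}\sin(ks)$, which is bounded. The $d_k$ part has $d_k=O(k^{-5})$, so it is $C^3$ with bounded derivatives. Hence $K_b$ inherits all the singular bounds from $K_3$.

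\textbf{Tail estimates for $\kappa$.} For \eqref{eq:atomic-rigidity-022}, I would use the preceding lemma: $\kappa(s)=s^{-4}(1-R(s))$, and $R$ is completely monotone with $R\le 3/s$. Complete monotonicity together with Bernstein's representation gives $|R^{(j)}(s)|\le C_j s^{-1-j}$, for instance from $|R^{(j)}(s)|\le j!\,(2/s)^j\,R(s/2)$ via Taylor/convexity. Leibniz' rule then bounds $s^{-4}$ times these derivatives, and the $s^{-4}$ term contributes its own derivatives of the right size. The three inequalities in \eqref{eq:atomic-rigidity-023} follow from Taylor's theorem on $[M-r,M+r]\subset[M/2,3M/2]$, using the bounds on $\kappa$, $\kappa'$, $\kappa''$ at scale $M$ (note $M/2\ge4$).

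\textbf{Pair energy.} On the box \eqref{eq:atomic-rigidity-024}, off-diagonal arguments satisfy $d\ge 2\pi/N-\pi/(2N)\ge c/N$, and the diagonal terms are independent of $x$. A $q$-th derivative in $x$ touches at most two indices, so each entry involves $K_a^{(q)}$ at one pair. I would bound the operator norm with Schur's test:
\[
\sum_{\ell\neq j}N^{-2}\,d_{j\ell}^{-1}\lesssim N^{-2}\,N\log N .
\]
For higher derivatives, the total is governed by $\sum_{m}(N/m)\cdot N^{-1}$, which, combined with the weight factors and Hessian row sums, gives $CN^4$. Mixed $u$-derivatives lose weights but keep at most $K_a''$. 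This counting of rows against the $1/d$ singularity is the main obstacle; I would first try Schur's test with $\sum_{m\ne0}|K_a^{(q)}(2\pi m/N)|\le CN^{q}$ for $q\le 3$, and handle the log factor at $q=2$ in the same way.

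\textbf{The second-order coefficient.} To identify the $t^2$ coefficient, write
\[
E_{\mathrm{pair}}=\sum_k a_k\Bigl|\sum_j(N^{-1}+u_j)e^{ik(2\pi j/N+x_j)}\Bigr|^2 ,
\]
with $u=t\alpha$ and $x=t\xi$. Expanding each inner sum to first order gives
\[
\widehat{(\cdot)}(k)=\mathbf 1_{N\mid k}+t\bigl(U_{r}+ikX_{r}\bigr)+t^2(\cdots),
\]
where $k\equiv r$. For $N\nmid k$ the square contributes $t^2a_k|Z_r+i(k-r)X_r|^2$, where $k-r=mN$. For $N\mid k$ the cross term with the $t^2$ part gives $-k^2X$-type second-order terms; averaged over $j$, these give $-a_{mN}(mN)^2\sum|X_r|^2$ distributed over residues by Parseval. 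Grouping $k=mN\pm r$ yields \eqref{eq:atomic-rigidity-026}, and the Nyquist residue is counted once.
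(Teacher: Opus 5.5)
Your kernel bounds and your treatment of $\kappa$ are sound. For $\kappa$ you use complete monotonicity of $R$ and the Bernstein--Taylor inequality $|R^{(j)}(s)|\le j!\,(2/s)^jR(s/2)$ for $s\ge4$, rather than the paper's Laplace representation of $D$; both give $|D^{(j)}(s)|\le C_js^{-5-j}$. The $C^3$ bound is not the real obstacle. Off the diagonal the arguments stay at distance at least $3\pi/(2N)$ from $2\pi\mathbb Z$, and crude counting gives derivative norms of order $N\log N$, far below $CN^4$.

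\textbf{The gap is in the identification of the $t^2$ coefficient.}
\begin{itemize}
\item You expand each $\widehat\mu_t(k)$ in $t$ and sum the $t^2$ coefficients over $k$. That expansion is not uniform in $k$: $\partial_t^2|\widehat\mu_t(k)|^2=O(k^2)$, so the differentiated series has terms of size $a_kk^2\sim k^{-1}$. It cannot be summed term by term.
\item The three alias pieces you produce are each divergent. Both $\sum_m a_{mN\pm r}|Z_r\pm i(mN)X_r|^2$ and $\sum_m a_{mN}(mN)^2|X_r|^2$ behave like $|X_r|^2\sum_m(mN)^{-1}$.
\item Only the grouped block \eqref{eq:atomic-rigidity-026} converges. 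This uses $(mN)^2[a_{mN+r}+a_{mN-r}-2a_{mN}]=O(r^2(mN)^{-3})$ and $mN\,|a_{mN-r}-a_{mN+r}|=O(r(mN)^{-3})$.
\end{itemize}
So ``grouping yields the formula'' assumes what must be proved. The claim to establish is that the Hessian of the genuinely defined pair energy equals this conditionally convergent regrouping of a series whose ungrouped form diverges.

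\textbf{The missing step is a cutoff argument, which is the one the paper's proof supplies.}
\begin{itemize}
\item Replace $K_a$ by $K_{a,n}(s)=\sum_{2\le k\le n}a_k\cos(ks)$. For the truncated pair energy your term-by-term computation is an exact finite identity.
\item At the regular configuration, the truncated Hessian involves only $K_{a,n}(0)$, and $K_{a,n},K_{a,n}',K_{a,n}''$ at the nonzero lattice points $2\pi m/N$; the diagonal terms do not depend on position.
\item At those points the partial sums of $\sum k^2a_k\cos(ks)$ converge. For $a_k=k^{-3}$ this follows from the Dirichlet test. For $a_k=b_k$ it follows from the Dirichlet test on $k^{-1}+k^{-2}$ plus absolute convergence of $k^2d_k=O(k^{-3})$. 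Hence the truncated Hessians converge to the true one.
\item Before letting $n\to\infty$, group the frequencies $mN-r,\,mN,\,mN+r$. The grouped series converges absolutely, and the incomplete block at the cutoff contributes $O(1/n)$.
\item Passing to the limit gives \eqref{eq:atomic-rigidity-026}, with the self-conjugate Nyquist residue counted once.
\end{itemize}
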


\begin{proof}
Absolute convergence gives $K_a$ and $K_a'$. Abel summation identifies $K_3^{\prime\prime\prime}$ and bounds its derivatives away from zero. Since $b_k-k^{-3}=O(k^{-4})$, summation by parts gives the corresponding bounds for $K_b$. The Laplace representation gives $|D^{(j)}(s)|\le C_js^{-5-j}$; integrating these derivative estimates proves \eqref{eq:atomic-rigidity-023}.

In the box \eqref{eq:atomic-rigidity-024}, distinct nodes stay $c/N$ apart modulo $2\pi$. The diagonal terms have no position dependence. The finite pair sum is therefore $C^3$, with \eqref{eq:atomic-rigidity-025}. To compute the Hessian, impose a Fourier cutoff and diagonalize by the discrete transform. Before letting the cutoff tend to infinity, combine $mN-r,mN,mN+r$. Their second differences are summable, so the grouped expressions converge to the classical pair-energy Hessian and give \eqref{eq:atomic-rigidity-026}. The self-conjugate Nyquist residue occurs once when $N$ is even, giving the half-block factor.
\end{proof}

\begin{theorem}
\label{thm:local-bessel-coercivity}
There are absolute $c_*,C>0$ and $N_0$ such that the following hold for
$N\ge N_0$.

\smallskip
\noindent\textup{\bfseries (i) Pure inverse-cubic blocks.}
For $2\le r\le N/2$,
\begin{equation}\label{eq:atomic-rigidity-027}
 H_{3,N}^{(r)}\ge c_*\left(
 \frac{|Z_r|^2}{r^3}+\frac{r^2|X_r|^2}{N^3}\right),
\end{equation}
and for $r=1$, under $Z_1=0$,
$H_{3,N}^{(1)}\ge c_*N^{-3}|X_1|^2$.

\smallskip
\noindent\textup{\bfseries (ii) Bessel alias perturbation.}
For $\kappa_k=b_k-k^{-3}=(1-r_k)k^{-4}$ and $M=mN$,
\begin{equation}\label{eq:atomic-rigidity-028}
\begin{aligned}
|\kappa_{M+r}|+|\kappa_{M-r}|&\le CM^{-4},\\
|\kappa_{M-r}-\kappa_{M+r}|&\le CrM^{-5},\\
|\kappa_{M+r}+\kappa_{M-r}-2\kappa_M|&\le Cr^2M^{-6},
\end{aligned}
\end{equation}
and the grouped alias correction obeys
\begin{equation}\label{eq:atomic-rigidity-029}
 |H_{\kappa,N}^{(r),\mathrm{alias}}|
 \le\frac CN\left(\frac{|Z_r|^2}{r^3}
 +\frac{r^2|X_r|^2}{N^3}\right).
\end{equation}
Also $\kappa_r\ge0$ for $r\ge3$ and
$\kappa_2>-2^{-3}/10$.

\smallskip
\noindent\textup{\bfseries (iii) Local constrained comparison.}
Let $A=20$ and
\begin{equation}\label{eq:atomic-rigidity-030}
 w_j=N^{-1}+u_j,\qquad \theta_j=2\pi j/N+x_j,
 \qquad \|(u,x)\|_2\le N^{-A},
\end{equation}
with the exact constraints
\begin{equation}\label{eq:atomic-rigidity-031}
 \sum_ju_j=0,\qquad \sum_jw_je^{i\theta_j}=0,
 \qquad N^{-1}\sum_jx_j=0.
\end{equation}
Then the configuration is $N$-admissible and
\begin{equation}\label{eq:atomic-rigidity-032}
 d_{B,N}\ge\frac12d_{3,N}.
\end{equation}
For the tangent space
\[
 \mathcal V_N=\{(\alpha,\xi):U_0=X_0=Z_1=0\},
\]
one also has
\begin{equation}\label{eq:atomic-rigidity-033}
 H_{3,N}(y)\ge cN^{-4}|y|^2,
\end{equation}
regular criticality on $U_0=X_0=0$, and the $C^3$ bound \eqref{eq:atomic-rigidity-025} for both
$d_{3,N}$ and $d_{B,N}$.
\end{theorem}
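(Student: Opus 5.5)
Part (i) is a one-residue computation. Write $M=mN$. Expanding the squares, the $m$-th grouped term of $H_{3,N}^{(r)}$ equals
\[
 \bigl((M+r)^{-3}+(M-r)^{-3}\bigr)|Z|^2
 +2M\bigl((M+r)^{-3}-(M-r)^{-3}\bigr)\operatorname{Re}(\bar Z\,iX)
 +M^2\bigl((M+r)^{-3}+(M-r)^{-3}-2M^{-3}\bigr)|X|^2 .
\]
The three coefficient sizes are as follows. The first coefficient is $\asymp M^{-3}$. The cross coefficient is $\asymp -rM^{-3}$. The second difference is $\asymp 12r^2M^{-5}$, so the last term is $\asymp r^2M^{-3}|X|^2$ and is positive.

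Summing over $m$, the $|X|^2$ part gives $\gtrsim r^2N^{-3}|X|^2$. The diagonal $r^{-3}|Z|^2$ term dominates the aliased $|Z|^2$ sums, which are $O(N^{-3})|Z|^2$. The cross term is bounded by $CrN^{-3}|Z||X|$.

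We absorb the cross term by Cauchy--Schwarz:
\[
 rN^{-3}|Z||X|\le \epsilon\, r^{-3}|Z|^2+\epsilon^{-1}r^5N^{-6}|X|^2 .
\]
Since $r\le N/2$, we have $r^5N^{-6}\le r^2N^{-3}/8$. This is where $r\le N/2$ is used, and it is the delicate spot: near the Nyquist residue the constants must actually be checked. We will treat the sums as explicit Hurwitz-type sums in $x=r/N\in(0,1/2]$ and verify positivity of the resulting $2\times2$ form with coefficients depending continuously on $x$, uniformly. For $r=1$ with $Z_1=0$, only the $|X|^2$ second-difference sum survives, giving $N^{-3}|X_1|^2$.

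For part (ii), the three bounds in \eqref{eq:atomic-rigidity-028} are \eqref{eq:atomic-rigidity-023} evaluated at $s=M\pm r$, using $D(k)=d_k$ and $\kappa_k=\kappa(k)$. This requires $M\ge8$, which holds once $N\ge N_0$. The alias correction is the same grouped expression as above with $a_k=\kappa_k$ in place of $k^{-3}$. Its $|Z|^2$ coefficient is $O(N^{-4})\le N^{-1}r^{-3}$. Its cross coefficient is $O(M\cdot rM^{-5})$, and its $|X|^2$ coefficient is $O(M^2r^2M^{-6})$. Summing over $m$ and using Cauchy--Schwarz as before gives \eqref{eq:atomic-rigidity-029}. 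The sign facts $\kappa_r\ge0$ for $r\ge3$ and $\kappa_2>-2^{-3}/10$ follow from $r_k<6/(k+1)\le1$ for $k\ge5$, together with a numerical evaluation of $r_2,r_3,r_4$ from the recurrence \eqref{eq:atomic-rigidity-018}.

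For part (iii), we Taylor-expand $d_{3,N}$ and $d_{B,N}$, each viewed as the pair energy $E_{\mathrm{pair},a}$ of Lemma~\ref{lem:pair-energy-kernel-calculus}, around the regular configuration. The radius $N^{-A}$ keeps us in the box \eqref{eq:atomic-rigidity-024}, so admissibility holds. On $U_0=X_0=0$ the first variation vanishes: the weights are constrained, and the regular configuration is a critical point by cyclic symmetry. Hence each defect equals $H_{a,N}(y)+O(N^4|y|^3)$ by \eqref{eq:atomic-rigidity-025}.

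The exact centering constraint gives $Z_1=O(N|y|^2)$, so the $r=1$ block differs from its constrained version by $O(N^{-3}\cdot N|y|^3)$. By parts (i) and (ii), summed over residues,
\[
 H_{3,N}(y)\ge cN^{-3}\sum_r\bigl(N^{-1}|U_r|^2+|X_r|^2\bigr)\gtrsim N^{-4}|y|^2 ,
\]
where Parseval converts the DFT variables back to $y$. The quadratic forms satisfy $H_b=H_3+H_4-H_d$, and the low $r$ diagonal terms have the favorable sign. Therefore $H_B\ge(1-C/N)H_3$ up to the $\kappa_2$ term, which is at most a $1/10$ fraction of the $r=2$ diagonal term.

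Finally, the cubic errors satisfy $N^4|y|^3\le N^{4-A}|y|^2\ll N^{-4}|y|^2$ because $A=20$. This yields $d_{B,N}\ge\tfrac12 d_{3,N}$.
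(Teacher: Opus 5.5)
\textbf{Parts (ii) and (iii).} Your treatment is essentially the paper's. Part (ii) is the grouped finite-difference bound \eqref{eq:atomic-rigidity-023} summed over aliases, followed by Young's inequality. Part (iii) is Taylor expansion with cyclic criticality and the $C^3$ bound \eqref{eq:atomic-rigidity-025}. The paper projects $z$ onto $\mathcal V_N$ by subtracting $(n(L_N^{\rm cen}z),0)$, whereas you perturb the $r=1$ block; the two are equivalent. In fact $|Z_1|\le C|y|^2$, without your factor $N$.

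\textbf{The gap is in (i) for $r$ comparable to $N$.} For $r\ge2$ and $x=r/N$, the block is exactly
\[
H_{3,N}^{(r)}=N^{-3}a(x)|Z|^2+2N^{-2}b(x)\operatorname{Re}(-iZ\overline X)+N^{-1}c(x)|X|^2,
\]
where
\[
a(x)=\sum_{m\in\mathbb Z}|m+x|^{-3},\quad b(x)=\sum_{m\ge1}m\bigl[(m+x)^{-3}-(m-x)^{-3}\bigr],\quad c(x)=\sum_{m\ge1}m^2\bigl[(m+x)^{-3}+(m-x)^{-3}-2m^{-3}\bigr].
\]
So \eqref{eq:atomic-rigidity-027} is equivalent to uniform positive definiteness on $(0,1/2]$ of the $2\times2$ matrix with entries $x^3a$, $x^{1/2}b$, $x^{-2}c$. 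Near Nyquist this is the entire content of (i), and your displayed scheme does not deliver it.

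Your scheme uses $A\ge r^{-3}$ and the convexity bound $12\zeta(3)r^2N^{-3}$ for the $|X|^2$ coefficient. At $r=N/2$ these give $AC\ge24\zeta(3)N^{-4}$, while $B^2=49\zeta(3)^2N^{-4}$ exactly. No Cauchy--Schwarz absorption is then possible. Replacing only one of $A$, $C$ by its exact value still fails.

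The exact values are $a(\tfrac12)=14\zeta(3)$, $b(\tfrac12)=-7\zeta(3)$ and $c(\tfrac12)=\tfrac72\zeta(3)+4\log2$. With these the determinant is $ac-b^2=56\zeta(3)\log2>0$. So your claim is true, but only after both alias sums are evaluated exactly. Writing that you ``will verify positivity uniformly'' therefore leaves the decisive step of (i) undone. To close it along your route you would need:
\begin{itemize}
\item a certified evaluation of $a,b,c$ on an interval such as $[1/3,1/2]$, for instance via polygamma closed forms;
\item continuity on that interval;
\item the small-$x$ limits $x^3a\to1$, $x^{-2}c\to12\zeta(3)$ and $x^{1/2}b\to0$.
\end{itemize}

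\textbf{How the paper handles $N/16\le r\le N/2$.} Its argument needs no numerics. It realizes the block as the second variation along the real mode-$r$ perturbation
\[
u_j=\tfrac2N\operatorname{Re}(Ue^{-ir\theta_j^0}),\qquad x_j=2\operatorname{Re}(Xe^{-ir\theta_j^0}),\qquad U=Z-irX.
\]
It then keeps two nonnegative remainder terms of the Fej\'er--Poisson certificate of Lemma~\ref{lem:atomic-fejer-certificate}, at $q=e^{-t/N}$ with $1\le t\le2$:
\begin{itemize}
\item the diagonal coefficient, which is $\ge cN$, acting on $\sum_ju_j^2=\tfrac2N|U|^2$;
\item the adjacent-pair factor, which vanishes quadratically at spacing $h$ with second derivative $\ge cN^3$, acting on $\sum_j|x_{j+1}-x_j|^2=2N|e^{2\pi ir/N}-1|^2|X|^2$.
\end{itemize}
Integrating in $t$ gives $H_{3,N}^{(r)}\ge c(N^{-3}|U|^2+N^{-1}|X|^2)$. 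Together with $|e^{2\pi ir/N}-1|\ge2\sin(\pi/16)$ and $r\sim N$, this converts to \eqref{eq:atomic-rigidity-027}.

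\textbf{A smaller point in (iii).} To reach $H_{b,N}\ge\tfrac45H_{3,N}$, or anything above $\tfrac12$, you need $H_{3,N}^{(2)}\ge(1-O(N^{-3}))2^{-3}|Z_2|^2$. This holds because the $r=2$ cross coefficient is $O(N^{-3})$. The bound $H_{3,N}^{(2)}\ge c_*2^{-3}|Z_2|^2$ alone is not enough, since the $\kappa_2$ loss is then a $1/(10c_*)$ fraction of the block and need not be absorbable.
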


\begin{proof}
Write one inverse-cubic block as $A|Z|^2+2\operatorname{Re}(iBZ\overline X)+C|X|^2$. In the range $r\le N/16$, the elementary estimates
\[
 (1-x)^{-3}+(1+x)^{-3}-2\ge12x^2,
 \quad 0\le(1-x)^{-3}-(1+x)^{-3}\le7x,
\]
for $0\le x\le1/16$ give
\[
 A\ge r^{-3},\qquad C\ge12\zeta(3)r^2N^{-3},\qquad
 |B|\le7\zeta(3)rN^{-3}.
\]
Weighted Young's inequality absorbs the cross term.

For $N/16\le r\le N/2$, positivity is more transparent in the certificate. Put $U=Z-irX$ and, away from the Nyquist class, use
\[
 u_j=\frac2N\operatorname {Re}(Ue^{-ir\theta_j^0}),\qquad
 x_j=2\operatorname {Re}(Xe^{-ir\theta_j^0}),\qquad
 \theta_j^0=2\pi j/N.
\]
This real variation preserves mass and remains admissible for a small path parameter. Finite summation gives
\[
 \sum_j u_j^2=\frac2N|U|^2,
 \qquad
 \sum_j|x_{j+1}-x_j|^2
   =2N|e^{2\pi ir/N}-1|^2|X|^2.
\]
The quadratic coefficient is \eqref{eq:atomic-rigidity-019}. At $q=e^{-t/N}$, $1\le t\le2$, the diagonal coefficient is at least $cN$. The adjacent-pair factor vanishes quadratically at $h$, with second derivative at least $cN^3$. Keeping these terms and integrating gives
\[
 H_{3,N}^{(r)}(Z,X)
 \ge c\bigl(N^{-3}|U|^2+N^{-1}|X|^2\bigr),
\]
since $|e^{2\pi ir/N}-1|\ge2\sin(\pi/16)$. The first harmonic adds no quadratic term:
\[
 \left.\frac d{d\tau}\widehat\mu_\tau(1)\right|_{\tau=0}
 =\sum_j\left(u_j+\frac iN x_j\right)e^{i\theta_j^0}=0.
\]
Here neither $1+r$ nor $1-r$ is divisible by $N$. Since $Z=U+irX$ and $r\sim N$, this proves \eqref{eq:atomic-rigidity-027}. At even Nyquist use the real variation $(-1)^j$ and its half-block convention. At $r=1$, substituting $Z_1=0$ gives $H_{3,N}^{(1)}\ge12\zeta(3)N^{-3}|X_1|^2$.

For the Bessel perturbation, sum the grouped finite-difference bounds \eqref{eq:atomic-rigidity-023} over aliases. The result is
\[
 C N^{-4}(|Z|^2+r|Z||X|+r^2|X|^2).
\]
Young's inequality, with $r\le N/2$, gives \eqref{eq:atomic-rigidity-029}. For $k\ge3$, $R(k)\le3/k$ implies $\kappa_k\ge0$. At the exceptional index, $J_0(j)=0$ gives $r_2=4-j^2/2$; using $28/5<j^2<6$ gives $r_2<6/5$ and $\kappa_2>-1/80$. The $r=2$ lower bound absorbs this loss, so on $\mathcal V_N$,
\begin{equation}\label{eq:atomic-rigidity-034}
 H_{b,N}(y)\ge\frac45H_{3,N}(y).
\end{equation}
Parseval and $Z_1=0$ also give \eqref{eq:atomic-rigidity-033}.

The exact center constraint differs quadratically from its tangent equation. Write it as $L_N^{\rm cen}z+Q_N^{\rm cen}(z)=0$, where $L_N^{\rm cen}z=Z_1(z)$ and $|Q_N^{\rm cen}(z)|\le|z|^2$. The correction $n_j(q)=2N^{-1}\operatorname{Re}(qe^{-2\pi ij/N})$ has zero mass, satisfies $L_N^{\rm cen}(n(q),0)=q$, and has norm at most $2N^{-1/2}|q|$. Therefore
\begin{equation}\label{eq:atomic-rigidity-035}
 v=z-(n(L_N^{\rm cen}z),0)\in\mathcal V_N,
 \qquad |z-v|\le2N^{-1/2}|z|^2.
\end{equation}
Stationarity at the roots and the pair-energy $C^3$ bound give, for $a=3,b$,
\begin{equation}\label{eq:atomic-rigidity-036}
 d_{a,N}(z)=H_{a,N}(v)+O(N^4|z|^3).
\end{equation}
The leading term is at least a fixed multiple of $N^{-4}|z|^2$. The relative remainder is bounded by $CN^8|z|\le CN^{-12}$ and is absorbed for large $N$. The same box gives positive masses and separated nodes.
\end{proof}

\subsection{A global comparison of the two atomic defects}

The positive certificate controls the inverse-cubic defect, while the spectral quadratic form contains the exact Bessel weights. We compare their difference at a fixed frequency threshold. Where the resulting additive tail is not absorbable, microscopic rigidity places the configuration in the local constrained chart just analyzed.

\begin{theorem}
\label{thm:bessel-coercivity}
There is $N_{\rm KB}$ such that, for every $N\ge N_{\rm KB}$ and every
$N$-admissible $\mu$ with $\widehat\mu(1)=0$,
\begin{equation}\label{eq:atomic-rigidity-037}
 4d_{B,N}(\mu)\ge\frac14d_{3,N}(\mu)\ge0.
\end{equation}
Moreover, $d_{3,N}=0$ if and only if $\mu=\sigma_{N,\gamma}$; hence
$d_{B,N}=0$ has the same equality case.  The coordinate estimates
\eqref{eq:atomic-rigidity-011} hold with $B_N=N^4d_{3,N}$ whenever $B_N\to0$.
\end{theorem}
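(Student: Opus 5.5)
The plan is to write $d_{B,N}=d_{3,N}+d_{4,N}-d_{d,N}$ using \eqref{eq:curvature-coordinates-003}, and to treat the two correction terms by splitting the frequencies at a fixed threshold. The inverse-power part $d_{4,N}$ is nonnegative by Theorem~\ref{thm:atomic-rigidity}(i) with $p=4$, so it can be discarded, or kept as a small bonus. The proof therefore reduces to showing $d_{d,N}\le\frac{15}{16}d_{3,N}$, up to terms that $d_{3,N}$ absorbs. This gives $d_{B,N}\ge\frac1{16}d_{3,N}$, which is \eqref{eq:atomic-rigidity-037}. The equality statement then follows at once: $d_{3,N}=0$ characterizes $\sigma_{N,\gamma}$ by Theorem~\ref{thm:atomic-rigidity}(i), and \eqref{eq:atomic-rigidity-037} makes $d_{B,N}=0$ imply $d_{3,N}=0$. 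The coordinate bounds are exactly \eqref{eq:atomic-rigidity-011} of Theorem~\ref{thm:atomic-rigidity}(iv).

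For the comparison of $d_{d,N}$ with $d_{3,N}$, use $d_k\le 3k^{-5}$ and split the sum at $k\le N/10$ and $k>N/10$.

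\textbf{Low band.} For $k\le N/10$ we have $d_k\le 3k^{-2}\cdot k^{-3}\le \frac34 k^{-3}$ once $k\ge2$. By \eqref{eq:atomic-rigidity-009}, this part is at most $\frac34\cdot\frac54 d_{3,N}<d_{3,N}$. The constant $\frac{15}{16}$ comes out exactly, but it is tight, so I would sharpen it using the explicit value $d_2=r_2/16$ with $r_2<6/5$, i.e.\ $d_2\cdot 8<0.6$, together with $d_k\le 3k^{-5}$ for $k\ge3$. This gives the low band at most roughly $0.6\cdot\frac54 d_{3,N}$.

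\textbf{High band.} For $k>N/10$, subtract the reference values $\sum_m d_{mN}$ and bound the total crudely by $\sum_{k>N/10}3k^{-5}|\widehat\mu(k)|^2\le CN^{-4}$. This additive error is harmless only when $d_{3,N}\ge C'N^{-4}$, that is, when $B_N=N^4d_{3,N}$ is bounded below.

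\textbf{Small-defect regime.} This is the main obstacle. Here I would argue by contradiction using a sequence with $B_N\to0$. Theorem~\ref{thm:atomic-rigidity}(iv) supplies $N$ atoms and the coordinate bounds \eqref{eq:atomic-rigidity-011}. If $B_N$ is tiny, below $N^{-2A}$ for $A=20$, then after rotating so that $N^{-1}\sum x_j=0$, the configuration lies in the box of Theorem~\ref{thm:local-bessel-coercivity}(iii). Inequality \eqref{eq:atomic-rigidity-032} then gives $d_{B,N}\ge\frac12 d_{3,N}$ directly.

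The intermediate range $N^{-40}\lesssim B_N\le\epsilon$ is the delicate part. Local coercivity is not available there, and the additive tail $CN^{-4}$ is not small relative to $d_{3,N}$. In this range I would first try to replace the crude tail bound by the grouped alias expansion of Theorem~\ref{thm:local-bessel-coercivity}(ii). Write the high-frequency part of $d_{d,N}$ as the DFT-residue block sum with the Bessel weights $\kappa$. Then bound it by $CN^{-1}$ times the inverse-cubic blocks. This would be justified by a Taylor expansion of the pair energy $E_{{\rm pair},a}$ around the regular configuration, using the $C^3$ bound \eqref{eq:atomic-rigidity-025}. The cubic Taylor error is $O(N^4|z|^3)$, and this is relatively small compared with $N^{-4}|z|^2$ provided $|z|\ll N^{-8}$.

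To cover all of the intermediate range, I would instead compare directly in Fourier space via the coordinates $(u,y)$. The aliasing structure of $\widehat\mu(mN+r)$ close to the lattice yields the difference estimates \eqref{eq:atomic-rigidity-023} without a full Taylor expansion, with errors controlled by $\max|y_j|\le C\sqrt{B_N/N}$. If this fails, the fallback is to sharpen the high-band crude bound by subtracting the lattice aliases exactly. This leaves residual terms quadratic in $(u,y)$ with coefficients $O(N^{-5})$ relative to the $N^{-4}$ coercivity scale.
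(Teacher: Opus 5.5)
Your low-band estimate ($d_k\le\frac35k^{-3}$ for $k\ge2$, combined with \eqref{eq:atomic-rigidity-009}) matches the paper. So do discarding $d_{4,N}\ge0$, the equality case, and the use of Theorem~\ref{thm:local-bessel-coercivity}(iii) for extremely small defects. The gap is the high band.

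\textbf{Why the frequency split leaves a window.} A split in frequency yields only a polynomial additive error. Your version gives $CN^{-4}$. Even using $\sum_{k\ge2}|\widehat\mu(k)|^2k^{-3}=d_{3,N}+\zeta(3)N^{-3}$, the best you get is $CN^{-2}d_{3,N}+CN^{-5}$. Absorption therefore requires $B_N\gtrsim N^{-1}$, or $B_N\gtrsim1$ with your bound. The local constrained comparison, however, needs $\|(u,x)\|_2\le N^{-20}$, i.e.\ roughly $B_N\lesssim N^{-40}$. Your Taylor route via \eqref{eq:atomic-rigidity-025} reaches only $|z|\ll N^{-8}$.

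The polynomially wide window in between must be covered, and the remedies you list for it are not arguments. Linearizing $\widehat\mu(mN+r)$ in $(u,y)$ breaks down once $(mN+r)\max_j|y_j|$ is of order one. The high-frequency part of the inverse-cubic defect has no definite sign, so the $d_k$-weighted tail cannot be compared with $d_{3,N}$ mode by mode.

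\textbf{The missing idea.} Split in the Poisson (Laplace) parameter rather than in frequency. By the preceding lemma, $d_{d,N}=\int_0^\infty h_d(t)G_N(e^{-t};\mu)\,dt$ and $d_{3,N}=\frac12\int_0^\infty t^2G_N(e^{-t};\mu)\,dt$, with $G_N\ge0$ by the certificate. Fix $a$ with $e^3a^2\le1/32$.
\begin{itemize}
\item On $(0,a)$, the bound $h_d(t)\le\frac{e^3}{2}t^4$ (the Laplace-side form of $d_k=O(k^{-5})$) bounds this part by $e^3a^2d_{3,N}$.
\item On $[a,\infty)$, return to Fourier variables with weights $\phi_a(k)=\int_a^\infty e^{-kt}h_d(t)\,dt$, and drop the regular subtraction since it has a favorable sign.
\item For $k\le N/10$, $\phi_a(k)\le d_k\le\frac35k^{-3}$ is handled exactly as in your low band.
\item For $k>N/10$, $\phi_a(k)\le e^{-ka/2}D(k/2)\le96e^{-ka/2}k^{-5}$, so the high band is at most $Ce^{-aN/20}(N^{-2}d_{3,N}+N^{-5})$.
\end{itemize}

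The additive error is now exponentially small. The non-absorbable regime shrinks to $B_N\le C_1e^{-aN/20}N^{-1}$. Microscopic rigidity places this regime far inside the $N^{-20}$ box, where the local comparison gives $d_{B,N}\ge\frac12d_{3,N}$. No intermediate range remains.
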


\begin{proof}
The centered defects admit the representations
\begin{equation}\label{eq:atomic-rigidity-038}
 d_{d,N}=\int_0^\infty h_d(t)G_N(e^{-t};\mu)\,dt,
 \qquad
 d_{3,N}=\frac12\int_0^\infty t^2G_N(e^{-t};\mu)\,dt.
\end{equation}
Split the first integral at $a\in(0,1]$. Its part below $a$ is at most $e^3a^2d_{3,N}$. On the complementary interval, $D(k)\le3k^{-5}$ for $k\ge3$ and $\kappa_2>-1/80$ imply $d_k\le(3/5)k^{-3}$ for $k\ge2$. The exponential tail has the bound
\[
 \int_a^\infty e^{-kt}h_d(t)dt\le e^{-ka/2}D(k/2)
 \le96e^{-ka/2}k^{-5}.
\]
The regular subtraction is favorable, so the high-frequency contribution is at most
\[
 \sum_{k>N/10}e^{-ka/2}k^{-5}|\widehat\mu(k)|^2
 \le Ce^{-aN/20}N^{-2}
       \sum_{k>N/10}\frac{|\widehat\mu(k)|^2}{k^3}
 \le Ce^{-aN/20}(N^{-2}d_{3,N}+N^{-5}).
\]
Low-band coercivity bounds the part $2\le k\le N/10$ by $3d_{3,N}/4$. Consequently,
\begin{equation}\label{eq:atomic-rigidity-039}
 d_{d,N}\le
 \left(\frac34+e^3a^2+CN^{-2}e^{-aN/20}\right)d_{3,N}
 +Ce^{-aN/20}N^{-5}.
\end{equation}
Choose $e^3a^2\le1/32$. If $d_{3,N}\ge C_1e^{-aN/20}N^{-5}$, a sufficiently large fixed $C_1$ gives $d_{d,N}\le15d_{3,N}/16$. Since $d_{4,N}\ge0$, \eqref{eq:curvature-coordinates-003} yields $d_{B,N}\ge d_{3,N}/16$.

Below that threshold, microscopic rigidity gives exactly $N$ atoms and, after rotation,
\[
 \|(u,x)\|_2^2
 \le C\left(B_N+\frac{B_N}{N^2}\right),
 \qquad B_N=N^4d_{3,N}
 \le C_1e^{-aN/20}N^{-1}.
\]
This assertion is uniform, since a contrary sequence would have $B_N\to0$. For large $N$ the coordinate norm is at most $N^{-20}$. Rotation and cyclic relabeling preserve mass and centering, and the angular slice has zero mean. The local constrained comparison applies and gives the stronger bound $d_{B,N}\ge d_{3,N}/2$.

Vanishing of $d_{B,N}$ forces $d_{3,N}=0$, hence a rotated uniform-root measure. Its Fourier support gives equality in the reverse direction. The remaining coordinate estimates are \eqref{eq:atomic-rigidity-011}.
\end{proof}

\section{Polar coordinates and the critical defect scale}\label{sec:polar-geometry}

We next return from turning measures to domains. The eigenvalue expansion is written in the true radial displacement, whereas the atomic energy uses the twice-integrated turning measure. Their difference is a nonlinear reconstruction error. Separating it from the higher spectral terms gives an exact identity for the eigenvalue difference; minimality will first use its absolute bounds to improve the defect. We also record the zero-defect reconstruction needed to recover the geometric equality case.

\subsection{Polar reconstruction and cyclic coordinates}

\begin{definition}
\label{def:polar-coordinates}
Fix $N\ge2$ and $h=2\pi/N$.  For a positive atomic probability measure
\[
 \nu=\sum_{j=0}^{M-1}w_j\delta_{\psi_j},\qquad M\le N,
\]
with distinct nodes, use
\begin{equation}\label{eq:curvature-coordinates-004}
 \widehat\nu(k)=\sum_jw_je^{-ik\psi_j},\qquad
 E_N(\nu)=\sum_{k\ge1}\frac{|\widehat\nu(k)|^2}{k^3}
 -\frac{\zeta(3)}{N^3},
 \qquad B_N(\nu)=N^4E_N(\nu).
\end{equation}
If $M=N$, choose cyclic lifts and write
\begin{equation}\label{eq:curvature-coordinates-005}
 a_j=Nw_j,
 \qquad \psi_{j+1}-\psi_j=h(1+\eta_j),
 \qquad \sum_j\eta_j=0.
\end{equation}

Let $P$ be a nondegenerate convex polygon translated to its polar center and
lying in the fixed radial chart
\[
 \partial P=\{(1+\rho(\theta))e^{i\theta}:\theta\in\mathbb T\},
 \qquad \|\rho\|_{W^{1,\infty}}\le\varepsilon_{\rm chart}<\frac12.
\]
At the effective vertices let $\psi_j=\arg z_j$ and let $\omega_j$ be the
exterior angles.  Define
\begin{equation}\label{eq:curvature-coordinates-006}
 w_j=\frac{\omega_j}{2\pi},\qquad
 \nu_P=\sum_jw_j\delta_{\psi_j},
\end{equation}
so $\nu_P$ is a probability measure and the polar-center equation is
\begin{equation}\label{eq:curvature-coordinates-007}
 \widehat\nu_P(1)=0.
\end{equation}
Put
\begin{equation}\label{eq:curvature-coordinates-008}
 E_N(P)=E_N(\nu_P)=d_{3,N}(\nu_P),
 \qquad B_N(P)=N^4E_N(P).
\end{equation}
The centered quasi-uniform chart is the case $M=N$ with
$|a_j-1|+|\eta_j|\le1/4$.  Fix the rotation gauge
\begin{equation}\label{eq:curvature-coordinates-009}
 \gamma=\frac1N\sum_j(\psi_j-jh),\qquad
 y_j=\psi_j-\gamma-jh,
 \qquad \sum_jy_j=0,
\end{equation}
rotate by $-\gamma$, and retain the symbols $\psi_j=jh+y_j$.  Then
\begin{equation}\label{eq:curvature-coordinates-010}
 x_j=\frac{\psi_j}{2\pi}\in\Tunit,
 \qquad x_{j+1}-x_j=\frac{1+\eta_j}{N},
\end{equation}
and the canonical chord is
\begin{equation}\label{eq:curvature-coordinates-011}
 a_j(t)=1+t(a_j-1),\qquad
 \psi_j(t)=jh+ty_j,
 \qquad T_t(jh)=\psi_j(t),
 \qquad J_t=T_t'=1+t\eta.
\end{equation}
On $\Tunit$ let $\Phi_N$ be the periodic piecewise-affine phase with
\begin{equation}\label{eq:curvature-coordinates-012}
 \Phi_N(x_{j+1})-\Phi_N(x_j)=2\pi\eta_j,
 \qquad e^{i\Phi_N(x_j)}=e^{2\pi iNx_j}.
\end{equation}
The finite columns are
\begin{equation}\label{eq:curvature-coordinates-013}
 H_\nu(k)=\overline{\widehat\nu(k)},\qquad f_k=e^{ik\Phi_N},
 \qquad c_k(r)=\int_{\Tunit}f_k(x)e^{2\pi irx}\,dx,
 \qquad C_k(r)=H_\nu(kN+r),
\end{equation}
with
\begin{equation}\label{eq:curvature-coordinates-014}
 I_N=\{-\lfloor(N-1)/2\rfloor,\ldots,\lfloor N/2\rfloor\},
 \qquad \Pi=\Pi_{I_N}.
\end{equation}
In the exact constraint coordinates,
\begin{equation}\label{eq:curvature-coordinates-015}
 x_j^{\rm ex}=y_j,
 \qquad u_j=w_j-N^{-1}=N^{-1}(a_j-1).
\end{equation}
These conventions fix the cyclic gauge for the comparisons. If, for a fixed $C_0$,
\begin{equation}\label{eq:curvature-coordinates-016}
 0\le E_N(P)\le C_0h^5\log^3(2N),
 \qquad 0\le B_N(P)\le C_0h\log^3(2N),
\end{equation}
then microscopic rigidity gives, after the common threshold,
\begin{equation}\label{eq:curvature-coordinates-017}
 \max_j(|a_j-1|+|\eta_j|)\le\frac14.
\end{equation}

Finally put $v=\log(1+\rho)$ and, on each edge, let
$p=\theta-\varphi$ where $\varphi$ is the outward-normal angle.  Then
\begin{equation}\label{eq:curvature-coordinates-018}
 v'=\tan p,
 \qquad Dp=d\theta-2\pi\nu_P,
 \qquad \int_{\mathbb T}\tan p=0.
\end{equation}
Define
\begin{equation}\label{eq:curvature-coordinates-019}
 \widehat v_L(0)=0,
 \qquad \widehat v_L(k)=\frac{\widehat\nu_P(k)}{k^2}\quad(k\ne0),
\end{equation}
so $v_L'=p-\overline p$, while
\begin{equation}\label{eq:curvature-coordinates-020}
 |P|=\frac12\int_{\mathbb T}e^{2v},
 \qquad |P|=\pi\iff\frac1{2\pi}\int_{\mathbb T}e^{2v}=1.
\end{equation}
We continue to use $F(\Omega)=|\Omega|\lambda_1(\Omega)$.
\end{definition}

For a measure that is not centered, the full and truncated defects differ by the first mode:
\[
 E_N(\nu)=d_{3,N}(\nu)+|\widehat\nu(1)|^2.
\]
Under exact centering they coincide. The notation $d_{p,N}$ always uses the
$k\ge2$ convention of Definition~\ref{def:atomic-energies};
$E_N$ includes the first mode.

The following reconstruction identifies the polygon once its atomic measure is known. It uses only the turning equation and the area constraint.

\begin{lemma}
\label{lem:polar-equality-reconstruction}
Let $P$ be a convex polygon in the near-disk radial chart, translated to its
polar center and normalized by $|P|=\pi$.  If
\[
 \nu_P=\frac1N\sum_{j=0}^{N-1}\delta_{\gamma+jh},
\]
then $P$ is the area-$\pi$ regular $N$-gon, up to rotation.
\end{lemma}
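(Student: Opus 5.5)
We reconstruct the radial function $v=\log(1+\rho)$ from the turning equations \eqref{eq:curvature-coordinates-018}, show that it is forced to be $N$-fold symmetric with vertices exactly at the nodes $\gamma+jh$, and then use the area normalization \eqref{eq:curvature-coordinates-020} to fix the scale.

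\textbf{Step 1: the outward normals.} By hypothesis $P$ has exactly $N$ effective vertices, at directions $\psi_j=\gamma+jh$, and every exterior angle equals $\omega_j=2\pi w_j=h$. Consequently the outward normal angles of consecutive edges differ by exactly $h$. Write $\varphi_j$ for the normal angle of the edge between $\psi_j$ and $\psi_{j+1}$, so that $\varphi_j=\varphi_0+jh$.

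\textbf{Step 2: the angle $p$ on each edge.} On that edge $p(\theta)=\theta-\varphi_j=(\theta-\psi_j)+(\psi_j-\varphi_j)$. The offset $\psi_j-\varphi_j=(\gamma-\varphi_0)$ does not depend on $j$. Hence $p$ is $h$-periodic, equal to $\theta-\gamma-\delta$ on $(\gamma,\gamma+h)$ with the constant $\delta=\varphi_0-\gamma$, and extended periodically. This is exactly the content of $Dp=d\theta-2\pi\nu_P$: the jumps are all $-h$ and lie at $\gamma+jh$.

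\textbf{Step 3: the constant $\delta$ vanishes.} The closing condition $\int_{\mathbb T}\tan p=0$ in \eqref{eq:curvature-coordinates-018} reduces, by periodicity, to
\[
\int_{-\delta}^{h-\delta}\tan s\,ds=0.
\]
Since $\tan$ is odd and strictly increasing on $(-\pi/2,\pi/2)$, the integral over an interval of length $h$ is strictly monotone in its left endpoint and vanishes only for the symmetric interval. Thus $\delta=h/2$, and the range $|p|<\pi/2$ holds because we are in the chart.

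Granting this, $v'=\tan p$ determines $v$ up to an additive constant. On each edge
\[
v(\theta)=c-\log\cos(\theta-\gamma-jh-h/2),
\]
with the same constant $c$ on every edge: continuity at the vertices, together with equal values at the two ends of each edge, forces this. Therefore $1+\rho=e^{c}\sec(\theta-\varphi_j)$, so each edge lies on a support line at distance $e^c$ with normal $\varphi_j=\gamma+(j+\tfrac12)h$. The polygon is the regular $N$-gon with inradius $e^c$, rotated by $\gamma$ and centered at the origin.

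\textbf{Step 4: the scale.} Finally $|P|=\pi$ fixes $e^{c}$, so $P$ is the area-$\pi$ regular $N$-gon up to rotation. The polar center is the origin, which is consistent because the origin is the center of symmetry.

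The only step requiring care is Step 3, where one must confirm that the constant of integration for $p$ is pinned down. One could alternatively argue through the polar center: reflection symmetry of $F_P$ and uniqueness of its minimizer force the origin to be the symmetry center, which again gives $\delta=h/2$. The monotonicity argument is more direct, and I would use it.
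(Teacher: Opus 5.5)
Your proposal is correct and follows essentially the same route as the paper. Both arguments identify a single turning offset for all cells and pin it through $\int_{\mathbb T}\tan p=0$: you use strict monotonicity of $a\mapsto\int_a^{a+h}\tan s\,ds$, while the paper uses $\log(\cos c/\cos(c+h))=0$ together with the range of $p$. Both then use continuity of $v$ at the vertices to equalize the support numbers, and let the area constraint fix the scale.
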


\begin{proof}
The atomic identity determines the vertex directions and masses. After cyclic relabeling,
\begin{equation}\label{eq:spectral-rigidity-010}
 \psi_j=\gamma+jh,\qquad w_j=N^{-1},\qquad \omega_j=h.
\end{equation}
Let $v=\log r$ and denote the normal angle on edge $j$ by $\varphi_j$. The function $p(\alpha)=\alpha-\varphi_j$ has slope one and jumps $-h$, hence
\[
 Dp=d\alpha-h\sum_{j=0}^{N-1}\delta_{\gamma+jh},\qquad
 p(\alpha)=c+\alpha-\gamma-jh,
\]
with one constant $c$ for all cells. Periodicity of $v$ gives
\[
 0=\int_0^{2\pi}v'
  =N\int_0^h\tan(c+s)\,ds
  =N\log\frac{\cos c}{\cos(c+h)}.
\]
The range $p\in(-\pi/2,\pi/2)$ then requires $c=-h/2$, and integration of $v'=\tan p$ gives
\[
 r(\alpha)=R_j\sec\!\left(\alpha-\gamma-jh-\frac h2\right).
\]
At a common vertex the adjacent radial values agree, so all support numbers $R_j$ coincide. Consecutive normal angles differ by $h$, making the polygon regular. Finally,
\[
 |P|=NR^2\tan(\pi/N)=\pi,
\]
fixes the scale; rotation is the only freedom left.
\end{proof}

\subsection{The normalized spectral decomposition}

The linear turning potential reproduces the atomic quadratic energy exactly. Its difference from the true radial displacement creates a geometric correction, and the terms beyond the radial Hessian create a spectral correction. Keeping those two sources of error separate gives the endpoint identity on which both localization and rigidity will rely.

\begin{definition}
\label{def:disk-normalized-splitting}
Let $\mathbb D=B_1(0)$ and $F(\Omega)=|\Omega|\lambda_1(\Omega)$.
An admissible splitting datum consists of a real sequence
$(\beta_k)_{k\ge2}$ with $|\beta_k|\le Ck$, together with polar measures and
distinguished polar radial graphs for $\Omega=P,R_N$, such that
\[
 Q(f)=4\sum_{k=2}^\infty\beta_k|\widehat f(k)|^2,
 \qquad
 Q_N(\Omega)=4\sum_{k=2}^\infty
 \frac{\beta_k}{k^4}|\widehat\nu_\Omega(k)|^2,
\]
converge.  Define
\[
 D_N(\Omega)=Q(\rho_\Omega)-Q_N(\Omega),
\]
\begin{equation}\label{eq:curvature-coordinates-021}
 \widetilde{R}_N(\Omega)=
 \frac{F(\Omega)-F(\mathbb D)}{F(\mathbb D)}
 -Q(\rho_\Omega).
\end{equation}
Write $X^\circ(P)=X(P)-X(R_N)$. Each term is evaluated in its domain's own polar chart.

Subtracting these identities at the two endpoints gives
\begin{equation}\label{eq:curvature-coordinates-022}
\frac{F(P)-F(R_N)}{F(\mathbb D)}
=Q_N^\circ(P)+D_N^\circ(P)
 +\widetilde{R}_N^\circ(P).
\end{equation}
\end{definition}

\begin{definition}
\label{def:bessel-splitting}

For $N\ge3$ and $h=2\pi/N$, use the Fourier convention \eqref{eq:compactness-001} and the Bessel coefficients above to define
\[
 \sigma_N=\frac1N\sum_{j=0}^{N-1}\delta_{jh},\qquad
 Q_N(\nu)=4\sum_{k=2}^\infty
 \frac{\beta_k}{k^4}|\widehat\nu(k)|^2,
\]
\begin{equation}\label{eq:curvature-coordinates-023}
 Q_N^\circ(\nu)=Q_N(\nu)-Q_N(\sigma_N).
\end{equation}
With the same Fourier convention and $b_k=\beta_k/k^4$, we have the exact identity
\begin{equation}\label{eq:curvature-coordinates-024}
 Q_N^\circ(\nu)=4d_{B,N}(\nu),
 \qquad
 E_N(\nu)=d_{3,N}(\nu)
 \quad\text{when }\widehat\nu(1)=0.
\end{equation}
The bound $|\beta_k|\le Ck$ gives convergence for every probability measure. The two Lipschitz radial traces belong to the form domain of $Q$, and their respective polar representations satisfy
\begin{equation}\label{eq:curvature-coordinates-025}
 Q_N(P)=Q_N(\nu_P),\qquad
 Q_N(R_N)=Q_N(\sigma_N),\qquad
 Q(v_L)=Q_N(P).
\end{equation}
For real periodic functions define
\[
 K_{{\rm B}}(f,g)=4\operatorname{Re}\sum_{k=2}^\infty
 \frac{\beta_k}{k^2}\widehat f(k)\overline{\widehat g(k)}.
\]
Put
\[
 b=e^v\tan p-p,\qquad q=\rho-v_L.
\]
The identities $\rho'=e^v\tan p$ and $v_L'=p-\overline p$ give, for
$k\ge2$,
\[
 \widehat q(k)=\frac{\widehat{b}(k)}{ik},\qquad
 \widehat v_L(k)=\frac{\widehat p(k)}{ik}.
\]
Weighted Cauchy--Schwarz justifies polarization, and therefore
\begin{equation}\label{eq:curvature-coordinates-026}
 D_N(P)=2K_{{\rm B}}(p,b)
 +K_{{\rm B}}(b,b).
\end{equation}
\end{definition}

\subsection{Discrepancy and nonlinear reconstruction}\label{sec:localization}

Before the mesh has any regularity, the atomic defect must control quantities that enter the turning equation. Discrepancy bounds give the needed control of its piecewise-linear primitive. The reconstruction estimates then quantify the effect of replacing the linear turning potential by the actual area-normalized radial graph.

\begin{lemma}\label{lem:turning-discrepancy}

There are absolute constants \(C<\infty\) and \(N_0\) such that the
following holds. Let \(N\ge N_0\), put \(h=2\pi/N\) as in
Definition~\ref{def:polar-coordinates},
and let

\[
\nu=\sum_{j=1}^{M}w_j\delta_{\psi_j},
\qquad 1\le M\le N,
\qquad w_j>0,
\qquad \sum_jw_j=1,
\]

where coincident atoms have been merged. Assume \(\widehat\nu(1)=0\) in
the Fourier convention of \eqref{eq:compactness-001}, and set
\(d=d_{3,N}(\nu)\). Then

\[
\operatorname{Disc}(\nu)
\le C\left(h+d^{1/4}\right),
\]

where the supremum defining \(\operatorname{Disc}\) is taken over all
half-open oriented arcs \(I\) of length \(|I|\in[0,2\pi]\), with the
normalization

\[
\operatorname{Disc}(\nu)
:=
\sup_I\left|\nu(I)-\frac{|I|}{2\pi}\right|.
\]

If \(\nu=\nu_P\) for a polygon in the near-disk polar chart of
Definition~\ref{def:polar-coordinates} and
\(p\) is its geometric turning representative, then, with
\(\operatorname{osc}p:=\operatorname*{ess\,sup}p- \operatorname*{ess\,inf}p\),

\[
\|p\|_\infty
\le\operatorname{osc}p
\le C\left(h+d^{1/4}\right).
\]

\end{lemma}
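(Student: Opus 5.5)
We bound the discrepancy through the sawtooth primitive of $\nu$ and an inverse-cubic Fourier estimate. Define $S(\theta)=\nu([0,\theta))-\theta/(2\pi)$. It is periodic, of bounded variation, and its distributional derivative is $\nu-d\theta/(2\pi)$. Hence $\widehat S(k)=\widehat\nu(k)/(ik)$ for $k\ne0$. The discrepancy over arcs is at most $\operatorname{osc}S\le 2\|S-\overline S\|_\infty$. So the task is to bound the sup norm of the zero-mean part of $S$. Its Fourier energy splits at a frequency $K$ into a low band and a high band.

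\emph{Low band.} We take $K=\lfloor N/10\rfloor$ and use the low-band coercivity \eqref{eq:atomic-rigidity-009} of Theorem~\ref{thm:atomic-rigidity}. Since $\widehat\nu(1)=0$, it gives $\sum_{2\le k\le K}|\widehat\nu(k)|^2/k^3\le\frac54 d$. This does not yet control $\sum|\widehat\nu(k)|/k$ in sup norm, because Cauchy--Schwarz against $k^{3}\cdot k^{-2}$ loses a factor $K$. We therefore avoid a raw sup bound and use the monotone structure instead. $S$ increases by jumps and decreases with slope $-1/(2\pi)$. So if $\operatorname{osc}S\ge\delta$ with $\delta\gg h$, there is an arc of length $\ell$ on which $S$ changes by about $\delta$. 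Moreover $S$ stays within $\delta/4$ of that change on an adjacent interval of length comparable to $\delta$: the slope is bounded, and a large jump gives a whole window where $S$ is shifted. Testing $S$ against a smooth bump $\phi$ of width $\sim\delta$ adapted to this window then gives
\[
\Bigl|\int (S-\overline S)\,\phi'\Bigr|\gtrsim\delta^2 .
\]
On the Fourier side, the same pairing is bounded by Cauchy--Schwarz:
\[
\Bigl(\sum |\widehat\nu(k)|^2k^{-3}\Bigr)^{1/2}\Bigl(\sum k\,|\widehat{\phi}(k)|^2\Bigr)^{1/2}.
\]
The second factor is $O(\delta^{-1/2}\cdot\delta\cdot \delta^{-1})$ in the relevant scaling. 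We must choose the bump so that its frequencies concentrate below $1/\delta\lesssim K$. This forces $\delta\gtrsim h$, and that is where the additive $h$ in the statement comes from. Balancing the two sides yields $\delta^{4}\lesssim d$, up to the $h$ term.

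\emph{High band.} The frequencies above $K$ are not controlled by $d$ alone. They are, however, damped by the bump: $|\widehat\phi(k)|$ decays rapidly for $k\gg1/\delta$. Their contribution is then bounded using only $|\widehat\nu(k)|\le1$ and $\sum_{k>K}k^{-3}\lesssim N^{-2}$. This is absorbed into the $h$ term. Combining the two bands gives $\operatorname{Disc}(\nu)\le C(h+d^{1/4})$.

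\emph{Main obstacle.} The hardest point is calibrating the test function so that the lower bound $\delta^2$ and the Fourier upper bound have matching powers giving exactly $d^{1/4}$. I would first try a Fejér-type kernel of degree $\sim\min(K,1/\delta)$. Its positivity lets the monotone jump structure give the lower bound directly, in the same spirit as the convolution estimate \eqref{eq:atomic-rigidity-014}.

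\emph{Bound on $\operatorname{osc}p$.} We use $Dp=d\theta-2\pi\nu_P$ from \eqref{eq:curvature-coordinates-018}. This gives $p=c-2\pi S$ for a constant $c$, so $\operatorname{osc}p=2\pi\operatorname{osc}S\le C(h+d^{1/4})$. To pass from oscillation to $\|p\|_\infty$, use $\int\tan p=0$. Because $\tan$ is odd and increasing, this forces $p$ to take both nonnegative and nonpositive values; $p$ is piecewise continuous, so its essential infimum is $\le 0$ and its essential supremum is $\ge0$. Therefore $\|p\|_\infty\le\operatorname{osc}p$, which completes the argument.
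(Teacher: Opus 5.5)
\textbf{Verdict: genuine gap, repairable with the Fej\'er kernel you already mention.} Your plan is viable: pair the centered sawtooth $S$ with a positive kernel of width $\sim\delta$ placed on a window where $S-\overline S$ has fixed sign and size $\gtrsim\delta$, then apply Cauchy--Schwarz against the inverse-cubic energy. As written, however, two steps do not give the stated bound.

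\emph{The scaling step.} The pairing must be with $\phi$, not $\phi'$: against a one-signed window, $\int(S-\overline S)\phi'$ carries no information, since $\int\phi'=0$. For a height-one bump of width $\delta$ one has $|\widehat\phi(k)|\lesssim\delta$ for $|k|\lesssim\delta^{-1}$, so $\sum_k|k||\widehat\phi(k)|^2\sim1$. Your value $\delta^{-1/2}\cdot\delta\cdot\delta^{-1}=\delta^{-1/2}$ would give $\delta^{5/2}\lesssim d^{1/2}$, i.e.\ only $d^{1/5}$.

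\emph{The high-band step.} This is the more serious problem, and it fails exactly in the regime $\delta\sim h$ that produces the additive $h$. A compactly supported smooth bump is not band-limited. With $|\widehat\nu(k)|\le1$ and $\sum_{k>K}k^{-3}\lesssim N^{-2}$, the tail is $\lesssim_m N^{-1}(K\delta)^{1-m}$. This is $\le c\delta^2$ only when $\delta\gtrsim N^{-m/(m+1)}$. You would get $\operatorname{Disc}(\nu)\le C_m(h^{m/(m+1)}+d^{1/4})$, not $C(h+d^{1/4})$. The sharp $h$ is needed later, where $d\le Ch^4$ must yield $\|p\|_\infty\le Ch$.

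\emph{The fix.} Use the band-limited kernel you mention only as a fallback.
\begin{itemize}
\item Put $\delta=\operatorname{osc}S$. The one-sided bound $S(\theta')-S(\theta)\ge-(\theta'-\theta)/2\pi$ gives a window of length $\sim\delta$ on which $|S-\overline S|\ge\delta/4$ with fixed sign.
\item Center the normalized Fej\'er kernel $F^{\rm av}_n$ there, with $n=\lceil A/\delta\rceil$. If $n\le\lfloor N/10\rfloor$, there is no high band at all.
\item The kernel's mass outside the window is $O(1/(n\delta))=O(1/A)$, and $|S-\overline S|\le\delta$ everywhere. So for $A$ large the pairing still has absolute value $\ge\delta/8$.
\item The Fourier side is at most $C\sum_{2\le k<n}|\widehat\nu(k)|/k\le Cnd^{1/2}$, by Cauchy--Schwarz and \eqref{eq:atomic-rigidity-009}. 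Hence $\delta^2\le CAd^{1/2}$.
\item If instead $n>\lfloor N/10\rfloor$, then $\delta\le CAh$ directly.
\end{itemize}
This uses exactly the factor-$n$ Cauchy--Schwarz loss you said you were avoiding. It is harmless because positivity lets you truncate at $n\sim1/\delta$ with an $O(1/n)$ error.

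\emph{Comparison with the paper.} Once repaired, your argument is a self-contained proof of the Erd\H{o}s--Tur\'an bound $\operatorname{Disc}(\nu)\le C(K^{-1}+\sum_{k\le K}|\widehat\nu(k)|/k)$. The paper instead cites that bound, extended to weighted atoms by replication and approximation. It then bounds $\sum_{k\le K}|\widehat\nu(k)|/k\le CKd^{1/2}$ and chooses $K\sim\min\{N/2,d^{-1/4}\}$. Your direct version needs no weight-extension step; the paper's is shorter. Your treatment of $\operatorname{osc}p$ and $\|p\|_\infty$, via $p=c-2\pi S$ and $\int\tan p=0$, matches the paper's.
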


\begin{proof}
The weighted discrepancy estimate follows first for rational masses by replication, and then for arbitrary positive masses by total-variation approximation:
\begin{equation}\label{eq:localization-001}
 \operatorname{Disc}(\nu)
 \le C\left(\frac1K+\sum_{k=1}^{K}\frac{|\widehat\nu(k)|}{k}\right).
\end{equation}
Atomic positivity gives $d\ge0$. For $K\le L=\lfloor N/2\rfloor$, centering and low-band coercivity yield
\begin{equation}\label{eq:localization-002}
 \sum_{k=1}^{K}\frac{|\widehat\nu(k)|}{k}
 \le\left(\sum_{k=2}^{K}\frac{|\widehat\nu(k)|^2}{k^3}\right)^{1/2}
     \left(\sum_{k=2}^{K}k\right)^{1/2}
 \le CKd^{1/2}.
\end{equation}
Set $\delta=d^{1/4}$. The trivial estimate covers $\delta\ge1/4$. Otherwise take $K=L$ when $\delta<L^{-1}$, and $K=\lfloor\delta^{-1}\rfloor$ when $\delta\ge L^{-1}$. In either case $K^{-1}+K\delta^2\le C(N^{-1}+\delta)$, so
\begin{equation}\label{eq:localization-003}
 \operatorname{Disc}(\nu)\le C(h+d^{1/4}).
\end{equation}

For the turning function, integrate over the lifted half-open arc $I=(x,y]$:
\[
 p(y)-p(x)=-2\pi\left(\nu_P(I)-\frac{|I|}{2\pi}\right).
\]
This gives $\operatorname{osc}p\le2\pi\operatorname{Disc}(\nu_P)$. The conditions $\int\tan p=0$ and $p\in(-\pi/2,\pi/2)$ put zero in the essential range, and therefore $\|p\|_\infty\le\operatorname{osc}p$.
\end{proof}

\begin{lemma}
\label{lem:endpoint-fejer-hilbert}
Let $(\nu,p,v_L)$ be centered near-regular polar data, put
$\eta=\|p\|_\infty$, and suppose that for some $2\le K\le N/10$,
\[
 \sum_{k=2}^{K}\frac{|\widehat\nu(k)|^2}{k^3}\le C_Ld,
 \qquad d\ge0.
\]
Then
\begin{equation}\label{eq:localization-004}
 \|v_L\|_\infty
 \le C\left(\eta\frac{\log(2K)}K
       +d^{1/2}\sqrt{\log(2K)}\right).
\end{equation}
For $K=\lfloor N/10\rfloor$, define, with modes $0,\pm1$ removed,
\[
 \widehat{M_{{\rm B}}f}(k)=\frac{4\beta_{|k|}}{k^2}\widehat f(k).
\]
Then
\begin{equation}\label{eq:localization-005}
 \|M_{{\rm B}}p\|_\infty
 \le C\left(\eta h+\|v_L\|_\infty\log(2N)\right),
\end{equation}
and, for every real $g\in L^1\cap L^2$,
\begin{equation}\label{eq:localization-006}
 |K_{{\rm B}}(p,g)|\le\frac12\|M_{{\rm B}}p\|_\infty\|g\|_1,
 \qquad |K_{{\rm B}}(g,g)|\le C\|g\|_2^2.
\end{equation}
The constants depend only on $C_L$.
\end{lemma}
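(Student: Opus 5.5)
The plan is to work entirely with the Fourier multipliers that link $\nu$, $p$ and $v_L$. The turning identity $Dp=d\theta-2\pi\nu_P$ gives $ik\,\widehat p(k)=-\widehat\nu(k)$ for $k\ne0$ (up to the fixed sign/normalisation of \eqref{eq:compactness-001}). Together with \eqref{eq:curvature-coordinates-019} this gives $\widehat v_L(k)=\widehat p(k)/(ik)$. Centering removes the mode $k=1$, and $v_L'=p-\overline p$ gives $\|v_L'\|_\infty\le2\eta$.

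\emph{Bound \eqref{eq:localization-004}.} Split $v_L=S_Kv_L+(v_L-S_Kv_L)$, where $S_K$ is the Dirichlet partial sum.
\begin{itemize}
\item For the low part, use $|\widehat v_L(k)|=|\widehat\nu(k)|/k^2$ and Cauchy--Schwarz against the hypothesis:
\[
\sum_{2\le |k|\le K}\frac{|\widehat\nu(k)|}{k^2}\le 2\Bigl(\sum_{k=2}^K\frac{|\widehat\nu(k)|^2}{k^3}\Bigr)^{1/2}\Bigl(\sum_{k\le K}\frac1k\Bigr)^{1/2}\le C\,d^{1/2}\sqrt{\log(2K)}.
\]
\item For the high part, apply the Lebesgue-constant bound $\|f-S_Kf\|_\infty\le C\log(2K)\,E_K(f)$, where $E_K$ is the best uniform approximation error by trigonometric polynomials of degree $K$. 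Jackson's inequality for the Lipschitz function $v_L$ gives $E_K(v_L)\le C\|v_L'\|_\infty/K\le C\eta/K$.
\end{itemize}
Adding the two parts gives \eqref{eq:localization-004}, with constants depending only on $C_L$.

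\emph{Bound \eqref{eq:localization-005}.} This is the step I expect to be the main obstacle, because $M_{\rm B}$ behaves like a Hilbert transform of $v_L$. Indeed $\widehat{M_{\rm B}p}(k)=\frac{4\beta_{|k|}}{|k|}\operatorname{sgn}(k)\,i\,\widehat v_L(k)$, and $4\beta_{|k|}/|k|=4+O(1/|k|)$ by $\beta_k=k+1-r_k$ with $0<r_k<6/(k+1)$. The factor $\operatorname{sgn}(k)$ is not bounded on $L^\infty$, which is the source of the logarithm.
\begin{itemize}
\item Insert a de la Vallée-Poussin cutoff $V_N$, with multiplier $1$ for $|k|\le N$ and $0$ for $|k|\ge2N$.
\item On $V_NM_{\rm B}p$, write the operator as (conjugate function) $\circ$ (bounded smooth multiplier) $\circ V_N$ acting on $v_L$. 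The conjugate function of a degree-$2N$ trigonometric polynomial costs $C\log(2N)$ in sup norm. The multiplier $4+O(1/|k|)$ restricted to $|k|\le2N$ is uniformly $L^1$-convolution bounded; I would check this by summation by parts using the complete monotonicity of $R$, which makes $r_k/k$ monotone with summable differences. This yields $C\log(2N)\|v_L\|_\infty$.
\item On the tail $(1-V_N)M_{\rm B}p$, act directly on $p$. The multiplier is $(1-\widehat{V_N}(k))\,4\beta_{|k|}/k^2$, supported on $|k|>N$, of size about $1/|k|$ and with monotone structure. Summation by parts shows its convolution kernel has $L^1$ norm $\le C/N$, like the tail of $\sum_{k>N}\sin(k\theta)/k$. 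Hence this part is $\le C\eta h$.
\end{itemize}

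\emph{Bounds \eqref{eq:localization-006}.} These follow from Parseval on $\mathbb T$ with the normalised measure. For real $g$, $K_{\rm B}(p,g)$ equals $\tfrac12\cdot\frac1{2\pi}\int M_{\rm B}p\cdot g$, since the factor $4$ and the two-sided sum against $\operatorname{Re}\sum_{k\ge2}$ account for the $\tfrac12$. Hölder then gives the first inequality. For the second, $|\beta_k|/k^2\le C/k\le C$, so $|K_{\rm B}(g,g)|\le C\sum_k|\widehat g(k)|^2\le C\|g\|_2^2$.
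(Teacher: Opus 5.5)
Your proposal is correct. For \eqref{eq:localization-006} it matches the paper: Parseval gives $K_{\rm B}(p,g)=\tfrac12\langle M_{\rm B}p,g\rangle$ for real $g$, then H\"older, and $|\beta_k|/k^2\le C$ gives the second bound. For the two sup-norm bounds you take a different route.

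For \eqref{eq:localization-004} the paper uses the Fej\'er mean $\Sigma_Kf=F^{\rm av}_{K+1}*f$ rather than the Dirichlet partial sum. The first absolute moment of $F^{\rm av}_{K+1}$ is $O(\log(2K)/K)$, so $\|v_L'\|_\infty\le2\eta$ bounds $v_L-\Sigma_Kv_L$ directly, and the low part is your Cauchy--Schwarz estimate. Your Lebesgue-constant-plus-Jackson argument gives the same bound, using two classical theorems where the paper uses one kernel moment.

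For \eqref{eq:localization-005} the paper works in physical space. It writes $\widehat{M_{\rm B}p}(k)=4i\operatorname{sgn}(k)\widehat v_L(k)+4i\operatorname{sgn}(k)\frac{1-r_{|k|}}{|k|}\widehat v_L(k)$ and bounds the Abel-regularized conjugate kernel by $C/|t|$. It then cuts the integral at $|t|=h$, using the Lipschitz bound of $v_L$ inside and $2\|v_L\|_\infty$ outside; the second multiplier has a bounded sawtooth kernel plus a summable error. Your de la Vall\'ee-Poussin splitting at frequency $N$ is the Fourier-side counterpart. The logarithm is the conjugate-function constant on polynomials of degree $2N$, and the $\eta h$ term comes from letting the high frequencies act on $p$. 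The paper's version is shorter because it never needs an $L^1$ bound for a tail kernel.

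That tail bound is the one step whose justification you should sharpen. The coefficients $a_k=(1-\widehat{V_N}(k))\,4\beta_{|k|}/k^2$ are not monotone: they rise on $N<|k|<2N$ and then decay like $4/|k|$. The analogy with $\sum_{k>N}\sin(k\theta)/k$ is also misleading. That sharp tail, like $\sum_{k>N}\cos(k\theta)/k$, has $L^1$ norm of order $\log N/N$. A single Abel summation against Dirichlet kernels likewise gives only $C\log N/N$, which yields $C\eta h\log N$ instead of the claimed $C\eta h$.

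The bound $C/N$ does hold, but it uses the smoothness of the ramp. Sum by parts twice against Fej\'er kernels, using $\sum_k k|\Delta^2a_k|\le C/N$ for the leading part $(1-\widehat{V_N}(k))\,4/|k|$. Equivalently, write that part as $N^{-1}g(k/N)$, where $g$ is continuous, piecewise smooth, and equal to $4/|x|$ for $|x|\ge2$, and apply Poisson summation with $\widehat g\in L^1(\mathbb R)$. The remaining pieces $4k^{-2}-4r_kk^{-2}$ have $\ell^1$ tail $O(1/N)$ and need nothing further.
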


\begin{proof}
Use the normalized Fej\'er kernel, whose bounds are
\[
 F_{K+1}^{\rm av}\ge0,\qquad \frac1{2\pi}\int F_{K+1}^{\rm av}=1,
 \qquad F_{K+1}^{\rm av}(t)\le C\min\{K+1,((K+1)t^2)^{-1}\},
\]
with $F_m^{\rm av}(\theta)=m^{-1}(\sin(m\theta/2)/\sin(\theta/2))^2$, and set $\Sigma_Kf=F_{K+1}^{\rm av}*f$. Its first absolute moment is $O(\log(2K)/K)$. Since $v_L'=p-\overline p$,
\[
 \|v_L-\Sigma_Kv_L\|_\infty\le C\eta\frac{\log(2K)}K.
\]
The retained part has no modes $0,\pm1$. Cauchy--Schwarz and low-band coercivity give
\[
 \|\Sigma_Kv_L\|_\infty
 \le C\sum_{k=2}^{K}\frac{|\widehat\nu(k)|}{k^2}
 \le Cd^{1/2}\sqrt{\log(2K)}.
\]
Adding proves \eqref{eq:localization-004}.

The identity $\widehat p(k)=ik\widehat v_L(k)$ and the decomposition $\beta_k=k+1-r_k$ give
\[
 \widehat{M_{{\rm B}}p}(k)
 =4i\operatorname {sgn}(k)\widehat v_L(k)
 +4i\operatorname {sgn}(k)\frac{1-r_{|k|}}{|k|}\widehat v_L(k).
\]
For the leading Hilbert-transform term, use
\[
 H_rv_L(x)=\frac1{2\pi}\int_{-\pi}^{\pi}
 [v_L(x-t)-v_L(x)]\frac{2r\sin t}{1-2r\cos t+r^2}\,dt.
\]
The kernel is bounded by $C/|t|$. Estimate the difference by its Lipschitz bound on $|t|\le h$ and by $2\|v_L\|_\infty$ elsewhere. The bound is uniform in the Abel parameter. A weak-$*$ limit, identified through the Fourier coefficients, yields
\[
 \|Hv_L\|_\infty
 \le C\left(\eta h+\|v_L\|_\infty\log(2N)\right).
\]
The remaining multiplier has a bounded sawtooth kernel for the $i/k$ term and a summable $O(k^{-2})$ error. This proves \eqref{eq:localization-005}. Fourier truncation gives $K_{{\rm B}}(p,g)=\tfrac12\overline{(M_{{\rm B}}p)g}$ for real arguments. This gives the first pairing estimate; Parseval and $|\beta_k|/k^2\le C$ give the second.
\end{proof}

\begin{lemma}
\label{lem:polar-transfer}
Let $X$ be an area-$\pi$ radial domain in the Bessel splitting above and in the chart of
Definition~\ref{def:polar-coordinates}, with polar variables
$(\nu,p,v,\rho,v_L)$, exact centering $\widehat\nu(1)=0$, and
$\eta=\|p\|_\infty$ smaller than a fixed absolute constant.  Put
$q=\rho-v_L$, $V_2=\|v_L\|_2$, and $V_\infty=\|v_L\|_\infty$.

\smallskip
\noindent\textup{\bfseries (i) Nonlinear reconstruction.}
There are unique $e,c$ with
\[
 v=v_L+e+c,\qquad \overline e=0,\qquad
 e'=\tan p-p+\overline p,
\]
and, whenever $V_\infty+\eta$ is sufficiently small,
\begin{equation}\label{eq:localization-007}
 |\overline p|+\|e\|_{W^{1,\infty}}+\|e\|_{H^1}\le C\eta^3,
 \qquad |c|\le C(V_2^2+\eta^6),
\end{equation}
\begin{equation}\label{eq:localization-008}
 \|v\|_2\le C(V_2+\eta^3),\qquad
 \|v\|_\infty\le C(V_\infty+\eta^3+V_2^2),
\end{equation}
\begin{equation}\label{eq:localization-009}
 \|q'\|_2\le C(\eta V_2+\eta^3),\qquad
 \|q'\|_\infty\le C(\eta\|v\|_\infty+\eta^3),
 \qquad \|q\|_\infty\le C(V_\infty^2+V_2^2+\eta^3),
\end{equation}
\begin{equation}\label{eq:localization-010}
 \|q\|_{H^{1/2}}^2
 \le C\|q\|_\infty(\|q\|_\infty+\|q'\|_2),\qquad
 \|b\|_1+\|b\|_2+\|b\|_\infty
 \le C(\eta\|v\|_\infty+\eta^3),
\end{equation}
where $b=e^v\tan p-p$.

\smallskip
\noindent\textup{\bfseries (ii) Quadratic and fixed-disk transfer.}
For $k\ge2$,
\begin{equation}\label{eq:localization-011}
 \frac k2\le\beta_k\le\frac{3k}{2},\qquad
 Q(q)\le C\|q'\|_2^2,
\end{equation}
and the exact polarization of $Q$ gives
\begin{equation}\label{eq:localization-012}
 |D_N(X)|
 \le C\left(Q(v_L)^{1/2}\|q'\|_2+\|q'\|_2^2\right).
\end{equation}
If $\rho\in W^{1,\infty}\cap H^{1/2}$ lies in the common fixed-disk
smallness ball and $U=\operatorname{Ext}(\rho e_r)$ is the simultaneous extension,
then
\begin{equation}\label{eq:localization-013}
 L(U)\le C\|\rho\|_{W^{1,\infty}},\qquad
 E(U)^2\le C\|\rho\|_{H^{1/2}}^2,
\end{equation}
\[
 (I+U)(\mathbb D)=X,\qquad
 \frac12D^2J_{{\rm sp}}(0)[U,U]=Q_B(\rho^\perp)=j^2Q(\rho),
\]
\begin{equation}\label{eq:localization-014}
 \widetilde{R}_N(X)
 =\frac1{j^2}\left(J_{{\rm sp}}(U)-J_{{\rm sp}}(0)
 -\frac12D^2J_{{\rm sp}}(0)[U,U]\right),
 \qquad
 |\widetilde{R}_N(X)|
 \le C\|\rho\|_{W^{1,\infty}}\|\rho\|_{H^{1/2}}^2.
\end{equation}

\smallskip
\noindent\textup{\bfseries (iii) Regular comparator.}
For the area-$\pi$ regular polygon, on $|s|\le h/2$,
\begin{equation}\label{eq:localization-015}
 p_R(s)=s,\qquad e^{v_R(s)}=s_N\sec s,\qquad
 v_{L,R}(s)=\frac{s^2}{2}-\frac{h^2}{24},\qquad
 s_N=\left(\frac{h/2}{\tan(h/2)}\right)^{1/2}.
\end{equation}
Consequently
\[
 \|p_R\|_\infty+\|\rho_R'\|_\infty\le Ch,
 \quad \|\rho_R\|_\infty+\|v_R\|_\infty+\|v_{L,R}\|_\infty\le Ch^2,
\]
\begin{equation}\label{eq:localization-016}
 \|\rho_R\|_{H^{1/2}}^2+Q(v_{L,R})\le Ch^3,
 \quad \|q_R'\|_2+\|b_R\|_1+\|b_R\|_2
 +\|b_R\|_\infty\le Ch^3,
\end{equation}
and, for the multiplier $M_{{\rm B}}$ in the endpoint estimate below,
\begin{equation}\label{eq:localization-017}
 \|M_{{\rm B}}p_R\|_\infty\le Ch^2,
 \qquad |D_N(R_N)|\le Ch^5,
 \qquad |\widetilde{R}_N(R_N)|\le Ch^4.
\end{equation}
All constants are independent of the smallest polar weight and gap.
\end{lemma}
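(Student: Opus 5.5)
The proof splits along the three parts of Lemma~\ref{lem:polar-transfer}, and in each part I work edge by edge using the identities \eqref{eq:curvature-coordinates-018}.

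\emph{Part (i).} Since $Dp=d\theta-2\pi\nu$ and $\widehat v_L(k)=\widehat\nu(k)/k^2$, we have $v_L'=p-\overline p$. Hence $v-v_L$ has derivative $\tan p-p+\overline p$, which defines $e$ uniquely with $\overline e=0$; then $c$ is the mean of $v$.

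\begin{itemize}
\item The identity $\int\tan p=0$ gives $\overline p=-\overline{(\tan p-p)}=O(\eta^3)$.
\item Since $|\tan p-p|\le C|p|^3$, we get $\|e'\|_\infty\le C\eta^3$, and Poincar\'e gives the $W^{1,\infty}$ and $H^1$ bounds.
\item For $c$, use the area constraint $\frac1{2\pi}\int e^{2v}=1$. Expand $e^{2v}=1+2v+O(v^2)$ and use $\overline{v_L}=\overline e=0$. This gives $2c=-O(\|v\|_2^2)$, so $|c|\le C(V_2^2+\eta^6)$ after absorbing $c^2$.
\item The bounds \eqref{eq:localization-008} follow by the triangle inequality.
\item For $q=\rho-v_L=e^v-1-v_L$, write $q'=e^v\tan p-p+\overline p=b+\overline p$. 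Then $b=(e^v-1)\tan p+(\tan p-p)$ has size $\le C(\eta|v|+\eta^3)$ pointwise, which gives the $b$ bounds and $\|q'\|$.
\item Also $q=(e^v-1-v)+e+c$, which gives $\|q\|_\infty$.
\item The $H^{1/2}$ bound is interpolation: $\|q\|_{\dot H^{1/2}}^2\le\|q\|_2\|q'\|_2$.
\end{itemize}

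\emph{Part (ii).} The bounds on $\beta_k$ are \eqref{eq:disk-perturbation-016}. Then $Q(q)\le C\sum k|\widehat q(k)|^2\le C\|q'\|_2^2$. Polarizing $Q(\rho)=Q(v_L+q)$ gives $D_N=2Q(v_L,q)+Q(q)$ after subtracting $Q(v_L)=Q_N(X)$ from \eqref{eq:curvature-coordinates-025}. Cauchy--Schwarz in the positive form then gives \eqref{eq:localization-012}.

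The fixed-disk statements are direct citations:
\begin{itemize}
\item Lemma~\ref{lem:fixed-disk-extension-realization} gives the extension bounds and the identity $(I+U)(\mathbb D)=X$.
\item Theorem~\ref{thm:uniform-fixed-disk-expansion}(i) gives the Hessian identity, using $Q_B(\rho^\perp)=j^2Q(\rho)$ (compare the normalizations $2j^2\sum_{|m|\ge2}$ with $4\sum_{k\ge2}$).
\item \eqref{eq:disk-perturbation-019} gives the remainder bound $CL(U)E(U)^2$, which is \eqref{eq:localization-014}.
\end{itemize}

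\emph{Part (iii).} This is explicit computation on one cell $|s|\le h/2$ of the regular polygon. There $p_R(s)=s$, and the support radius $s_N$ is fixed by the area formula $N s_N^2\tan(h/2)=\pi$.

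\begin{itemize}
\item $v_{L,R}$ is the periodic zero-mean primitive of $s$ (here $\overline{p_R}=0$), namely $s^2/2-h^2/24$.
\item The sup bounds follow by inspection.
\item $\widehat\nu_R$ is supported on $N\mathbb Z$, so $Q(v_{L,R})\le C\sum_m (mN)^{-3}\le Ch^3$. The $H^{1/2}$ norm of $\rho_R$ is bounded the same way, since $\rho_R=v_{L,R}+O(h^4)$ with the correction having derivative $O(h^3)$.
\item The $q_R'$ and $b_R$ bounds follow from part (i) with $\eta=h$ and $\|v_R\|_\infty=O(h^2)$.
\item For $M_{\rm B}p_R$, the Fourier support on $N\mathbb Z$ gives $\sum_m\beta_{mN}(mN)^{-2}|\widehat p_R(mN)|$ with $|\widehat p_R(mN)|\le C/(mN)$. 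Summing gives $\le Ch^2$, where the $m$-sum converges since the terms are $\beta_{mN}/(mN)^3\sim (mN)^{-2}$.
\item $D_N(R_N)$ comes from \eqref{eq:curvature-coordinates-026} together with \eqref{eq:localization-006}: the cross term is $\|M_{\rm B}p_R\|_\infty\|b_R\|_1\le Ch^5$ and the square term is $C\|b_R\|_2^2\le Ch^6$.
\item $\widetilde R_N(R_N)\le C\|\rho_R\|_{W^{1,\infty}}\|\rho_R\|_{H^{1/2}}^2\le Ch\cdot h^3$.
\end{itemize}

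The main obstacle is uniformity: every constant must be independent of the minimal gap. I expect this to hold automatically, because only $\|p\|_\infty$ and global Fourier quantities are used, never individual edge lengths. The other delicate point is the smallness of $c$ in part (i), which needs the area expansion to be carried to second order.
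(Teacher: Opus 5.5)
Your proposal is correct and follows essentially the same route as the paper: the same exact identities $q'=b+\overline p$ and $q=(e^v-1-v)+e+c$, Cauchy--Schwarz for the positive form $Q$ after polarization, direct citation of the fixed-disk extension and expansion results, and, for $R_N$, Fourier support on $N\mathbb Z$ combined with the multiplier pairing $|K_{\rm B}(p_R,b_R)|\le\tfrac12\|M_{\rm B}p_R\|_\infty\|b_R\|_1$. The one small difference is in the constant $c$: the paper uses the exact formula $c=-\tfrac12\log\overline{e^{2(v_L+e)}}$, which gives $|c|\le C\|v_L+e\|_2^2$ directly, whereas your absorption of $c^2$ needs $c$ to be small a priori; that smallness, $|c|\le\|v_L+e\|_\infty$, follows from the same formula, so your step goes through.
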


\begin{proof}
The periodicity condition gives $\overline{\tan p}=0$, hence $\overline p=\overline{p-\tan p}$ and $|\overline p|\le C\eta^3$. The equation for $e'$ consequently has a unique mean-zero primitive. The area constraint determines the constant by
\[
 c=-\frac12\log\overline{e^{2(v_L+e)}}.
\]
The primitive bound for $e$, Taylor expansion at the mean-zero function $v_L+e$, and local Lipschitz continuity of the logarithm give \eqref{eq:localization-007}--\eqref{eq:localization-008}. For $q$, use the exact formulas
\[
 q'=(e^v-1)\tan p+(\tan p-p)+\overline p,
 \qquad q=(e^v-1-v)+e+c,
\]
to obtain \eqref{eq:localization-009}. Interpolate $\|q\|_{H^{1/2}}^2\le C\|q\|_2\|q\|_{H^1}$ and use $\|q\|_2\le C\|q\|_\infty$ for the first bound in \eqref{eq:localization-010}. The second follows from $b=(e^v-1)\tan p+(\tan p-p)$.

The disk coefficient bounds and Parseval give \eqref{eq:localization-011}. Polarization at $\rho=v_L+q$, with $Q(v_L)=Q_N(X)$, yields \eqref{eq:localization-012}. The simultaneous extension supplies \eqref{eq:localization-013}. Domain realization and the trace-independent Hessian identify \eqref{eq:localization-014}, whose estimate is the uniform disk remainder bound.

For the regular comparator, the supporting-line formula and area constraint give \eqref{eq:localization-015}. Expanding $\sec$ and $\tan$ and applying periodic interpolation gives \eqref{eq:localization-016}; the bounds for $q_R'$ also use $\overline p_R=0$. Only multiples of $N$ occur, with $|\widehat v_{L,R}(mN)|=(mN)^{-2}$. Thus $\beta_k\le3k/2$ gives
\[
 \|M_{{\rm B}}p_R\|_\infty
 \le C\sum_{m\ge1}(mN)^{-2}\le Ch^2
\]
The pairings in \eqref{eq:curvature-coordinates-026} yield $|D_N(R_N)|\le Ch^5$, and \eqref{eq:localization-014} gives the regular remainder estimate.
\end{proof}

\subsection{The first quantitative localization}

Minimality first bounds the total quadratic energy at the regular polygon scale. That estimate can be fed back into discrepancy and reconstruction to improve the defect itself. The distinction between total energy and defect is essential here: only the latter measures departure from the regular atomic configuration.

\begin{lemma}\label{lem:coarse-polar-energy}

There are absolute constants \(C<\infty\) and \(N_0\) with the following
property. Let \(N\ge N_0\), put \(h=2\pi/N\), and let \(P=P_N^*\) be any
global minimizer translated to its polar center in the sense of
Definition~\ref{def:polar-coordinates}.
Write

\[
\partial P=\{(1+\rho(\theta))e^{i\theta}:\theta\in\mathbb T\}.
\]

Use the Bessel-normalized variables from
Definition~\ref{def:bessel-splitting}
and
Definition~\ref{def:disk-normalized-splitting};
in particular, \(Q\) uses the coefficients \(\beta_k\) fixed by
Definition~\ref{def:atomic-energies}.
Then

\[
\|\rho\|_{\dot H^{1/2}}^2+Q(\rho)
+\sum_{k=2}^{\infty}\frac{|\widehat\nu_P(k)|^2}{k^3}
\le Ch^3.
\]

The constant is independent of the smallest exterior angle, polar gap,
or side length.

\end{lemma}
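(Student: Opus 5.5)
The plan is to compare the minimizer with the regular polygon through the exact splitting \eqref{eq:curvature-coordinates-022}, and to absorb every error into the quadratic energy. Minimality gives $F(P)\le F(R_N)$. Dividing by $F(\mathbb D)$ and using Definition~\ref{def:disk-normalized-splitting} at $\Omega=P$,
\[
 Q(\rho)\le \frac{F(R_N)-F(\mathbb D)}{F(\mathbb D)}-\widetilde R_N(P)
 \le Ch^3+|\widetilde R_N(P)|,
\]
where the first term is $Q(\rho_R)+\widetilde R_N(R_N)\le Ch^3$ by \eqref{eq:localization-016}--\eqref{eq:localization-017}. By Proposition~\ref{prop:global-minimizer-radial-entry}, $\|\rho\|_{W^{1,\infty}}\le CN^{-1/2}\to0$. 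Hence \eqref{eq:localization-014} gives $|\widetilde R_N(P)|\le C N^{-1/2}\|\rho\|_{H^{1/2}}^2$.

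Next I relate $\|\rho\|_{H^{1/2}}^2$ to $Q(\rho)$. By \eqref{eq:disk-perturbation-017}, $Q(\rho)$ controls $\|\rho^\perp\|_{\dot H^{1/2}}^2$. The modes $0,\pm1$ must be handled separately.
\begin{itemize}
\item The zero mode comes from the area constraint $\overline{e^{2v}}=1$. It gives $|\widehat\rho(0)|\le C\|\rho\|_2^2\le C\|\rho\|_\infty\|\rho^\perp\|_2+\dots$, which is quadratically small and can be absorbed.
\item For the first modes, use the exact centering $\widehat\nu_P(1)=0$ together with $v'=\tan p$ and the turning relation \eqref{eq:compactness-013}. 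Since $\widehat p(1)$ is determined by $\widehat\nu_P(1)=0$ up to the term $\widehat{(\tan p-p)}(1)=O(\eta^3)$, we get $|\widehat v(\pm1)|\le C\|p\|_\infty^2\|p\|_2$. Converting from $v$ to $\rho=e^v-1$ adds another quadratic error. These bounds are $o(1)$ times the full $\dot H^{1/2}$ energy, or else $O(h^3)$, because $\|p\|_2^2\lesssim\|\rho'\|_2^2$ is only used with the small prefactor $\eta^2\le CN^{-1}$.
\end{itemize}
This is the delicate step: a pure $H^{1/2}$ quantity must absorb $H^1$-type errors. I would instead bound the low modes by $\|\rho\|_\infty$ times the $\dot H^{1/2}$ norm, using the $O(1/N)$ sup bound. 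The result is $\|\rho\|_{H^{1/2}}^2\le C Q(\rho)+o(1)\|\rho\|_{H^{1/2}}^2$. Combining with the first display and $N$ large gives $\|\rho\|_{\dot H^{1/2}}^2+Q(\rho)\le Ch^3$.

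Finally, for the atomic sum I pass from $\rho$ to $v_L$. By \eqref{eq:curvature-coordinates-025} and $\beta_k\ge k/2$, $\sum_{k\ge2}|\widehat\nu_P(k)|^2/k^3\le C Q(v_L)=CQ_N(P)$. Polarizing $\rho=v_L+q$ gives $Q(v_L)\le 2Q(\rho)+2Q(q)$. Lemma~\ref{lem:polar-transfer} bounds $Q(q)\le C\|q'\|_2^2\le C(\eta V_2+\eta^3)^2$. With $\eta=\|p\|_\infty\le CN^{-1/2}$ the $\eta^6$ term is only $O(N^{-3})=O(h^3)$, which suffices. For $\eta^2V_2^2$, note $V_2^2\le\sum|\widehat\nu_P(k)|^2/k^4\le$ the atomic sum itself, so this term carries the prefactor $CN^{-1}$ and is absorbed into the left side. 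The lower bound $\|v\|_\infty\lesssim N^{-1}$ keeps the hypotheses of Lemma~\ref{lem:polar-transfer} valid uniformly.

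The main obstacle is the absorption of the first-mode and reconstruction errors without any mesh regularity. I expect the $N^{-1/2}$ Lipschitz smallness of Proposition~\ref{prop:global-minimizer-radial-entry} to be exactly what makes all three absorptions close, with constants independent of the smallest side.
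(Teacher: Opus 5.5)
Your argument is essentially the paper's: minimality plus the exact splitting, the disk remainder bound with the $h^{1/2}$ Lipschitz prefactor, control of the modes $0,\pm1$ from the area and centering constraints followed by absorption, and then $Q(v_L)\le 2Q(\rho)+2Q(q)$ with $Q(q)\le C\|q'\|_2^2$. The one difference there is minor: the paper bounds $\|q'\|_2\le Ch^{3/2}$ directly from $\|e^v-1\|_\infty\le Ch$ and $\eta\le Ch^{1/2}$, instead of absorbing $\eta^2V_2^2$ into the atomic sum, and both routes close.

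For the first modes, keep the additive bound from your first bullet, $|\widehat v(\pm1)|=|\widehat{(\tan p-p)}(\pm1)|\le C\eta^3\le Ch^{3/2}$. This gives $\|\rho\|_{H^{1/2}}^2\le C(Q(\rho)+h^3)$, exactly as in the paper. The alternative you say you would prefer, bounding these modes by $\|\rho\|_\infty\|\rho\|_{\dot H^{1/2}}$, is not justified for this cubic $H^1$-type term. It is also not needed.
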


\begin{proof}
Let $\eta=\|p\|_\infty$. Geometric localization gives
\begin{equation}\label{eq:localization-018}
 \|\rho\|_\infty+\|v\|_\infty\le Ch,
 \qquad \eta+\|\rho'\|_\infty\le Ch^{1/2},
 \qquad ik\widehat p(k)=-\widehat\nu_P(k).
\end{equation}
Centering removes $\widehat p(\pm1)$, so $v'=\tan p$ bounds the corresponding modes of $v$ by $C\eta^3$. Combining $\|\rho-v\|_\infty\le Ch^2$ with the area identity $2\widehat\rho(0)+\widehat{\rho^2}(0)=0$ gives
\begin{equation}\label{eq:localization-019}
 |\widehat\rho(0)|^2+|\widehat\rho(1)|^2
 +|\widehat\rho(-1)|^2\le Ch^3.
\end{equation}
The two quadratic normalizations satisfy
\begin{equation}\label{eq:localization-020}
 Q_B(f^\perp)=j^2Q(f).
\end{equation}
Their coercivity and the low-mode estimate imply
\begin{equation}\label{eq:localization-021}
 \|\rho\|_{H^{1/2}}^2\le C(Q(\rho)+h^3).
\end{equation}
At $\|\rho\|_{W^{1,\infty}}\le Ch^{1/2}$, the disk remainder is bounded by
\begin{equation}\label{eq:localization-022}
 |\widetilde{R}_N(P)|
 \le Ch^{1/2}(Q(\rho)+h^3).
\end{equation}

The regular comparison has normalized spectral deficit at most $Ch^3$. Minimality and the exact decomposition therefore give
\[
 Q(\rho)
 \le Ch^3+Ch^{1/2}(Q(\rho)+h^3).
\]
For large $N$, absorption yields
\begin{equation}\label{eq:localization-023}
 \|\rho\|_{\dot H^{1/2}}^2+Q(\rho)\le Ch^3.
\end{equation}
To recover the atomic bound, use
\[
 q'=(e^v-1)\tan p+(\tan p-p)+\overline p,
 \qquad \|q'\|_2\le Ch^{3/2}.
\]
This gives $Q(q)\le Ch^3$. The Hilbert-seminorm triangle inequality bounds $Q(v_L)$ by $Ch^3$, and then
\[
 Q(v_L)=j^{-2}Q_B(v_L)
 \ge\|v_L\|_{\dot H^{1/2}}^2
 =2\sum_{k=2}^\infty\frac{|\widehat\nu_P(k)|^2}{k^3},
\]
gives the assertion. None of these estimates requires a lower bound for a mass, a gap, or an edge.
\end{proof}

\begin{proposition}\label{prop:critical-localization}

There are absolute constants \(C<\infty\) and \(N_0\) such that, for
every \(N\ge N_0\) and every global minimizer \(P\), translated to its
polar center and written in the near-disk chart of
Definition~\ref{def:polar-coordinates},
putting

\[
h=\frac{2\pi}{N},
\qquad
 d=E_N(P)=E_N(\nu_P)=d_{3,N}(\nu_P),
\]

one has

\[
\|v_L\|_2^2\le C(d+h^4),
\]

\[
|D_N^\circ(P)|
\le
C\left(h^{9/2}+h^{3/2}d^{3/4}\right),
\]

and

\[
d\le Ch^4,
\qquad
\|p\|_\infty\le Ch.
\]

\end{proposition}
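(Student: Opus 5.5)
The plan is a single bootstrap. Lemma~\ref{lem:coarse-polar-energy} gives the crude inputs. The discrepancy and reconstruction lemmas convert them into bounds in terms of $d$ and $h$. Minimality, through the exact splitting \eqref{eq:curvature-coordinates-022} and the Bessel coercivity of Theorem~\ref{thm:bessel-coercivity}, then gives an inequality for $d$ that can be absorbed.

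\emph{Step 1: crude inputs and the $\|v_L\|_2$ bound.} Lemma~\ref{lem:coarse-polar-energy} gives $\sum_{k\ge2}|\widehat\nu_P(k)|^2k^{-3}\le Ch^3$, hence $d\le Ch^3$ and $Q(v_L)\le Ch^3$. Lemma~\ref{lem:turning-discrepancy} then gives $\eta:=\|p\|_\infty\le C(h+d^{1/4})$, which is small.

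For $\|v_L\|_2^2=\sum_{k\ge2}|\widehat\nu_P(k)|^2k^{-4}$ (centering kills $k=1$), I split at $k=N/10$. On the low band, $k^{-4}\le k^{-3}$ and \eqref{eq:atomic-rigidity-009} give at most $\tfrac54 d$. On the tail, $k^{-4}\le 10N^{-1}k^{-3}$, so the tail is at most $Ch(d+\zeta(3)N^{-3})\le C(d+h^4)$. This proves the first claim, so $V_2\le C(d^{1/2}+h^2)$.

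\emph{Step 2: the geometric correction.} I use \eqref{eq:localization-012} together with \eqref{eq:localization-009}:
\[
 \|q'\|_2\le C(\eta V_2+\eta^3)\le C(h+d^{1/4})(d^{1/2}+h^2)+C(h^3+d^{3/4}).
\]
Young's inequality gives $hd^{1/2}\le C(h^3+d^{3/4})$, and $d^{3/4}+d^{1/4}h^2\le C(h^3+d^{3/4})$ since $d\le Ch^3$. Hence $\|q'\|_2\le C(h^3+d^{3/4})$.

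With $Q(v_L)^{1/2}\le Ch^{3/2}$, this yields
\[
 |D_N(P)|\le C\bigl(h^{3/2}(h^3+d^{3/4})+h^6+d^{3/2}\bigr).
\]
Since $d^{3/2}\le Ch^{3/2}d^{3/4}$, the right side is at most $C(h^{9/2}+h^{3/2}d^{3/4})$. Adding $|D_N(R_N)|\le Ch^5$ from \eqref{eq:localization-017} gives the stated bound for $D_N^\circ$.

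\emph{Step 3: the spectral remainder.} By \eqref{eq:localization-014}, $|\widetilde R_N(P)|\le C\|\rho\|_{W^{1,\infty}}\|\rho\|_{H^{1/2}}^2$. Here $\|\rho\|_\infty\le Ch$ and $\|\rho'\|_\infty=\|e^v\tan p\|_\infty\le C\eta$, so $\|\rho\|_{W^{1,\infty}}\le C(h+d^{1/4})$.

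For the energy factor, I combine three pieces:
\begin{itemize}
\item coercivity gives $\|\rho\|_{H^{1/2}}^2\le C(Q(v_L)+Q(q))$ plus low modes;
\item $Q(v_L)\le C\sum|\widehat\nu_P(k)|^2k^{-3}\le C(d+h^3)$ and $Q(q)\le C\|q'\|_2^2$;
\item the low modes $0,\pm1$ are bounded by $Ch^3$ as in \eqref{eq:localization-019}.
\end{itemize}
Therefore $|\widetilde R_N(P)|\le C(h+d^{1/4})(d+h^3)$, and $|\widetilde R_N(R_N)|\le Ch^4$.

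\emph{Step 4: closing via minimality.} By \eqref{eq:curvature-coordinates-024} and Theorem~\ref{thm:bessel-coercivity}, $Q_N^\circ(P)=4d_{B,N}\ge d/4$. Minimality makes the left side of \eqref{eq:curvature-coordinates-022} nonpositive, so
\[
 \tfrac14 d\le C\bigl(h^{9/2}+h^{3/2}d^{3/4}+hd+d^{5/4}+d^{1/4}h^3+h^4\bigr).
\]
Each term on the right except $h^4$ can be absorbed:
\begin{itemize}
\item $hd$ and $d^{5/4}\le Ch^{3/4}d$ are $o(d)$ for large $N$;
\item Young's inequality gives $d^{1/4}h^3\le\varepsilon d+C_\varepsilon h^4$ and $h^{3/2}d^{3/4}\le\varepsilon d+C_\varepsilon h^6$.
\end{itemize}
This gives $d\le Ch^4$, and then $\|p\|_\infty\le C(h+d^{1/4})\le Ch$.

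\emph{Main obstacle.} The delicate point is Step 3. The crude Lipschitz bound $h^{1/2}$ from Lemma~\ref{lem:coarse-polar-energy} would only give a remainder of order $h^{7/2}$, which is too large to reach $d\le Ch^4$. The argument therefore has to use the improved Lipschitz bound $\eta\le C(h+d^{1/4})$ from discrepancy, together with the energy bound $d+h^3$ rather than the crude $h^3$. I would check carefully that the low-mode and $Q(q)$ contributions to $\|\rho\|_{H^{1/2}}^2$ really stay at $O(d+h^3)$, since the whole absorption hinges on this.
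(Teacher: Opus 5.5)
Your proposal is correct and follows the paper's bootstrap essentially step for step: the coarse energy and discrepancy bounds, reconstruction plus polarization for $D_N^\circ$, the remainder bound using $\|\rho\|_{W^{1,\infty}}\le C(h+d^{1/4})$ in place of the crude $h^{1/2}$, and then minimality, Bessel coercivity and Young absorption. Your split at $N/10$ with the identity $\sum_{k\ge2}|\widehat\nu_P(k)|^2k^{-3}=d+\zeta(3)N^{-3}$ is an equally valid variant of the paper's split at $N/2$.

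Two small points need tightening. First, the reconstruction lemma requires $\|v_L\|_\infty$ to be small, which you should check as the paper does via $\|v_L\|_\infty^2\le C\|v_L\|_2\|v_L'\|_2\le C(d+h^4)^{1/2}\eta$. Second, the bound $d^{1/4}h^2\le C(h^3+d^{3/4})$ follows from Young's inequality, not from $d\le Ch^3$.
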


\begin{proof}
Splitting at frequency $N/2$, use low-band coercivity below the cutoff and the coarse estimate above it. This gives
\begin{equation}\label{eq:localization-024}
 \|v_L\|_2^2\le C(d+h^4).
\end{equation}
The coarse bound is $0\le d\le Ch^3$, so discrepancy yields $\eta=\|p\|_\infty\le C(h+d^{1/4})\le Ch^{3/4}$. Mean-zero interpolation gives
\[
 \|v_L\|_\infty^2
 \le C\|v_L\|_2\|v_L'\|_2
 \le C(d+h^4)^{1/2}\eta=o(1).
\]
The nonlinear reconstruction estimates, followed by Young's inequality, yield
\begin{equation}\label{eq:localization-025}
 \|q'\|_2\le C\{\eta(d+h^4)^{1/2}+\eta^3\}
 \le C(h^3+d^{3/4}).
\end{equation}
Polarizing the quadratic form at both endpoints now gives
\begin{equation}\label{eq:localization-026}
 |D_N^\circ(P)|
 \le C\left(h^{9/2}+h^{3/2}d^{3/4}\right).
\end{equation}

By the coarse bound and \eqref{eq:localization-019}, $\|\rho\|_{H^{1/2}}^2\le Ch^3$. Area normalization forces $v$ to vanish at some point; integrate $v'=\tan p$ from that point to get $\|\rho\|_{W^{1,\infty}}\le C\eta$. Hence
\[
 |\widetilde{R}_N(P)|\le C\eta h^3,
 \qquad |\widetilde{R}_N(R_N)|\le Ch^4,
\]
and discrepancy implies
\begin{equation}\label{eq:localization-027}
 \widetilde{R}_N^\circ(P)
 \ge-C(h^4+h^3d^{1/4}).
\end{equation}
Inserting these estimates into the spectral decomposition and using minimality and Bessel coercivity gives
\[
 d\le C\left(h^4+h^3d^{1/4}
 +h^{9/2}+h^{3/2}d^{3/4}\right).
\]
Young's inequality absorbs both nonlinear powers of $d$. Thus $d\le Ch^4$, and the discrepancy estimate becomes $\|p\|_\infty\le Ch$.
\end{proof}

\section{The cubic coefficient in curvature variables}\label{sec:cubic-coefficient}

At the critical scale the cubic spectral contribution is as large as the defect that should determine the sign. Its structure must therefore be identified before it can be estimated effectively. We separate the cubic coefficient into a principal Fourier interaction and a lower-order term controlled by the boundary energy. The principal interaction is the one whose cyclic structure will recover the regular value in the next section. Smooth traces permit the coefficient calculation; the endpoint bounds then identify the same coefficient for the Lipschitz graphs supplied by the global reduction.

\subsection{The lower Bessel multiplier at the endpoint}

\begin{lemma}\label{lem:bessel-endpoint-trilinear}

Let \(j\) be the first positive zero of \(J_0\), as fixed in
Definition~\ref{def:fixed-disk},
and put \(x=j^2/4\). For \(n\ge1\), define

\begin{equation}\label{eq:cubic-coefficient-010}
A_n:=\sum_{\ell=0}^{\infty}
 \frac{(-x)^\ell}{\ell!(n+1)(n+2)\cdots(n+\ell)}.
\end{equation}

The product is understood to be one when \(\ell=0\). Then

\begin{equation}\label{eq:cubic-coefficient-011}
J_n(j)=\frac{(j/2)^n}{n!}A_n,
\qquad
q_n=q_{-n}:=\frac{J_n'(j)}{J_n(j)}\quad(n\ge1).
\end{equation}

With Fourier normalization

\[
\widehat f(n)=\frac1{2\pi}\int_0^{2\pi}
 f(\theta)e^{-in\theta}\,d\theta,
\]

let \(\Lambda_\theta=|D_\theta|\) be the multiplier \(|n|\). Define the even
multiplier \(S\) by

\[
\widehat{Sf}(n)=s_n\widehat f(n),\qquad
s_n=q_n-\frac{|n|}{j}\quad(|n|\ge2),
\]

and fix any finite values \(s_0\) and \(s_{-1}=s_1\) on the remaining
modes. The constants below may depend on \(K_0:=\max(|s_0|,|s_1|)\), but
not on the functions to which the estimates are applied. Then

\begin{equation}\label{eq:cubic-coefficient-012}
(Sf)(\theta)=\int_{\mathbb S^1}K_S(\theta-\phi)f(\phi)
 \frac{d\phi}{2\pi}
\quad\hbox{with a periodic }K_S\in L^1(\mathbb S^1),
\end{equation}

where the identity first holds for trigonometric polynomials and then
for every \(f\in L^\infty\) by normalized \(L^1\) convolution. Moreover,

\[
\|Sf\|_\infty\le C\|f\|_\infty,\qquad
\|Sf\|_{H^{1/2}}\le C\|f\|_{H^{1/2}}.
\]

Put \(\langle F\rangle=(2\pi)^{-1}\int_0^{2\pi}F\). If
\(f,g,h\in W^{1,\infty}(\mathbb S^1)\) have zero mean, then every fixed
linear combination \(\Lambda\) of the five generators

\[
\langle fgh\rangle,\quad
\langle (Sf)gh\rangle,\quad
\langle (Sf)(Sg)h\rangle,\quad
\langle (\Lambda_\theta f)gh\rangle,\quad
\langle (\Lambda_\theta f)(Sg)h\rangle,
\]

and their permutations satisfies

\[
|\Lambda(f,g,h)|
\le
C_\Lambda\sum_{\rm cyc}
\|f\|_\infty
\|g\|_{\dot H^{1/2}}\|h\|_{\dot H^{1/2}}.
\]

The two expressions containing \(\Lambda_\theta\) are understood as the
\(H^{-1/2}\)--\(H^{1/2}\) pairing. This pairing agrees with the
displayed integral for smooth functions and is obtained for Lipschitz
functions by smooth density.

\end{lemma}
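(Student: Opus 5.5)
The plan is in three stages: an asymptotic expansion of the symbol $s_n$, the resulting kernel and mapping bounds for $S$, and the trilinear estimate, which reduces to a commutator bound for $\Lambda_\theta$.

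\textbf{The symbol $s_n$.} The series identity \eqref{eq:cubic-coefficient-011} is the standard power series of $J_n$ with $x=j^2/4$. The key computation is the large-$n$ expansion of $s_n$. Using $jJ_n'(j)=nJ_n(j)-jJ_{n+1}(j)$, we get
\[
 q_n=\frac nj-\frac{J_{n+1}(j)}{J_n(j)}=\frac nj-\frac{r_n}{j}.
\]
Hence $s_n=-r_n/j$ for $|n|\ge2$. By the completely monotone interpolation $R$ from Section~\ref{sec:curvature-coordinates}, and the recurrence \eqref{eq:atomic-rigidity-018}, we have
\[
 r_n=\frac{j^2}{2(n+1)}+O(n^{-2}),
\]
with symbol-type bounds $|\Delta^k r_n|\le C_k n^{-1-k}$ for finite differences. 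These follow from the Laplace representation of $R$ by differentiation.

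\textbf{Kernel and mapping bounds for $S$.} Write
\[
 s_n=-\frac{j}{2(|n|+1)}+t_n,\qquad t_n=O(n^{-2}),
\]
with $t_n$ having summable differences. The first piece has kernel
\[
 -\frac j2\sum_n\frac{e^{in\theta}}{|n|+1},
\]
which is $c\log(1/|\theta|)$ plus a bounded function, hence in $L^1$. The second piece has an absolutely summable Fourier series, so its kernel is bounded. Adjusting the finitely many modes $0,\pm1$ adds a trigonometric polynomial of size $O(K_0)$. This gives \eqref{eq:cubic-coefficient-012} with $K_S\in L^1$. Young's inequality then gives $\|Sf\|_\infty\le\|K_S\|_1\|f\|_\infty$. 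The $H^{1/2}$ bound is Parseval with $\sup|s_n|<\infty$. The $L^1$ identity extends from trigonometric polynomials to $L^\infty$ by weak-$*$ density, since convolution with an $L^1$ kernel is continuous in that topology.

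\textbf{The trilinear estimate.} For generators without $\Lambda_\theta$, put the $L^\infty$ norm on one factor and $L^2$ norms on the other two. Zero mean gives $\|g\|_2\le\|g\|_{\dot H^{1/2}}$. $S$ preserves both $L^\infty$ and $\dot H^{1/2}$ up to the $K_0$-dependent constant; the mean of $Sg$ is $s_0\cdot0=0$, so zero mean is also preserved. This yields, for example,
\[
 |\langle (Sf)(Sg)h\rangle|\le C\|f\|_\infty\|g\|_{\dot H^{1/2}}\|h\|_{\dot H^{1/2}}.
\]

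The generators containing $\Lambda_\theta$ are the main obstacle. Here $\Lambda_\theta f$ is only in $H^{-1/2}$, so we must pair it against $gh$ (or $(Sg)h$) in $H^{1/2}$. The product $gh$ is in $H^{1/2}$ only when paired with an $L^\infty$ factor, and the $L^\infty$ norm must fall on a single function in each cyclic term. I would use the Gagliardo form of the pairing:
\[
 \langle(\Lambda_\theta f)G\rangle=c\iint\frac{(f(\theta)-f(\phi))(G(\theta)-G(\phi))}{\sin^2((\theta-\phi)/2)}\,d\theta\,d\phi,\qquad G=gh.
\]
Then expand
\[
 G(\theta)-G(\phi)=g(\theta)\bigl(h(\theta)-h(\phi)\bigr)+h(\phi)\bigl(g(\theta)-g(\phi)\bigr).
\]
The first term is bounded by $\|g\|_\infty\|f\|_{\dot H^{1/2}}\|h\|_{\dot H^{1/2}}$ via Cauchy--Schwarz in the double integral. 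The second term is bounded by $\|h\|_\infty\|f\|_{\dot H^{1/2}}\|g\|_{\dot H^{1/2}}$. Both are cyclic terms of the required form. The same argument applies with $Sg$ in place of $g$, using the $L^\infty$ and $\dot H^{1/2}$ bounds for $S$.

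Finally, I would verify the identity for smooth functions and pass to Lipschitz data by mollification. Mollifiers converge in $\dot H^{1/2}$ and are uniformly bounded in $L^\infty$, and the estimate just proved provides the continuity needed to take the limit.
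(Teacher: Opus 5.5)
Your proposal is correct and follows essentially the paper's route. The Bessel recurrence gives $s_n=-\tfrac{j}{2n}+O(n^{-2})$, so $K_S$ is a logarithmic kernel plus a bounded one, and the $\Lambda_\theta$ generators are handled by $H^{-1/2}$--$H^{1/2}$ duality. Your Gagliardo Cauchy--Schwarz step is exactly the standard proof of the product estimate $\|uv\|_{\dot H^{1/2}}\lesssim\|u\|_\infty\|v\|_{\dot H^{1/2}}+\|v\|_\infty\|u\|_{\dot H^{1/2}}$ that the paper invokes. The only real difference is the source of the symbol asymptotics: you take them from the recurrence and the completely monotone interpolation of Section~\ref{sec:curvature-coordinates}, whereas the paper uses the series bounds $1-x/(n+1)\le A_n\le1$; both work.
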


\begin{proof}
The kernel estimate follows by separating the leading Bessel-ratio asymptotic from a summable error. Since $x=j^2/4<3/2$, the alternating series gives
\begin{equation}\label{eq:cubic-coefficient-013}
1-\frac{x}{n+1}\le A_n\le1,
\qquad A_n>0.
\end{equation}
The first harmonic can be treated directly: differentiating
\[
J_0(z)=\sum_{m=0}^{\infty}
 \frac{(-1)^m(z^2/4)^m}{(m!)^2},
\]
gives $J_1=-J_0'$ and
\begin{equation}\label{eq:cubic-coefficient-014}
J_1(z)=\frac z2\sum_{\ell=0}^{\infty}
 \frac{(-z^2/4)^\ell}{\ell!(\ell+1)!}
=\frac z2\sum_{\ell=0}^{\infty}
 \frac{(-z^2/4)^\ell}{\ell!\,2\cdot3\cdots(1+\ell)}.
\end{equation}
Combining this with
\[
jJ_n'(j)=nJ_n(j)-jJ_{n+1}(j),
\]
yields, for every $n\ge1$,
\begin{equation}\label{eq:cubic-coefficient-015}
s_n=-\frac{J_{n+1}(j)}{J_n(j)}
=-\frac{j}{2(n+1)}\frac{A_{n+1}}{A_n}.
\end{equation}
The terms of degree at least two have the bound
\[
\sum_{\ell\ge2}\frac{x^\ell}{\ell!(n+1)^\ell}
\le \frac{C}{(n+1)^2}.
\]
Thus
\[
A_n=1-\frac{x}{n+1}+O(n^{-2}),
\qquad
\frac{A_{n+1}}{A_n}=1+O(n^{-2}),
\]
and
\begin{equation}\label{eq:cubic-coefficient-016}
s_n=-\frac{j}{2n}+O(n^{-2})\qquad(n\ge2).
\end{equation}
After even extension and inclusion of the fixed low modes in the error, write
\begin{equation}\label{eq:cubic-coefficient-017}
s_n=-\frac{j}{2|n|}+e_n,
\qquad \sum_{n\in\mathbb Z}|e_n|<\infty.
\end{equation}
The leading part corresponds to
\[
 \sum_{n\ne0}\frac{e^{in\theta}}{|n|}
 =-2\log\!\left(2|\sin(\theta/2)|\right)
 \quad\text{in }L^1(\mathbb S^1),
\]
while the error defines a bounded kernel. Young's inequality now gives
\begin{equation}\label{eq:cubic-coefficient-018}
 \|Sf\|_\infty\le C\|f\|_\infty,
 \qquad \widehat{Sf}(n)=s_n\widehat f(n).
\end{equation}
The boundedness of the symbol also gives
\begin{equation}\label{eq:cubic-coefficient-019}
\|Sf\|_{H^{1/2}}^2
=\sum_n(1+|n|)|s_n|^2|\widehat f(n)|^2
\le C\|f\|_{H^{1/2}}^2,
\end{equation}
and its decay supplies the order-one smoothing estimate.

For the multilinear assertion, use
\begin{equation}\label{eq:cubic-coefficient-020}
 \|uv\|_{\dot H^{1/2}}
 \le C\bigl(\|u\|_\infty\|v\|_{\dot H^{1/2}}
          +\|v\|_\infty\|u\|_{\dot H^{1/2}}\bigr).
\end{equation}
Together with the bounds for $S$ and Poincar\'e's inequality on the mean-zero factors, this gives
\begin{equation}\label{eq:cubic-coefficient-021}
|\langle fgh\rangle|+|\langle(Sf)gh\rangle|
 +|\langle(Sf)(Sg)h\rangle|
\le C\sum_{\rm cyc}\|f\|_\infty
 \|g\|_{\dot H^{1/2}}\|h\|_{\dot H^{1/2}}.
\end{equation}
In the two remaining generators, regard $\Lambda_\theta f$ as an element of $H^{-1/2}$ and the product of the other factors as an element of $H^{1/2}$. The same product estimate yields
\begin{equation}\label{eq:cubic-coefficient-022}
|\langle(\Lambda_\theta f)gh\rangle|+|\langle(\Lambda_\theta f)(Sg)h\rangle|
\le C\sum_{\rm cyc}\|f\|_\infty
 \|g\|_{\dot H^{1/2}}\|h\|_{\dot H^{1/2}}.
\end{equation}
Smooth approximation gives the stated interpretation for Lipschitz arguments. The estimates are preserved under permutations and fixed linear combinations.
\end{proof}

\subsection{The principal cubic symbol and its Lipschitz extension}

The principal part of the cubic coefficient couples two positive frequencies with their sum. After the two integrations in the turning potential, this interaction becomes an absolutely convergent trilinear expression in the atomic Fourier coefficients. The endpoint argument below ensures that its identification does not require smoothing the polygonal corners.

\begin{lemma}\label{lem:bessel-cubic-symbol}

Let \(f\in W^{1,\infty}(\mathbb S^1;\mathbb R)\), with the Fourier
normalization of
Lemma~\ref{lem:bessel-endpoint-trilinear},
and suppose

\[
\widehat f(0)=\widehat f(1)=\widehat f(-1)=0,
\]

so that \(f\) lies in the similarity slice. Let \(\Pi_+\) be the
projection onto positive Fourier modes, let \(\Lambda_\theta=|D_\theta|\), and let
\(P_{{\rm Pois},\rho}\) be the Poisson multiplier
\(\widehat{P_{{\rm Pois},\rho} f}(n)=\rho^{|n|}\widehat f(n)\). For
\(0<\rho<1\) put

\begin{equation}\label{eq:cubic-coefficient-023}
P^{\mathrm{cub}}_\rho(f):=
\operatorname{Re}\sum_{a,b\ge1}
ab\,\rho^{a+b}\widehat f(a)\widehat f(b)
\widehat f(-(a+b)).
\end{equation}

This series is absolutely convergent for each \(\rho<1\), and its Abel limit satisfies

\begin{equation}\label{eq:cubic-coefficient-024}
P^{\mathrm{cub}}(f):=\lim_{\rho\uparrow1}P^{\mathrm{cub}}_\rho(f)
=\lim_{\rho\uparrow1}\operatorname{Re}\left\langle
 f\,[\Lambda_\theta\Pi_+(P_{{\rm Pois},\rho} f)]^2\right\rangle
=\operatorname{Re}\langle f(\Lambda_\theta\Pi_+f)^2\rangle.
\end{equation}

The last pairing is well-defined because \(\Lambda_\theta\Pi_+f\in L^2\). Thus the
double sum below means this Abel limit. If \(f\) is smooth, the
unsmoothed double series is absolutely convergent and has the same
value.

For the cubic functional \(C_3\) defined in
Lemma~\ref{lem:analytic-coefficient-chain},
one has

\[
C_3(f)
=
-8P^{\mathrm{cub}}(f)
+C_{{\rm low}}(f),
\]

where

\[
|C_{{\rm low}}(f)|
\le C\|f\|_\infty\|f\|_{\dot H^{1/2}}^2.
\]

More precisely, with the multipliers \(\Lambda_\theta,S\) of
Lemma~\ref{lem:bessel-endpoint-trilinear},
define for every real
\(f\in W^{1,\infty}(\mathbb S^1)\cap H^{1/2}(\mathbb S^1)\)

\begin{equation}\label{eq:cubic-coefficient-025}
\begin{aligned}
B_{\rm low}^{\rm cmp}(f):={}&
j\langle f^2\Lambda_\theta f\rangle
+2j^2\langle f(\Lambda_\theta f)(Sf)\rangle
+j^2\langle f^2Sf\rangle\\
&+j^3\langle f(Sf)^2\rangle
+\frac{j(1+j^2)}3\langle f^3\rangle ,
\qquad
 C_{{\rm low}}^{\rm cmp}(f):=-\frac2jB_{\rm low}^{\rm cmp}(f).
\end{aligned}
\end{equation}

On the similarity slice, \(C_{{\rm low}}^{\rm cmp}(f)=C_{{\rm low}}(f)\).
Thus \eqref{eq:cubic-coefficient-025} is a canonical ambient completion, and its symmetric
polarization is a fixed real linear combination of the five generators
in
Lemma~\ref{lem:bessel-endpoint-trilinear}
and their permutations. Since \(\Lambda_\theta\) and \(S\) are Fourier multipliers,
this completion is equivariant under rotations of the angular variable.

\end{lemma}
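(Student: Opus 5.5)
The proof will have two parts. First we justify the Abel-limit definition of $P^{\mathrm{cub}}$. Then we compute the cubic coefficient of $J_{\rm sp}$ for smooth traces on the similarity slice and extend the result to Lipschitz traces by density.

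\emph{Abel limit.} For $\rho<1$ the factor $\rho^{a+b}$ makes the double series absolutely convergent, because $|\widehat f(n)|\le\|f\|_\infty$. Write $g_\rho=\Lambda_\theta\Pi_+P_{{\rm Pois},\rho}f$, so that $\widehat{g_\rho}(a)=a\rho^a\widehat f(a)$ for $a\ge1$. Parseval for the product $g_\rho^2$ gives
\[
\langle fg_\rho^2\rangle=\sum_{a,b\ge1}ab\rho^{a+b}\widehat f(a)\widehat f(b)\widehat f(-(a+b)).
\]
As $\rho\uparrow1$ we have $g_\rho\to\Lambda_\theta\Pi_+f$ in $L^2$. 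This uses that $f'\in L^\infty\subset L^2$, hence $\Lambda_\theta\Pi_+f=-i\Pi_+f'\in L^2$. Since $f\in L^\infty$, the pairing $\langle fg_\rho^2\rangle$ converges, which proves \eqref{eq:cubic-coefficient-024}. For smooth $f$ the coefficients decay rapidly, so the unsmoothed series converges absolutely and dominated convergence identifies its value with the Abel limit.

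\emph{Cubic coefficient for smooth $f$.} This is the core computation. Take $f$ smooth on the similarity slice and use the analytic branch of Lemma~\ref{lem:analytic-coefficient-chain} along $U_{tf}$. The third derivative is independent of the extension, since $C_3$ depends only on the trace (by the domain realization). So I would compute it through the classical Hadamard expansion for normal graph perturbations of the disk $r<1+tf$. The plan has four steps.
\begin{enumerate}
\item Solve the second-order shape-derivative equation. The first-order eigenfunction correction is $u'=\sum_n c_n J_{|n|}(jr)e^{in\theta}$, with $c_n$ chosen so that $u'=-\partial_ru_0\,f=c_0jJ_1(j)f$ on the boundary. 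Its normal derivative on $r=1$ produces the Dirichlet-to-Neumann-type multiplier $jq_n=|n|+jS$, hence $\Lambda_\theta+jS$.
\item Compute the second-order correction $u''$ in the same way. Its boundary datum contains $f^2$ and $f\,\partial_r u'$.
\item Insert $u'$ and $u''$ into the third-order Hadamard formula. This formula contains $\int(\partial_ru_0)\partial_ru''f$, $\int(\partial_ru')^2f$, curvature terms $f^2$ and $f^3$, and the area normalization. The normalization contributes only multiples of $\langle f^3\rangle$ and products with $\widehat f(0)$, and on the slice the latter vanish or reduce to $\langle f^3\rangle$.
\item Express the result in the multipliers $\Lambda_\theta$ and $S$.
\end{enumerate}
The only term with two copies of $\Lambda_\theta$ is $\langle f(\Lambda_\theta f)^2\rangle$. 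Since $f$ is real, $\Lambda_\theta f=\Lambda_\theta\Pi_+f+\overline{\Lambda_\theta\Pi_+f}$. The cross term $\langle f|\Lambda_\theta\Pi_+f|^2\rangle$ must then be rewritten, via the commutator identity $\Lambda_\theta(f\cdot)-f\Lambda_\theta$ on positive and negative modes, as a combination of $\operatorname{Re}\langle f(\Lambda_\theta\Pi_+f)^2\rangle$ and terms containing at most one $\Lambda_\theta$. Equivalently, in Fourier variables: for the frequency triple $(a,b,-(a+b))$ the symbol $|a||b|+|a||a+b|+|b||a+b|$ becomes $2ab+(a+b)^2$ when restricted to the same-sign configurations, and for opposite signs it becomes $|a||b|$-type terms of lower order. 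Tracking constants should yield the coefficient $-8$. The remainder is precisely $C^{\rm cmp}_{\rm low}$ in \eqref{eq:cubic-coefficient-025}, whose terms use at most one $\Lambda_\theta$ together with $S$ factors. I expect this bookkeeping to be the main obstacle. There are two difficult points: verifying the exact coefficients of the five generators, and showing that every product of two $\Lambda_\theta$ factors has been captured by $P^{\mathrm{cub}}$. I would carry it out entirely in Fourier variables, summing over all sign patterns of the triple $(a,b,c)$ with $a+b+c=0$. As a consistency check, the result must agree with the quadratic step reproducing $\beta_n$.

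\emph{Lipschitz extension.} The bound $|C_{\rm low}(f)|\le C\|f\|_\infty\|f\|_{\dot H^{1/2}}^2$ follows directly from the multilinear estimate of Lemma~\ref{lem:bessel-endpoint-trilinear}. For Lipschitz $f$ on the slice, mollify to $f_\epsilon\to f$, converging in $H^{1/2}$ with uniform $W^{1,\infty}$ bounds. Then pass to the limit term by term:
\begin{itemize}
\item $C_3(f_\epsilon)\to C_3(f)$ by the polarization bound in Lemma~\ref{lem:analytic-coefficient-chain} and trilinearity;
\item $C^{\rm cmp}_{\rm low}(f_\epsilon)\to C^{\rm cmp}_{\rm low}(f)$ by the trilinear bound;
\item $P^{\mathrm{cub}}(f_\epsilon)\to P^{\mathrm{cub}}(f)$, since $\Lambda_\theta\Pi_+f_\epsilon\to\Lambda_\theta\Pi_+f$ in $L^2$ (uniform $L^2$ bounds on $f'_\epsilon$, together with convergence) and $\|f_\epsilon\|_\infty$ is bounded.
\end{itemize}
Rotation equivariance holds because $\Lambda_\theta$ and $S$ are Fourier multipliers.
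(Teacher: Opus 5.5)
Computing the third variation through classical shape derivatives for smooth $f$ is a fine substitute for the paper's formal Bessel jet, and your Abel-limit argument is correct. The gap is the central step. The lemma asserts an exact identity, $C_3=-8P^{\mathrm{cub}}-\frac2jB_{\rm low}^{\rm cmp}$ with five specific coefficients, and ``tracking constants should yield $-8$'' leaves that identity unproved.

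Worse, the mechanism you sketch cannot work. You assert that $\langle f(\Lambda_\theta f)^2\rangle$ is the only term with two factors of $\Lambda_\theta$. You then propose to rewrite its cross part $2\langle f|\Lambda_\theta\Pi_+f|^2\rangle$ as a multiple of $P^{\mathrm{cub}}$ plus terms with at most one $\Lambda_\theta$. That is impossible:
\begin{itemize}
\item On a triple $(a,b,-(a+b))$ with $a,b\ge1$, the symmetrized symbol of $\langle f(\Lambda_\theta f)^2\rangle$ is $\frac13\bigl(ab+(a+b)^2\bigr)$, not $2ab+(a+b)^2$.
\item The excess $\frac13(a^2+b^2)$ over a multiple of $ab$ is genuinely second order. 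For the smooth slice function with $\widehat f(\pm2)=1$ and $\widehat f(\pm M)=\widehat f(\pm(M+2))=M^{-1/2}$, one gets $\langle f|\Lambda_\theta\Pi_+f|^2\rangle\approx2M$. Meanwhile $P^{\mathrm{cub}}(f)=4$ and $\|f\|_\infty\|f\|_{\dot H^{1/2}}^2=O(1)$.
\item There is no lower-order ``opposite-sign'' sector either. Every zero-sum triple of nonzero integers has two entries of one sign, so the symbol is homogeneous of degree two throughout.
\end{itemize}

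What you are missing is a second principal term. The $t^3$ coefficient of the Dirichlet condition at $r=1+tf$ contains $\frac12f^2\partial_r^2u'$. On $r=1$ the Helmholtz equation gives $\partial_r^2=-\partial_\theta^2-\partial_r-j^2$, that is, $\tau_n:=J_{|n|}''(j)/J_{|n|}(j)=n^2/j^2-1-q_n/j$. This produces $-\frac j2\langle f^2\Lambda_\theta^2f\rangle$. The principal part of $B(f)=-\frac j2C_3(f)$ is therefore
\[
 j\langle f(\Lambda_\theta f)^2\rangle-\frac j2\langle f^2\Lambda_\theta^2f\rangle .
\]
Its symbol is $j\{\frac13(|a||b|+|b||c|+|c||a|)-\frac16(a^2+b^2+c^2)\}$, which equals $\frac{2j}3ab$ on $c=-(a+b)$: the $a^2+b^2$ parts cancel exactly. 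Equivalently, with $g=\partial_\theta f$ and $H$ the Hilbert transform, the combination is $j\langle f[(Hg)^2-g^2]\rangle$, and $(Hg)^2-g^2=4\operatorname{Re}(\Lambda_\theta\Pi_+f)^2$. This gives $4jP^{\mathrm{cub}}$, hence the coefficient $-8$.

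The five lower coefficients also require the explicit formula for $k_3$. For instance, $\frac{j(1+j^2)}3$ combines $\frac{j(2-j^2)}6$ from $J_0'''(j)$ with $\frac{j^3}2$ from the $-1$ in $\tau_n$. Contrary to your step 3, the area factor contributes nothing at third order on the slice, since $|\Omega_t|/\pi=1+t^2\langle f^2\rangle$ and the first variation vanishes.

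Your density step also fails as justified. The polarization bound $|C_3(f,g,h)|\le C\sum_{\rm cyc}\|f\|_{W^{1,\infty}}\|g\|_{H^{1/2}}\|h\|_{H^{1/2}}$ cannot give $C_3(f_\epsilon)\to C_3(f)$. After trilinear expansion, $f_\epsilon-f$ occupies the $W^{1,\infty}$ slot in one term of every cyclic sum. Mollifiers of a Lipschitz function with corners, which is exactly the polygonal case, do not converge in $W^{1,\infty}$.

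The paper handles this differently. The pulled coefficients through order three are polynomials in $DU_{f_m}$ and the area integrals, uniformly bounded and convergent in measure. Hence $A_r^{(m)}w_m\to A_rw$ in $H^{-1}$ whenever $w_m\to w$ in $H_0^1$, and running the saddle recurrence gives $\lambda_r^{(m)}\to\lambda_r$.

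A Vitali argument would also work. The map $z\mapsto J_{\rm sp}(zU_{f_\epsilon})$ is holomorphic and uniformly bounded on a fixed disk, and it converges for real $t$ by domain monotonicity and scaling. Your limits for $P^{\mathrm{cub}}$ and $C_{\rm low}^{\rm cmp}$ are fine, the latter because $f_\epsilon\to f$ uniformly and in $H^{1/2}$.
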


\begin{proof}
For a smooth real $f$ in the similarity slice, consider
\[
 \Omega_t=\{re^{i\theta}:0\le r<1+tf(\theta)\},
\]
and write $\lambda_1(\Omega_t)=k(t)^2$, where $k(0)=j$. Domain realization gives
\[
 J_{{\rm sp}}(tU_f)=\frac{|\Omega_t|}{\pi}k(t)^2.
\]
Since
\[
 \frac{|\Omega_t|}{\pi}=\langle(1+tf)^2\rangle
 =1+t^2\langle f^2\rangle,
\]
analyticity and stationarity imply
\begin{equation}\label{eq:cubic-coefficient-026}
 k(t)=j+k_2t^2+k_3t^3+O(t^4).
\end{equation}
The constant-mode calculation will use the identities
\begin{equation}\label{eq:cubic-coefficient-027}
 J_0''(j)=-\frac{J_0'(j)}j,
 \qquad
 J_0'''(j)=\frac{2-j^2}{j^2}J_0'(j),
\end{equation}
obtained from the Bessel equation at its first zero.

We identify the analytic Taylor coefficient through a formal boundary calculation, justified on the fixed disk. In $\mathbb R[t]/(t^4)$, take
\begin{equation}\label{eq:cubic-coefficient-028}
 v^{\le3}(t,r,\theta)=J_0(k(t)r)
 +\sum_{n\ne0}a_n(t)J_{|n|}(k(t)r)e^{in\theta},
 \qquad
 a_n(t)=\sum_{p=1}^3t^pa_n^{(p)}.
\end{equation}
If $g_p$ is the boundary term already determined at lower orders, cancellation of its nonconstant part and its average prescribes
\begin{equation}\label{eq:cubic-coefficient-029}
 a_n^{(p)}=-\frac{\widehat g_p(n)}{J_{|n|}(j)}\quad(n\ne0),
 \qquad
 k_p=-\frac{\widehat g_p(0)}{J_0'(j)}.
\end{equation}
At first order the mean condition is $jJ_0'(j)\widehat f(0)=0$, hence $k_1=0$. The Bessel bounds ensure nonzero denominators throughout this recursion.

The formal coefficients define genuine smooth spatial functions on the disk. Indeed,
\begin{equation}\label{eq:cubic-coefficient-030}
 \frac{J_n(jr)}{J_n(j)}=r^n\frac{A_n(xr^2)}{A_n(x)},
 \qquad
 \sup_{0\le r\le1}\left|
 \partial_r^q\frac{J_n(jr)}{J_n(j)}\right|
 \le C_q(1+n)^q,
 \quad 0\le q\le3.
\end{equation}
The series in \eqref{eq:cubic-coefficient-010} obeys $\sup_{0\le y\le x}|A_n^{(q)}(y)|\le e^{x/(n+1)}(n+1)^{-q}$, with $A_n(x)$ bounded below. Angular differentiation costs powers of $|n|$; differentiation in $k$ at $j$ reduces to radial differentiation. The convolutions and polynomially bounded multipliers in the recursion therefore preserve rapid Fourier decay. All derivatives needed through order three converge absolutely and uniformly, and
\begin{equation}\label{eq:cubic-coefficient-031}
 T_{\le3}\bigl[v^{\le3}(t,1+tf(\theta),\theta)\bigr]=0,
\end{equation}
holds coefficientwise.

Choose a smooth field $W$ with trace $fe_r$, extended across a collar and corrected in the interior to have zero mean. Then $\Phi_t=I+tW$ realizes $\Omega_t$ for small $t$. Pull back the formal solution and retain degrees at most three. Its coefficients have zero trace and lie in $H_0^1(\mathbb D)$. With $F_t=D\Phi_t$ and $\delta_t=\det F_t$, Piola's identity is
\[
 \operatorname{div}_x\!\left(\delta_tF_t^{-1}q\right)
 =\delta_t(\operatorname{div}_y q)\circ\Phi_t,
 \qquad q=(\nabla_yv^{\le3})\circ\Phi_t.
\]
The row divergences of $\operatorname{adj}D\Phi_t$ vanish, giving the weak equation
\begin{equation}\label{eq:cubic-coefficient-032}
 \int_{\mathbb D}\delta_tF_t^{-1}F_t^{-T}
 \nabla\widetilde v^{\le3}\cdot\nabla\varphi
 =k(t)^2\int_{\mathbb D}\delta_t\widetilde v^{\le3}\varphi
 \pmod {t^4},
\end{equation}
for $\varphi\in H_0^1(\mathbb D)$. Normalize by pairing with the fixed ground state $u_0$. At the first possible disagreement between formal coefficients $(w_p,\nu_p)$ and analytic coefficients $(u_p,\ell_p)$, one has
\[
 (A_0-j^2M_0)(w_p-u_p)=(\nu_p-\ell_p)M_0u_0.
\]
Pairing with $u_0$ identifies the eigenvalue coefficient; normalization and the saddle inverse identify the state coefficient. Induction through $p=3$ proves that the formal jet computes the analytic one, without requiring boundary regularity of the moving eigenfunction.

For the remaining scalar calculation set
\[
 q_n=\frac{J_{|n|}'(j)}{J_{|n|}(j)},
 \qquad
 \tau_n=\frac{J_{|n|}''(j)}{J_{|n|}(j)}.
\]
The first two nonconstant equations and the second mean equation are
\begin{equation}\label{eq:cubic-coefficient-033}
\begin{aligned}
 a_n^{(1)}&=-\frac{jJ_0'(j)}{J_{|n|}(j)}\widehat f(n),\\
 a_n^{(2)}&=\frac{jJ_0'(j)}{J_{|n|}(j)}
 \left\{\frac12\widehat{f^2}(n)
 +j\sum_{\ell\ne0}q_\ell\widehat f(\ell)
                  \widehat f(n-\ell)\right\},\\
 k_2&=\frac j2\widehat{f^2}(0)
 +j^2\sum_{n\ne0}q_n\widehat f(n)\widehat f(-n).
\end{aligned}
\end{equation}
At order three, the nonconstant coefficients do not enter the average equation. The $k_2f$ terms cancel because $J_0'(j)+jJ_0''(j)=0$, and the term with $k_2a_n^{(1)}$ has zero average. Consequently,
\[
\begin{aligned}
0={}&J_0'(j)k_3+\frac{j^3}{6}J_0'''(j)\widehat{f^3}(0)\\
&+j\sum_{n\ne0}a_n^{(2)}J_{|n|}'(j)\widehat f(-n)
 +\frac{j^2}{2}\sum_{n\ne0}a_n^{(1)}J_{|n|}''(j)
                    \widehat{f^2}(-n).
\end{aligned}
\]
Substitution gives $k_3=-B(f)$, with
\begin{equation}\label{eq:cubic-coefficient-034}
\begin{aligned}
B(f)={}&\frac{j(2-j^2)}6\widehat{f^3}(0)
 +\frac{j^2}{2}\sum_{n\ne0}q_n\widehat f(-n)
                         \widehat{f^2}(n)\\
&+j^3\sum_{\substack{n\ne0\\\ell\ne0}}
 q_nq_\ell\widehat f(-n)\widehat f(\ell)
                    \widehat f(n-\ell)
 -\frac{j^3}{2}\sum_{n\ne0}\tau_n\widehat f(n)
                         \widehat{f^2}(-n).
\end{aligned}
\end{equation}
All these series are absolutely convergent for smooth $f$. Reintroducing the exact area factor gives
\[
 J_{{\rm sp}}(tU_f)
 =j^2+t^2\bigl(2jk_2+j^2\langle f^2\rangle\bigr)
 +2jk_3t^3+O(t^4),
\]
and hence
\begin{equation}\label{eq:cubic-coefficient-035}
 C_3(f)=\frac{2k_3}{j}=-\frac2jB(f).
\end{equation}

To separate the principal order, let $T_q$ and $T_\tau$ have even symbols $q_n$ and $\tau_n$ on the nonzero modes. Parseval rewrites the preceding expression as
\begin{equation}\label{eq:cubic-coefficient-036}
 B(f)=\frac{j(2-j^2)}6\langle f^3\rangle
 +\frac{j^2}{2}\langle f^2T_q f\rangle
 +j^3\langle f(T_q f)^2\rangle
 -\frac{j^3}{2}\langle f^2T_\tau f\rangle.
\end{equation}
The Bessel equation and the definition of $S$ give
\[
 \tau_n=\frac{n^2}{j^2}-1-\frac{q_n}{j},
 \qquad q_n=\frac{|n|}{j}+s_n.
\]
Thus $T_q=\Lambda_\theta/j+S$ and $T_\tau=\Lambda_\theta^2/j^2-I-\Lambda_\theta/j^2-S/j$ on the similarity slice. Inserting these decompositions gives
\begin{equation}\label{eq:cubic-coefficient-037}
\begin{aligned}
B(f)={}&j\langle f(\Lambda_\theta f)^2\rangle
 -\frac j2\langle f^2\Lambda_\theta^2f\rangle\\
&+j\langle f^2\Lambda_\theta f\rangle
 +2j^2\langle f(\Lambda_\theta f)(Sf)\rangle
 +j^2\langle f^2Sf\rangle\\
&+j^3\langle f(Sf)^2\rangle
 +\frac{j(1+j^2)}3\langle f^3\rangle.
\end{aligned}
\end{equation}
The order-two contribution is its first line. On an ordered triple with sum zero, the symmetrized coefficient is
\[
 j\left\{
 \frac{|a||b|+|b||c|+|c||a|}{3}
 -\frac{a^2+b^2+c^2}{6}\right\}.
\]
For $a,b>0$ and $c=-(a+b)$ it equals $2jab/3$. Summing over the positions of the negative mode and over the conjugate sector yields
\begin{equation}\label{eq:cubic-coefficient-038}
 j\langle f(\Lambda_\theta f)^2\rangle-\frac j2\langle f^2\Lambda_\theta^2f\rangle
 =4j\operatorname{Re}\sum_{a,b\ge1}
 ab\widehat f(a)\widehat f(b)\widehat f(-(a+b)).
\end{equation}
The case $a=b$ has three distinct positions rather than six, but $(a,a)$ also occurs only once in the ordered double sum. The same formula therefore covers repeated indices.

The five other terms are the completed lower-order generators. The endpoint estimate gives
\begin{equation}\label{eq:cubic-coefficient-039}
 |B_{\rm low}^{\rm cmp}(f)|
 \le C\|f\|_\infty\|f\|_{\dot H^{1/2}}^2,
\end{equation}
so that
\begin{equation}\label{eq:cubic-coefficient-040}
 C_3(f)=-8P^{\mathrm{cub}}(f)
 -\frac2jB_{\rm low}^{\rm cmp}(f).
\end{equation}
On the slice, $C_{{\rm low}}=C_{{\rm low}}^{\rm cmp}=-2B_{\rm low}^{\rm cmp}/j$. Its polarization is a fixed linear combination of the polarized generators. Each multiplier and the normalized integral commute with rotation, so the completion is equivariant.

For Lipschitz traces, interpret the principal expression by Abel regularization:
\begin{equation}\label{eq:cubic-coefficient-041}
 P^{\mathrm{cub}}_\rho(f)=\operatorname{Re}\left\langle
 f[\Lambda_\theta\Pi_+(P_{{\rm Pois},\rho} f)]^2\right\rangle.
\end{equation}
For $0<\rho<1$ the series is absolutely convergent. As $\rho\uparrow1$, the differentiated positive-frequency factor converges in $L^2$, since $f\in H^1$. Its square converges in $L^1$, and multiplication by the bounded factor $f$ gives the Abel limit. On smooth traces this is the usual absolutely convergent sum.

Take periodic mollifications preserving modes $0,\pm1$:
\begin{equation}\label{eq:cubic-coefficient-042}
 f_m\to f\quad\hbox{uniformly and in }H^1,
 \qquad \sup_m\|f_m\|_{W^{1,\infty}}<\infty.
\end{equation}
The Abel representation gives $P^{\mathrm{cub}}(f_m)\to P^{\mathrm{cub}}(f)$. Polarizing the difference of the lower-order forms and using convergence in $L^\infty$ and $H^{1/2}$ gives
\begin{equation}\label{eq:cubic-coefficient-043}
 B_{\rm low}^{\rm cmp}(f_m)\longrightarrow
 B_{\rm low}^{\rm cmp}(f).
\end{equation}
For the analytic coefficient put $G_m=DU_{f_m}$ and $G=DU_f$. The extension satisfies
\begin{equation}\label{eq:cubic-coefficient-044}
 G_m\to G\quad\hbox{in }L^2,
 \qquad \sup_m\|G_m\|_\infty<\infty.
\end{equation}
For $r\le3$, the pulled matrices are finite polynomials in these gradients and the first two area integrals. They are uniformly bounded and converge in measure; the area integrals converge as well. If $w_m\to w$ in $H_0^1$, it follows that
\begin{equation}\label{eq:cubic-coefficient-045}
 A_r^{(m)}w_m\to A_rw,
 \qquad M_r^{(m)}w_m\to M_rw
 \quad\hbox{in }H^{-1}.
\end{equation}
For the gradient terms, decompose the difference as $P_r^{(m)}\nabla(w_m-w)+(P_r^{(m)}-P_r)\nabla w$. Boundedness controls the first term. For the second, split into $|P_r^{(m)}-P_r|\le\varepsilon$ and its complement, and use convergence in measure and absolute continuity of the $L^2$ integral. Let $m\to\infty$ and then $\varepsilon\downarrow0$. The mass terms are treated identically with functions in place of gradients.

The coefficient recurrence uses the same disk saddle inverse for every $m$. Applying it at orders one, two, and three gives
\begin{equation}\label{eq:cubic-coefficient-046}
 u_r^{(m)}\to u_r\quad\hbox{in }H_0^1,
 \qquad \lambda_r^{(m)}\to\lambda_r,
 \qquad r=1,2,3.
\end{equation}
Thus $C_3(f_m)=\lambda_3^{(m)}/j^2\to\lambda_3/j^2=C_3(f)$. Passing to the limit in \eqref{eq:cubic-coefficient-040} establishes the identity and its equivariant completion for Lipschitz traces.
\end{proof}

\section{Cyclic quadrature and the recovery of a regular mesh}\label{sec:cyclic-quadrature}

The estimate $E_N=O(N^{-4})$ leaves $N^4E_N$ bounded, while atomic mesh rigidity requires it to tend to zero. The missing gain comes from identifying the leading cubic value across the critical class. Frequencies near multiples of $N$ are organized through a unit-modulus phase; its three-factor correlation recovers the regular value without a lower gap bound. Once that common value is subtracted in the minimizing inequality, the defect becomes $o(N^{-4})$ and the atomic certificate yields a nearly regular mesh.

\subsection{Finite phase identities and their sampling bounds}

The sampling bounds below need control of local node multiplicity, not a minimum gap. Critical-scale data supply that weaker geometric information. Their Fourier columns can then be compared with a periodic unit-modulus phase. We distinguish two algebraic facts: multiplication of phases gives the unit three-column correlation, whereas cancellation of the first moments also uses the degree-zero condition stated in the lemma. The latter condition is explicit in the near-regular phase construction used for the relative comparison.

\begin{definition}\label{def:critical-ward}
Fix $C_0<\infty$.  A critical-scale datum consists of exactly $N$ distinct
cyclically ordered points $0\le x_1<\cdots<x_N<x_1+1$, positive weights
$w_j$ of total mass one, and
\[
 \nu_N=\sum_{j=1}^Nw_j\delta_{2\pi x_j},\qquad
 H_N(k)=\overline{\widehat\nu_N(k)},\qquad
 a_j=Nw_j,\qquad g_j=N(x_{j+1}-x_j).
\]
It also contains the real turning representative $p_N$ determined by
\[
 Dp_N=d\theta-2\pi\nu_N,\qquad
 \int_0^1\tan p_N(x)\,dx=0,
\]
with values in $(-\pi/2,\pi/2)$.  The critical hypotheses are
\begin{equation}\label{eq:cyclic-quadrature-001}
 H_N(1)=0,\qquad
 d_{3,N}(\nu_N)\le C_0N^{-4},\qquad
 \|p_N\|_\infty\le C_0N^{-1}.
\end{equation}
Put
\[
 I_N=\{-\lfloor(N-1)/2\rfloor,\ldots,\lfloor N/2\rfloor\},
 \qquad C_m(r)=H_N(mN+r),
\]
and
\begin{equation}\label{eq:cyclic-quadrature-002}
 T_N(\nu_N)
 =\sum_{a,b\ge1}
 \frac{\widehat\nu_N(a)\widehat\nu_N(b)
       \overline{\widehat\nu_N(a+b)}}{ab(a+b)^2}.
\end{equation}
The series is absolutely convergent, since grouping by $s=a+b$ gives
$2\sum_{s\ge2}H_{s-1}s^{-3}<\infty$.  The column series formed from $C_m(r)$ is the complex conjugate of
$T_N(\nu_N)$. Alias blocks and sector sums below carry the
canonical Fourier convention of \eqref{eq:cyclic-quadrature-002}; an
expression written directly in the conjugate columns is conjugated
accordingly. Real parts and absolute values are unchanged.
\end{definition}

\begin{lemma}\label{lem:cyclic-ward}
Let $\nu=\sum_jw_j\delta_{x_j}$ be a probability measure on $\Tunit$ with
distinct nodes.  Assume
\begin{equation}\label{eq:cyclic-quadrature-003}
 Nw_j\le A_0,
 \qquad
 \sup_{I:\,|I|=(4N)^{-1}}\#\{j:x_j\in I\}\le M_0.
\end{equation}
All constants below depend only on $A_0,M_0$.

\smallskip
\noindent\textup{\bfseries (i) Sampling and phase columns.}
For arbitrary $z_j$ and every $F\in H^1(\Tunit)$,
\begin{equation}\label{eq:cyclic-quadrature-004}
 \sum_{r\in I_N}\left|\sum_jw_jz_je^{2\pi irx_j}\right|^2
 \le C\sum_jw_j|z_j|^2,
 \qquad
 \sum_jw_j|F(x_j)|^2
 \le C\bigl(\|F\|_2^2+N^{-2}\|F'\|_2^2\bigr).
\end{equation}
Let $\varsigma\in\{-1,1\}$, let $f_1\in H^1(\Tunit;\mathbb S^1)$ have
degree zero, put $f_k=f_1^k$, and define
\[
 c_k^\varsigma(r)=\int_{\Tunit}f_k(x)e^{-2\pi i\varsigma rx}\,dx.
\]
Convolution by $c_k^\varsigma$ is unitary,
$c_m^\varsigma*c_n^\varsigma=c_{m+n}^\varsigma$, and
\begin{equation}\label{eq:cyclic-quadrature-005}
 \langle c_m^\varsigma*c_n^\varsigma,c_{m+n}^\varsigma\rangle=1,
 \qquad
 \left\|\frac rN c_k^\varsigma(r)\right\|_2
 =\frac{\|f_k'\|_2}{2\pi N}.
\end{equation}
The three first moments obtained by placing $r/N$, $s/N$, or $(r+s)/N$
in this correlation vanish.  If
$f_1(x_j)=e^{-2\pi i\varsigma Nx_j}$ and
\[
 C_k^\varsigma(r)=\int e^{-2\pi i\varsigma(kN+r)x}\,d\nu(x),
\]
then
\begin{equation}\label{eq:cyclic-quadrature-006}
 C_k^\varsigma(r)-c_k^\varsigma(r)
 =\int f_k(x)e^{-2\pi i\varsigma rx}\,d(\nu-dx)(x),
\end{equation}
and every finite Toeplitz block cut from $C_k^\varsigma$ has uniformly
bounded $2\to2$ norm; the corresponding $c_k^\varsigma$ block is a
contraction.

\smallskip
\noindent\textup{\bfseries (ii) Finite interpolation.}
Write
\[
 \rho(u)=C_0^\varsigma(u)-\delta_0(u),\quad
 R_2=\|\rho\|_{\ell^2(|u|\le3N/4)},\quad
 R_1(L)=\sum_{|u|\le L}|\rho(u)|,
 \quad \Lambda_k=(2\pi)^{-1}\|f_k'\|_2.
\]
For $1\le S\le N/8$,
\begin{equation}\label{eq:cyclic-quadrature-007}
 \|\Pi_{I_N}(C_k^\varsigma-c_k^\varsigma)\|_2
 \le C\left(\sqrt S\,R_2+\frac{\Lambda_k}{S}
                         +\frac{\Lambda_k}{N}\right).
\end{equation}
For $0\le R<K<N/8$, if $Q_{{\rm col},k}^R$ and $M_{{\rm ph},k}^R$ are the
actual and phase convolution blocks on $[-R,R]$, then
\begin{equation}\label{eq:cyclic-quadrature-008}
 \|Q_{{\rm col},k}^R-M_{{\rm ph},k}^R\|_{2\to2}
 \le C\left[R_1(K+R)+
 (\Lambda_k\sqrt{2R+1}+R)(K^{-1}+N^{-1})\right].
\end{equation}

\smallskip
\noindent\textup{\bfseries (iii) Exact cyclic return.}
Suppose $C_k=c_k+e_k$ on $I_N$, the phase identities above hold, all
actual Toeplitz blocks are uniformly bounded, and for some $d_*\ge0$
\begin{equation}\label{eq:cyclic-quadrature-009}
 \|(r/N)c_k(r)\|_2\le Ck d_*,\qquad
 \|N^{-1}f_k'\|_\infty\le Ck.
\end{equation}
Put $\varepsilon_k=\|e_k\|_{\ell^2(I_N)}$ and let
$T_k=\|\mathbf1_{\{|r|>N/8\}}c_k\|_2$.  There are nonnegative universal completion
weights $\beta_{k,M}$ with
\begin{equation}\label{eq:cyclic-quadrature-010}
 \beta_{k,M}\le C\frac{\log(2k)}{k^3},\qquad
 \sum_{k\le2M}k\beta_{k,M}\le C,
 \qquad
 \sum_{k\le2M}k^2\beta_{k,M}\le C(1+\log(2M))^2.
\end{equation}
Define
\begin{equation}\label{eq:cyclic-quadrature-011}
 E_M^{\rm col}=\sum_{k\le2M+2}\frac{\log(2k)}{k^3}\varepsilon_k^2,
 \qquad
 E_M^{\rm tail}=\sum_{k\le2M+2}\frac{\log(2k)}{k^3}T_k^2,
\end{equation}
\begin{equation}\label{eq:cyclic-quadrature-012}
 U_M=\sum_{k\le2M}\beta_{k,M}
 (\|\Pi_{I_N}C_k\|_2^2-\|\Pi_{I_N}c_k\|_2^2).
\end{equation}
For $r,s\in I_N$ let $t\in I_N$ and $\sigma\in\{-1,0,1\}$ be the
unique return $r+s=t+\sigma N$.  With the returned exact denominator,
let $V_M$ denote twice the real part of the $N^4$-normalized
sum over $m,n\le M$, after subtracting the regular constant
$[mn(m+n)^2]^{-1}$.  Then, for an absolute $A$,
\begin{equation}\label{eq:cyclic-quadrature-013}
 \ |
 V_M-U_M|
 \le C\log^A(2N)\left[
 d_*^2\log^2(2M)+d_*\log(2M)(E_M^{\rm col})^{1/2}
 +d_*(E_M^{\rm tail})^{1/2}+E_M^{\rm col}+E_M^{\rm tail}\right].
\end{equation}
Moreover
\begin{equation}\label{eq:cyclic-quadrature-014}
 N^4\sum_{m>M\ \mathrm{or}\ n>M}|T_{m,n,N}|
 \le C\log^A(2N)\frac{\log(2M)}{M^2}.
\end{equation}
For a fixed pair $m,n$, if the columns are restricted to $|r|\le R<N/8$
and the actual/phase block error is $\epsilon$, while the phase tails are
$\tau$, then
\begin{equation}\label{eq:cyclic-quadrature-015}
 \left|N^4T_{m,n,N}-\frac1{mn(m+n)^2}\right|
 \le \frac{C}{mn(m+n)^2}
 \bigl(\epsilon+\tau+R/N\bigr)\log(2N),
\end{equation}
up to the same bound on the complementary carry/tail chambers.

Finally, for the shifted return $t=r+q-\sigma N\in I_N$,
$2\le q\le\lfloor N/2\rfloor$, expand the exact correlation as
$H_{m,q}=P_{m,q}+L^{\rm in}_{m,q}+L^{\rm out}_{m,q}+Q_{m,q}$ according
to $C=c+e$.  Uniformly in $m$,
\begin{equation}\label{eq:cyclic-quadrature-016}
 \|P_{m,\cdot}\|_2
 \le C\log^2(2N)\min\{1,(m+1)d_*\},
\end{equation}
\begin{equation}\label{eq:cyclic-quadrature-017}
 \|L^{\rm in}_{m,\cdot}\|_2+\|L^{\rm out}_{m,\cdot}\|_2
 \le C\log^2(2N)(\varepsilon_m+\varepsilon_{m+1}),
 \quad
 \sup_q|Q_{m,q}|\le C(\varepsilon_m^2+\varepsilon_{m+1}^2),
\end{equation}
and
\begin{equation}\label{eq:cyclic-quadrature-018}
 \sup_q\sum_{m>M}m^{-3}|H_{m,q}|\le CM^{-2}.
\end{equation}
The no-carry chamber, both nonzero carries, and the even-$N$ Nyquist row
are disjoint and exhaustive; the latter is counted once by the half-open
choice of $I_N$.
\end{lemma}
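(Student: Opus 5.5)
The lemma has three layers, and I will prove them in order: (i) sampling plus phase algebra, then (ii) finite interpolation, then (iii) cyclic return. Each layer uses only the two hypotheses \eqref{eq:cyclic-quadrature-003} and the outputs of the previous layer.

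For (i), the first sampling bound is a large-sieve argument. I bound the Gram operator $\sum_{r\in I_N}e^{2\pi i r(x_j-x_\ell)}$ by a Fejér-type majorant of width $N^{-1}$, which has size at most $CN(1+N\|x_j-x_\ell\|)^{-2}$. The multiplicity bound $M_0$ and the weight bound $Nw_j\le A_0$ then make the weighted Schur test give $C\sum_jw_j|z_j|^2$. The second bound is a local Sobolev estimate. On each arc of length $(4N)^{-1}$ I use $|F(x_j)|^2\le C(N\int_I|F|^2+N^{-1}\int_I|F'|^2)$ and sum; at most $M_0$ nodes lie in each arc, each carrying weight at most $A_0/N$. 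The phase identities are algebraic.
\begin{itemize}
\item $f_mf_n=f_{m+n}$ gives $c_m*c_n=c_{m+n}$.
\item $|f_k|=1$ makes convolution by $c_k$ unitary, and Parseval gives $\langle c_m*c_n,c_{m+n}\rangle=\int|f_{m+n}|^2=1$.
\item The identity $\widehat{f_k'}(r)=2\pi i\varsigma r\,c_k(r)$ gives the moment norm in \eqref{eq:cyclic-quadrature-005}.
\item Each first moment equals $\int \bar f_{m+n}\,f_m'f_n\,dx$ (or a permutation of it), up to a constant. Since $f_k'=k f_1^{k-1}f_1'$, this is a constant multiple of $\int \bar f_1f_1'\,dx=2\pi i\deg f_1=0$.
\end{itemize}
The difference formula \eqref{eq:cyclic-quadrature-006} is immediate from $f_k(x_j)=e^{-2\pi i\varsigma kNx_j}$. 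Toeplitz blocks cut from $C_k$ have symbol given by a weighted sum of point masses, so the sampling bound bounds them; blocks cut from $c_k$ are compressions of a unitary, hence contractions.

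For (ii), I write $C_k-c_k=\int f_k e^{-2\pi i\varsigma r x}\,d(\nu-dx)$ and split $f_k$ with a Fejér mean $\sigma_S f_k$ of order $S$. For the smooth part, its pairing with $\nu-dx$ is controlled by the low Fourier coefficients $\rho(u)$ for $|u|\le S+|r|$. Cauchy--Schwarz over the $O(S)$ shifts gives $\sqrt S R_2$. For the rough part $f_k-\sigma_Sf_k$, its $L^2$ norm is $O(\Lambda_k/S)$, and the sampling inequality \eqref{eq:cyclic-quadrature-004} bounds both the $\nu$ and $dx$ contributions by $\|F\|_2+N^{-1}\|F'\|_2$, which produces the $\Lambda_k/N$ term. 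The block estimate \eqref{eq:cyclic-quadrature-008} follows the same pattern. Truncating at frequency $K$, the entries that differ are convolutions with $\rho$ on the window $K+R$ (this gives $R_1$), and the truncation tails cost $\Lambda_k\sqrt{2R+1}$ times $K^{-1}+N^{-1}$ by Schur and Cauchy--Schwarz.

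For (iii), I expand each term of $T_N$ with $a=mN+r$, $b=nN+s$, $a+b=(m+n+\sigma)N+t$, Taylor-expanding the exact denominator $[ab(a+b)^2]^{-1}$ about $[mn(m+n)^2N^4]^{-1}$ in $r/N,s/N,t/N$. This expansion is done separately in the three carry chambers and the Nyquist row, which are disjoint and exhaustive by the half-open choice of $I_N$. I insert $C=c+e$ into the resulting sums.
\begin{itemize}
\item The pure phase part gives the regular constant through the unit correlation.
\item Its first-order Taylor term vanishes by the moment identities.
\item The second-order Taylor terms are $O(d_*^2\log^2)$ by \eqref{eq:cyclic-quadrature-009}.
\item Terms linear in $e$ are bounded by Cauchy--Schwarz against the contraction blocks.
\item Terms quadratic in $e$ give $E^{\rm col}$.
\end{itemize}
The completion weights $\beta_{k,M}$ arise from summing $[mn(m+n)^2]^{-1}$ over $m+n=k$ or $m=k$, and they collect the diagonal $\|C_k\|^2-\|c_k\|^2$ contributions into $U_M$. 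The tails \eqref{eq:cyclic-quadrature-014} and \eqref{eq:cyclic-quadrature-018} follow from $\sum_{m>M}m^{-3}$ and the uniform column bounds. Estimates \eqref{eq:cyclic-quadrature-015}--\eqref{eq:cyclic-quadrature-017} are the same decomposition localized to one pair $(m,n)$ or one shift $q$.

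The main obstacle is the carry chambers $\sigma=\pm1$, where the correlation returns across the cyclic boundary. There the phase identity applies to $c_{m+n\pm1}$ evaluated at a shifted residue, so the first-moment cancellation is not automatic. My first attempt would be to show that on these chambers $|r|,|s|\gtrsim N/4$. Then these terms are controlled by the tails $T_k$ and by $R/N$, giving $E^{\rm tail}$ and the $d_*(E^{\rm tail})^{1/2}$ term, while the $\log^A(2N)$ factor absorbs the logarithmic Hilbert-type sums that arise from the denominator near $r+s\approx\pm N/2$.
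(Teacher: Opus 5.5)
Parts (i) and (ii) of your proposal, and the overall plan for (iii), follow the paper's route. Your Fej\'er-majorant/weighted Schur proof of the first sampling bound is a valid substitute for the paper's partition into $(4N)^{-1}$-separated families plus the large sieve. For that argument you must majorize the frequency indicator $\mathbf 1_{I_N}(r)$ by a triangular weight; the Dirichlet-kernel Gram matrix itself has no quadratic decay. The phase identities, the degree-zero moment cancellation and the column comparison are the paper's arguments. For \eqref{eq:cyclic-quadrature-008}, the paper truncates the product $f_k\phi$, and the derivative bound $\Lambda_k\sqrt{2R+1}+R$ for that product is where the extra $R$ comes from. Two steps in (iii), however, do not work as you state them.

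\emph{Carry chambers.} The geometric fact you propose is false. Take $(r,s)=(1,\lfloor N/2\rfloor)$: then $r+s\notin I_N$, so $\sigma=1$ and $t=\lfloor N/2\rfloor+1-N\in I_N$, yet $|r|=1$. The correct statement, which the paper uses, is that at least two of $|r|,|s|,|t|$ are $\ge N/8$; if two were smaller, $r+s-t=\sigma N$ would force the third above $3N/4$. In the subchamber where one input is central, the second small factor is therefore the tail of the returned output column $c_{m+n\pm1}$, not an input tail. The central column is absorbed by Schur's test through a bounded Toeplitz block, so every term is tail$\times$tail, tail$\times$error or error$\times$error. This is why the sums in \eqref{eq:cyclic-quadrature-011} extend past $2M$ to include the returned indices. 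With input tails only, this subchamber is not covered.

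The logarithms also do not come from the denominator. The quantities $mN+r$, $nN+s$ and $(m+n+\sigma)N+t$ are all at least $N/2$, so the normalized symbol is smooth and nonsingular on each chamber. The factor $\log^A(2N)$ is the cost of decoupling the sharp chamber masks, whose Fourier $\ell^1$ norm is $O(\log N)$. This decoupling into modulations is what lets you use Cauchy--Schwarz against unitary and Toeplitz blocks at all, since the masks and the Taylor-remainder coefficients are not of product form in $(r,s)$.

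\emph{Zeroth-order terms linear in $e$.} These cannot be bounded by Cauchy--Schwarz. Doing so leaves $\sum_k\beta_{k,M}\varepsilon_k$, which does not appear in \eqref{eq:cyclic-quadrature-013}. In Proposition~\ref{prop:principal-cubic-relative}, $\varepsilon_k$ is only $O(B_N^{1/2})$ while the target is $o(B_N)$, so this would destroy the relative estimate. These terms must be computed exactly:
\begin{itemize}
\item Since $f_{m+n}\bar f_n=f_m$, unitarity gives $\langle e_m*c_n,c_{m+n}\rangle=\langle e_m,c_m\rangle$, and likewise for the other two slots.
\item Each term therefore reduces, up to error$\times$tail mask corrections, to $\langle e_k,\Pi c_k\rangle$ with weight $\beta_{k,M}$.
\item The identity $2\operatorname{Re}\langle e_k,\Pi c_k\rangle=\|\Pi C_k\|_2^2-\|\Pi c_k\|_2^2-\|e_k\|_2^2$ then produces $U_M$ exactly, and the leftover $-\beta_{k,M}\varepsilon_k^2$ goes into $E_M^{\rm col}$.
\end{itemize}
Only the moment$\times$error terms from the first-order Taylor coefficients are estimated by Cauchy--Schwarz, giving $d_*\log(2M)(E_M^{\rm col})^{1/2}$. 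You do say the diagonal contributions are ``collected into $U_M$.'' But your bullet list estimates all linear terms, and it is this exact identification, not an estimate, that keeps the bound quadratic in the defect.
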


\begin{proof}
The multiplicity hypothesis allows a partition of the nodes into finitely many $(4N)^{-1}$-separated families. On each family, the large sieve over $N$ consecutive frequencies and $Nw_j\le A_0$ give the first sampling estimate. For the second, average the one-dimensional Sobolev inequality on intervals of length $2/N$.

Fourier transformation turns convolution by $c_k^\varsigma$ into multiplication by $f_k$. Unimodularity gives unitarity, and $f_mf_n=f_{m+n}$ gives the unit correlation. The degree-zero condition gives its three vanishing first moments. Sampling the two trigonometric factors in the nodal phase identity bounds the finite Toeplitz blocks. To compare columns, truncate $f_k$ at frequency $S$. The low part contributes $C\sqrt S R_2$, while sampling and Bessel's inequality give $C(\Lambda_k/S+\Lambda_k/N)$ for the tail. This proves \eqref{eq:cyclic-quadrature-007}. Apply this argument to $f_k\phi$ for $\phi$ supported on $[-R,R]$. The low contribution becomes $R_1(K+R)$, and the derivative is bounded by $(2\pi)^{-1}\|(f_k\phi)'\|_2\le\Lambda_k\sqrt{2R+1}+R$. The compressed-block estimate follows.

For the cubic expression, retain the exact return $r+s=t+\sigma N$. On the no-carry chamber its normalized denominator expands as
\begin{equation}\label{eq:cyclic-quadrature-019}
 1-\frac r{mN}-\frac s{nN}-\frac{2t}{(m+n)N}
 +\sum_{\alpha<\beta}\frac{X_\alpha X_\beta}{N^2}
 A_{\alpha\beta},
\end{equation}
with uniformly $C^2$ coefficients. The first-moment identities cancel the three linear phase contributions. After $C=c+e$ is substituted, combine the one-error terms through
\begin{equation}\label{eq:cyclic-quadrature-020}
 2\operatorname {Re}\langle e_k,\Pi c_k\rangle
 =\|\Pi C_k\|_2^2-\|\Pi c_k\|_2^2-\|e_k\|_2^2.
\end{equation}
Their sum is the completed expression \eqref{eq:cyclic-quadrature-012}. Every remaining term contains two moments, one moment and one error, or two errors. Weighted Cauchy--Schwarz with \eqref{eq:cyclic-quadrature-010}--\eqref{eq:cyclic-quadrature-011} gives the five terms in \eqref{eq:cyclic-quadrature-013}.

On a nonzero-carry chamber, at least two of $|r|,|s|,|t|$ are $N/8$ or larger. Schur's test gives the same estimate with two tails, a tail and an error, or two errors. The masks have Fourier $\ell^1$ norm $O(\log N)$, and the rational symbols remain uniformly smooth on each chamber. Summing $C\log^A(2N)[mn(m+n)^2]^{-1}$ gives \eqref{eq:cyclic-quadrature-014}; retaining one alias pair gives \eqref{eq:cyclic-quadrature-015}.

For the shifted return, the phase correlation is an isometry in the shift variable and the masks cost $\log^2(2N)$. Subtract the constant denominator. In the no-carry chamber the difference contains an input or output moment; in a carry chamber use $|r-t|\ge N/2$. This gives \eqref{eq:cyclic-quadrature-016}. Replacing phase columns by errors gives \eqref{eq:cyclic-quadrature-017}, and the partial-bijection property of the return gives the $m^{-3}$ tail. The half-open residue set counts the even Nyquist coordinate once and makes the cases disjoint.
\end{proof}

\begin{proposition}
For every fixed $C_0$, critical-scale data with sufficiently large $N$ satisfy
\begin{equation}\label{eq:cyclic-quadrature-021}
 0<a_j\le C,
 \qquad 0<g_j\le C,
 \qquad \sum_j|a_j-1|^2\le C,
\end{equation}
and every interval of length $(4N)^{-1}$ contains at most $M_0(C_0)$ nodes.
There is a piecewise affine lift $\Phi_N$ such that
\begin{equation}\label{eq:cyclic-quadrature-022}
 e^{i\Phi_N(x_j)}=e^{2\pi iNx_j},\qquad
 f_m=e^{im\Phi_N},\qquad
 \|f_m'\|_2^2\le Cm^2N,
\end{equation}
and, for
\[
 c_m(r)=\int_{\Tunit}f_m(x)e^{2\pi irx}\,dx,
\]
we have $c_m*c_n=c_{m+n}$, $\|c_m\|_2=1$, and unit three-column
correlation.  The sampling estimates \eqref{eq:cyclic-quadrature-004} hold.  Moreover, for
$1\le R\le N/8$,
\begin{equation}\label{eq:cyclic-quadrature-023}
 \|C_m-c_m\|_{\ell^2(I_N)}
 \le C\left(\sqrt{R/N}+m\sqrt N/R+m/\sqrt N\right),
\end{equation}
and, if $R<K<N/8$,
\begin{equation}\label{eq:cyclic-quadrature-024}
 \|Q_{{\rm col},m,N}^R-M_{{\rm ph},m,N}^R\|_{2\to2}
 \le C\left[\frac{(K+R)^2}{N^2}
 +(m\sqrt{NR}+R)(K^{-1}+N^{-1})\right].
\end{equation}
All constants are independent of the smallest weight and gap.
\end{proposition}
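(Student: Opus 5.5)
The plan is to derive everything from the two critical hypotheses $d_{3,N}\le C_0N^{-4}$ and $\|p_N\|_\infty\le C_0N^{-1}$, using the turning equation $Dp_N=d\theta-2\pi\nu_N$ as the bridge to local geometry. The argument proceeds in four steps.

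\emph{Weights, gaps and multiplicity.} Integrating the turning equation over a half-open arc $I$ gives
\[
 p_N(y)-p_N(x)=|I|-2\pi\nu_N(I),
\]
so every arc satisfies $|2\pi\nu_N(I)-|I||\le 2C_0N^{-1}$.
\begin{itemize}
\item Taking $I$ to be a tiny arc around a single node gives $2\pi w_j\le 2C_0/N+o(1)$, hence $a_j\le C$.
\item Taking $I$ to be the open arc strictly between consecutive nodes gives $\nu_N(I)=0$, hence $2\pi g_j/N\le 2C_0/N$ and $g_j\le C$.
\end{itemize}
Positivity of $a_j$ and $g_j$ is part of the definition of a datum. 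For the multiplicity bound I would not rely on mass counting, since small weights escape it. Instead, on a cluster of nodes inside an arc of length $(4N)^{-1}$, the function $p_N$ has slope one between jumps, so it oscillates by at most $2C_0/N$. I would then combine this with the defect: Theorem~\ref{thm:atomic-rigidity}\textup{(ii)} gives $\sum_j(a_j-1)^2\le 2e^4N^4d_{3,N}\le C$, which is the third claim in \eqref{eq:cyclic-quadrature-021}. This bounds the number of nodes with $|a_j-1|\ge 1/2$ by a constant. Nodes with $a_j\ge1/2$ carry mass at least $1/(2N)$, and the arc bound caps their number in a short interval at $O(C_0)$. Together these give $M_0(C_0)$.

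\emph{The phase lift.} I would define $\Phi_N$ as the piecewise affine interpolation with $\Phi_N(x_{j+1})-\Phi_N(x_j)=2\pi(Nx_{j+1}-Nx_j-1)=2\pi(g_j-1)$, matching \eqref{eq:curvature-coordinates-012} with $\eta_j=g_j-1$. This increment sums to zero, so $\Phi_N$ is periodic with degree zero, and $e^{i\Phi_N(x_j)}=e^{2\pi iNx_j}$ after fixing the initial value. Its slope on cell $j$ is $2\pi N(g_j-1)/g_j$ and the cell length is $g_j/N$, so
\[
 \|f_m'\|_2^2=m^2\sum_j 4\pi^2N(g_j-1)^2/g_j .
\]
Small gaps are harmless because $(g_j-1)^2/g_j\le 1/g_j$ only for tiny $g_j$; I would instead use the sampling structure, or the convexity observation that cell length times slope squared is $(2\pi)^2N(g_j-1)^2/g_j$. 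Bounding $\sum_j(g_j-1)^2/g_j\le C$ needs gap control, and I would obtain it from the quadratic gap term of the certificate in Theorem~\ref{thm:atomic-rigidity}. This step needs care and is the main obstacle. If $1/g_j$ misbehaves, I would modify $\Phi_N$ to wind through $2\pi$-multiples, choosing lift increments $2\pi(g_j-1)+2\pi\ell_j$ with $\sum_j\ell_j=0$ to keep slopes bounded by $CN$. That directly gives $\|f_m'\|_2^2\le Cm^2N$, since $|\Phi_N'|\le CN$ on a set of measure one.

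\emph{Phase identities and column bounds.} The identities $c_m*c_n=c_{m+n}$, $\|c_m\|_2=1$ and the unit correlation follow from Lemma~\ref{lem:cyclic-ward}\textup{(i)}, with $f_1=e^{i\Phi_N}$ of degree zero and $\varsigma=-1$ matching conventions. The sampling estimates \eqref{eq:cyclic-quadrature-004} follow from the weight and multiplicity bounds. For \eqref{eq:cyclic-quadrature-023} I would apply \eqref{eq:cyclic-quadrature-007} with $S=R$ and $\Lambda_m\le Cm\sqrt N$. It then remains to show $R_2\le CN^{-1/2}$, that is, $\sum_{2\le|u|\le 3N/4}|\widehat\nu_N(u)|^2\le C/N$. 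By low-band coercivity (Theorem~\ref{thm:atomic-rigidity}\textup{(iii)} with $\alpha=3/4$), $|\widehat\nu(u)|^2\le Cu^3d_{3,N}$ summed with weights $u^{-3}$. So $\sum|\widehat\nu(u)|^2\le N^3\cdot Cd_{3,N}\le C/N$, and $R_2\le CN^{-1/2}$, giving the $\sqrt{R/N}$ term after the factor $\sqrt S$. Similarly, $R_1(L)\le \sqrt{L}\,(\sum|\rho|^2)^{1/2}$ restricted to $|u|\le L$. Using the weighted bound $\sum_{|u|\le L}|\rho(u)|^2\le CL^3d_{3,N}$ gives $R_1(L)\le CL^2N^{-2}$, and inserting this into \eqref{eq:cyclic-quadrature-008} yields \eqref{eq:cyclic-quadrature-024}. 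None of these constants involves a lower bound on the weights or gaps.
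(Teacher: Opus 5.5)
\textbf{There is a genuine gap in the phase lift.}

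Much of your proposal is correct. This includes the bounds on $a_j$, $g_j$ and $\sum_j(a_j-1)^2$, the estimates $R_2\le CN^{-1/2}$ and $R_1(L)\le CL^2N^{-2}$, and the reduction of \eqref{eq:cyclic-quadrature-023}--\eqref{eq:cyclic-quadrature-024} to $\Lambda_m\le Cm\sqrt N$. Your multiplicity argument is a valid and more elementary substitute for the paper's adjacent-pair count: heavy nodes are counted by the arc bound from $\|p_N\|_\infty\le C_0/N$, and light nodes are bounded globally by weight rigidity.

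With increments $2\pi(g_j-1)$ you get $\|f_m'\|_2^2=4\pi^2m^2N\sum_j(g_j-1)^2/g_j$. This is not bounded independently of the smallest gap, and critical-scale data can have arbitrarily small gaps. For example:
\begin{itemize}
\item move mass $(1-\epsilon)/N$ from one regular node to its two neighbours, a second difference costing $O(N^{-4})$ in $d_{3,N}$;
\item keep a residual atom of weight $\epsilon/N$, with $\epsilon\lesssim(\log N)^{-1/2}$, within distance $\tau$ of a neighbour;
\item recentre by a negligible amount.
\end{itemize}
All three critical hypotheses then hold for every $\tau>0$. The certificate cannot exclude this, because $d_{3,N}$ is weak-$*$ continuous and stays bounded as two atoms merge.

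Your fallback is also miscomputed. Having $|\Phi_N'|\le CN$ on a set of measure one gives only $\int|\Phi_N'|^2\le CN^2$, i.e.\ $\|f_m'\|_2^2\le Cm^2N^2$. The target needs $\sum_j\Delta_j^2/g_j\le C$ for increments $2\pi\Delta_j$, so large slopes are affordable only on boundedly many cells.

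\textbf{The missing idea is to drop degree zero, which this proposition never uses.} Unitarity, $c_m*c_n=c_{m+n}$ and the unit correlation need only $|f_1|=1$ and $f_mf_n=f_{m+n}$. The bounds \eqref{eq:cyclic-quadrature-007}--\eqref{eq:cyclic-quadrature-008} come from truncating $f_k$. Only the vanishing first moments need degree zero, and they are not claimed here.

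The paper's construction is as follows. Let $n_j$ be the nearest integer to $g_j$, set $\delta_j=g_j-n_j$, and use increments $2\pi\delta_j$.
\begin{itemize}
\item Since $\sum_j\delta_j=N-\sum_jn_j\in\mathbb Z$, $e^{i\Phi_N}$ is periodic with the prescribed nodal values.
\item Every term obeys $\delta_j^2/g_j\le\tfrac12$, however small $g_j$ is.
\item For the bulk, put $B=\{j:a_j<1/2\}$. In the certificate at $q=e^{-t/N}$, $1\le t\le2$, retain the adjacent pairs with $j,j+1\notin B$. This gives $G_N(e^{-t/N})\ge cN^{-1}\sum_{j,j+1\notin B}\sin^2(\pi g_j)/g_j^2$.
\item Integrating against $t^2$ bounds this sum by $CN^4d_{3,N}\le C$.
\item Since $|\sin\pi g|\ge2\operatorname{dist}(g,\mathbb Z)$ and $g_j\le C$, this becomes $\sum_j\delta_j^2/g_j\le C$. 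The boundedly many pairs touching $B$ contribute $O(1)$.
\end{itemize}
With $\Lambda_m\le Cm\sqrt N$ in hand, the rest of your argument goes through.
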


\begin{proof}
The weight estimate gives $a_j\le C$ and $\sum_j|a_j-1|^2\le C$; the turning bound gives $g_j\le C$. Sampling does not require a positive minimum gap, only a bound on local multiplicity. To obtain it, use
\begin{equation}\label{eq:cyclic-quadrature-025}
 \frac12\int_0^\infty s^2G_N(e^{-s})\,ds=d_{3,N}(\nu_N),
\end{equation}
and retain adjacent pairs at $s=t/N$, $1\le t\le2$, whose normalized masses are at least $1/2$. Then
\begin{equation}\label{eq:cyclic-quadrature-026}
 G_N(e^{-t/N})\ge\frac cN
 \sum_{j,j+1\notin B}\left|\frac{\sin(\pi g_j)}{g_j}\right|^2,
 \qquad B=\{j:a_j<1/2\}.
\end{equation}
The exceptional set $B$ has bounded cardinality. Integration controls the sine sum; because $\sin(\pi g)\ge2g$ for $0<g<1/2$, the number of such short gaps is bounded. Hence every interval of length $(4N)^{-1}$ contains boundedly many nodes.

For each gap choose a nearest integer $n_j$ and write $\delta_j=g_j-n_j$. The same estimate gives
\begin{equation}\label{eq:cyclic-quadrature-027}
 \sum_j\frac{|\delta_j|^2}{g_j}\le C.
\end{equation}
Define the affine phase lift by $\Phi_N(x_{j+1})-\Phi_N(x_j)=2\pi\delta_j$. The sum $\sum_j\delta_j$ is an integer, so the exponential phase is periodic and has the prescribed nodal values $e^{2\pi iNx_j}$. Its derivative obeys
\[
 \|f_m'\|_2^2=4\pi^2m^2N\sum_j|\delta_j|^2/g_j\le Cm^2N.
\]
This bound, together with sampling, gives the phase estimates.

The critical defect bound and centering yield, by low-band coercivity,
\begin{equation}\label{eq:cyclic-quadrature-028}
 R_2:=\|H_N-\delta_0\|_{\ell^2(|u|\le3N/4)}\le CN^{-1/2},
 \qquad
 R_1(L)\le C L^2N^{-2}\quad(L\le3N/4).
\end{equation}
With $S=R$ and $\Lambda_m\le Cm\sqrt N$, \eqref{eq:cyclic-quadrature-007} gives \eqref{eq:cyclic-quadrature-023}. The same low-frequency bounds in \eqref{eq:cyclic-quadrature-008} give \eqref{eq:cyclic-quadrature-024}.
\end{proof}

\begin{theorem}
\label{thm:critical-cubic-ward}
\label{thm:critical-cubic-quadrature}
For every $C_0<\infty$ there are $C,N_0$ such that every critical-scale datum
with $N\ge N_0$ satisfies
\begin{equation}\label{eq:cyclic-quadrature-029}
 \left|N^4T_N(\nu_N)-\frac{\zeta(4)}2\right|
 \le CN^{-1/16}\log^2(2N).
\end{equation}
More precisely, with
\[
 R=N^{3/4}+O(1),\qquad K=N^{31/32}+O(1),\qquad M=N^{1/32}+O(1),
\]
the positive-alias sector differs from its regular value by
$O(N^{-1/16}\log^2N)$ after multiplication by $N^4$; the sector with one
central input is $O(N^{-1/16})$, and the sector with two central inputs is
$O(N^{-1/2})$ after the same normalization.
\end{theorem}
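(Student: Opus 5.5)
Since $\nu_N$ is a positive probability measure, $|\widehat\nu_N|\le1$ and $T_N(\nu_N)$ converges absolutely. The plan is to reorganize $T_N$ by aliasing, exactly as in Lemma~\ref{lem:cyclic-ward}. Write each frequency as $a=mN+r$, $b=nN+s$, $a+b=(m+n)N+t+\sigma N$ with $r,s,t\in I_N$ and the exact return $r+s=t+\sigma N$. This splits the sum into three sectors:
\begin{itemize}
 \item[] the \emph{positive-alias sector} $m,n\ge1$;
 \item[] the sector with \emph{one central input}, $m=0$ or $n=0$ (so $1\le a\le N/2$ or $1\le b\le N/2$);
 \item[] the sector with \emph{two central inputs}, $m=n=0$.
\end{itemize}
For the regular measure $\sigma_N$ only the positive-alias sector survives, with $r=s=t=0$. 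It gives
\[
 N^4T_N(\sigma_N)=\sum_{m,n\ge1}\frac{1}{mn(m+n)^2}=\frac{\zeta(4)}2,
\]
by the classical Tornheim--Witten evaluation (note $\sum_{m,n}[mn(m+n)^2]^{-1}=2\sum_s H_{s-1}s^{-3}=\zeta(4)/2$ via Euler's $\sum H_{s-1}/s^3=\zeta(4)/4$). So it suffices to show that each sector of $N^4T_N(\nu_N)$ differs from its regular value by the stated amount.

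The central sectors should be controlled directly by low-band coercivity. By Theorem~\ref{thm:atomic-rigidity}(iii) and the critical hypothesis $d_{3,N}\le C_0N^{-4}$, we have $\sum_{2\le k\le N/2}|\widehat\nu(k)|^2k^{-3}\le CN^{-4}$, and $\widehat\nu(1)=0$. For two central inputs, Cauchy--Schwarz in the pair $(a,b)$, with $|\widehat\nu(a+b)|\le1$ and the weight $(ab)^{-1}(a+b)^{-2}$, gives a bound of order $N^4\cdot N^{-4}\cdot N^{-1/2}$. The $N^{-1/2}$ comes from splitting at $a,b\lesssim N^{1/2}$, using coercivity below the split and the weight decay above it. For one central input, say $m=0$ and $n\ge1$, the term is roughly $a^{-1}(nN)^{-3}\widehat\nu(a)\,C_n(s)\overline{C_{n}(s+a)}$. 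Here I would use the Toeplitz block bound from Lemma~\ref{lem:cyclic-ward}(i), together with the sampling estimate \eqref{eq:cyclic-quadrature-004} justified by the preceding Proposition (bounded multiplicity, $a_j\le C$). Combined with $\sum_a|\widehat\nu(a)|/a\lesssim$ coercivity as in \eqref{eq:localization-002}, and truncating $a$ at $R=N^{3/4}$, this should give $O(N^{-1/16})$ after normalization.

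The main obstacle is the positive-alias sector, since each column $C_m$ has $\ell^2$ size $O(1)$ and there is no smallness per term. For $m,n\le M=N^{1/32}$ I would apply \eqref{eq:cyclic-quadrature-015}. Restrict the columns to $|r|\le R$, replace the actual convolution blocks by phase blocks using \eqref{eq:cyclic-quadrature-024} with $K=N^{31/32}$, and invoke the unit three-column correlation $\langle c_m*c_n,c_{m+n}\rangle=1$ from the preceding Proposition. Unit correlation replaces the sum over $r,s$ by the regular constant $[mn(m+n)^2]^{-1}$, and the denominator variations contribute $O(R/N)$. The phase tails $\tau$ beyond $|r|>R$ are bounded by $\|(r/N)c_m\|_2\lesssim m/\sqrt N$ times $N/R$, which gives $mN^{-1/4}$. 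The block error $\epsilon$ from \eqref{eq:cyclic-quadrature-024} with these parameters is $O(N^{-1/16}+mN^{-5/32})$. Summing over $m,n\le M$, weighted by $[mn(m+n)^2]^{-1}$ (which absorbs the factor $m$), gives $O(N^{-1/16}\log N)$. Pairs with $m>M$ or $n>M$ are handled by \eqref{eq:cyclic-quadrature-014}, which gives $O(\log^A N\cdot\log M/M^2)=O(N^{-1/16}\log^A N)$. I expect that the logarithmic power can be tightened to $\log^2$ by using \eqref{eq:cyclic-quadrature-015} (single $\log$) more carefully. This parameter bookkeeping, and checking that the carry chambers obey the same bound, is where I expect the real work to lie.
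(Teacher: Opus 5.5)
Your positive-alias argument is essentially the paper's. It restricts to $|r|\le R$, replaces actual blocks by phase blocks via \eqref{eq:cyclic-quadrature-024}, uses the unit three-column correlation, and expands each pair by \eqref{eq:cyclic-quadrature-015}. For the alias tail, use the per-block bound $N^4|T_{m,n,N}|\le C\log(2N)/(mn(m+n)^2)$, from one-mask Schur plus Toeplitz, instead of \eqref{eq:cyclic-quadrature-014}; that is what gives $\log^2$.

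The genuine gaps are in the two central sectors, where the tools you list give no decay.

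\emph{Two central inputs.} Once you use $|\widehat\nu_N(a+b)|\le1$, your majorant is only $O(N^{-4})$, i.e.\ $O(1)$ after normalization. With $x_k=|\widehat\nu_N(k)|k^{-3/2}$ one has $|\widehat\nu_N(a)\widehat\nu_N(b)|/(ab(a+b)^2)\le x_ax_b/(2(a+b))$, and Hilbert's inequality gives $\lesssim\|x\|_2^2\lesssim N^{-4}$. No split at $N^{1/2}$ improves this. Take weights $w_j=N^{-1}(1+\epsilon\cos 2jh)$ at the nodes $jh$ with $\epsilon\sim N^{-2}$. This is a critical-scale datum with $|\widehat\nu_N(2)|\sim N^{-2}$, and the single summand $a=b=2$ of your majorant is already $\sim N^{-4}$.

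The missing idea is that the \emph{output} coefficient is also small:
\begin{itemize}
\item[] For $d=a+b\le3N/4$, combine $\sum_{a+b=d}|\widehat\nu_N(a)\widehat\nu_N(b)|/(ab)\le\frac d2\|x\|_2^2$ with low-band coercivity $\sum_{d\le 3N/4}|\widehat\nu_N(d)|/d\lesssim N^{-1}$. This gives $O(N^{-5})$.
\item[] For $3N/4<d\le N$, the output is a first-alias coefficient $C_1(r)$ with $d=N+r$, and sampling gives $\sum_{r\in I_N}|C_1(r)|\lesssim\sqrt N$. Since each such $d$ carries an extra $1/N$, this gives $O(N^{-9/2})$.
\end{itemize}

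\emph{One central input.} The Toeplitz bound $|H_{m,k}|\le C$ together with $\sum_{k\le N/2}|\widehat\nu_N(k)|/k\lesssim N^{-1}$ again gives only $N^4\cdot N^{-3}\cdot N^{-1}=O(1)$. Truncating at $R$ disposes of $k\le R$, since $\sum_{k\le R}|\widehat\nu_N(k)|/k\lesssim N^{-2}R$ yields $O(N^{-1/4})$. But $\sum_{R<k\le N/2}|\widehat\nu_N(k)|/k$ can have the full size $N^{-1}$ at the critical scale. You give no mechanism making the shifted column correlations $\sum_s C_m(s)\overline{C_{m+\sigma}(s+k-\sigma N)}$ (exact return, with denominators) small for large $k$.

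The paper supplies this through \eqref{eq:cyclic-quadrature-016}--\eqref{eq:cyclic-quadrature-018}. Because $|f_m|\equiv1$, the phase correlation at a nonzero shift vanishes. Only the denominator moments, of size $\lesssim(m+1)d_*$ with $d_*\lesssim N^{-1/2}$, and the column errors $\varepsilon_m\lesssim N^{-1/8}$ survive; the alias tail contributes $M^{-2}=N^{-1/16}$. An alternative route is to use the concentration of $c_m$ on $|r|\le R$ together with $\|C_m-c_m\|_2\lesssim N^{-1/8}$, which makes the correlation small once $k>2R$. Some estimate of this kind is indispensable. Without it, the one-central sector is not even shown to be $o(1)$, let alone $O(N^{-1/16})$.
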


\begin{proof}
Decompose the two positive inputs into central frequencies and positive aliases. The low-frequency estimates are
\begin{equation}\label{eq:cyclic-quadrature-030}
 \sum_{2\le k\le3N/4}\frac{|H_N(k)|^2}{k^3}\le CN^{-4},
 \qquad
 \sum_{2\le k\le N/2}\frac{|H_N(k)|}{k}\le CN^{-1}.
\end{equation}
Choose $R=N^{3/4}+O(1)$, $K=N^{31/32}+O(1)$, and $M=N^{1/32}+O(1)$. Uniformly for $m\le3M$, phase approximation gives
\begin{equation}\label{eq:cyclic-quadrature-031}
 \|C_m-c_m\|_2+\|(I-\Pi_R)c_m\|_2\le CN^{-1/8},
 \qquad
 \|Q_{{\rm col},m,N}^R-M_{{\rm ph},m,N}^R\|\le CN^{-1/16}.
\end{equation}
The unit phase correlation and $R/N=N^{-1/4}$ imply, for $m,n\le M$,
\begin{equation}\label{eq:cyclic-quadrature-032}
 N^4T_{m,n,N}
 =\frac{1+O(N^{-1/16}\log(2N))}{mn(m+n)^2},
\end{equation}
with both carry chambers included. The unrestricted alias blocks satisfy
\begin{equation}\label{eq:cyclic-quadrature-033}
 N^4|T_{m,n,N}|
 \le \frac{C\log(2N)}{mn(m+n)^2},
\end{equation}
by the one-mask Schur estimate and the Toeplitz bound. Since the tail of $[mn(m+n)^2]^{-1}$ outside $m,n\le M$ is at most $C\log(2M)M^{-2}$, this proves the positive-alias estimate.

For exactly one central input, $2\le k\le N/2$, the conjugate-column identity is
\begin{equation}\label{eq:cyclic-quadrature-034}
 \overline{T_N^{1\mathrm c}}
 =\frac2{N^3}\sum_k\frac{H_N(k)}k
   \sum_{m\ge1}\frac{H_{m,k}}{m^3}.
\end{equation}
Here $d_*\le CN^{-1/2}$. Split the aliases at $m=M$ and use the shifted-return and tail estimates to get
\begin{equation}\label{eq:cyclic-quadrature-035}
 N^4|T_N^{1\mathrm c}|
 \le C\{N^{-1/8}\log^2(2N)+M^{-2}\}
 \le CN^{-1/16}.
\end{equation}

For two central inputs put $x_k=|H_N(k)|k^{-3/2}$, so that $\|x\|_2^2\le CN^{-4}$. On $a+b=d\le\alpha N$,
\[
 \sum_{a+b=d}\frac{|H_N(a)H_N(b)|}{ab}
 \le \frac d2\|x\|_2^2.
\]
The second low-band bound makes this contribution $O(N^{-5})$. At output $d=N+r$, the bound is $CN^{-5}|C_1(r)|$. Sampling gives $\sum_{r\in I_N}|C_1(r)|\le C\sqrt N$, and hence
\begin{equation}\label{eq:cyclic-quadrature-036}
 |T_N^{2\mathrm c}|\le CN^{-9/2}.
\end{equation}

These sectors exhaust the series. Its regular value is fixed by
\begin{equation}\label{eq:cyclic-quadrature-037}
 \sum_{m,n\ge1}\frac1{mn(m+n)^2}=\frac{\zeta(4)}2,
\end{equation}
obtained from the decompositions of $\zeta(2)^2$ using $\sum[m^2(m+n)^2]^{-1}$ and the diagonal. Summing the three estimates proves the result.
\end{proof}

\subsection{From cubic quadrature to lattice localization}\label{sec:localization-cubic}

The quadrature result must still be connected to the actual spectral remainder. We first justify substituting the linear turning potential into the cubic coefficient at the accuracy available in the critical class. The resulting normal form has the same leading term at a minimizer and at the regular polygon. Their subtraction is what improves the defect and makes atomic rigidity applicable.

\begin{lemma}
\label{lem:unified-fixed-disk-substitution}
Let $P$ be a polygon of area $\pi$ in the centered polar chart, with radial
function $\rho$, polar measure $\nu$, linear potential $v_L$, and
$q=\rho-v_L$.  Assume $\widehat\nu(1)=0$, the Bessel quadratic
normalization
\begin{equation}\label{eq:localization-028}
 Q(f)=j^{-2}Q_B(f^\perp),
\end{equation}
and the smallness required by the analytic coefficient chain for the extension of $\rho e_r$.  Put
\begin{equation}\label{eq:localization-029}
 A=\|v_L\|_{W^{1,\infty}},\quad B=\|v_L\|_{H^{1/2}},\quad
 E=\|q\|_{W^{1,\infty}},\quad F=\|q\|_{H^{1/2}}.
\end{equation}
Then
\begin{equation}\label{eq:localization-030}
 \left|\widetilde{R}_N(P)
       +8\operatorname{Re}T_N(\nu)\right|
 \le C\,S_{\rm sub}(\rho,v_L,q),
\end{equation}
where
\begin{equation}\label{eq:localization-031}
\begin{aligned}
 S_{\rm sub}(\rho,v_L,q):={}&ABF+EB^2+AF^2+EBF+EF^2\\
 &+\|v_L\|_\infty B^2
  +\|\rho\|_{W^{1,\infty}}^2\|\rho\|_{H^{1/2}}^2.
\end{aligned}
\end{equation}
The same estimate holds for the regular polygon.  In that case
\begin{equation}\label{eq:localization-032}
 S_{\rm sub}(\rho_R,v_{L,R},q_R)\le Ch^5.
\end{equation}
All constants are independent of the smallest weight and gap.
\end{lemma}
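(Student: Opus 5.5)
The plan is to combine the analytic coefficient chain of Lemma~\ref{lem:analytic-coefficient-chain} with the cubic symbol identification of Lemma~\ref{lem:bessel-cubic-symbol}, splitting $\rho=v_L+q$ inside the cubic coefficient. First, by Lemma~\ref{lem:polar-transfer}(ii), the extension $U=\operatorname{Ext}(\rho e_r)$ realizes $P$, and $\widetilde R_N(P)$ equals the normalized fixed-disk remainder $\widetilde R(\rho)$. Lemma~\ref{lem:analytic-coefficient-chain} then gives
\[
\widetilde R_N(P)=C_3(\rho)+R_{\ge4}(\rho),\qquad |R_{\ge4}(\rho)|\le C\|\rho\|_{W^{1,\infty}}^2\|\rho\|_{H^{1/2}}^2,
\]
which is the last term of $S_{\rm sub}$. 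Since $C_3$ is the restriction of a symmetric trilinear form, I expand
\[
C_3(\rho)=C_3(v_L)+3C_3(v_L,v_L,q)+3C_3(v_L,q,q)+C_3(q,q,q).
\]
I apply the cyclic trilinear bound of Lemma~\ref{lem:analytic-coefficient-chain} to each mixed term, which requires no smallness and no slice condition. This gives $|C_3(v_L,v_L,q)|\le C(ABF+EB^2)$, $|C_3(v_L,q,q)|\le C(AF^2+EBF)$ and $|C_3(q,q,q)|\le CEF^2$, which are exactly the first five terms of $S_{\rm sub}$.

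Next comes the principal term $C_3(v_L)$. By \eqref{eq:curvature-coordinates-019} and the centering $\widehat\nu(1)=0$, the function $v_L$ has vanishing modes $0,\pm1$. It therefore lies in the similarity slice, and it is Lipschitz because $v_L'=p-\overline p$. Lemma~\ref{lem:bessel-cubic-symbol} gives $C_3(v_L)=-8P^{\rm cub}(v_L)+C_{\rm low}(v_L)$ with $|C_{\rm low}(v_L)|\le C\|v_L\|_\infty\|v_L\|_{\dot H^{1/2}}^2\le C\|v_L\|_\infty B^2$. It remains to identify $P^{\rm cub}(v_L)$ with $\operatorname{Re}T_N(\nu)$. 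I would pull back $\Tphys$ to $\Tunit$ to check that the Fourier conventions agree. Then $\widehat v_L(a)=\widehat\nu(a)/a^2$, and reality of $\nu$ gives $\widehat\nu(-(a+b))=\overline{\widehat\nu(a+b)}$. Hence the Abel-regularized summand $ab\,\rho^{a+b}\widehat v_L(a)\widehat v_L(b)\widehat v_L(-(a+b))$ equals $\rho^{a+b}\widehat\nu(a)\widehat\nu(b)\overline{\widehat\nu(a+b)}/(ab(a+b)^2)$. The unregularized series converges absolutely, as noted after \eqref{eq:cyclic-quadrature-002}, so dominated convergence lets the Abel limit equal $T_N(\nu)$. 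Collecting terms proves \eqref{eq:localization-030}.

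For the regular polygon I would run the same argument and use Lemma~\ref{lem:polar-transfer}(iii) together with the reconstruction estimates in (i), with $\eta\le Ch$ and $V_\infty\le Ch^2$. These give $A\le Ch$ (from $\|p_R\|_\infty$), $B^2\le CQ(v_{L,R})\le Ch^3$ (plus the mean-zero low-mode bound), and $\|v_{L,R}\|_\infty\le Ch^2$. From \eqref{eq:localization-009} I get $\|q_R\|_\infty\le C(h^4+h^3)$, and from \eqref{eq:localization-016} $\|q_R'\|_\infty\le Ch^3$, so $E\le Ch^3$. Then \eqref{eq:localization-010} yields $F^2\le Ch^3\cdot h^3$, so $F\le Ch^3$. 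Substituting, $ABF\le Ch^{11/2}$, $EB^2\le Ch^6$, and the remaining cross terms are smaller. The terms $\|v_L\|_\infty B^2$ and $\|\rho_R\|_{W^{1,\infty}}^2\|\rho_R\|_{H^{1/2}}^2$ are each $\le Ch^2\cdot h^3=Ch^5$, which proves \eqref{eq:localization-032}.

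The main obstacle I expect is the bookkeeping for $q$ in the regular case. The crude bound $\|q\|_\infty\le C(V_\infty^2+V_2^2+\eta^3)$ must actually deliver $F\lesssim h^3$ rather than $h^2$, since otherwise $ABF$ is only $h^{9/2}$. I would verify this directly from the explicit formulas $q=(e^v-1-v)+e+c$ and \eqref{eq:localization-015}, where $e,c,\overline p_R$ are all $O(h^3)$ or smaller. A secondary point is that the constants must not depend on the smallest weight or gap. This holds because every input lemma already has that uniformity, and the estimates use only $W^{1,\infty}$ and $H^{1/2}$ norms.
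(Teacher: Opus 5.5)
Your proposal is correct and takes essentially the same route as the paper. Both arguments use the exact disk expansion $\widetilde R_N(P)=C_3(\rho)+R_{\ge4}(\rho)$, polarize the cubic term at $\rho=v_L+q$ with the trilinear bound, identify the Abel-defined principal term with $\operatorname{Re}T_N(\nu)$ through $\widehat v_L(k)=\widehat\nu(k)/k^2$, and bound $C_{\rm low}(v_L)$ and the quartic tail. In the regular case your bound $F_R\le Ch^3$ is weaker than the paper's $F_R\le Ch^{7/2}$ but still makes every term of $S_{\rm sub}$ at most $Ch^5$; note also that $\|q_R'\|_\infty\le Ch^3$ comes from $q_R'=b_R$ (since $\overline p_R=0$) or from \eqref{eq:localization-009}, not directly from \eqref{eq:localization-016}.
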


\begin{proof}
Start from the exact disk expansion
\begin{equation}\label{eq:localization-033}
 \widetilde{R}_N(P)
 =C_3(\rho)+R_{\ge4}(\rho).
\end{equation}
For the linear potential, centering removes modes $0,\pm1$ and gives
\begin{equation}\label{eq:localization-034}
 ab\,\widehat v_L(a)\widehat v_L(b)\widehat v_L(-(a+b))
 =\frac{\widehat\nu(a)\widehat\nu(b)
        \overline{\widehat\nu(a+b)}}{ab(a+b)^2}.
\end{equation}
The series on the right is absolutely convergent, so it identifies the Abel-defined principal term. Thus
\begin{equation}\label{eq:localization-035}
 C_3(v_L)
 =-8\operatorname{Re}T_N(\nu)+C_{{\rm low}}(v_L),
 \qquad
 |C_{{\rm low}}(v_L)|\le C\|v_L\|_\infty B^2.
\end{equation}
The difference at $\rho=v_L+q$ is bounded by the polarized cubic estimate:
\begin{equation}\label{eq:localization-036}
 |C_3(\rho)-C_3(v_L)|
 \le C(ABF+EB^2+AF^2+EBF+EF^2).
\end{equation}
Adding the quartic bound gives the last term of \eqref{eq:localization-031} and proves the general substitution estimate.

On a regular cell $|s|\le h/2$,
\[
 p_R(s)=s,\qquad v_{L,R}(s)=\frac{s^2}{2}-\frac{h^2}{24}.
\]
Periodic estimates yield
\[
 \|v_{L,R}\|_\infty\le Ch^2,\quad A_R\le Ch,\quad
 B_R\le Ch^{3/2}.
\]
The formula $q_R'=\rho_R\tan p_R+(\tan p_R-p_R)$ and the area constraint give
\[
 E_R\le Ch^3,\qquad F_R\le Ch^{7/2},\qquad
 \|\rho_R\|_{W^{1,\infty}}\le Ch,\qquad
 \|\rho_R\|_{H^{1/2}}\le Ch^{3/2}.
\]
Substitution in \eqref{eq:localization-031} bounds every term by $Ch^5$. The constants are independent of the smallest mass and gap.
\end{proof}

\begin{theorem}\label{thm:absolute-cubic-normal-form}

There are absolute constants \(C<\infty\) and \(N_0\) such that, for
every \(N\ge N_0\) and every global minimizer \(P_N\) with polar measure
\(\nu_N\),

\[
\left|
\widetilde{R}_N(P_N)
+8\operatorname{Re}T_N(\nu_N)
\right|
\le CN^{-9/2}.
\]

For the regular polygon one has

\[
\left|
\widetilde{R}_N(R_N)
+\frac{4\zeta(4)}{N^4}
\right|
\le CN^{-5}.
\]

The same \(C\) works for all global minimizers and is independent of the
smallest atomic weight and polar gap.

Here every minimizer is translated to its polar center and is written in
the distinguished polar chart of
Definition~\ref{def:polar-coordinates} and
Definition~\ref{def:disk-normalized-splitting}.
For any polar probability measure \(\nu\) (also when it has fewer than
\(N\) atoms), the global Fourier convention gives

\[
T_N(\nu):=
\sum_{a,b\ge1}
\frac{\widehat\nu(a)\widehat\nu(b)
      \overline{\widehat\nu(a+b)}}
     {ab(a+b)^2}.
\]

The conjugate phase-column series used in the finite cyclic calculation is the
complex conjugate of this one and therefore has the same real part.
This series is absolutely convergent, since \(|\widehat\nu(k)|\le1\) and
\((a+b)^2\ge4ab\).

\end{theorem}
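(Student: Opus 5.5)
The plan is to apply Lemma~\ref{lem:unified-fixed-disk-substitution} to a global minimizer and check that every term of $S_{\rm sub}$ is $O(h^{9/2})$ under the a priori bounds of Proposition~\ref{prop:critical-localization}. The regular statement then follows from the same lemma together with an exact evaluation of $T_N(\sigma_N)$.

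\emph{Size of the linear potential.} At a minimizer, Proposition~\ref{prop:critical-localization} gives $d\le Ch^4$ and $\eta=\|p\|_\infty\le Ch$. Splitting the $\dot H^{1/2}$ sum of $v_L$ at $N/2$, with low-band coercivity below the cutoff and the coarse bound of Lemma~\ref{lem:coarse-polar-energy} above it, yields
\[
B=\|v_L\|_{H^{1/2}}\le Ch^{3/2},\qquad \|v_L\|_2^2\le C(d+h^4)\le Ch^4.
\]
Lemma~\ref{lem:endpoint-fejer-hilbert} with $K=\lfloor N/10\rfloor$ then gives
\[
\|v_L\|_\infty\le C\bigl(h\cdot h\log N+h^2\sqrt{\log N}\bigr)\le Ch^2\log N,
\]
and $v_L'=p-\overline p$ gives $A\le Ch$.

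\emph{Size of the reconstruction error.} Lemma~\ref{lem:polar-transfer}(i) gives $\|v\|_\infty\le Ch^2\log N$. It also gives
\[
\|q'\|_\infty\le C(\eta\|v\|_\infty+\eta^3)\le Ch^3\log N,\qquad \|q\|_\infty\le Ch^3\log^2N,
\]
so $E\le Ch^3\log^2N$. The interpolation bound in \eqref{eq:localization-010} then gives $F\le Ch^3\log^2N$.

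\emph{Term-by-term check.} Inserting these bounds:
\begin{itemize}
\item $ABF\lesssim h^{11/2}\log^2N$,
\item $EB^2\lesssim h^6\log^2N$,
\item $AF^2$, $EBF$ and $EF^2$ are smaller still,
\item $\|v_L\|_\infty B^2\lesssim h^5\log N$.
\end{itemize}
The quartic term $\|\rho\|_{W^{1,\infty}}^2\|\rho\|_{H^{1/2}}^2$ is the critical one. The proof of Proposition~\ref{prop:critical-localization} gives $\|\rho\|_{W^{1,\infty}}\le C\eta\le Ch$. Using $\|\rho\|_{H^{1/2}}^2\le Ch^3$ then bounds it by $Ch^5$. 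Every term is therefore $\le Ch^{9/2}$ for large $N$, with room to absorb the logarithms. All inputs are independent of the smallest weight and gap, so $C$ is uniform over minimizers.

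\emph{Regular polygon.} Lemma~\ref{lem:unified-fixed-disk-substitution} gives $|\widetilde R_N(R_N)+8\operatorname{Re}T_N(\sigma_N)|\le Ch^5$ by \eqref{eq:localization-032}. Since $\widehat\sigma_N(k)=\mathbf 1_{\{N\mid k\}}$, only $a=mN$, $b=nN$ contribute, and
\[
T_N(\sigma_N)=N^{-4}\sum_{m,n\ge1}\frac{1}{mn(m+n)^2}=\frac{\zeta(4)}{2N^4}
\]
by \eqref{eq:cyclic-quadrature-037}. Hence $8T_N(\sigma_N)=4\zeta(4)/N^4$. Note that $h^5\sim N^{-5}$ here, matching the stated bound.

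\emph{Main obstacle.} The delicate points are the logarithmic losses in $\|v_L\|_\infty$ and the quartic term at exactly the $h^5$ scale. I expect the bound $\|\rho\|_{W^{1,\infty}}\le Ch$ to be the one that needs care. It comes from integrating $v'=\tan p$ from a zero of $v$, which area normalization provides. This integration controls $\|\rho\|_\infty$ by $2\pi\eta$, and $\|\rho'\|_\infty$ by $\eta(1+\|\rho\|_\infty)$.
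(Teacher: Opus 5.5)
Your proposal is correct and follows the paper's proof: apply Lemma~\ref{lem:unified-fixed-disk-substitution} under the bounds of Proposition~\ref{prop:critical-localization}, check $S_{\rm sub}$ term by term, and evaluate $T_N(\sigma_N)=\zeta(4)/(2N^4)$ at the regular endpoint. The one difference is your bound $\|v_L\|_\infty\le Ch^2\log N$ from the Fej\'er--Hilbert estimate, which is legitimate here because it needs only $d\le Ch^4$, $\|p\|_\infty\le Ch$ and low-band coercivity. The paper instead uses the cruder interpolation $\|v_L\|_\infty^2\le C\|v_L\|_2\|v_L'\|_\infty\le Ch^3$. That makes $\|v_L\|_\infty B^2\sim h^{9/2}$ the dominant term and accounts for the exponent in the statement, whereas your route already reaches the $O(h^5\log N)$ accuracy that the paper records only later, in Lemma~\ref{lem:refined-cubic-error}.
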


\begin{proof}
The critical localization gives
\begin{equation}\label{eq:localization-037}
 \|v_L\|_2\le Ch^2,\qquad \|p\|_\infty\le Ch,
 \qquad v_L'=p-\overline p.
\end{equation}
Together with centering, periodic interpolation gives
\begin{equation}\label{eq:localization-038}
 \|v_L\|_\infty\le Ch^{3/2},\qquad
 \|v_L\|_{H^{1/2}}\le Ch^{3/2},\qquad
 \|v_L\|_{W^{1,\infty}}\le Ch.
\end{equation}
Write $v=v_L+e+c$, $\overline e=0$. The identities
\[
 e'=\tan p-p+\overline p,
 \qquad \overline p=\overline{p-\tan p},
\]
and $|\tan t-t|\le C|t|^3$ imply $\|e\|_{W^{1,\infty}}+\|e\|_{H^1}\le Ch^3$. Area normalization gives
\begin{equation}\label{eq:localization-039}
 e^{2c}\,\overline{e^{2(v_L+e)}}=1,
 \qquad |c|\le C\|v_L+e\|_2^2\le Ch^4.
\end{equation}
For $\rho=e^v-1$ and $q=\rho-v_L$, the resulting bounds are
\begin{equation}\label{eq:localization-040}
 \begin{array}{c|ccc}
 &W^{1,\infty}&H^{1/2}&L^\infty\\ \hline
 v_L&Ch&Ch^{3/2}&Ch^{3/2}\\
 \rho&Ch&Ch^{3/2}&Ch^{3/2}\\
 q&Ch^{5/2}&Ch^{11/4}&Ch^3
 \end{array}.
\end{equation}
The last row follows by estimating
\[
 q=(e^v-1-v)+e+c,
 \qquad q'=(e^v-1)\tan p+(\tan p-p)+\overline p,
\]
in $L^2$, $L^\infty$, and $H^1$, then interpolating. The extension gradient is at most $Ch$, so the analytic expansion applies. With $j^{-2}Q_B=Q$, the substitution estimate yields
\begin{equation}\label{eq:localization-041}
 \left|\widetilde{R}_N(P_N)
       +8\operatorname{Re}T_N(\nu_N)\right|
 \le Ch^{9/2}\le CN^{-9/2},
\end{equation}
without a minimum mass or gap hypothesis.

For the regular polygon the sharper substitution bound gives
\begin{equation}\label{eq:localization-042}
 \left|\widetilde{R}_N(R_N)
       +8\operatorname{Re}T_N(\nu_{R_N})\right|
 \le Ch^5.
\end{equation}
After rotation, its Fourier coefficients are $\mathbf1_{\{N\mid k\}}$. Therefore
\begin{equation}\label{eq:localization-043}
 T_N(\nu_{R_N})
 =\frac1{N^4}\sum_{m,n\ge1}\frac1{mn(m+n)^2}
 =\frac{\zeta(4)}{2N^4}.
\end{equation}
To evaluate the double sum $I$, set $U=\sum_{m,n\ge1}[m^2(m+n)^2]^{-1}$. Partial fractions and the strict-half-plane decomposition give $\zeta(2)^2=2U+2I=\zeta(4)+2U$, so $I=\zeta(4)/2$.
\end{proof}

\begin{corollary}\label{cor:first-localization}

For all sufficiently large \(N\), every global minimizer satisfies

\[
E_N(P)
\le
CN^{-4-1/16}\bigl(\log(2N)\bigr)^2,
\qquad
B_N(\nu_P)
\le
CN^{-1/16}\bigl(\log(2N)\bigr)^2.
\]

In particular, there is a numerical sequence
\(\varepsilon_N\downarrow0\) such that every global minimizer with
sufficiently large \(N\) has exactly \(N\) atoms and, in the cyclic
coordinates of
Theorem~\ref{thm:microscopic-rigidity},

\[
\max_j\bigl(|Nw_j-1|+|\eta_j|\bigr)\le\varepsilon_N.
\]

\end{corollary}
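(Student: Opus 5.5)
The plan is to insert the two cubic normal forms into the exact spectral decomposition \eqref{eq:curvature-coordinates-022} at a global minimizer $P=P_N$. Minimality gives $F(P)\le F(R_N)$, so the left side is nonpositive. By \eqref{eq:curvature-coordinates-024} and Theorem~\ref{thm:bessel-coercivity},
\[
 Q_N^\circ(P)=4d_{B,N}(\nu_P)\ge\tfrac14 d_{3,N}(\nu_P)=\tfrac14E_N(P).
\]
Hence
\[
 \tfrac14E_N(P)\le -D_N^\circ(P)-\widetilde R_N^\circ(P).
\]

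For the geometric correction, Proposition~\ref{prop:critical-localization} gives
\[
 |D_N^\circ(P)|\le C(h^{9/2}+h^{3/2}E_N^{3/4}).
\]
For the spectral remainder, Theorem~\ref{thm:absolute-cubic-normal-form} gives
\[
 \widetilde R_N(P)=-8\operatorname{Re}T_N(\nu_P)+O(N^{-9/2}),
 \qquad
 \widetilde R_N(R_N)=-4\zeta(4)N^{-4}+O(N^{-5}).
\]
Therefore
\[
 |\widetilde R_N^\circ(P)|\le 8N^{-4}\bigl|N^4\operatorname{Re}T_N(\nu_P)-\zeta(4)/2\bigr|+CN^{-9/2}.
\]

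To apply Theorem~\ref{thm:critical-cubic-ward} to the last bracket, I must check that $\nu_P$ is a critical-scale datum in the sense of Definition~\ref{def:critical-ward}. The required properties come from earlier results:
\begin{itemize}
\item exactly $N$ distinct atoms, from Proposition~\ref{prop:full-activation} (merged atoms would reduce the number of effective sides);
\item exact centering, from Proposition~\ref{prop:global-minimizer-radial-entry};
\item $d_{3,N}\le C_0N^{-4}$ and $\|p\|_\infty\le Ch$, from Proposition~\ref{prop:critical-localization}.
\end{itemize}
The bound $\|p_N\|_\infty\le C_0N^{-1}$ holds once $C_0$ absorbs the factor $2\pi$. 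This gives a fixed absolute $C_0$, so the theorem yields
\[
 \bigl|N^4T_N(\nu_P)-\zeta(4)/2\bigr|\le CN^{-1/16}\log^2(2N).
\]
Taking real parts, $|\widetilde R_N^\circ(P)|\le CN^{-4-1/16}\log^2(2N)$.

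Combining,
\[
 E_N\le C\bigl(N^{-4-1/16}\log^2(2N)+N^{-9/2}+h^{3/2}E_N^{3/4}\bigr).
\]
Young's inequality gives $h^{3/2}E_N^{3/4}\le \tfrac18E_N+Ch^6$, and this is absorbed into the left side. The dominant term is $N^{-4-1/16}\log^2(2N)$, which proves the first bound. Multiplying by $N^4$ gives the bound for $B_N(\nu_P)$.

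For the second assertion, set $\varepsilon_N'=CN^{-1/16}\log^2(2N)\to0$. I then apply Theorem~\ref{thm:atomic-rigidity}(iv). Its hypothesis is sequential, so I would argue by contradiction. Suppose some sequence of minimizers $P_{N_\nu}$ has $\max_j(|Nw_j-1|+|\eta_j|)$ bounded below, or has fewer than $N$ atoms. This sequence still satisfies $B_{N_\nu}\to0$, which contradicts (iv).

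This contradiction argument gives a numerical sequence
\[
 \varepsilon_N:=\sup\Bigl\{\max_j(|Nw_j-1|+|\eta_j|):\ \nu \text{ centered, } N\text{-admissible, } B_N(\nu)\le\varepsilon_N'\Bigr\},
\]
with $\varepsilon_N\to0$, which is uniform over all minimizers. Replacing $\varepsilon_N$ by $\sup_{M\ge N}\varepsilon_M$ makes it monotone.

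The main obstacle is verifying the critical-scale hypotheses with constants uniform over all minimizers, in particular that the atoms are distinct and that $N$ of them are present. Everything else is bookkeeping of already established bounds.
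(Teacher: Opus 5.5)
Your proposal is correct and follows the paper's proof essentially step for step. You check the critical-scale hypotheses via activation, centering and Proposition~\ref{prop:critical-localization}; apply the cubic quadrature theorem so that the two absolute normal forms share the leading term $-4\zeta(4)N^{-4}$; close with minimality and Bessel coercivity; and obtain $\varepsilon_N$ from the sequential microscopic rigidity theorem by contradiction plus a tail supremum. The only cosmetic difference is that you absorb $h^{3/2}E_N^{3/4}$ by Young's inequality, whereas the paper inserts the already known bound $E_N\le Ch^4$ to make this geometric error $O(N^{-9/2})$ directly.
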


\begin{proof}
By activation and polar centering, a minimizer has exactly $N$ distinct nodes with positive weights, total mass one, and zero first moment. It also satisfies
\[
 d_{3,N}(\nu_P)\le CN^{-4},\qquad \|p\|_\infty\le CN^{-1}.
\]
The cubic quadrature theorem applies, giving
\begin{equation}\label{eq:localization-044}
 T_N(\nu_P)
 =\frac{\zeta(4)}{2N^4}
 +O\!\left(N^{-4-1/16}\log^2(2N)\right).
\end{equation}
The two absolute spectral expansions have the same leading term $-4\zeta(4)N^{-4}$. Their difference is therefore
\[
 \widetilde{R}_N^\circ(P)
 =O\!\left(N^{-4-1/16}\log^2(2N)\right).
\]
At $E_N(P)\le Ch^4$, Proposition~\ref{prop:critical-localization} gives a geometric error $O(N^{-9/2})$. Minimality and Bessel coercivity now imply
\[
 0\ge\frac14E_N(P)
 -CN^{-4-1/16}\log^2(2N).
\]
This bounds $E_N$; multiplication by $N^4$ bounds $B_N$.

Thus $B_N\to0$ uniformly over minimizers. The microscopic theorem makes the normalized masses and gaps converge to one. Any failure of uniformity would give a sequence contradicting that theorem. A decreasing tail supremum gives $\varepsilon_N\downarrow0$.
\end{proof}

\section{From absolute localization to relative cancellation}\label{sec:relative-cubic}

With the mesh now controlled, the cubic term can be estimated relative to $E_N$ itself. This requires combining the frequency columns before taking absolute values. The apparent linear phase errors become squared differences, and each weight residual is paired with a phase error. These quadratic expressions preserve the defect factor. The resulting relative bound, together with sharper absolute estimates for the other terms, gives the refined localization used in the material comparison.

\subsection{Sharper absolute remainder estimates}

The first localization gives additional low-frequency control. Using it in the reconstruction and cubic substitution improves the absolute errors. These estimates will be paired with the relative principal-cubic bound to reach the sharper defect scale needed for the geometric transport.

\begin{proposition}\label{prop:absolute-geometric-error}

Let \(P=P_N^*\) be a global minimizer and \(R_N\) the regular competitor. For all sufficiently large \(N\),

\[
 |D_N(P)|+|D_N(R_N)|
 \le Ch^5\bigl(\log(2N)\bigr)^3.
\]

Consequently,

\[
|D_N^\circ(P)|
 \le  Ch^5\bigl(\log(2N)\bigr)^3.
\]

\end{proposition}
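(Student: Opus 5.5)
The plan is to bound $D_N(P)$ through the exact polarization \eqref{eq:curvature-coordinates-026}, $D_N(P)=2K_{\rm B}(p,b)+K_{\rm B}(b,b)$ with $b=e^v\tan p-p$. I would estimate the two pairings with the endpoint Fej\'er--Hilbert lemma (Lemma~\ref{lem:endpoint-fejer-hilbert}), which gives
\[
 |K_{\rm B}(p,b)|\le\tfrac12\|M_{\rm B}p\|_\infty\|b\|_1,
 \qquad
 |K_{\rm B}(b,b)|\le C\|b\|_2^2 .
\]
The regular term needs nothing new: Lemma~\ref{lem:polar-transfer}\textup{(iii)} already gives $|D_N(R_N)|\le Ch^5$. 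The estimate for $D_N^\circ(P)$ then follows from the triangle inequality.

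\textbf{Inputs at a minimizer.} By Corollary~\ref{cor:first-localization}, $d=E_N(P)\le Ch^{4+1/16}\log^2(2N)$. By Proposition~\ref{prop:critical-localization}, $\eta=\|p\|_\infty\le Ch$. Centering holds, so the low-band coercivity of Theorem~\ref{thm:atomic-rigidity}\textup{(iii)} supplies the hypothesis of Lemma~\ref{lem:endpoint-fejer-hilbert} with $K=\lfloor N/10\rfloor$ and $C_L=5/4$.

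\textbf{Bound on $v_L$.} Estimate \eqref{eq:localization-004} gives
\[
 \|v_L\|_\infty\le C\bigl(h\cdot h\log(2N)+d^{1/2}\log^{1/2}(2N)\bigr)\le Ch^2\log(2N),
\]
because $d^{1/2}\le Ch^{2+1/32}\log(2N)$. Next, $V_2=\|v_L\|_2\le C(d+h^4)^{1/2}\le Ch^2$ by Proposition~\ref{prop:critical-localization}.

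\textbf{Reconstruction.} Estimate \eqref{eq:localization-008} gives $\|v\|_\infty\le C(h^2\log(2N)+h^3+h^4)\le Ch^2\log(2N)$. Then \eqref{eq:localization-010} yields
\[
 \|b\|_1+\|b\|_2+\|b\|_\infty\le C(\eta\|v\|_\infty+\eta^3)\le Ch^3\log(2N).
\]

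\textbf{Multiplier.} Estimate \eqref{eq:localization-005} gives
\[
 \|M_{\rm B}p\|_\infty\le C(\eta h+\|v_L\|_\infty\log(2N))\le Ch^2\log^2(2N).
\]

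\textbf{Conclusion.} Combining these,
\[
 |K_{\rm B}(p,b)|\le Ch^5\log^3(2N),
 \qquad
 |K_{\rm B}(b,b)|\le Ch^6\log^2(2N).
\]
Hence $|D_N(P)|\le Ch^5\log^3(2N)$, and adding the regular bound $Ch^5$ proves both displays. None of these steps uses a lower bound on the weights or gaps; they use only the critical-scale bounds and the discrepancy-free Fej\'er estimates.

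\textbf{Main obstacle.} The delicate point is obtaining $\|v_L\|_\infty$ at scale $h^2$, losing only logarithms. The $L^2$-interpolation route used in Proposition~\ref{prop:critical-localization} only gives $h^{3/2}$, which would yield $h^{9/2}$ overall. What makes the improvement possible is the improved defect $d=o(h^4)$ from Corollary~\ref{cor:first-localization}: it makes the Fej\'er low-band part $d^{1/2}\sqrt{\log}$ negligible against the smoothing error $\eta h\log N/h$. The second delicate point is the logarithmic loss in the Hilbert-transform bound for $M_{\rm B}p$. That loss costs only the stated powers of $\log(2N)$.
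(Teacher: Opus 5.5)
Your proof is correct and follows the paper's argument step for step. You use the polarization $D_N=2K_{\rm B}(p,b)+K_{\rm B}(b,b)$ with the pairing bounds of Lemma~\ref{lem:endpoint-fejer-hilbert}, the Fej\'er bound $\|v_L\|_\infty\le Ch^2\log(2N)$ at $K=\lfloor N/10\rfloor$, reconstruction giving $\|b\|_1+\|b\|_2+\|b\|_\infty\le Ch^3\log(2N)$, the Hilbert bound $\|M_{\rm B}p\|_\infty\le Ch^2\log^2(2N)$, and \eqref{eq:localization-017} with the triangle inequality for $R_N$. One small correction to your ``main obstacle'' remark: the improved defect $d=o(h^4)$ is not actually needed, since $d\le Ch^4$ from Proposition~\ref{prop:critical-localization} already gives $d^{1/2}\sqrt{\log(2K)}\le Ch^2\sqrt{\log(2N)}$, which lies within $Ch^2\log(2N)$, so the gain over the interpolation route comes from the Fej\'er--Hilbert smoothing itself.
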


\begin{proof}
With $d=E_N(P)$ and $\eta=\|p\|_\infty$, the first localization gives
\begin{equation}\label{eq:relative-cubic-016}
 d\le CN^{-4-1/16}\log^2(2N),\qquad \eta\le Ch.
\end{equation}
At $K=\lfloor N/10\rfloor$, the low-mode term in the Fej\'er bound is $o(h^2)$. The Hilbert estimate then gives
\begin{equation}\label{eq:relative-cubic-017}
 \|v_L\|_\infty\le Ch^2\log(2N),
 \qquad \|M_{{\rm B}}p\|_\infty\le Ch^2\log^2(2N).
\end{equation}
Combine this with $\|v_L\|_2\le Ch^2$ in radial reconstruction:
\begin{equation}\label{eq:relative-cubic-018}
 \|v\|_\infty\le Ch^2\log(2N),
 \qquad
 \|b\|_1+\|b\|_2+\|b\|_\infty
 \le Ch^3\log(2N).
\end{equation}
The exact correction and its pairing estimates yield
\[
 |D_N(P)|
 \le C\|M_{{\rm B}}p\|_\infty\|b\|_1
   +C\|b\|_2^2
 \le Ch^5\log^3(2N).
\]
Use \eqref{eq:localization-017} for the regular endpoint, and apply the triangle inequality for the difference.
\end{proof}

\begin{lemma}\label{lem:refined-cubic-error}

For every sufficiently large \(N\) and every global minimizer \(P\),

\begin{equation}\label{eq:relative-cubic-019}
\left|
\widetilde{R}_N^\circ(P)
+8\operatorname{Re}\!\left[
T_N(\nu_P)-T_N(\nu_{R_N})
\right]
\right|
\le Ch^5\log(2N).
\end{equation}

The constant is uniform in the smallest polar weight and gap.

\end{lemma}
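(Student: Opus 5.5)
The plan is to rerun the substitution Lemma~\ref{lem:unified-fixed-disk-substitution} at both endpoints, now feeding it the sharper bounds from Corollary~\ref{cor:first-localization} and Proposition~\ref{prop:absolute-geometric-error}, and then subtract the two results. The substitution lemma gives
\[
 \bigl|\widetilde R_N(X)+8\operatorname{Re}T_N(\nu_X)\bigr|\le C\,S_{\rm sub}(\rho_X,v_{L,X},q_X),\qquad X=P,R_N.
\]
By \eqref{eq:localization-032}, the regular endpoint already satisfies $S_{\rm sub}\le Ch^5$. Hence the triangle inequality reduces \eqref{eq:relative-cubic-019} to showing
\[
 S_{\rm sub}(\rho,v_L,q)\le Ch^5\log(2N)
\]
at a minimizer. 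This must hold uniformly, with no lower bound on weights or gaps.

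I will first update the norm table \eqref{eq:localization-040} using the first localization. Corollary~\ref{cor:first-localization} gives $d\le CN^{-4-1/16}\log^2(2N)$ and $\eta=\|p\|_\infty\le Ch$. Together with \eqref{eq:relative-cubic-017}, this yields
\[
 \|v_L\|_\infty\le Ch^2\log(2N),\qquad A\le Ch,\qquad B\le Ch^{3/2}.
\]
The value of $B$ follows from $\|v_L\|_2\le Ch^2$ and $\|v_L'\|_2\le Ch$. In the reconstruction Lemma~\ref{lem:polar-transfer} I then use the improved bound $\|v\|_\infty\le Ch^2\log(2N)$ from \eqref{eq:relative-cubic-018}. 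This gives
\[
 \|q'\|_\infty\le C(\eta\|v\|_\infty+\eta^3)\le Ch^3\log(2N),\qquad \|q'\|_2\le C(\eta V_2+\eta^3)\le Ch^3.
\]
From $q=(e^v-1-v)+e+c$ with $|c|\le C(V_2^2+\eta^6)$, I also get $\|q\|_\infty\le Ch^4\log^2(2N)$. Interpolation via \eqref{eq:localization-010} then gives
\[
 F=\|q\|_{H^{1/2}}\le C\bigl(\|q\|_\infty\|q'\|_2\bigr)^{1/2}+C\|q\|_\infty\le Ch^{7/2}\log(2N),
\]
and $E=\|q\|_{W^{1,\infty}}\le Ch^3\log(2N)$.

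Next I insert these bounds into $S_{\rm sub}$ term by term:
\[
\begin{aligned}
 ABF&\le Ch^6\log,& EB^2&\le Ch^6\log,& \|v_L\|_\infty B^2&\le Ch^5\log(2N),\\
 AF^2,\ EBF,\ EF^2&\le Ch^{8}\log^2.
\end{aligned}
\]
The quartic term is $\|\rho\|_{W^{1,\infty}}^2\|\rho\|_{H^{1/2}}^2\le C h^2\cdot h^3=Ch^5$, using $\|\rho\|_{W^{1,\infty}}\le C\eta$ and $\|\rho\|_{H^{1/2}}\le\|v_L\|_{H^{1/2}}+F$. Every term is therefore at most $Ch^5\log(2N)$. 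Subtracting the two endpoint estimates proves the lemma.

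The main obstacle is the term $\|v_L\|_\infty B^2$. It comes from $C_{\rm low}(v_L)$, and with the crude bound $\|v_L\|_\infty\le Ch^{3/2}$ it is only $O(h^{9/2})$. The gain must come from the Fej\'er--Hilbert estimate \eqref{eq:localization-004}, whose defect contribution $d^{1/2}\sqrt{\log N}$ is $o(h^2)$ only because of the first localization. I will check carefully that \eqref{eq:localization-004} applies there with $K=\lfloor N/10\rfloor$, using the low-band coercivity \eqref{eq:atomic-rigidity-009} to supply the hypothesis with $C_L=5/4$. I will also check that the constants stay independent of the minimal gap. The same care is needed so that the logarithm in $\|v\|_\infty$ does not enter $F$ with a worse power.
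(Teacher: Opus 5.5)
Your proposal is correct and is essentially the paper's proof: you feed the sharpened bounds $A\le Ch$, $B\le Ch^{3/2}$, $\|v_L\|_\infty\le Ch^2\log(2N)$ and $E\le Ch^3\log(2N)$ into the substitution lemma at the minimizer, and then subtract the $Ch^5$ regular-endpoint estimate. The one slip is the claim $\|q\|_\infty\le Ch^4\log^2(2N)$: in $q=(e^v-1-v)+e+c$ the term $e$ is only $O(\eta^3)=O(h^3)$ by \eqref{eq:localization-007} (compare the $\eta^3$ in \eqref{eq:localization-009}), so you only get $F\le Ch^3$ rather than $Ch^{7/2}\log(2N)$; this is harmless, since then $ABF\le Ch^{11/2}$ and the other $q$-terms remain $o(h^5)$, exactly as with the paper's cruder bound $E+F\le Ch^3\log(2N)$.
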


\begin{proof}
The localization available here is
\[
 d\le CN^{-4-1/16}\log^2(2N),\qquad \|p\|_\infty\le Ch.
\]
The bounds $\|v_L\|_2\le Ch^2$ and $\|v_L\|_\infty\le Ch^2\log(2N)$ from Fej\'er--Hilbert control give, through reconstruction,
\begin{equation}\label{eq:relative-cubic-020}
 \begin{array}{c|ccc}
 &W^{1,\infty}&H^{1/2}&L^\infty\\ \hline
 v_L&Ch&Ch^{3/2}&Ch^2\log(2N)\\
 \rho&Ch&Ch^{3/2}&Ch^2\log(2N)\\
 q&Ch^3\log(2N)&Ch^3\log(2N)&Ch^3\log(2N)
 \end{array}.
\end{equation}
The substitution parameters therefore satisfy $A\le Ch$, $B\le Ch^{3/2}$, and $E+F\le Ch^3\log(2N)$. Hence
\[
 \left|\widetilde{R}_N(P)
       +8\operatorname{Re}T_N(\nu_P)\right|
 \le Ch^5\log(2N).
\]
Subtract the regular-endpoint formula, whose error is at most $Ch^5$, to obtain \eqref{eq:relative-cubic-019}. These bounds remain uniform in the smallest polar mass and gap.
\end{proof}

\subsection{Comparison on a defect annulus}

We compare the principal cubic term on a defect annulus, keeping the lower cutoff explicit. In addition to the finite cyclic identities, the proof needs a quantitative estimate for their completed linear contribution. This is obtained by separating the composite-trapezoid weights from the weight residual: the first part is a quadrature error of squared phase differences, and the second pairs a weight residual with a phase error. The exact constraint chart will later remove the cutoff.

\begin{proposition}
\label{prop:principal-cubic-relative}
Fix $0<b<3/8$ and $C_b<\infty$.  For each $N$, let
\[
 \nu_N=\sum_{j=1}^Nw_j\delta_{\psi_j},\qquad
 a_j=Nw_j,
 \qquad
 \psi_{j+1}-\psi_j=\frac{2\pi}{N}(1+\eta_j),
\]
have exactly $N$ distinct cyclically ordered nodes, positive weights of total
mass one, and $\widehat\nu_N(1)=0$.  Assume
\begin{equation}\label{eq:relative-cubic-001}
 \max_j(|a_j-1|+|\eta_j|)\le\frac14,
\end{equation}
and, with $B_N=N^4d_{3,N}(\nu_N)$,
\begin{equation}\label{eq:relative-cubic-002}
 e^{-N^{1/3}}\le B_N
 \le C_bN^{-b}\bigl(\log(2N)\bigr)^{C_b}.
\end{equation}
Then, for
$\sigma_N=N^{-1}\sum_{j=0}^{N-1}\delta_{2\pi j/N}$, there is a numerical
sequence $\epsilon_N=\epsilon_N(b,C_b)\to0$ such that
\begin{equation}\label{eq:relative-cubic-003}
 N^4\left|\operatorname {Re}\bigl[
 T_N(\nu_N)-T_N(\sigma_N)
 \bigr]\right|\le\epsilon_N B_N.
\end{equation}
The estimate is uniform in the individual weights and gaps and uses no
minimality, Euler equation, or turning representative.
\end{proposition}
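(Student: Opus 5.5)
The plan is to run the finite cyclic calculus of Lemma~\ref{lem:cyclic-ward} in the near-regular regime, this time with the phase built from the actual mesh, and to track every error relative to $B_N$ instead of in absolute size. The hypothesis \eqref{eq:relative-cubic-001} gives bounded multiplicity and bounded weights trivially. Theorem~\ref{thm:atomic-rigidity}(iv), through its quantitative form \eqref{eq:atomic-rigidity-011}, gives
\[
\sum_j(\varepsilon_j^2+\eta_j^2)\le CB_N,
\qquad \sum_j|y_j|^2\le CB_N,
\qquad \sum_j|u_j|^2\le CB_N/N^2 .
\]
The centering lets the first derivation of these bounds apply directly. I define the zero-winding phase $\Phi_N$ of \eqref{eq:curvature-coordinates-012} with increments $2\pi\eta_j$. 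Since $\sum_j\eta_j=0$, its degree is zero, so the first-moment Ward identities of Lemma~\ref{lem:cyclic-ward}(i) hold. The phase then satisfies $\|f_k'\|_2^2\le Ck^2N\sum_j\eta_j^2\le Ck^2NB_N$. This gives the moment parameter $d_*\le C(B_N/N)^{1/2}$ in \eqref{eq:cyclic-quadrature-009}, through $\|(r/N)c_k\|_2=\|f_k'\|_2/(2\pi N)$.

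Next I decompose $N^4[T_N(\nu_N)-T_N(\sigma_N)]$ into the same sectors as in Theorem~\ref{thm:critical-cubic-ward}: two positive aliases, one central input, and two central inputs. The two-central sector is quadratic in the central coefficients. By low-band coercivity \eqref{eq:atomic-rigidity-008} and the sampling bound, it is $O(N^{-1/2}B_N)$ after normalization, which is $o(B_N)$. The one-central sector is bilinear in a central coefficient and a shifted return $H_{m,q}$. I use \eqref{eq:cyclic-quadrature-016}--\eqref{eq:cyclic-quadrature-018}. The central factor carries $d_{3,N}^{1/2}$, and $P_{m,q}$ carries $(m+1)d_*$, so their product is $O(\log^A N\cdot N^{-1/2}B_N)$. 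For the alias sector I apply the exact cyclic return \eqref{eq:cyclic-quadrature-013}, truncating at $M=N^{\delta}$ with tail \eqref{eq:cyclic-quadrature-014}. The tail is not relative, so I will instead bound the tail of the difference. Since both measures' tails share the regular structure, I rerun the same estimate for $m>M$ using $|T_{m,n}-T_{m,n}^{\rm reg}|\lesssim\log^A N\,[mn(m+n)^2]^{-1}\cdot(\text{column errors})$. All quadratic terms in \eqref{eq:cyclic-quadrature-013} are $\log^A N$ times $d_*^2$, $\varepsilon_k^2$, or $T_k^2$. Here $d_*^2\sim B_N/N$, and I must show $E_M^{\rm col}+E_M^{\rm tail}\le CN^{-c}B_N$. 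For this I compare $C_k$ with $c_k$ through \eqref{eq:cyclic-quadrature-006}: $\nu-dx$ is a quadrature error for the smooth-on-cells function $f_k e^{2\pi irx}$. Splitting $w_j=(x_{j+1}-x_{j-1})/2+(w_j-\tfrac{x_{j+1}-x_{j-1}}2)$ gives a composite-trapezoid error, which is quadratic in $f_k'$ differences and hence $O(k^2B_N/N)$ in $\ell^2$, plus a weight residual, whose $\ell^2$ norm is $O(B_N^{1/2}/N^{1/2})$ times sampling. This yields $\varepsilon_k^2\le Ck^2N^{-1}B_N\log^A N$, which is relatively small.

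The main obstacle is the linear term $U_M$ in \eqref{eq:cyclic-quadrature-012}. It is linear in the errors $e_k$, so a priori it is only $O(B_N^{1/2})$. I plan to show it is a genuine defect-sized quantity, not merely $\epsilon_NB_N$-small. By Parseval, $\|\Pi C_k\|^2-\|\Pi c_k\|^2$ compares the alias-band energy of $\nu$ with that of the phase. The phase has $\|c_k\|_2=1$ exactly, and $\sum_{r}|C_k(r)|^2$ is the weighted discrete norm $\sum_{j,l}w_jw_l D_N(\cdot)$, which is $1+O(\sum u_j^2N)+O(\sum\eta_j^2/N)$ after the trapezoid split. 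The first-order pieces cancel by mass conservation ($\sum u_j=0$) and by $\sum\eta_j=0$. What remains is quadratic, of size $O(B_N/N)$, and summing with $\sum k\beta_{k,M}\le C$ keeps it $O(B_N\log^A N/N)$. The lower cutoff $B_N\ge e^{-N^{1/3}}$ is used only to absorb exponentially small truncation and Abel errors, such as high-frequency tails $e^{-cN}$, into $\epsilon_NB_N$. The upper bound $B_N\le N^{-b}$ with $b<3/8$ makes the cubic terms $d_*^3$ and the Taylor remainders of the denominator \eqref{eq:cyclic-quadrature-019} relatively $o(1)$. Collecting everything gives $\epsilon_N=C(N^{-c}+N^{-b/2})\log^AN\to0$.
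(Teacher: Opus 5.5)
Several parts of your outline match the paper: the zero-winding phase giving $d_*\le C\sqrt{B_N/N}$, the three-sector decomposition, and splitting $\nu_N$ into composite-trapezoid weights plus a weight residual so that the completed linear term $U_M$ becomes defect-sized. The genuine gap is the alias tail. With $M=N^{\delta}$ fixed, the absolute tail \eqref{eq:cyclic-quadrature-014} is of size $N^{-2\delta}$ up to logarithms. Your proposed replacement is a per-pair bound $|N^4T_{m,n,N}-[mn(m+n)^2]^{-1}|\lesssim\log^AN\,[mn(m+n)^2]^{-1}\cdot(\text{column errors})$, as in \eqref{eq:cyclic-quadrature-015}. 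That bound is linear in the column errors and first moments, and these scale like $\varepsilon_m,\ md_*\sim B_N^{1/2}$, not like $B_N$. Summed over $m>M$ or $n>M$ it gives about $B_N^{1/2}N^{-2\delta}\log^{A+1}N$. This is not $o(B_N)$ once $B_N\ll N^{-4\delta}$, and \eqref{eq:relative-cubic-002} allows $B_N$ as small as $e^{-N^{1/3}}$. To make the tail quadratic you would have to run the Ward cancellation and the completion of $U$ on the tail as well. That cannot work for all $m$: once $md_*\gtrsim1$, i.e.\ $m\gtrsim\sqrt{N/B_N}$, the moment bound $\|(r/N)c_m\|_2\le Cmd_*$ is no better than trivial. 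So beyond some $B_N$-dependent level you are left with the absolute tail anyway.

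The missing idea is to let the alias cutoff depend on the defect, as the paper does: $M=\lceil B_N^{-1/2}(L_N^{\rm ann})^{A_*}\rceil$ with $L_N^{\rm ann}=1+\log N+\log B_N^{-1}$. Then $\log^A(2N)\log(2M)M^{-2}\le CB_N(L_N^{\rm ann})^{A+1-2A_*}=o(B_N)$. Every in-range loss in \eqref{eq:cyclic-quadrature-013} and in $U_M$ carries only powers of $\ell_M=\log(2M)$; for instance $d_*^2\ell_M^2$ and $\alpha_N^2\sum_{k\le2M}k^2\beta_{k,M}$ are both $\lesssim B_N\ell_M^2/N$. These are $o(B_N)$ precisely because $B_N\ge e^{-N^{1/3}}$ forces $\ell_M=O(N^{1/3})$. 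That is the real role of the lower cutoff, not absorbing exponentially small truncation errors. Likewise, the upper bound on $B_N$ is needed only for $B_N\to0$ and microscopic rigidity, not for $d_*^3$ or the denominator remainders, which are relatively small regardless.

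A smaller point: the first-order cancellation in $U_M$ does not follow from $\sum_ju_j=\sum_j\eta_j=0$ via a Taylor expansion about the regular mesh. That expansion breaks down once $kN\|y\|_\infty\gtrsim1$. In the paper the cancellation comes from unimodularity, $1-\operatorname{Re}(\overline f_kA_{{\rm cut},N}f_k)=\frac12\int K_N(z)|f_k(x)-f_k(x-z)|^2\,dz$, whose trapezoid error is $O(k^2\alpha_N^2)$. The residual $\mu_q$ is paired against $(I-A_{{\rm cut},N})f_k$, which gives $O(k\alpha_N\|q\|_{2,N})$.
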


\begin{proof}
Write $x_j=\psi_j/(2\pi)$ and $H_N(k)=\overline{\widehat\nu_N(k)}$. The assumed upper bound implies $B_N\to0$, so microscopic rigidity gives
\begin{equation}\label{eq:relative-cubic-004}
 \sum_j\bigl((a_j-1)^2+\eta_j^2\bigr)\le CB_N,
 \qquad
 \max_j(|a_j-1|+|\eta_j|)\le C\sqrt{B_N},
\end{equation}
uniformly; otherwise a violating sequence would contradict its sequential version. The linear column terms must be combined before estimation to preserve this factor of defect.

Choose the degree-zero phase lift by
\[
 \Phi_N(x_j)=2\pi Nx_j\pmod{2\pi},\qquad
 \Phi_N'=2\pi N\frac{\eta_j}{1+\eta_j},
\]
and set
\[
 f_k=e^{ik\Phi_N},\quad
 c_k(r)=\int f_k(x)e^{2\pi irx}\,dx,\quad
 C_k(r)=H_N(kN+r),\quad
 e_k=\Pi_{I_N}(C_k-c_k).
\]
For
\[
 \alpha_N=(2\pi N)^{-1}\|f_1'\|_2,
 \qquad
 q_j=a_j-\frac{(1+\eta_{j-1})+(1+\eta_j)}2,
\]
rigidity gives
\begin{equation}\label{eq:relative-cubic-005}
 \alpha_N^2+\|q\|_{2,N}^2\le C\frac{B_N}{N},
 \qquad d_*\le C\sqrt{B_N/N}.
\end{equation}
The parameter $d_*$ uses the opposite orientation of the conjugate phase. Quasi-uniformity supplies sampling, while low-band coercivity gives
\[
 R_2^2\le C\frac{B_N}{N},
 \qquad
 R_1(L)\le CB_N^{1/2}\frac{L^2}{N^2}
 \qquad(L\le3N/4).
\]
Use $S=\lceil(kN)^{2/3}\rceil$ for $k\le N^{1/4}$ and $S=\lfloor N/8\rfloor$ otherwise. The column bound becomes
\begin{equation}\label{eq:relative-cubic-006}
 \varepsilon_k^2:=\|e_k\|_2^2
 \le CB_N\begin{cases}
 k^{2/3}N^{-1/3}+k^2N^{-1},&k\le N^{1/4},\\
 1+k^2/N,&k>N^{1/4},
 \end{cases}
 \qquad
 T_k^2\le C\min\{1,k^2B_N/N\}.
\end{equation}
With $\ell_M=\log(2M)$, summation yields
\begin{equation}\label{eq:relative-cubic-007}
 E_M^{\rm col}\le B_N\,a^{\mathrm{err}}_N(M),\qquad
 E_M^{\rm tail}\le B_N\,p^{\mathrm{tail}}_N,
\end{equation}
where
\[
 a^{\mathrm{err}}_N(M)
 =C\left(N^{-1/3}+N^{-1/2}\log(2N)+\frac{\ell_M^2}{N}\right),
 \qquad
 p^{\mathrm{tail}}_N
 =\frac{C}{N}\left[1+\log^2\!\left(2\sqrt{N/B_N}\right)\right].
\]
These estimates control the quadratic errors and phase tails, but not yet the completed linear term.

For that term separate the composite-trapezoid measure:
\begin{equation}\label{eq:relative-cubic-008}
 \nu_N=\nu_N^\gamma+\mu_q,
 \quad
 \nu_N^\gamma=\frac1N\sum_j\frac{2+\eta_{j-1}+\eta_j}{2}\,\delta_{x_j},
 \quad
 \mu_q=\frac1N\sum_jq_j\delta_{x_j}.
\end{equation}
Take a smooth taper equal to one on the central residue block, with
\[
 |K_N(x)|\le C\min\{N,N^{-2}\|x\|_{\Tunit}^{-3}\},
 \qquad
 \int\|x\|_{\Tunit}|K_N(x)|\,dx\le C/N.
\]
Unimodularity gives, for $g_{k,N}=1-\operatorname{Re}(\overline f_kA_{{\rm cut},N}f_k)$,
\[
 g_{k,N}(x)=\frac12\int K_N(z)|f_k(x)-f_k(x-z)|^2\,dz,
 \qquad
 \operatorname{Var}(g_{k,N})\le CN^{-1}\|f_k'\|_2^2.
\]
Thus its trapezoid error is $O(k^2\alpha_N^2)$. For the residual measure put $h_k=(I-A_{{\rm cut},N})f_k$; then
\[
 \|h_k\|_2\le Ck\alpha_N,
 \qquad \|h_k'\|_2\le CkN\alpha_N,
 \qquad |g_{k,N}|\le|h_k|.
\]
Sampling and Cauchy--Schwarz give $|\int g_{k,N}\,d\mu_q|\le Ck\alpha_N\|q\|_{2,N}$. Returning to the sharp block costs at most $2\varepsilon_k^2+Ck^2\alpha_N^2$. The completed identity consequently gives
\begin{equation}\label{eq:relative-cubic-009}
 |U_M|
 \le C\left[
 \sum_{k\le2M}\beta_{k,M}\varepsilon_k^2
 +\alpha_N^2(1+\ell_M)^2+\alpha_N\|q\|_{2,N}\right]
 \le B_N\,a^{\mathrm{err}}_N(M).
\end{equation}
The linear contribution now has the same full factor $B_N$ as the quadratic errors.

Choose the alias cutoff by
\begin{equation}\label{eq:relative-cubic-010}
 L_N^{\rm ann}=1+\log N+\log B_N^{-1},
 \qquad
 M=\left\lceil B_N^{-1/2}(L_N^{\rm ann})^{A_*}\right\rceil,
\end{equation}
where $A_*$ dominates the fixed logarithmic losses. The lower defect bound gives $L_N^{\rm ann}+\ell_M=O(N^{1/3})$, hence $a_N^{\mathrm{err}}(M)+p_N^{\mathrm{tail}}=o(1)$. After division by $B_N$ the contributions are
\begin{equation}\label{eq:relative-cubic-011}
\begin{array}{c|c}
\text{term} & \text{bound after division by }B_N\\ \hline
 d_*^2\ell_M^2 & C\ell_M^2/N\\
 d_*\ell_M(E_M^{\rm col})^{1/2}
   & C\ell_M\sqrt{a^{\mathrm{err}}_N(M)/N}\\
 d_*(E_M^{\rm tail})^{1/2}
   & C\sqrt{p^{\mathrm{tail}}_N/N}\\
 E_M^{\rm col},\ E_M^{\rm tail},\ |U_M|
   & C\bigl(a^{\mathrm{err}}_N(M)+p^{\mathrm{tail}}_N\bigr)
\end{array}.
\end{equation}
Each remains $o(1)$ after the fixed logarithmic loss. Also $M^{-2}\le B_N(L_N^{\rm ann})^{-2A_*}$, so the alias tail is $o(B_N)$. It follows that
\begin{equation}\label{eq:relative-cubic-012}
 2N^4\operatorname{Re}\sum_{m,n\ge1}
 \left(T_{m,n,N}-\frac1{N^4mn(m+n)^2}\right)
 =o(B_N).
\end{equation}

For central inputs let $\mathcal L_N=\{2,\ldots,\lfloor N/2\rfloor\}$. Coercivity and Cauchy--Schwarz give
\begin{equation}\label{eq:relative-cubic-013}
 \sum_{k\in\mathcal L_N}\frac{|H_N(k)|^2}{k^3}
 \le C\frac{B_N}{N^4},
 \qquad
 \sum_{k\in\mathcal L_N}\frac{|H_N(k)|}{k}
 \le C\frac{B_N^{1/2}}N.
\end{equation}
With one central input the exact conjugate-column expression is
\[
 \overline{T_N^{1\mathrm c}}
 =\frac2{N^3}\sum_{k\in\mathcal L_N}\frac{H_N(k)}k
   \sum_{m\ge1}\frac{H_{m,k}}{m^3}.
\]
The shifted-return estimates, with the same cutoff, give
\begin{equation}\label{eq:relative-cubic-014}
 \begin{aligned}
 \frac{N^4|T_N^{1\mathrm c}|}{B_N}
 \le C\Bigg[&
 \frac{\log^2(2N)}N+N^{-2/3}\log^C(2N)\\
 &+B_N^{1/2}\left(N^{-1/3}+\frac{\ell_M}{N}\right)\log^C(2N)
 +B_N^{1/2}(L_N^{\rm ann})^{-2A_*}\Bigg]=o(1).
 \end{aligned}
\end{equation}
The terms come from the principal phase part, the linear errors, the quadratic error, and the alias tail; the carry chamber and even central row are included.

With two central inputs put $x_k=|H_N(k)|k^{-3/2}$, so $\|x\|_2^2\le CB_NN^{-4}$. For $a+b=d\le\beta N$, $1/2<\beta<1$ fixed,
\[
 \sum_{a+b=d}\frac{|H_N(a)H_N(b)|}{ab}
 \le Cd\frac{B_N}{N^4},
 \qquad
 \sum_{d\le\beta N}\frac{|H_N(d)|}{d}
 \le C_\beta\frac{B_N^{1/2}}N.
\]
At $d=N+r$, the contribution is at most $CB_NN^{-5}|C_1(r)|$. Sampling gives $\sum_r|C_1(r)|\le C\sqrt N$, and therefore
\begin{equation}\label{eq:relative-cubic-015}
 \frac{N^4|T_N^{2\mathrm c}|}{B_N}
 \le C\left(B_N^{1/2}N^{-1}+N^{-1/2}\right)=o(1).
\end{equation}
The first input harmonic vanishes by centering. The half-open convention counts $d=N$ and the even Nyquist input once. The sectors exhaust the series, and all central sectors vanish at the regular measure. Combine \eqref{eq:relative-cubic-012}, \eqref{eq:relative-cubic-014}, and \eqref{eq:relative-cubic-015}. Continuity and compactness give finite bounds for the remaining finitely many $N$; a monotone tail envelope supplies $\epsilon_N\to0$.
\end{proof}

\subsection{Refined localization of minimizers}\label{sec:refined-localization}

The relative cubic estimate now enters the minimizing inequality together with the sharper absolute errors. The cubic difference can be absorbed into the positive defect, leaving the absolute errors to set a smaller localization scale. This is the quantitative regime assumed in the material comparison; no critical-point equation is needed for that later comparison.

\begin{corollary}\label{cor:refined-localization}

For every sufficiently large \(N\), every global minimizer satisfies

\[
E_N(P)
\le
Ch^5\bigl(\log(2N)\bigr)^3,
\qquad
B_N(\nu_P)\le Ch\bigl(\log(2N)\bigr)^3.
\]

\end{corollary}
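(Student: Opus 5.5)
We start from the exact splitting \eqref{eq:curvature-coordinates-022} at a global minimizer $P$. Minimality gives $F(P)\le F(R_N)$, so
\[
 0\ge Q_N^\circ(P)+D_N^\circ(P)+\widetilde R_N^\circ(P).
\]
By \eqref{eq:curvature-coordinates-024} and Theorem~\ref{thm:bessel-coercivity}, $Q_N^\circ(P)=4d_{B,N}(\nu_P)\ge\tfrac14E_N(P)$. Proposition~\ref{prop:absolute-geometric-error} bounds the geometric term by $|D_N^\circ(P)|\le Ch^5\log^3(2N)$. Lemma~\ref{lem:refined-cubic-error} replaces the spectral remainder by the principal cubic difference, up to an error $Ch^5\log(2N)$:
\[
 \widetilde R_N^\circ(P)\ge-8\operatorname{Re}[T_N(\nu_P)-T_N(\sigma_N)]-Ch^5\log(2N).
\]
Here $T_N(\nu_{R_N})=T_N(\sigma_N)$, since $T_N$ is rotation invariant.

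It remains to show that the cubic difference is $o(E_N)$. We apply Proposition~\ref{prop:principal-cubic-relative} with $b=1/16$ and a suitable $C_b$. Its hypotheses are checked as follows.
\begin{itemize}
\item[(a)] Activation and polar centering give exactly $N$ distinct nodes, positive weights and $\widehat\nu_P(1)=0$.
\item[(b)] Corollary~\ref{cor:first-localization} gives $B_N\le CN^{-1/16}\log^2(2N)$, which is the upper bound in \eqref{eq:relative-cubic-002}. It also gives $\max_j(|Nw_j-1|+|\eta_j|)\le\varepsilon_N\le1/4$ for large $N$.
\item[(c)] The lower bound $B_N\ge e^{-N^{1/3}}$ is not available a priori. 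This is the one real obstacle.
\end{itemize}

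We handle (c) by splitting into cases. If $B_N(\nu_P)<e^{-N^{1/3}}$, the conclusion already holds, because $e^{-N^{1/3}}\le h\log^3(2N)$ for large $N$, and then $E_N=N^{-4}B_N\le Ch^5\log^3(2N)$.

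Otherwise $B_N\ge e^{-N^{1/3}}$, and the proposition yields
\[
 8\bigl|\operatorname{Re}[T_N(\nu_P)-T_N(\sigma_N)]\bigr|\le8\epsilon_NN^{-4}B_N=8\epsilon_NE_N(P).
\]
For $N$ large enough that $8\epsilon_N\le1/8$, the minimizing inequality becomes
\[
 0\ge\Bigl(\tfrac14-\tfrac18\Bigr)E_N(P)-Ch^5\log^3(2N).
\]
Hence $E_N(P)\le Ch^5\log^3(2N)$. Multiplying by $N^4=(2\pi)^4h^{-4}$ gives $B_N\le Ch\log^3(2N)$.

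Uniformity over minimizers follows because every constant used (coercivity, the absolute errors and $\epsilon_N$) is uniform in the smallest weight and gap. Since each case gives the bound, the corollary holds for all sufficiently large $N$.
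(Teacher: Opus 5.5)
Your proposal is correct and follows the paper's proof essentially step for step. It uses the same case split at $B_N=e^{-N^{1/3}}$, and in the upper range it combines Proposition~\ref{prop:principal-cubic-relative} with Lemma~\ref{lem:refined-cubic-error}, Proposition~\ref{prop:absolute-geometric-error}, Bessel coercivity and minimality. The only difference is cosmetic: you take $b=1/16$ where the paper takes $b=1/32$, which is equally admissible since Corollary~\ref{cor:first-localization} gives $B_N\le CN^{-1/16}\log^2(2N)$ and the proposition only requires $0<b<3/8$.
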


\begin{proof}
Set $b=1/32$. The first localization gives
\begin{equation}\label{eq:relative-cubic-021}
 B_N(\nu_P)\le C_bN^{-b}\log^{C_b}(2N).
\end{equation}
Activation, polar centering, and microscopic localization supply respectively $N$ positive atoms, the exact first-moment condition, and quasi-uniformity. The radial estimate supplies the required chart. The regular measure is a rotation of $\sigma_N$; its cubic value is unchanged because every summand has total phase zero. The Bessel normalization and polar transfer give the exact decomposition and its convergence.

For $B_N<e^{-N^{1/3}}$, the assertion follows from $B_N=N^4E_N$ and $e^{-N^{1/3}}\le Ch\log^3(2N)$. In the complementary range the relative cubic estimate gives
\begin{equation}\label{eq:relative-cubic-022}
 \left|\operatorname {Re}\left[
 T_N(\nu_P)-T_N(\nu_{R_N})\right]\right|
 =o(E_N(P)).
\end{equation}
Together with the refined absolute bound, this yields
\begin{equation}\label{eq:relative-cubic-023}
 |\widetilde{R}_N^\circ(P)|
 \le o(E_N(P))+Ch^5\log(2N).
\end{equation}
Minimality, Bessel coercivity, and the geometric estimate imply
\[
 0\ge\left(\frac14-o(1)\right)E_N(P)
      -Ch^5\log^3(2N).
\]
Absorb the relative error. Multiplying the resulting estimate for $E_N$ by $N^4$ and using $N^4h^5=(2\pi)^4h$ gives the estimate for $B_N$.
\end{proof}

\section{Comparison along a common material coordinate}\label{sec:polygonal-transport}

For the remaining terms, vertex motion must be included in the differentiation. We attach fixed labels to the cells and pull both the geometric and spectral expressions back along the moving-mesh map. The atomic defect controls the quadratic size of this deformation. Cyclic symmetry cancels the first variations at the regular endpoint, so second material derivatives estimate the endpoint differences. The periodic estimates in Appendix~\ref{sec:mesh-tools} keep the constants uniform as the number of cells grows.

\subsection{Transport of the polygonal turning data}

Fix $C_0<\infty$, put $h=2\pi/N$, $\theta_j=jh$, and let
\begin{equation}\label{eq:polygonal-transport-001}
 \nu_P=\frac1N\sum_ja_j\delta_{\psi_j},\qquad
 \psi_j=\theta_j+y_j,\qquad y_{j+1}-y_j=h\eta_j,
\end{equation}
be exact-centered near-regular polar data, in the canonical zero-mean gauge, with
\begin{equation}\label{eq:polygonal-transport-002}
 \langle a\rangle_N=1,\quad \langle ae^{i\psi}\rangle_N=0,
 \quad \sum_jy_j=\sum_j\eta_j=0,
 \quad \max_j(|a_j-1|+|\eta_j|)\le\frac14,
\end{equation}
and
\begin{equation}\label{eq:polygonal-transport-003}
 0\le E_N(P)\le C_0h^5\log^3(2N),\qquad
 B_N(P)=N^4E_N(P)\le C_0h\log^3(2N).
\end{equation}
Write $L_N=(\log(2N))^{A_0(C_0)}$, where the exponent is enlarged a finite
number of times below, and put
\begin{equation}\label{eq:polygonal-transport-004}
 \chi_j=\frac{\eta_{j-1}+\eta_j}{2},\qquad \gamma_j=1+\chi_j,
 \qquad \sigma^2=\frac1N\sum_j\eta_j^2.
\end{equation}
Let $\bar\gamma(\psi)=\gamma+U_{{\rm cen},1}(c_X)$ be the unique positive
mass--center correction in the first Fourier weight block, and decompose
\begin{equation}\label{eq:polygonal-transport-005}
 a-1=\chi+q^\sharp+r^{\rm geo},\qquad
 r^{\rm geo}=U_{{\rm cen},1}(c_X+c_Z),
\end{equation}
where $q^\sharp$ has no regular modes $0,\pm1$.  With
\begin{equation}\label{eq:polygonal-transport-006}
 G_X=N^{-4}\sum_j\eta_j^2,
 \qquad
 G_q=\sum_{r\notin\{0,\pm1\}}w_{N,r}|\widehat \vartheta_N^\sharp(r)|^2,
 \qquad G=G_X+G_q,
\end{equation}
let $Q_{\rm st}$ be the zero-mean regular-step primitive
\begin{equation}\label{eq:polygonal-transport-007}
 \partial_xQ_{\rm st}=-h\sum_j(q_j^\sharp+r_j^{\rm geo})\delta_{\theta_j}.
\end{equation}
On $I_j=(\theta_j,\theta_{j+1})$ set $P_0=x-\theta_j-h/2$ and define
\begin{equation}\label{eq:polygonal-transport-008}
 T_t(x)=x+tY(x),\quad Y'=\eta,\quad J_t=T_t'=1+t\eta,
\end{equation}
\begin{equation}\label{eq:polygonal-transport-009}
 P_t=J_tP_0+tQ_{\rm st}+c_{\rm turn}(t),\qquad z_t=\dot P_t=\eta P_0+Q_{\rm st}+c_{\rm turn}'(t),
\end{equation}
where $c_{\rm turn}(t)$ is uniquely chosen by
\begin{equation}\label{eq:polygonal-transport-010}
 \int_{\mathbb T}J_t\tan P_t\,dx=0.
\end{equation}
Let $V_t$ be the normalized primitive
\begin{equation}\label{eq:polygonal-transport-011}
 \partial_xV_t=J_t\tan P_t,
 \qquad \int_{\mathbb T}J_te^{2V_t}\,dx=2\pi,
 \qquad b_t=e^{V_t}\tan P_t-P_t.
\end{equation}
For the moving Bessel form, let $K_{{\rm B}}$ be the fixed form with Fourier multiplier
\begin{equation}\label{eq:polygonal-transport-012}
 \Gamma(k)=\begin{cases}4\beta_{|k|}/k^2,&|k|\ge2,\\0,&k=0,\pm1,
 \end{cases}
 \qquad \beta_k=k+1-j\frac{J_{k+1}(j)}{J_k(j)},
\end{equation}
write $K_{{\rm B},t}(F,H)=K_{{\rm B}}(F\circ T_t^{-1},H\circ T_t^{-1})$ and
\begin{equation}\label{eq:polygonal-transport-013}
 D_N^{\rm raw}(t)
 =2K_{{\rm B},t}(P_t,b_t)
  +K_{{\rm B},t}(b_t,b_t).
\end{equation}

\begin{proposition}
\label{prop:geometric-comparison}
Uniformly for the near-regular data above,
\begin{equation}\label{eq:polygonal-transport-014}
 G\le CE_N(P),\qquad
 \sigma^2\le C\frac{B_N(P)}N,
\end{equation}
and the path in \textup{\eqref{eq:polygonal-transport-008}--\eqref{eq:polygonal-transport-013}} is $C^2$, has the exact endpoints
\begin{equation}\label{eq:polygonal-transport-015}
 D_N^{\rm raw}(0)=D_N(R_N),\qquad
 D_N^{\rm raw}(1)=D_N(P),
\end{equation}
and satisfies
\begin{equation}\label{eq:polygonal-transport-016}
 (D_N^{\rm raw})'(0)=0,
 \qquad
 \sup_{0\le t\le1}|(D_N^{\rm raw})''(t)|
 \le Ch^{1/2}L_N^C G.
\end{equation}
Consequently there is $\varepsilon_N(C_0)\to0$ such that
\begin{equation}\label{eq:polygonal-transport-017}
 |D_N^\circ(P)|
 \le \varepsilon_N(C_0)E_N(P).
\end{equation}
No minimizing or critical-point equation is used in this comparison.
\end{proposition}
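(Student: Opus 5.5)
The argument runs in four stages: the size bounds \eqref{eq:polygonal-transport-014}, the endpoint identities \eqref{eq:polygonal-transport-015}, the derivative bounds \eqref{eq:polygonal-transport-016}, and finally Taylor's formula.

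\emph{Size bounds.} For \eqref{eq:polygonal-transport-014} I use the coordinate estimates \eqref{eq:atomic-rigidity-011} of Theorem~\ref{thm:bessel-coercivity} with $B_N=N^4E_N$. They give $\sum_j\eta_j^2\le CB_N$, hence $\sigma^2\le CB_N/N$ and $G_X=N^{-4}\sum_j\eta_j^2\le CE_N$.

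For $G_q$, I would split $a-1=\chi+q^\sharp+r^{\rm geo}$. The piece $\chi$ is the composite-trapezoid part, and $r^{\rm geo}$ lies in the first Fourier weight block fixed by the exact mass and center constraints. This leaves $q^\sharp$ carried by the modes $r\notin\{0,\pm1\}$. The weighted DFT norm $G_q$ is then bounded by the $Z_r$ part of the blocks $H_{3,N}^{(r)}$ via \eqref{eq:atomic-rigidity-027}. To pass from that quadratic form to $E_N$ in the present annulus, I apply the constrained chart \eqref{eq:atomic-rigidity-036} on a short segment, or else argue directly from the certificate integrated at $q=e^{-t/N}$ as in the proof of (iv).

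\emph{Endpoints.} At $t=0$ we have $J_0=1$ and $P_0(x)=x-\theta_j-h/2$. The normalization \eqref{eq:polygonal-transport-010} forces $c_{\rm turn}(0)=0$, and \eqref{eq:polygonal-transport-011} then reproduces \eqref{eq:localization-015}, so $D_N^{\rm raw}(0)=D_N(R_N)$.

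At $t=1$ the map $T_1$ sends $\theta_j$ to $\psi_j$, and $P_1\circ T_1^{-1}$ has slope one on each cell. Its jumps are $-h(1+\chi_j+q^\sharp_j+r^{\rm geo}_j)=-2\pi w_j$, since $J_1P_0$ contributes the $\chi$ part. The condition $\int\tan p=0$ matches \eqref{eq:polygonal-transport-010}, so $P_1\circ T_1^{-1}=p$. Similarly $V_1\circ T_1^{-1}=v$ by \eqref{eq:curvature-coordinates-020}, and $D_N^{\rm raw}(1)$ equals \eqref{eq:curvature-coordinates-026}. The $C^2$ regularity follows from the implicit function theorem for $c_{\rm turn}$, because $\int J_t\sec^2P_t\ge c>0$. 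I also need $K_{{\rm B},t}$ to be twice differentiable in $t$. For that I pull the multiplier back and use the kernel bounds of Lemma~\ref{lem:pair-energy-kernel-calculus} and the appendix mesh estimates, which are uniform in $N$.

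\emph{First variation at $t=0$.} The regular configuration is invariant under the cyclic shift by $h$. Every first-order term in $(D_N^{\rm raw})'(0)$ is linear in $(\eta,Q_{\rm st})$ and is paired against regular data that are $h$-periodic. Linear functionals of this kind see only the residue-zero component of the variation. That component vanishes because $\sum_j\eta_j=0$, $q^\sharp$ has no modes $0,\pm1$, and $r^{\rm geo}$ is carried by the $\pm1$ block, which does not pair with multiples of $N$.

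\emph{Second derivative, the main obstacle.} I differentiate twice and expand. Each resulting term contains two velocity factors among $\eta P_0$, $Q_{\rm st}$, $\dot Y$ (in the transported form) and $\dot c_{\rm turn}$, together with regular background factors. The background satisfies $\|P_t\|_\infty\le Ch$, $\|b_t\|_\infty\le Ch^3L_N$, and $\|M_{\rm B}P_t\|_\infty\le Ch^2L_N$ by Lemma~\ref{lem:endpoint-fejer-hilbert} along the path.

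The velocities are measured in the norms that $G$ controls:
\begin{itemize}
\item $\|\eta P_0\|_2^2\sim h^2\sigma^2$;
\item $\|Q_{\rm st}\|_{\dot H^{1/2}}^2\lesssim G_q$ after the weights $w_{N,r}$ are matched with the primitive.
\end{itemize}
The hard part is to obtain the gain $h^{1/2}$ rather than $O(1)$ in front of $G$. My plan is to exploit that $b_t$ is cubic in $P_t$: terms involving $K_{\rm B}(b,b)$ are then harmless. In the $K_{\rm B}(P_t,\ddot b_t)$ term I bound the factor $M_{\rm B}P_t$ in $L^\infty$ and bound $\ddot b_t$ in $L^1$ by $Ch^2|\text{velocity}|^2$, working cell by cell. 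When the transport derivative hits the kernel, I use the $1/d$ bound on $K_a'''$ together with the cell separation to lose only $L_N$.

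\emph{Conclusion.} Given the above, Taylor's formula gives $|D_N^\circ(P)|\le\frac12\sup|(D_N^{\rm raw})''|\le Ch^{1/2}L_N^CE_N$. We may take $\varepsilon_N=Ch^{1/2}L_N^C\to0$.
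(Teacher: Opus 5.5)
\textbf{Gap: the bound $G_q\le CE_N(P)$.} This is the substantive part of \eqref{eq:polygonal-transport-014}. The bounds for $\sigma^2$ and $G_X$ do follow from microscopic rigidity, \eqref{eq:atomic-rigidity-010}, as you say. Neither of your two suggested routes, as described, gives $G_q\le CE_N(P)$.

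\emph{The constrained chart.} The expansion \eqref{eq:atomic-rigidity-036} is only available in the box $\|(u,x)\|_2\le N^{-20}$ of Theorem~\ref{thm:local-bessel-coercivity}(iii). Its remainder comes from the crude bound \eqref{eq:atomic-rigidity-025}, so it is $O(N^4|z|^3)$, against a main term of order $N^{-4}|z|^2$. The data allowed here have $|z|^2$ up to order $B_N\lesssim h\log^3(2N)$, far outside that box. The remainder would then exceed the quadratic term by a factor of order $N^8|z|$. Restricting to a short segment does not help, because the point you must evaluate is itself far from the origin.

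\emph{The certificate route of Theorem~\ref{thm:atomic-rigidity}(iv).} That argument yields only $\ell^2$ information such as $\sum_j(a_j-1)^2\le CB_N$. But $G_q$ must weight residue $r$ roughly by $d(r)^{-3}$, since it controls $\|Q_{\rm st}\|_{\dot H^{-1/2}}^2\approx\sum_r d(r)^{-3}|\widehat v_N(r)|^2$ in \eqref{eq:polygonal-transport-030}. Here $\widehat v_N$ is the normalized DFT of $q^\sharp+r^{\rm geo}$. So the $\ell^2$ bound loses a factor of order $N^3$ at low residues. Low-band coercivity controls $\widehat\nu_P(k)$, not the DFT of $q^\sharp$. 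Passing between them by linearizing $e^{-ik\psi_j}$ in $y_j$, with only linear treatment of the trapezoid part $\chi$, loses powers of $N$ at this defect scale.

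\textbf{What is missing} is the nonlinear comparison for the composite-trapezoid configuration.
\begin{itemize}
\item Write $\nu_t^\gamma-d\vartheta/(2\pi)=B_t''/(2\pi)$ through the cell bubbles, so that $E(t)=E_{{\rm at},3}(\nu_t^\gamma)=\frac12[B_t]_{\dot H^{1/2}}^2$ with $B_t\circ T_t=J_t^2B_0$.
\item Differentiating the transported Gagliardo kernel gives $|E^{(4)}|\le C\|\eta\|_\infty^2G_X$. The smallness comes from $\|\eta\|_\infty$, not from powers of $N$.
\item The grouped Hessian gives $E''(0)\le CG_X$.
\item Global minimality of the root measure in both directions of the path then forces $0\le E(1)-\zeta(3)/N^3\le CG_X$.
\item Correct the center by $U_{{\rm cen},1}(c_X)$ with $|c_X|\le Ch^2\sigma$. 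Then polarize between the weights $a$ and $\bar\gamma(\psi)$ at the actual nodes. This gives $G_q\le CE_N+C(|c_X|^2+|c_Z|^2)$ with $|c_Z|\le\kappa_NG_q^{1/2}$, and absorption closes the bound.
\end{itemize}
Without this step, \eqref{eq:polygonal-transport-016} bounds $(D_N^{\rm raw})''$ only in terms of $G$, and your Taylor step does not reach \eqref{eq:polygonal-transport-017}.

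\textbf{What matches the paper.} The endpoint identification via the jumps $-h$ and $-ha_j$, the $C^2$ argument for $c_{\rm turn}$, cyclic invariance for $(D_N^{\rm raw})'(0)=0$, and the final Taylor step are the paper's argument.

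\textbf{Two slips in the second-derivative sketch.}
\begin{itemize}
\item $Q_{\rm st}$ is a step function with jumps, so $\|Q_{\rm st}\|_{\dot H^{1/2}}=\infty$. The relevant norm, paired by the order $-1$ form $K_{\rm B}$, is $\dot H^{-1/2}$.
\item Lemma~\ref{lem:pair-energy-kernel-calculus} concerns the inverse-cubic pair kernel, not the logarithmic kernel of $K_{\rm B}$.
\end{itemize}
Also, the factor $h^{1/2}$ does not come from the term you focus on. It arises from $K_{{\rm B},t}'(P,\dot b)$ and $K_{{\rm B},t}(z,\dot b)$, via $|K_{{\rm B},t}'(f,g)|\le C\|\eta\|_2\|f\|_2\|g\|_2$ and $\|z\|_2\le Ch^{-1/2}G^{1/2}$. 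The term $K_{{\rm B},t}(P,\ddot b)$ is only $O(hL_N^CG)$.
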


\begin{proof}
We first bound the geometric transport by the atomic defect, and then estimate the second variation along that transport. For the trapezoid weights $\gamma_j(t)=1+t\chi_j$ at $\psi_j(t)=\theta_j+ty_j$, the cell-bubble representation is
\begin{equation}\label{eq:polygonal-transport-018}
 \nu_t^\gamma-\frac{d\vartheta}{2\pi}=\frac1{2\pi}B_t'',
 \qquad
 E_{{\rm at},3}(\nu_t^\gamma)=\frac12[B_t]_{\dot H^{1/2}}^2,
 \qquad
 B_t(T_tx)=(1+t\eta(x))^2B_0(x),
\end{equation}
where $B_t(\vartheta)=\tfrac12(\vartheta-\psi_j(t))(\psi_{j+1}(t)-\vartheta)$ on cell $j$. For $\ell=1,2$,
\begin{equation}\label{eq:polygonal-transport-019}
 |B_0|\le Ch^2,
 \quad |B_0(x)-B_0(z)|\le C\min\{h|x-z|,h^2\},
 \quad [\eta^\ell B_0]_{\dot H^{1/2}}^2
 \le Ch^3\|\eta\|_\infty^{2\ell-2}\|\eta\|_2^2.
\end{equation}
Differentiating the transported Gagliardo kernel introduces $(Y(x)-Y(z))/(x-z)$. Four derivatives, split near and away from the diagonal, give
\begin{equation}\label{eq:polygonal-transport-020}
 |E^{(4)}(t)|\le C\|\eta\|_\infty^2G_X,
 \qquad E(t):=E_{{\rm at},3}(\nu_t^\gamma).
\end{equation}
The grouped Hessian gives
\begin{equation}\label{eq:polygonal-transport-021}
 0\le E''(0)
 =2\sum_{1\le r<N/2}H_{N,r}|X_r|^2
 +2\mathbf1_{\{2\mid N\}}H_{N,N/2}|X_{N/2}|^2
 \le CG_X,
\end{equation}
with $X_r=N^{-1}\sum_jy_je^{-ir\theta_j}$ and $0\le H_{N,r}\le Cr^2N^{-3}$. Global minimality of the regular roots on both directions of the path, together with the second- and fourth-derivative bounds, yields
\begin{equation}\label{eq:polygonal-transport-022}
 0\le E_{{\rm at},3}(\gamma,\psi)-\frac{\zeta(3)}{N^3}\le CG_X.
\end{equation}

The first weight block of the centering equation is
\begin{equation}\label{eq:polygonal-transport-023}
 (C_{{\rm cen},\psi} U_{{\rm cen},1})c=A_\psi c+B_\psi\bar c,
 \quad A_\psi=\langle e^{iy}\rangle_N,
 \quad B_\psi=\langle e^{i(2\theta+y)}\rangle_N.
\end{equation}
Microscopic rigidity gives
\begin{equation}\label{eq:polygonal-transport-024}
 \sum_j\bigl((a_j-1)^2+\eta_j^2\bigr)\le CB_N,
 \qquad \|y\|_\infty\le C\sqrt{B_N/N},
\end{equation}
so the block is uniformly invertible. The trapezoid error for $e^{ix}$ implies
\begin{equation}\label{eq:polygonal-transport-025}
 |c_X|\le Ch^2\sigma,
 \qquad \bar\gamma_j\ge\frac12.
\end{equation}
The Fourier coefficients of the correction satisfy
\begin{equation}\label{eq:polygonal-transport-026}
 \sum_{n\ge2}\frac{|\widehat{U_{{\rm cen},1}(c_X)}_\psi(n)|^2}{n^3}
 \le C|c_X|^2\{N^{-3}+\|y\|_\infty^2
                       \log(2/\|y\|_\infty)\},
\end{equation}
and its energy cost is $o(G_X)$. For the cross term use
\begin{equation}\label{eq:polygonal-transport-027}
 \widehat\gamma_\psi(n)-\mathbf1_{\{N\mid n\}}
 =-\frac1{2\pi}\left[D_0(e^{-inT}-e^{-inx})
       +\partial_x(\eta P_0)e^{-inT}\right],
 \quad D_0=dx-h\sum_j\delta_{\theta_j},
\end{equation}
with $|D_0(H)|\le2h\operatorname{TV}(H)$ and the $\dot H^{-1/2}$ bound for the cell-mean-zero function $(\eta P_0)\circ T^{-1}$. Hence
\begin{equation}\label{eq:polygonal-transport-028}
 0\le E_{{\rm at},3}(\bar\gamma(\psi),\psi)-\frac{\zeta(3)}{N^3}
 \le CG_X.
\end{equation}
Polarization between $a$ and $\bar\gamma(\psi)$, using the first-mode equation for $c_Z$, gives
\begin{equation}\label{eq:polygonal-transport-029}
 G_q\le CE_N(P)+C(|c_X|^2+|c_Z|^2),
 \qquad |c_Z|\le\kappa_NG_q^{1/2},
 \quad \kappa_N=o(1).
\end{equation}
After absorption, $G\le CE_N(P)$. Regular-step aliases also give
\begin{equation}\label{eq:polygonal-transport-030}
 \|Q_{\rm st}\|_{\dot H^{-1/2}}\le CG^{1/2},\qquad
 \|Q_{\rm st}\|_2\le Ch^{-1/2}G^{1/2},\qquad
 \|Q_{\rm st}\|_\infty\le Ch^{-1}G^{1/2}.
\end{equation}

The derivative of the scalar turning constraint is $\int J_t\sec^2P_t\ge2\pi$, so $c_{\rm turn}\in C^2$ and
\begin{equation}\label{eq:polygonal-transport-031}
 |c_{\rm turn}|+|c_{\rm turn}'|\le Ch^{-1/2}G^{1/2},
 \quad |c_{\rm turn}''|\le Ch^{-2}G,
 \quad \|z_t\|_2\le Ch^{-1/2}G^{1/2}.
\end{equation}
Apply the periodic primitive estimates to the first two differentiated equations for $V_t$. They give
\begin{equation}\label{eq:polygonal-transport-032}
 \|P_t\|_2+\|P_t\|_\infty\le Ch,
 \quad \|V_t\|_\infty\le Ch^2L_N,
 \quad \|\dot V_t\|_{H^{1/2}}+\|\dot V_t\|_2\le CL_NG^{1/2},
\end{equation}
and
\begin{equation}\label{eq:polygonal-transport-033}
 \|b_t\|_2+\|b_t\|_\infty\le Ch^3L_N,
 \quad
 \|\dot{b}_t\|_2
 +\|\dot{b}_t\|_{\dot H^{-1/2}}
 \le ChL_NG^{1/2}.
\end{equation}
Twice differentiating the exact turning and area constraints yields
\begin{equation}\label{eq:polygonal-transport-034}
 \partial_x\ddot V_t
 =2\eta\sec^2P_tz_t
  +2J_t\sec^2P_t\tan P_tz_t^2
  +J_t\sec^2P_tc_{\rm turn}''(t).
\end{equation}
The finite-mesh endpoint estimate applied to its three terms gives
\begin{equation}\label{eq:polygonal-transport-035}
 \|\partial_x\ddot V_t\|_{L^1}
 +\|\ddot V_t\|_{H^{1/2}}+\|\ddot V_t\|_\infty
 \le Ch^{-2}L_N^CG.
\end{equation}
Put $c_1=e^{V_t}\tan P_t$ and $c_2=e^{V_t}\sec^2P_t-1$. Then
\begin{equation}\label{eq:polygonal-transport-036}
 \ddot{b}_t
 =D_t(e^{V_t}\dot V_t^2)
 +2e^{V_t}\sec^2P_t\tan P_tz_t^2
 +c_1\ddot V_t+c_2c_{\rm turn}''-2(\eta/J_t)c_1\dot V_t.
\end{equation}
The same cell estimates imply
\begin{equation}\label{eq:polygonal-transport-037}
 |K_{{\rm B},t}(P_t,\ddot{b}_t)|
 +|K_{{\rm B},t}(\ddot{b}_t,b_t)|
 \le ChL_N^CG
\end{equation}
with one extra power of $h$ in the second pairing.

For the transported Bessel form, the symbol is
\begin{equation}\label{eq:polygonal-transport-038}
 \Gamma(k)/2=2|k|^{-1}+2k^{-2}+O(|k|^{-3}).
\end{equation}
Its kernel has logarithmic and periodic Bernoulli parts, an $H^2$ remainder, and finitely many low modes. The second derivative of the Bernoulli part has both a diagonal delta and a constant; neither is discarded. The uniform bi-Lipschitz bounds and $|Y(x)-Y(z)|\le C|x-z|$ justify differentiating the integral twice and give
\begin{equation}\label{eq:polygonal-transport-039}
 |K_{{\rm B},t}(A,F)|
 \le C\|A\circ T_t^{-1}\|_{\dot H^{-1/2}}
        \|F\circ T_t^{-1}\|_{\dot H^{-1/2}},
\end{equation}
and
\begin{equation}\label{eq:polygonal-transport-040}
 |K_{{\rm B},t}'(f,g)|\le C\|\eta\|_2\|f\|_2\|g\|_2,
 \qquad
 |K_{{\rm B},t}''(f,g)|\le C\|\eta\|_2^2
                             \|f\|_\infty\|g\|_\infty.
\end{equation}
The corresponding multiplier annihilates constants and satisfies
\begin{equation}\label{eq:polygonal-transport-041}
 \|M_{{\rm B},t}P_t\|_\infty\le Ch^2L_N,
 \qquad
 \|M_{{\rm B},t}b_t\|_\infty\le Ch^3L_N.
\end{equation}
These estimates give the asserted path regularity. The turning jumps at the endpoints are $-h$ on the regular mesh and $-ha_j$ at $\psi_j$. The turning, area, and centering normalizations therefore identify the endpoints with $R_N$ and $P$.

Since $\ddot P_t=c_{\rm turn}''(t)$, the constant arguments vanish in the symmetric form. Differentiation gives
\begin{equation}\label{eq:polygonal-transport-042}
\begin{aligned}
(D_N^{\rm raw})''={}&
 2K_{{\rm B},t}''(P,b)
 +4K_{{\rm B},t}'(z,b)
 +4K_{{\rm B},t}'(P,\dot{b})
 +4K_{{\rm B},t}(z,\dot{b})\\
&+2K_{{\rm B},t}(P,\ddot{b})
 +K_{{\rm B},t}''(b,b)
 +4K_{{\rm B},t}'(\dot{b},b)\\
&+2K_{{\rm B},t}(\ddot{b},b)
 +2K_{{\rm B},t}(\dot{b},\dot{b}).
\end{aligned}
\end{equation}
The nine terms obey
\begin{equation}\label{eq:polygonal-transport-043}
\begin{array}{c|c}
\text{row}&\text{bound}/G\\ \hline
 K_{{\rm B},t}''(P,b),\ K_{{\rm B},t}'(z,b)
 &ChL_N^C\\
K_{{\rm B},t}'(P,\dot{b}),\ K_{{\rm B},t}(z,\dot{b})
 &Ch^{1/2}L_N^C\\
K_{{\rm B},t}(P,\ddot{b}),\
 K_{{\rm B},t}(\ddot{b},b)&ChL_N^C\\
K_{{\rm B},t}''(b,b),\
 K_{{\rm B},t}'(\dot{b},b),\
 K_{{\rm B},t}(\dot{b},\dot{b})&Ch^2L_N^C
\end{array}.
\end{equation}
At the regular endpoint, rotation and cyclic relabeling leave the differential invariant. Its tangent $(a-1,y)$ has both coordinate sums zero, hence $(D_N^{\rm raw})'(0)=0$. Taylor's integral formula, the estimate $G\le CE_N(P)$, and $h^{1/2}L_N^C\to0$ prove the comparison.
\end{proof}

\subsection{The spectral remainder in material coordinates}\label{sec:material-comparison}

The turning data now live on a fixed cyclic mesh. We reconstruct radial states and transport the spectral forms along this same path, so that the estimates refer to the actual geometric endpoints. Two kinds of uniform control are needed: radial derivatives must retain the quadratic deformation size, and derivatives of the transported operators must depend on the mesh distortion with constants uniform in $N$. The theorem below keeps these inputs separate before combining them in the endpoint comparison.

Along the same geometric chord, set
\begin{equation}\label{eq:material-comparison-001}
 A=G_X^{1/2},\qquad D_*=G^{1/2},\qquad \delta=\|\eta\|_\infty.
\end{equation}
Here $A\le D_*$,
$\delta\le Ch^{-2}A$, and $\|\eta\|_2\le Ch^{-3/2}A$.  Define the
material radial states by
\begin{equation}\label{eq:material-comparison-002}
 \bar p_t=\int J_tP_t,\qquad
 \partial_xV_{L,t}=J_t(P_t-\bar p_t),\qquad \int J_tV_{L,t}=0,
\end{equation}
\begin{equation}\label{eq:material-comparison-003}
 R_t=e^{V_t}-1,
 \qquad W_t=R_t-V_{L,t}.
\end{equation}
The turning and area constraints imply the exact identities
\begin{equation}\label{eq:material-comparison-004}
 \partial_xW_t=J_t(b_t+\bar p_t),
 \qquad
 \int J_tW_t=-\frac12\int J_tR_t^2.
\end{equation}
For $C_tf=f\circ T_t^{-1}$ let $S_{{\rm tr},t}=C_t^{-1}SC_t$ denote the
transport of the fixed lower-Bessel smoothing multiplier, and let
$B_{{\rm tr},t}(f,g)=\langle \Lambda_\theta(C_tf),C_tg\rangle$.

\begin{theorem}
\label{thm:material-endpoint-comparison}
\label{thm:annular-material-comparison}
For every near-regular datum satisfying \textup{\eqref{eq:polygonal-transport-001}--\eqref{eq:polygonal-transport-003}}, the path above admits a
fixed radial collar realization and a uniform analytic eigenpair expansion.
The estimates below are uniform in the path parameter. Each row may contain an additional fixed power of $L_N$.

\smallskip
\noindent\textup{\bfseries (i) Radial jets.}
Uniformly for $0\le t\le1$,
\begin{equation}\label{eq:material-comparison-005}
\begin{array}{c|ccc}
 &L^\infty&W^{1,\infty}&H^{1/2}\\ \hline
V_{L,t},\ R_t&h^2&h&h^{3/2}\\
W_t&h^3&h^3&h^3\\
\dot V_{L,t},\ \dot R_t&D_*&h^{-1}D_*&D_*\\
\dot W_t&hD_*&hD_*&hD_*\\
\ddot V_{L,t},\ \ddot R_t&h^{-2}G&h^{-3}G&h^{-2}G\\
\ddot W_t&h^{-1}G&h^{-1}G&h^{-1}G
\end{array}.
\end{equation}

\smallskip
\noindent\textup{\bfseries (ii) Uniform analytic tail.}
There is a linear collar map $Z$ from
$W^{1,\infty}(\mathbb T)\cap H^{1/2}(\mathbb T)$ to radial disk fields,
independent of $N$, such that
\begin{equation}\label{eq:material-comparison-006}
 L(Z_f):=\|DZ_f\|_\infty\le C\|f\|_{W^{1,\infty}},
 \qquad
 E(Z_f):=\|DZ_f\|_2\sim \|f\|_{H^{1/2}}.
\end{equation}
For $U_t=Z(R_t)$ the pulled lowest eigenvalue has a uniform analytic
expansion
\begin{equation}\label{eq:material-comparison-007}
 \lambda_{t,U}-j^2=\sum_{r\ge2}J_{{\rm sp},r,t}(U,\ldots,U),
 \qquad
 J_{{\rm sp},r,t}(Z_1,\ldots,Z_r)
 :=\frac1{r!}\left.\partial_{\varepsilon_1}\cdots
 \partial_{\varepsilon_r}
 (\lambda_{t,\sum_i\varepsilon_iZ_i}-j^2)\right|_{\varepsilon=0}.
\end{equation}
For $r\ge2$ and $a=0,1,2$,
\begin{equation}\label{eq:material-comparison-008}
 |\partial_t^aJ_{{\rm sp},r,t}(Z_1,\ldots,Z_r)|
 \le C_0^r\delta^a
 \sum_{\alpha\ne\beta}E(Z_\alpha)E(Z_\beta)
 \prod_{\ell\notin\{\alpha,\beta\}}L(Z_\ell).
\end{equation}
Consequently, for
\begin{equation}\label{eq:material-comparison-009}
 \Phi_{\ge4}(t)=j^{-2}\sum_{r\ge4}
 J_{{\rm sp},r,t}(U_t,\ldots,U_t),
\end{equation}
the series is $C^2$ and
\begin{equation}\label{eq:material-comparison-010}
 \sup_t|\Phi_{\ge4}''(t)|\le ChL_N^CG,
 \qquad
 |\Phi_{\ge4}(1)-\Phi_{\ge4}(0)|
 \le ChL_N^CE_N(P).
\end{equation}

\smallskip
\noindent\textup{\bfseries (iii) Cubic differences and endpoint identity.}
Let
\begin{equation}\label{eq:material-comparison-011}
\begin{aligned}
 C_{{\rm low}}(v_L;t)&=C_{{\rm low},t}(V_{L,t},V_{L,t},V_{L,t}),\\
 C_3(\rho;t)&=j^{-2}J_{{\rm sp},3,t}(ZR_t,ZR_t,ZR_t),\\
 C_3(v_L;t)&=j^{-2}J_{{\rm sp},3,t}(ZV_{L,t},ZV_{L,t},ZV_{L,t}).
\end{aligned}
\end{equation}
Their first derivatives vanish at the regular endpoint, and
\begin{equation}\label{eq:material-comparison-012}
 |C_{{\rm low}}^\circ(v_L)|
 \le ChL_N^CE_N(P),
\end{equation}
\begin{equation}\label{eq:material-comparison-013}
 \left|[C_3(\rho)-C_3(v_L)]^\circ\right|
 \le Ch^{3/2}L_N^CE_N(P).
\end{equation}
If $(\nu_P,\rho,v_L)$ and $(\sigma_N,\rho_R,v_{L,R})$ denote the two
endpoints, then
\begin{equation}\label{eq:material-comparison-014}
\begin{aligned}
\widetilde{R}_N^\circ(P)
={}&-8\operatorname {Re}[T_N(\nu_P)-T_N(\sigma_N)]\\
&+C_{{\rm low}}(v_L)-C_{{\rm low}}(v_{L,R})\\
&+[C_3(\rho)-C_3(v_L)]
 -[C_3(\rho_R)-C_3(v_{L,R})]\\
&+\Phi_{\ge4}(1)-\Phi_{\ge4}(0).
\end{aligned}
\end{equation}
Moreover $\nu_P$ has $N$ distinct cyclic nodes, positive mass-one weights,
$\widehat\nu_P(1)=0$, and
\begin{equation}\label{eq:material-comparison-015}
 B_N(\nu_P)=B_N(P)=N^4E_N(P).
\end{equation}
Therefore, if additionally
\begin{equation}\label{eq:material-comparison-016}
 e^{-N^{1/3}}\le B_N(P)\le C_0h\log^3(2N),
\end{equation}
then there is $\omega_{C_0,N}\to0$ such that
\begin{equation}\label{eq:material-comparison-017}
 |\widetilde{R}_N^\circ(P)|
 \le \omega_{C_0,N}E_N(P).
\end{equation}
No minimizing or critical-point equation is used in this comparison.
\end{theorem}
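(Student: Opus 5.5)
The plan is to treat the entire statement as a second-order Taylor comparison along the material chord $t\mapsto T_t$ of Proposition~\ref{prop:geometric-comparison}. Each endpoint difference will be written as $\Psi(1)-\Psi(0)=\Psi'(0)+\int_0^1(1-t)\Psi''(t)\,dt$, with $\Psi'(0)=0$ supplied by the cyclic symmetry of the regular endpoint and $\Psi''$ bounded by $o(1)G$. The bound $G\le CE_N(P)$ from \eqref{eq:polygonal-transport-014} then converts each estimate into a multiple of $E_N(P)$. The principal cubic term is the one exception: it is not compared along the path. Instead, Proposition~\ref{prop:principal-cubic-relative} is applied directly at the endpoint measure $\nu_P$.

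\textbf{Step (i): radial jets.} The zeroth-order rows follow from Lemma~\ref{lem:polar-transfer}, which applies at every $t$ because $P_t$ is itself the turning function of a near-regular polygon. For the first and second derivatives, differentiate \eqref{eq:material-comparison-002}--\eqref{eq:material-comparison-004} in $t$ and reuse \eqref{eq:polygonal-transport-031}--\eqref{eq:polygonal-transport-035}. The $\dot W_t,\ddot W_t$ rows gain a factor $h$ or $h^2$ over the $\dot R_t,\ddot R_t$ rows. The reason is that $\partial_x W_t=J_t(b_t+\bar p_t)$, so each derivative lands on $b_t$, whose jets are smaller by $h^2$ by \eqref{eq:polygonal-transport-033}; the mean of $W_t$ is quadratic in $R_t$.

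\textbf{Step (ii): uniform analytic tail.} Take $Z$ to be the extension of Lemma~\ref{lem:fixed-disk-extension-realization}, radially realized as a collar. Transporting the forms by $C_t$ gives $t$-dependent pulled pairs whose $t$-derivatives are controlled by $\delta=\|\eta\|_\infty$. The quantitative mixed-coefficient part of Lemma~\ref{lem:uniform-eigenpair-perturbation} then yields \eqref{eq:material-comparison-008}, and \eqref{eq:disk-perturbation-009} with $r\ge4$ bounds $\Phi_{\ge4}''$. In the bound, the two energy factors take $E_0\le Ch^{3/2}$ or $E(\dot U)\le CD_*$, while the remaining factors give $L_0^{r-2}\le (Ch)^{r-2}$ with $r-2\ge2$. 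Combining $\delta\le Ch^{-2}A$ with the jet table, each term is $ChL_N^CG$. At $t=0$ the first derivative vanishes by the same symmetry argument used for $D_N^{\rm raw}$: the tangent $(a-1,y)$ has zero cyclic sums, and the regular configuration is invariant under rotation and relabeling.

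\textbf{Step (iii): cubic differences and the endpoint identity.} The endpoint identity \eqref{eq:material-comparison-014} is pure bookkeeping. Write $\widetilde R_N=C_3(\rho)+R_{\ge4}$ at both endpoints, insert $C_3(v_L)=-8\operatorname{Re}T_N+C_{\rm low}(v_L)$ from Lemma~\ref{lem:bessel-cubic-symbol} and \eqref{eq:localization-035}, and identify $R_{\ge4}$ with $\Phi_{\ge4}(t)$ at $t=0,1$. For \eqref{eq:material-comparison-012} and \eqref{eq:material-comparison-013}, differentiate twice the polarized forms generated by the five generators of Lemma~\ref{lem:bessel-endpoint-trilinear}, transported by $S_{{\rm tr},t}$ and $B_{{\rm tr},t}$. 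Each derivative either lands on a state or produces a commutator with $\eta$; in the latter case, estimates like \eqref{eq:polygonal-transport-040} cost $\|\eta\|_2$. The trilinear bound then gives $O(hL_N^CG)$ for $C_{\rm low}$. For the difference $C_3(\rho)-C_3(v_L)$, one argument is always a $W$-jet, which supplies the extra $h^{1/2}$.

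\textbf{Conclusion and main obstacle.} Under \eqref{eq:material-comparison-016}, Corollary~\ref{cor:refined-localization}'s hypotheses and \eqref{eq:polygonal-transport-002} give $b>0$ small and quasi-uniformity. Proposition~\ref{prop:principal-cubic-relative} then makes the $T_N$ term $o(E_N)$, and summing with the three path bounds gives \eqref{eq:material-comparison-017}. The step I expect to be hardest is the second-derivative bound in (ii) and (iii) for transported nonlocal operators. The operators $\Lambda_\theta$ and $S$ conjugated by $C_t$ produce commutator kernels involving $(Y(x)-Y(z))/(x-z)$, and these must be controlled with only two energy factors and no loss of powers of $N$. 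I would first try the kernel decomposition of \eqref{eq:polygonal-transport-038}: a logarithmic part, a Bernoulli part, a smooth remainder, and finitely many low modes, handled exactly as was done for $K_{{\rm B},t}$.
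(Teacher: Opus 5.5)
Your proposal is correct and follows the paper's proof essentially step for step. The jets come from differentiating the exact turning, area and mean identities along the chord; $\Phi_{\ge4}$ is handled by the mixed-coefficient lemma and \eqref{eq:disk-perturbation-009} with the jet bookkeeping you describe; the lower-cubic and $W$-containing cubic differences use cyclic averaging, Taylor's formula and $G\le CE_N(P)$; and the principal term is handled by Proposition~\ref{prop:principal-cubic-relative} at $\nu_P$ (the paper takes $b=1/4$).

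Only minor corrections are needed. First, the intermediate chord configurations are not exactly centered, so the zeroth-order rows should come from \eqref{eq:polygonal-transport-032}--\eqref{eq:polygonal-transport-033}, \eqref{eq:material-comparison-004} and the cell estimates rather than from Lemma~\ref{lem:polar-transfer}. Second, each $t$-derivative of a transported cubic generator costs $\delta=\|\eta\|_\infty$, as in \eqref{eq:material-comparison-025}--\eqref{eq:material-comparison-026}, not $\|\eta\|_2$; this still yields $O(hL_N^CG)$. Third, the last step uses only the range \eqref{eq:material-comparison-016} and quasi-uniformity, not Corollary~\ref{cor:refined-localization}, which is a statement about minimizers.
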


\begin{proof}
The radial states can be estimated before the spectral expansion is used. Let $I_{\rm per}$ be the normalized periodic primitive. Cell cancellation gives, for regular steps with the required zero means,
\begin{equation}\label{eq:material-comparison-018}
 \|aP_0\|_\infty\le Ch\|a\|_\infty,
 \quad \|aP_0\|_{\dot H^{-1/2}}\le Ch^{3/2}\|a\|_2,
 \quad
 \|I_{\rm per}q\|_\infty
 \le C\sqrt{\log(2N)}\|q\|_{\dot H^{-1/2}}.
\end{equation}
The linear trace obeys
\begin{equation}\label{eq:material-comparison-019}
 \partial_xV_{L,t}=S_t-J_tm_t,
 \qquad
 S_t=J_t^2P_0+tJ_tQ_{\rm st},
 \qquad m_t=\int S_t=t^2\int\eta Q_{\rm st}.
\end{equation}
Differentiate twice and use the transport bounds, $\delta\le Ch^{-2}A$, and the cell estimates:
\begin{equation}\label{eq:material-comparison-020}
\begin{array}{c|ccc}
 &\|\partial_xV_{L,t}^{(a)}\|_\infty
 &\|\partial_xV_{L,t}^{(a)}\|_{\dot H^{-1/2}}
 &\|I_{\rm per}\partial_xV_{L,t}^{(a)}\|_\infty\\ \hline
 a=0&Ch&Ch^{3/2}L_N^C&Ch^2L_N^C\\
 a=1&Ch^{-1}L_N^CD_*&CL_N^CD_*&CL_N^CD_*\\
 a=2&Ch^{-3}L_N^CG&Ch^{-2}L_N^CG&Ch^{-2}L_N^CG
\end{array}.
\end{equation}
The moving-mean condition then bounds $V_{L,t}$. For the nonlinear trace use
\begin{equation}\label{eq:material-comparison-021}
 \|V_t\|_\infty\le Ch^2L_N,
 \quad
 \|\dot V_t\|_\infty+\|\dot V_t\|_{H^{1/2}}
 \le CL_N^CD_*,
 \quad
 \|\ddot V_t\|_\infty+\|\ddot V_t\|_{H^{1/2}}
 \le Ch^{-2}L_N^CG,
\end{equation}
and
\begin{equation}\label{eq:material-comparison-022}
 \dot R_t=e^{V_t}\dot V_t,
 \qquad
 \ddot R_t=e^{V_t}(\ddot V_t+\dot V_t^2).
\end{equation}
The difference trace satisfies
\begin{equation}\label{eq:material-comparison-023}
 \partial_x\dot W_t
 =\eta(b_t+\bar p_t)
   +J_t(\dot{b}_t+\dot{\bar p}_t).
\end{equation}
One further derivative, together with the exact mean identity \eqref{eq:material-comparison-004}, gives the remaining radial-jet rows. Products with a step factor are controlled in $L^2\hookrightarrow H^{-1/2}$.

For the analytic realization, fix an even mollifier and a cutoff, and define
\begin{equation}\label{eq:material-comparison-024}
 z_f(r,x)=
 \begin{cases}
 (\varphi_{1-r}*f)(x),&3/4\le r\le1,\\
 \chi(r)(\varphi_{1/4}*f)(x),&1/2\le r\le3/4,\\
 0,&0\le r\le1/2.
 \end{cases}.
\end{equation}
Fourier integration gives \eqref{eq:material-comparison-006}. For small $L(Z_f)$, the collar has $r+z_f\ge1/4$ and $1+\partial_rz_f\ge1/2$. The map $(r,x)\mapsto(r+z)e^{iT_t(x)}$ has coefficients rational in $J_t^{\pm1}$, $R=r+z$, $R_r$, and $R_x$, with denominators bounded away from zero. At zero radial amplitude it is a disk self-map, with normalized ground state $(u_0,j^2)$. The disk gap provides a uniform saddle inverse on the moving complement, and hence an analytic eigenpair on one complex ball independent of $t$, $N$, and finite-dimensional restrictions.

In a mixed coefficient retain two distinct $L^2$ deformation factors and estimate the rest in $L^\infty$. A material derivative of a rational coefficient costs $C\delta$, giving \eqref{eq:material-comparison-008} by the common-domain coefficient lemma. For the lower-Bessel operator, transport derivatives of the logarithmic kernel and the integrable order-minus-two remainder insert only $\eta$ and differences of $Y$. Schur's test gives the supremum estimate. Localizing at the diagonal and applying the Calder\'on commutator bound \citep{Calderon65,CMM82} gives the Sobolev estimates; the remaining periodic kernel is smooth and integrable. Therefore
\begin{equation}\label{eq:material-comparison-025}
 \|\partial_t^aS_{{\rm tr},t}\|_{L^\infty\to L^\infty}
 +\|\partial_t^aS_{{\rm tr},t}\|_{H^{1/2}\to H^{1/2}}
 +\|\partial_t^aS_{{\rm tr},t}\|_{L^2\to H^1}
 \le C\delta^a,
 \quad a=0,1,2.
\end{equation}
The Gagliardo representation of $\Lambda_\theta$ also gives, for every cubic generator,
\begin{equation}\label{eq:material-comparison-026}
 |\partial_t^a\Lambda(C_tf,C_tg,C_th)|
 \le C\delta^a\sum_{\rm cyc}
 \|f\|_\infty\|g\|_{H^{1/2}}\|h\|_{H^{1/2}}.
\end{equation}

Up to $L_N^C$, the deformation jets are
\begin{equation}\label{eq:material-comparison-027}
\begin{array}{c|cc}
 &L&E\\ \hline
U_t&h&h^{3/2}\\
\dot U_t&h^{-1}D_*&D_*\\
\ddot U_t&h^{-3}G&h^{-2}G
\end{array}.
\end{equation}
Insert them into the mixed coefficient bound. For $\vartheta_N\le ChL_N^C\to0$ and $r\ge4$,
\begin{equation}\label{eq:material-comparison-028}
 |f_r|\le Cr^2\vartheta_N^{r-4}h^5L_N^C,
 \quad
 |f_r'|\le Cr^3\vartheta_N^{r-4}h^3L_N^CD_*,
 \quad
 |f_r''|\le Cr^4\vartheta_N^{r-4}hL_N^CG.
\end{equation}
The tail and both derivatives converge uniformly. Summation bounds $\Phi_{\ge4}''$ as in \eqref{eq:material-comparison-010}. Cyclic averaging gives a zero first derivative at the regular endpoint, so Taylor's formula and $G\le CE_N(P)$ give the endpoint bound.

Cyclic averaging also applies to the lower cubic and the difference of the cubic traces: their differentials are invariant linear functionals of $(a-1,y)$. The multiplier and radial-jet estimates give
\begin{equation}\label{eq:material-comparison-029}
 \left|\frac{d^2}{dt^2}C_{{\rm low},t}(V_{L,t},V_{L,t},V_{L,t})\right|
 \le ChL_N^CG,
\end{equation}
and
\begin{equation}\label{eq:material-comparison-030}
 \left|\frac{d^2}{dt^2}
 [C_3(\rho;t)-C_3(v_L;t)]\right|
 \le Ch^{3/2}L_N^CG.
\end{equation}
For the latter, expand $R_t^3-V_{L,t}^3=3V_{L,t}^2W_t+3V_{L,t}W_t^2+W_t^3$. Each term contains $W_t$. The largest two-energy contributions are $\delta^2h^{11/2}$, $\delta h^{7/2}D_*$, $h^{3/2}G$, and $h^2G$. Taylor's formula gives \eqref{eq:material-comparison-012}--\eqref{eq:material-comparison-013}.

The physical endpoint identities are
\begin{equation}\label{eq:material-comparison-031}
 C_1V_{L,1}=v_L,
 \quad C_0V_{L,0}=v_{L,R},
 \qquad
 C_1R_1=\rho,
 \quad C_0R_0=\rho_R.
\end{equation}
Their spectral expansions are
\begin{equation}\label{eq:material-comparison-032}
 \widetilde{R}_N(P)
 =C_3(\rho)+R_{\ge4}(\rho),
 \qquad
 \widetilde{R}_N(R_N)
 =C_3(\rho_R)+R_{\ge4}(\rho_R).
\end{equation}
The difference of the tails is the endpoint difference in \eqref{eq:material-comparison-009}. On the similarity slice,
\begin{equation}\label{eq:material-comparison-033}
 C_3(v_L)
 =-8\operatorname {Re}T_N(\nu_P)+C_{{\rm low}}(v_L),
\end{equation}
with the same formula at $R_N$. Add and subtract the linear-trace cubics to obtain \eqref{eq:material-comparison-014}, with normalization \eqref{eq:material-comparison-015}.

Finally choose $b=1/4$. For fixed $C_b=C_b(C_0)$, the upper bound $B_N(P)\le C_0h\log^3(2N)$ is in the relative cubic range. The endpoint identities and quasi-uniformity supply the other assumptions, so
\begin{equation}\label{eq:material-comparison-034}
 8|\operatorname {Re}[T_N(\nu_P)-T_N(\sigma_N)]|
 \le o(1)\,E_N(P).
\end{equation}
Combining it with the lower-cubic and tail bounds gives
\[
 |\widetilde{R}_N^\circ(P)|
 \le\{o(1)+C(hL_N^C+h^{3/2}L_N^C)\}E_N(P).
\]
The argument uses only the path estimates, not a minimizing or critical-point equation.
\end{proof}

\section{Exact constraints and the removal of the defect cutoff}\label{sec:exact-constraints}

The relative estimate must now be extended below its exponential cutoff. We use exact mass and centering coordinates, with area imposed in reconstruction, so that every point in the chart represents an admissible polygon. Polynomial bounds through the third derivative allow a comparison at an inverse-power scale to determine the quadratic coefficients at the regular configuration. Taylor expansion then extends the estimate to smaller defects. Chart coverage ensures that this argument reaches every sufficiently close polygon.

\subsection{Realization of the exact constraint manifold}

In this subsection $x_j=x_j^{\rm ex}$ is the angular displacement in
\eqref{eq:curvature-coordinates-015}, not the unit-circle node
$\psi_j/(2\pi)$. Thus
\[
 w_j=N^{-1}+u_j,\qquad \psi_j=jh+x_j,
\]
and we write
\[
 C_{{\rm cen},N}(u,x)=\sum_j(N^{-1}+u_j)e^{i(jh+x_j)},\qquad
 L_N^{\rm cen}(u,x)=\sum_j\left(u_j+\frac iN x_j\right)e^{ijh}.
\]
In the zero-mass, zero-rotation gauge set
\[
 \begin{aligned}
 \mathcal M_N
 &=\{(u,x):\textstyle\sum_j u_j=\sum_j x_j=0,
                  \ C_{{\rm cen},N}(u,x)=0\},\\
 \mathcal Z_N
 &=\{(u,x):\textstyle\sum_j u_j=\sum_j x_j=0,
                  \ L_N^{\rm cen}(u,x)=0\}.
 \end{aligned}
\]
For $q\in\mathbb C$ let
\begin{equation}\label{eq:exact-constraints-001}
 [n(q)]_j=\frac2N\operatorname {Re}(qe^{-ijh});
 \qquad
 \sum_jn(q)_j=0,
 \quad L_N^{\rm cen}(n(q),0)=q,
 \quad \|n(q)\|_2\le2N^{-1/2}|q|.
\end{equation}

\begin{theorem}
\label{thm:constraint-chart}
There are fixed exponents $A_{\rm geo},C_{\rm geo}<\infty$ such that, for all
sufficiently large $N$, the following assertions hold.

\smallskip
\noindent\textup{(i) Exact constraint chart.}
There is a real-analytic chart
\[
 \Xi_N:B_{N^{-4}}(0)\cap\mathcal Z_N\longrightarrow\mathcal M_N,
 \qquad \Xi_N(0)=0,\qquad D\Xi_N(0)=I,
\]
whose derivatives through order three, and those of its local inverse, are
bounded by $CN^6$.  For one fixed $c>0$,
\begin{equation}\label{eq:exact-constraints-002}
 \mathcal M_N\cap B_{cN^{-4}}(0)
 \subset\Xi_N(B_{N^{-4}}(0)\cap\mathcal Z_N),
 \qquad |\Xi_N^{-1}(\zeta)|\le5|\zeta|.
\end{equation}
Every chart point has positive weights and consecutive angular gaps in
$[h/2,3h/2]$.

\smallskip
\noindent\textup{(ii) Polygon realization.}
For $|z|<N^{-A_{\rm geo}}$, if
\[
 \Xi_N(z)=(v,x),\qquad w_j=N^{-1}+v_j,\qquad \psi_j=jh+x_j,
\]
then these data determine a unique strictly convex area-$\pi$ polygon
$P_N(z)$ whose polar center is the origin, whose vertex directions are
$\psi_j$, and whose normalized exterior angles are $w_j$.  Moreover
\begin{equation}\label{eq:exact-constraints-003}
 P_N(0)=R_N,\qquad
 R_N=\operatorname {int}\operatorname {conv}
 \{r_N^{\rm out}e^{ijh}:0\le j<N\},\qquad
 r_N^{\rm out}=\left(\frac{2\pi}{N\sin h}\right)^{1/2}.
\end{equation}
The vertices, the fixed-fan map and its inverse, the pulled stiffness and
mass coefficients, and the simple eigenpair are real analytic, with derivatives through order
three bounded by $N^{C_{\rm geo}}$.  The fan maps are uniformly
bi-Lipschitz, the pulled forms uniformly elliptic, and in particular
$z\mapsto F(P_N(z))$ has derivatives through order three bounded by
$N^{C_{\rm geo}}$.

\smallskip
\noindent\textup{(iii) Near-regular compatibility and coverage.}
Fix $A>\max\{A_{\rm geo},8,C_{\rm geo}+2\}$.  If $|z|<N^{-A}$, then $P_N(z)$ is an
area-$\pi$ exact-centered near-regular datum and, in its canonical cyclic gauge,
\begin{equation}\label{eq:exact-constraints-004}
 \max_j(|Nw_j-1|+|\eta_j|)\le\frac14,
 \qquad \|\rho_z\|_{W^{1,\infty}}\le\varepsilon_{\rm chart}.
\end{equation}
Conversely, if $K/2>\max\{4,A\}+1$, every area-$\pi$ exact-centered
near-regular configuration with $B_N(P)<N^{-K}$ is, after the canonical rotation
and cyclic relabeling, $P_N(z)$ for a unique $z\in\mathcal Z_N$ with
$|z|<N^{-A}$.  Rotation and relabeling leave $B_N$, $D_N^\circ$,
and $\widetilde{R}_N^\circ$ unchanged.
\end{theorem}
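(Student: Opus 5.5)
The plan is to build the chart $\Xi_N$ by a quantitative implicit-function argument around the linearized centering constraint, then realize polygons by solving the turning and area equations, and finally obtain coverage from microscopic rigidity.

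\emph{Step (i).} The constraint map $C_{{\rm cen},N}$ is entire in $(u,x)$. Its differential at the origin is exactly $L_N^{\rm cen}$, and the remainder satisfies $|C_{{\rm cen},N}-L_N^{\rm cen}|\le|z|^2$, as used earlier in the proof of Theorem~\ref{thm:local-bessel-coercivity}(iii). For $z\in\mathcal Z_N$ I seek $\Xi_N(z)=z+(n(q),0)$ with $q\in\mathbb C$ solving
\[
q+Q_N^{\rm cen}\bigl(z+(n(q),0)\bigr)=0 .
\]
The correction $n(q)$ preserves zero mass, and the angular sum is untouched. The map $q\mapsto -Q_N^{\rm cen}(z+(n(q),0))$ is a contraction on $|q|\le 2|z|^2$ once $|z|\le N^{-4}$, because its $q$-derivative is $O(N^{-1/2}(|z|+N^{-1/2}|q|))$. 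Analytic dependence then follows from the holomorphic implicit function theorem. Cauchy estimates on a complex ball of radius $\sim N^{-4}$ give the derivative bounds, of order $N^{-4}\cdot N^{8}\cdot(\text{size }N^{-2})$, crudely $CN^6$. For the inverse I use the projection $\zeta\mapsto\zeta-(n(L_N^{\rm cen}\zeta),0)$, which is linear of norm $\le 1+2$ on the relevant sets and maps $\mathcal M_N$ into $\mathcal Z_N$. Together with uniqueness of the contraction, this gives the containment of $\mathcal M_N\cap B_{cN^{-4}}$ and $|\Xi_N^{-1}\zeta|\le5|\zeta|$. Positivity of the weights and the gap bounds follow since $\|(u,x)\|_\infty\le CN^{-4}\ll \min(N^{-1},h)$.

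\emph{Step (ii).} Given weights $w_j$ and directions $\psi_j$, I reconstruct the polygon through Definition~\ref{def:polar-coordinates}. Set $p=\theta-\varphi_j$ on cells with jumps $-2\pi w_j$. The single turning constant $c$ is fixed by $\int\tan p=0$, which has positive derivative $\int\sec^2p$. Then $v$ is the primitive of $\tan p$, with its constant fixed by the area normalization $\frac1{2\pi}\int e^{2v}=1$. Each edge is a supporting-line segment, and continuity at vertices is automatic from the primitive. Strict convexity follows from $w_j>0$. The polar center is the origin because of the exact centering constraint and the strict convexity of $F_P$ (Lemma~\ref{lem:polar-center-stability}). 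At $z=0$ this recovers $R_N$ via Lemma~\ref{lem:polar-equality-reconstruction}.

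All of these quantities (vertices, support numbers) are explicit analytic expressions in the data, built from $\tan$, $\log$, and finite sums. For the spectral part I pull back through a piecewise-affine fan map on the triangles $(0,z_j,z_{j+1})$ onto those of $R_N$. This map is bi-Lipschitz with constants $1+o(1)$ because edge perturbations are $O(N^{-A_{\rm geo}})\ll h$. Its coefficients are rational in the vertices, with denominators bounded below by triangle areas $\sim h$, which costs powers of $N$. Lemma~\ref{lem:uniform-eigenpair-perturbation} then gives an analytic simple eigenpair with uniform gap, and Cauchy estimates yield polynomial bounds $N^{C_{\rm geo}}$ on derivatives through order three.

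\emph{Step (iii).} Compatibility is immediate: $|z|<N^{-A}$ gives $|Nw_j-1|\le N^{1-A}$ and $|\eta_j|\le CN^{-A}/h$. The $W^{1,\infty}$ chart bound follows from $\|p\|_\infty\le Ch$ and the reconstruction estimates of Lemma~\ref{lem:polar-transfer}.

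For coverage, take a near-regular configuration with $B_N<N^{-K}$. By the microscopic estimate \eqref{eq:atomic-rigidity-011}, after canonical rotation and relabeling, $\|(u,x)\|_2\le C(B_N^{1/2}+B_N^{1/2}N^{-1})\le CN^{-K/2}<cN^{-4}$. Its data therefore lie in $\mathcal M_N\cap B_{cN^{-4}}$, so $\zeta=\Xi_N(z)$ with $|z|\le5|\zeta|<N^{-A}$ by the choice of $K$. Uniqueness of the reconstruction in (ii) identifies the polygon with $P_N(z)$.

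Invariance of $B_N$, $D_N^\circ$ and $\widetilde R_N^\circ$ under rotation and relabeling holds because they are defined through $|\widehat\nu(k)|$ and rotation-equivariant Fourier multipliers.

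\emph{Main obstacle.} I expect the hardest part to be the uniform-in-$N$ analytic spectral realization in (ii). The fan map has $N$ cells of size $h$, so one must check that the complex ball on which ellipticity and the spectral gap persist has radius only polynomially small in $N$. My first attempt is to bound the complexified fan gradient by $N\cdot|z|$, which requires $|z|\lesssim N^{-2}$ and is comfortably inside $N^{-A_{\rm geo}}$.
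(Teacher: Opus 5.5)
Your proposal is correct and follows the paper's proof essentially step by step. The shared steps are: the graph over $\mathcal Z_N$ along the correction $n(q)$, with the linear projection $\zeta\mapsto\zeta-(n(L_N^{\rm cen}\zeta),0)$ as inverse; reconstruction via the monotone closing equation and area normalization; the fan-map pullback for spectral analyticity; and coverage from the microscopic bound $|\zeta|^2\le CB_N$. The only cosmetic difference is that you solve for $q$ by a contraction. The paper instead uses that $C_{{\rm cen},N}$ is affine in the weights and writes the correction explicitly as $c_N^{\rm cen}(z)=-A_N^{\rm cen}(x)^{-1}C_N^{\rm res}(z)$; your contraction iteration is just the Neumann series for that linear solve.
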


\begin{proof}
We construct the constraint chart in the weight variables and then recover the polygon from its turning angles. For $z=(u,x)\in\mathcal Z_N$, use
\[
 C_N^{\rm res}(z)=C_{{\rm cen},N}(u,x),\qquad
 A_N^{\rm cen}(x)q=C_{{\rm cen},N}(n(q),x),
 \qquad \Psi_N(z,q)=C_N^{\rm res}(z)+A_N^{\rm cen}(x)q.
\]
At the regular configuration, root-of-unity cancellation gives $A_N^{\rm cen}(0)=I$. The bound for $n(q)$ and $|e^{is}-1|\le|s|$ imply
\begin{equation}\label{eq:exact-constraints-005}
 \|A_N^{\rm cen}(x)-I\|\le2N^{-1/2}|x|.
\end{equation}
The tangent equation removes the first-order terms of the residual, leaving
\[
 C_N^{\rm res}(z)=\sum_ju_je^{ijh}(e^{ix_j}-1)
 +\frac1N\sum_je^{ijh}(e^{ix_j}-1-ix_j).
\]
In particular, $|C_N^{\rm res}(z)|\le3|z|^2$ for $|z|\le N^{-4}$. On this ball the correction matrix is invertible, and we set
\begin{equation}\label{eq:exact-constraints-006}
 c_N^{\rm cen}(z)=-A_N^{\rm cen}(x)^{-1}C_N^{\rm res}(z),\qquad |c_N^{\rm cen}(z)|\le6|z|^2,
 \qquad \Xi_N(z)=(u+n(c_N^{\rm cen}(z)),x).
\end{equation}
The correction is quadratic at zero, so $\Xi_N(0)=0$ and $D\Xi_N(0)=I$. Differentiating the finite sums and the inverse identity gives
\begin{equation}\label{eq:exact-constraints-007}
 \max_{1\le k\le3}\bigl(\|D^kc_N^{\rm cen}\|+\|D^k\Xi_N\|\bigr)\le CN^6.
\end{equation}
The inverse on the constraint manifold is
\begin{equation}\label{eq:exact-constraints-008}
 \Xi_N^{-1}(v,y)=\bigl(v-n(L_N^{\rm cen}(v,y)),y\bigr),\qquad
 |\Xi_N^{-1}(v,y)|\le5|(v,y)|.
\end{equation}
Composing in either order gives the identity. The norm estimate proves coverage of the smaller ball. Also $|w_j-N^{-1}|<(2N)^{-1}$ and $|x_{j+1}-x_j|<h/2$, including the last cyclic gap. This proves the assertions about weights and gaps.

For the geometric realization, choose increasing lifts $\psi_j=jh+x_j$ and introduce
\[
 \phi_j(c)=c+\frac h2+2\pi\sum_{\ell=1}^jw_\ell,
 \qquad
 g_N^{\rm close}(c)=\sum_j\int_{\psi_j}^{\psi_{j+1}}
             \tan(\theta-\phi_j(c))\,d\theta.
\]
A sufficiently small polynomial ball keeps all arguments of the tangent in a fixed compact subinterval of $(-\pi/2,\pi/2)$. There,
\begin{equation}\label{eq:exact-constraints-009}
 \partial_cg_N^{\rm close}
 =-\sum_j\int_{\psi_j}^{\psi_{j+1}}
                 \sec^2(\theta-\phi_j(c))\,d\theta\le-2\pi.
\end{equation}
The regular value is zero and the perturbation is $O(N^{8-A_{\rm geo}})$. After increasing $A_{\rm geo}$, the closing equation therefore has a unique real-analytic solution $c_N(v,x)$.

Integrate $(V_z^0)'=\tan(\theta-\phi_j(c_N))$ on successive cells, starting with $V_z^0(\psi_0)=0$. The closing equation makes the resulting primitive periodic. The area constraint fixes its additive constant through
\[
 \kappa_z=-\frac12\log\left(\frac1{2\pi}\int e^{2V_z^0}\right),
 \qquad r_z=e^{V_z^0+\kappa_z}.
\]
On every cell,
\[
 \frac d{d\theta}\log
 \bigl(r_z(\theta)\cos(\theta-\phi_j(c_N))\bigr)=0.
\]
Thus each radial piece is a line segment. The bounds $(2N)^{-1}\le w_j\le3(2N)^{-1}$ put the normal increments in $(0,\pi)$ for large $N$, giving strict convexity. Moreover, $\Phi_{P_N(z)}(0)=2\pi\sum_jw_je^{i\psi_j}=0$, so uniqueness of the polar center places it at the origin. The monotone closing equation, integration of the radial equation, and area normalization also prove uniqueness of the polygon.

The vertex expressions are finite sums and analytic scalar operations on fixed compact ranges. Their derivatives through order three have polynomial bounds in $N$. On a regular fan triangle the affine map is determined by
\[
 A_j(z)[\zeta_j^0\ \zeta_{j+1}^0]=[\zeta_j(z)\ \zeta_{j+1}(z)].
\]
The regular determinant is comparable to $N^{-1}$. Shrinking the polynomial ball makes the vertex perturbation $o(h)$; consequently the fan maps, their inverses, and their pulled forms have uniform bounds and polynomial $C^3$ bounds. Uniform ellipticity and perturbation of the simple eigenpair give the spectral assertion.

For compatibility, $|z|<N^{-A}$ gives $|\Xi_N(z)|\le CN^{6-A}$ and
\[
 \max_j(|Nv_j|+|(x_{j+1}-x_j)/h|)\le CN^{7-A}=o(1).
\]
The vertex displacement is at most $\delta_N=N^{C_{\rm geo}}|z|$. Since regular edges have length comparable to $h$, the supporting-line formula yields
\[
 \|r_z-1\|_\infty+\|r_z'\|_\infty
 \le C(h+N\delta_N)=o(1).
\]
This places the realization in the required near-disk chart. Conversely, take the zero-mean cyclic coordinates $\zeta\in\mathcal M_N$ of a small-defect configuration. Microscopic rigidity gives
\begin{equation}\label{eq:exact-constraints-010}
 |\zeta|^2\le CB_N(P).
\end{equation}
The condition on $K$ puts these coordinates in $B_{cN^{-4}}$ and their inverse image in $B_{N^{-A}}$. The chart and the unique geometric realization recover the original polygon. Finally, rotation and relabeling preserve the spectral functional and the atomic Fourier energies, proving the stated invariances.
\end{proof}

\begin{lemma}
\label{lem:vanishing-defect}
With $P_N(z)$ from the exact near-regular chart, set
\[
 B_N^{\rm ch}(z)=B_N(P_N(z))=N^4E_N(P_N(z)),
\]
\begin{equation}\label{eq:exact-constraints-011}
 S_N(z)=N^4\bigl[
   D_N^\circ(P_N(z))
  +\widetilde{R}_N^\circ(P_N(z))\bigr].
\end{equation}
There are constants $c_0>0$, exponents $A_0>A_1+2$, a fixed
$K_0>2A_0$, and a sequence $\omega_N^{(0)}\to0$ such that, for all large
$N$,
\begin{equation}\label{eq:exact-constraints-012}
 B_N^{\rm ch}(0)=DB_N^{\rm ch}(0)=0,
 \qquad \frac12D^2B_N^{\rm ch}(0)[z,z]\ge c_0|z|^2,
 \qquad S_N(0)=DS_N(0)=0,
\end{equation}
\begin{equation}\label{eq:exact-constraints-013}
 \sup_{|z|<N^{-A_0}}
 \bigl(\|D^3B_N^{\rm ch}(z)\|+\|D^3S_N(z)\|\bigr)
 \le N^{A_1},
\end{equation}
and
\begin{equation}\label{eq:exact-constraints-014}
 |S_N(z)|\le\omega_N^{(0)}B_N^{\rm ch}(z)
 \qquad
 (|z|<N^{-A_0},\ 0<B_N^{\rm ch}(z)<N^{-K_0}).
\end{equation}
\end{lemma}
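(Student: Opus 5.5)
The lemma collects four groups of assertions in the exact chart of Theorem~\ref{thm:constraint-chart}. I would prove them in the order: vanishing at zero, coercive Hessian, third-derivative bounds, and finally the relative bound \eqref{eq:exact-constraints-014}. The last one is obtained by transferring the material comparison of Section~\ref{sec:material-comparison} into chart coordinates.

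\emph{Vanishing and first derivatives.} Since $P_N(0)=R_N$, both $B_N^{\rm ch}(0)=0$ and $S_N(0)=0$ are immediate; the second holds because $X^\circ(R_N)=0$ for each term. By Theorem~\ref{thm:bessel-coercivity} and \eqref{eq:curvature-coordinates-024}, $B_N^{\rm ch}\ge0$, so $z=0$ is a minimum and $DB_N^{\rm ch}(0)=0$. For $S_N$ I would use \eqref{eq:curvature-coordinates-022}:
\[
 N^4\,\frac{F(P_N(z))-F(R_N)}{F(\mathbb D)}=4N^4d_{B,N}+S_N(z).
\]
The left side has vanishing differential at zero. The reason is that its differential is a linear functional of $(u,x)$ on $\mathcal Z_N$ that is invariant under cyclic relabeling, and $\mathcal Z_N$ has zero mass, zero mean and $Z_1=0$; so the functional vanishes, by the same cyclic-averaging argument used for $(D_N^{\rm raw})'(0)=0$ in Proposition~\ref{prop:geometric-comparison}. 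The same argument applies to $d_{B,N}$, whose differential also vanishes by its minimality. Hence $DS_N(0)=0$.

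\emph{Coercivity and third derivatives.} Since $D\Xi_N(0)=I$, the Hessian of $B_N^{\rm ch}$ at zero equals $N^4$ times the Hessian of $d_{3,N}$ on the tangent space $\mathcal V_N$. The quadratic correction from $\Xi_N$ does not contribute, because $Dd_{3,N}(0)=0$ on $U_0=X_0=0$ (regular criticality). This Hessian equals $N^4H_{3,N}$, and \eqref{eq:atomic-rigidity-033} gives $N^4H_{3,N}(y)\ge c|y|^2$. For the third-derivative bounds, I would combine the pair-energy $C^3$ bound \eqref{eq:atomic-rigidity-025} with the chart bounds $CN^6$ from Theorem~\ref{thm:constraint-chart}(i), using the chain rule. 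For $S_N$, Theorem~\ref{thm:constraint-chart}(ii) gives $N^{C_{\rm geo}}$ bounds for $F(P_N(z))$. The term $D_N^\circ$ is a quadratic form in explicit polar data, and these data are analytic in the vertices with polynomial bounds. I would then choose $A_1$ to be the largest exponent arising, and take $A_0>A_1+2$ large enough that the chart applies.

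\emph{Relative bound, the main obstacle.} For $|z|<N^{-A_0}$ with $B_N^{\rm ch}(z)<N^{-K_0}$, Theorem~\ref{thm:constraint-chart}(iii) makes $P_N(z)$ a near-regular exact-centered datum. Since $B_N\le C_0h\log^3(2N)$ trivially, Proposition~\ref{prop:geometric-comparison} gives $|D_N^\circ|\le\varepsilon_NE_N$, valid at every defect with no lower cutoff. The difficulty is $\widetilde R_N^\circ$: Theorem~\ref{thm:material-endpoint-comparison} needs $B_N\ge e^{-N^{1/3}}$. To handle small defects I would argue along rays. Fix a unit $\hat z$ and set $g(s)=S_N(s\hat z)$ and $b(s)=B_N^{\rm ch}(s\hat z)$. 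Taylor expansion gives
\[
 g(s)=\tfrac12g''(0)s^2+O(N^{A_1}s^3),\qquad b(s)\ge c_0s^2-N^{A_1}s^3.
\]
Pick a comparison scale $s_*=N^{-A_0-1}$ (after enlarging constants). At this scale $b(s_*)\sim s_*^2$ is polynomial in $N$, so it exceeds $e^{-N^{1/3}}$, and the material bound applies: $|g(s_*)|\le\omega b(s_*)$. This yields
\[
 |g''(0)|\le C\omega+CN^{A_1}s_*.
\]
For $s\le s_*$ it then follows that $|g(s)|\le(C\omega+CN^{A_1}s_*)s^2$, and dividing by $b(s)\ge c_0s^2/2$ gives a vanishing ratio $\omega_N^{(0)}$. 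The bookkeeping of exponents ($A_0>A_1+2$, and $K_0$ large enough that small $B_N$ forces $|z|\le s_*$ via $b\ge c_0s^2/2$) is where the choices must be made consistently. I expect this to be the delicate point, especially ensuring that $s_*$ lies both above the exponential cutoff and inside the Taylor-dominated range.
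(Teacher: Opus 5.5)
Your route is the paper's. You get the first differentials from cyclic invariance; using $B_N^{\rm ch}\ge0$ and $d_{B,N}\ge0$ for the minimum parts is a harmless variant. You get the Hessian of $B_N^{\rm ch}$ from $D\Xi_N(0)=I$, with regular criticality removing the $D^2\Xi_N(0)$ term. You get the $C^3$ bounds from the chain rule. Finally you run a ray-wise Taylor comparison at one polynomially small point of the annulus. The paper puts that point at defect level $N^{-K_0}$, and your fixed radius $s_*=N^{-A_0-1}$ also works. Two steps fail as written, though both are easily repaired.

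\textbf{The ray bookkeeping.} The coefficient $b^{(2)}(\hat z)=\tfrac12D^2B_N^{\rm ch}(0)[\hat z,\hat z]=N^4H_{3,N}(\hat z)$ is bounded below by $c_0$ but not above uniformly in $N$. For the unit tangent direction $u_j=\sqrt{2/N}\cos(2jh)$, $x=0$, the measure along the ray has $\widehat\nu(2)=s\sqrt{N/2}+O(s^2)$, so $b^{(2)}(\hat z)\ge N^5/16$.
\begin{itemize}
\item From $|g(s_*)|\le\omega\,b(s_*)$ you therefore only get $\tfrac12|g''(0)|\le C\omega\,b^{(2)}(\hat z)+CN^{A_1}s_*$. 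You do not get $|g''(0)|\le C\omega+CN^{A_1}s_*$.
\item If you then divide by $c_0s^2/2$, you are left with $\omega\,b^{(2)}(\hat z)/c_0$, which need not be small.
\item The fix is to divide by $b(s)\ge\tfrac12b^{(2)}(\hat z)s^2$, valid for $s<N^{-A_0}$ because $A_0>A_1+2$. Use $b^{(2)}\ge c_0$ only on the cubic Taylor error. This gives $|g(s)|/b(s)\le C\omega+CN^{A_1-A_0-1}$.
\end{itemize}
This is exactly why the paper estimates the ratio $s_N^{(2)}(e)/b_N^{(2)}(e)$. Your coverage step, $b<N^{-K_0}\Rightarrow s<s_*$ with $K_0>2A_0+2$, is fine.

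\textbf{The third-derivative bound for $S_N$.} Calling $D_N^\circ$ a quadratic form in polar data that are analytic in the vertices does not justify the bound. Both $D_N=Q(\rho)-Q_N$ and $\widetilde R_N$ contain $Q(\rho_z)$. But $z\mapsto\rho_z$ is not even $C^1$ into $H^{1/2}$, the form domain of $Q$. Moving a vertex direction changes the ratio of adjacent support numbers, so $\partial_z\rho_z$ jumps at the vertices. Smoothness of $z\mapsto Q(\rho_z)$, let alone polynomial $C^3$ bounds, would need a separate material-coordinate argument.

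You do not need it, because $Q(\rho)$ cancels in $D_N+\widetilde R_N$. By the identity you already wrote,
$S_N(z)=N^4\bigl(F(P_N(z))-F(R_N)\bigr)/F(\mathbb D)-4N^4d_{B,N}(\nu_{\Xi_N(z)})$.
Theorem~\ref{thm:constraint-chart}(ii), the pair-energy bound \eqref{eq:atomic-rigidity-025} for $a=b$, and the chain rule with the $CN^6$ chart bounds then give $\|D^3S_N\|\le N^{C_*}$. This is how the paper proceeds.
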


\begin{proof}
The comparison will be extended along rays in the tangent space. First we identify its second-order terms and obtain a common bound for the Taylor errors. Put $\zeta(z)=\Xi_N(z)$ and $\nu_{u,x}=\sum_j(N^{-1}+u_j)\delta_{jh+x_j}$. On the exact constraint manifold,
\begin{equation}\label{eq:exact-constraints-015}
 d_{3,N}(\nu_{u,x})=: d_{3,N}^{\rm B}(u,x),
 \qquad
 \frac14Q_N^\circ(\nu_{u,x})=: d_{B,N}^{\rm B}(u,x).
\end{equation}
Here $D\Xi_N(0)=I$ on $\mathcal Z_N$. Both ambient first variations vanish on the mass-zero, rotation-zero space. The vector $D^2\Xi_N(0)[z,z]$ belongs to this space, so the corresponding chain-rule term is zero. Local atomic coercivity gives
\begin{equation}\label{eq:exact-constraints-016}
 \frac12D^2B_N^{\rm ch}(0)[z,z]
 =N^4H_{3,N}(z)\ge c_0|z|^2.
\end{equation}
Take initially $A_0>A_{\rm geo}+7$, ensuring that the realized polygon lies in the local atomic box. The coordinate energies have third-order bounds $O(N^4)$ and the chart derivatives have bounds $O(N^6)$. The third-order chain rule yields
\[
 \|D^3B_N^{\rm ch}\|\le CN^{26}.
\]
For the remaining spectral terms, use the exact identity
\begin{equation}\label{eq:exact-constraints-017}
 S_N(z)=N^4\left[
 \frac{F(P_N(z))-F(R_N)}{F(\mathbb D)}
 -Q_N^\circ(P_N(z))\right],
\end{equation}
together with the bound $N^{C_{\rm geo}}$ for the realized functional. This yields $\|D^3S_N\|\le N^{C_*}$. After increasing a fixed exponent, we may take $C_*=\max\{27,C_{\rm geo}+5\}$.

Both values at zero vanish by definition. To compute the first derivatives, conjugate the cyclic shift by the chart: $U_N^{\rm cyc}=\Xi_N^{-1}\Gamma\Xi_N$. Its derivative at zero is $\Gamma$. Relabeling rotates the realized polygon, so the two functions are invariant under $U_N^{\rm cyc}$ and their differentials are $\Gamma$-invariant. Since $\sum_{k=0}^{N-1}\Gamma^kz=0$ on $\mathcal Z_N$, averaging gives zero differentials. Choose
\[
 A_1>\max\{C_*,C_{\rm geo},6\},
 \qquad
 A_0>\max\{10,A_{\rm geo}+7,C_{\rm geo}+2,A_1+2\}
\]
to obtain the common derivative bounds on the stated ball.

The geometric and material estimates already control the range
\begin{equation}\label{eq:exact-constraints-018}
 e^{-N^{1/3}}\le B_N^{\rm ch}(z)\le3h\log^3(2N),
\end{equation}
by
\begin{equation}\label{eq:exact-constraints-019}
 |S_N(z)|\le\omega_N^{\rm ann}B_N^{\rm ch}(z).
\end{equation}
Fix $K_0>2A_0+2A_1+8$ and let $\tau_N=N^{-K_0}$. For each unit vector $e\in\mathcal Z_N$, Taylor expansion gives
\begin{equation}\label{eq:exact-constraints-020}
 B_N^{\rm ch}(te)=b_N^{(2)}(e)t^2+O(N^{A_1}t^3),\qquad
 S_N(te)=s_N^{(2)}(e)t^2+O(N^{A_1}t^3),
\end{equation}
with $b_N^{(2)}(e)=\tfrac12D^2B_N^{\rm ch}(0)[e,e]\ge c_0$. Choose $\rho_e=(2\tau_N/b_N^{(2)}(e))^{1/2}$. This radius is smaller than $N^{-A_0}$, and
\[
 \frac32\tau_N\le B_N^{\rm ch}(\rho_ee)\le\frac52\tau_N.
\]
The inequalities $e^{-N^{1/3}}\ll\tau_N\ll h\log^3(2N)$ place this comparison point inside the annulus. Apply the relative estimate there and divide the second Taylor expansion by $\rho_e^2$. The result is
\begin{equation}\label{eq:exact-constraints-021}
 \frac{|s_N^{(2)}(e)|}{b_N^{(2)}(e)}
 \le C\omega_N^{\rm ann}+CN^{A_1-K_0/2}=o(1).
\end{equation}
The condition $A_0>A_1+2$ also gives $B_N^{\rm ch}(te)\ge\tfrac12b_N^{(2)}(e)t^2$ for $0\le t\le N^{-A_0}$. Hence $0<B_N^{\rm ch}(se)<\tau_N$ implies $s<\rho_e$. Returning to the two Taylor expansions now gives
\[
 \frac{|S_N(se)|}{B_N^{\rm ch}(se)}
 \le C\omega_N^{\rm ann}+CN^{A_1-K_0/2}
 =:\omega_N^{(0)}\longrightarrow0,
\]
uniformly in $e$ and in the positive defect. Thus no lower cutoff remains.
\end{proof}

\section{The global spectral comparison}\label{sec:spectral-rigidity}

Every global minimizer lies in the centered, nearly regular class obtained above. On this class, the annular and small-defect estimates together bound the full remainder by a vanishing fraction of the positive atomic term. The minimizing inequality therefore forces zero defect. The atomic equality case determines the turning measure, and polar reconstruction identifies the regular polygon.

For the final comparison, recall the scaled defect
\[
 h=\frac{2\pi}{N},\qquad B_N(P)=N^4E_N(P).
\]
With the cyclic ordering,

\begin{equation}\label{eq:spectral-rigidity-001}
 a_j=Nw_j,\qquad
 \psi_{j+1}-\psi_j=h(1+\eta_j),\qquad \sum_j\eta_j=0.
\end{equation}

The term \emph{quasi-uniform} means
$\max_j(|a_j-1|+|\eta_j|)\le1/4$.  An exact-centered polygon has area
$\pi$ and is represented about its polar center; in particular,
$\widehat\nu_P(1)=0$.
The decomposition to be used is

\begin{equation}\label{eq:spectral-rigidity-002}
 \frac{F(P)-F(R_N)}{F(\mathbb D)}
 =Q_N^\circ(P)+D_N^\circ(P)
  +\widetilde{R}_N^\circ(P).
\end{equation}

The first term is controlled by atomic Bessel coercivity. The next theorem combines the two available remainder comparisons so that their bound holds for every positive defect in the localized class.

\begin{proposition}
\label{prop:minimizer-localization}
There are $C_{\rm ent}<\infty$ and $N_{\rm ent}$ such that every global
minimizer in $\mathcal P_{\le N}(\pi)$, $N\ge N_{\rm ent}$, after translation
to its polar center, lies in the exact near-disk chart and satisfies
\begin{equation}\label{eq:spectral-rigidity-003}
 M=N,\qquad \widehat\nu_P(1)=0,\qquad
 \max_j(|Nw_j-1|+|\eta_j|)\le\frac14,
 \qquad B_N(P)\le C_{\rm ent}h\log^3(2N).
\end{equation}
Moreover,
\begin{equation}\label{eq:spectral-rigidity-004}
 E_N(P)\le C_{\rm ent}h^5\log^3(2N).
\end{equation}
\end{proposition}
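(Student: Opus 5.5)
This proposition is essentially a collation of results already established for global minimizers; the plan is to chain them in order and check that the hypotheses of each step are supplied by the previous ones.

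\textbf{Step 1: activation, centering and the radial chart.} Proposition~\ref{prop:existence} and Proposition~\ref{prop:full-activation} give a minimizer with exactly $N$ effective sides. Hence the turning measure has $M=N$ distinct atoms with positive weights. Proposition~\ref{prop:global-minimizer-radial-entry}, applied in the polar-center normalization, places $P$ in the near-disk radial chart with $\|\rho_N\|_{W^{1,\infty}}\le CN^{-1/2}$. For $N$ large this is below $\varepsilon_{\rm chart}$. The same proposition gives the exact centering $\widehat\nu_P(1)=0$.

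\textbf{Step 2: the defect bound.} Proposition~\ref{prop:critical-localization} gives the critical-scale input $d_{3,N}\le Ch^4$ and $\|p\|_\infty\le Ch$. Corollary~\ref{cor:first-localization} then improves this to $B_N\to0$ uniformly over minimizers. With this, Corollary~\ref{cor:refined-localization} yields $E_N(P)\le Ch^5\log^3(2N)$, which is \eqref{eq:spectral-rigidity-004}. Multiplying by $N^4$ and using $N^4h^5=(2\pi)^4h$ gives the bound on $B_N(P)$ in \eqref{eq:spectral-rigidity-003}. $C_{\rm ent}$ is taken as the maximum of the constants appearing.

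\textbf{Step 3: quasi-uniformity.} Corollary~\ref{cor:first-localization} already gives a sequence $\varepsilon_N\downarrow0$ with $\max_j(|Nw_j-1|+|\eta_j|)\le\varepsilon_N$. Alternatively, the refined bound $B_N\le Ch\log^3(2N)\to0$ feeds into Theorem~\ref{thm:atomic-rigidity}(iv), whose estimate \eqref{eq:atomic-rigidity-010} gives the same conclusion. Choosing $N_{\rm ent}$ so that $\varepsilon_N\le1/4$ gives the quasi-uniform condition.

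\textbf{Main obstacle.} The step needing care is not analytic but the consistency of normalizations. The refined localization was proved in the polar-center gauge. The cyclic gauge \eqref{eq:curvature-coordinates-009} only rotates and relabels, which preserves $B_N$, $E_N$ and the centering. One must also confirm that "exact near-disk chart" means precisely the conditions of Definition~\ref{def:polar-coordinates} together with \eqref{eq:curvature-coordinates-016}--\eqref{eq:curvature-coordinates-017}, all of which are now verified. Uniformity in the minimizer follows because each cited bound carries constants independent of the smallest weight and gap, and a failure of uniformity would produce a sequence contradicting the microscopic rigidity theorem.
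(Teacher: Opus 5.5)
Your proposal is correct and follows the paper's own proof essentially step for step. Full activation and the radial-entry proposition give $M=N$, the polar-centered chart and $\widehat\nu_P(1)=0$; Corollary~\ref{cor:first-localization} gives $B_N\to0$ uniformly, so microscopic rigidity yields the quasi-uniform bound $\le 1/4$; and Corollary~\ref{cor:refined-localization} supplies both quantitative estimates.
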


\begin{proof}
Proposition~\ref{prop:full-activation} gives $N$ effective sides, and Proposition~\ref{prop:global-minimizer-radial-entry} supplies the polar-centered radial chart. Corollary~\ref{cor:first-localization} gives $B_N\to0$ uniformly over minimizers. The microscopic rigidity theorem therefore puts their weights and gaps in the stated quasi-uniform range. The two quantitative bounds are exactly those of Corollary~\ref{cor:refined-localization}.
\end{proof}

\begin{theorem}
\label{thm:relative-remainder}
For every fixed $C_0<\infty$, there is a sequence $\omega_N(C_0)\to0$ such
that, for all sufficiently large $N$, every area-$\pi$ exact-centered
configuration in the quasi-uniform chart satisfying
\[
 E_N(P)\le C_0h^5\log^3(2N),
 \qquad
 B_N(P)\le C_0h\log^3(2N),
 \qquad
 E_N(P)>0,
\]
obeys
\begin{equation}\label{eq:spectral-rigidity-005}
 \left|D_N^\circ(P)
       +\widetilde{R}_N^\circ(P)\right|
 \le \omega_N(C_0)E_N(P).
\end{equation}
No criticality assumption is used here; the estimate holds uniformly on the
whole exact constraint chart.
\end{theorem}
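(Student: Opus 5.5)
The proof splits according to the size of the scaled defect $B_N(P)=N^4E_N(P)$ and glues the two available comparisons: the annular material comparison for defects above the exponential cutoff, and the exact constraint chart for defects below an inverse-power threshold. The inverse-power threshold lies far above the exponential cutoff, so the two ranges overlap and every positive defect is covered.

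\emph{Annular range.} Suppose first that $e^{-N^{1/3}}\le B_N(P)\le C_0h\log^3(2N)$. The configuration is exact-centered and quasi-uniform with $E_N(P)\le C_0h^5\log^3(2N)$, so it satisfies \eqref{eq:polygonal-transport-001}--\eqref{eq:polygonal-transport-003}. Proposition~\ref{prop:geometric-comparison} then gives
\[
|D_N^\circ(P)|\le\varepsilon_N(C_0)E_N(P).
\]
Under the lower bound \eqref{eq:material-comparison-016}, Theorem~\ref{thm:material-endpoint-comparison} gives
\[
|\widetilde R_N^\circ(P)|\le\omega_{C_0,N}E_N(P).
\]
Adding the two bounds yields \eqref{eq:spectral-rigidity-005} on this range with rate $\varepsilon_N(C_0)+\omega_{C_0,N}$. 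Neither estimate uses minimality, so the bound is uniform over the class.

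\emph{Small-defect range.} Now let $0<B_N(P)<e^{-N^{1/3}}$. For large $N$ this is below $N^{-K}$ for any fixed $K$. Choose $K$ with $K/2>\max\{4,A_0\}+1$ and $K\ge K_0$, where $A_0,K_0$ are the exponents of Lemma~\ref{lem:vanishing-defect}. By the coverage statement of Theorem~\ref{thm:constraint-chart}(iii), after canonical rotation and cyclic relabeling we have $P=P_N(z)$ for a unique $z\in\mathcal Z_N$ with $|z|<N^{-A_0}$. These operations leave $B_N$, $D_N^\circ$ and $\widetilde R_N^\circ$ unchanged. Since $0<B_N^{\rm ch}(z)<N^{-K_0}$, estimate \eqref{eq:exact-constraints-014} gives $|S_N(z)|\le\omega_N^{(0)}B_N^{\rm ch}(z)$. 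Dividing by $N^4$ gives
\[
|D_N^\circ(P)+\widetilde R_N^\circ(P)|\le\omega_N^{(0)}E_N(P).
\]
Finally set $\omega_N(C_0)=\varepsilon_N(C_0)+\omega_{C_0,N}+\omega_N^{(0)}$, which tends to zero.

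\emph{Main obstacle.} The delicate point is checking that the hypotheses line up. The chart exponent $A_0$ in Lemma~\ref{lem:vanishing-defect} must be compatible with the coverage condition on $K$, so $K$ may need to be enlarged. The lemma's internal use of the annular estimate at the comparison radius $\rho_e$ must rely on the same $C_0$-independent annulus bound (there $B_N\le3h\log^3$). Each chart point must be confirmed to satisfy the quasi-uniform, exact-centered, area-$\pi$ hypotheses, which Theorem~\ref{thm:constraint-chart}(iii) provides. Once these are checked, the two ranges overlap and the case split is exhaustive.
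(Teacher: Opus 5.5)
Your proposal is correct and follows the paper's proof essentially verbatim. You split by the scaled defect, use the geometric and material annular comparisons above $e^{-N^{1/3}}$, and below an inverse-power threshold use chart coverage, invariance under rotation and relabeling, and Lemma~\ref{lem:vanishing-defect}. The only cosmetic differences are that the paper states the two ranges as overlapping, works with $C^\sharp=\max\{C_0,3\}$, and takes the maximum rather than the sum of the error coefficients.
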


\begin{proof}
The annular comparison and the exact constraint chart cover overlapping ranges. Set $C^\sharp=\max\{C_0,3\}$. In the range
\begin{equation}\label{eq:spectral-rigidity-006}
 e^{-N^{1/3}}\le B_N(P)\le C_0h\log^3(2N),
\end{equation}
the geometric and material estimates give
\begin{equation}\label{eq:spectral-rigidity-007}
 |\widetilde{R}_N^\circ(P)|
 \le\omega^{(R)}_{C^\sharp,N}E_N(P),\qquad
 |D_N^\circ(P)|
 \le\varepsilon_N(C^\sharp)E_N(P),
\end{equation}
with both coefficients tending to zero.

Choose $A_0,K_0$ from Lemma~\ref{lem:vanishing-defect}, increasing $A_0$ if needed for the exact chart, and fix
\begin{equation}\label{eq:spectral-rigidity-008}
 K_*\ge K_0,\qquad K_*/2>\max\{4,A_0\}+1.
\end{equation}
The coverage part of Theorem~\ref{thm:constraint-chart} represents every configuration with $0<B_N(P)<N^{-K_*}$ as $P_N(z)$, $|z|<N^{-A_0}$, modulo rotation and cyclic relabeling. All quantities under comparison are invariant under these operations. Lemma~\ref{lem:vanishing-defect} gives
\begin{equation}\label{eq:spectral-rigidity-009}
 |D_N^\circ(P)+\widetilde{R}_N^\circ(P)|
 \le\omega_N^{(0)}E_N(P).
\end{equation}
For large $N$, $e^{-N^{1/3}}<N^{-K_*}$. The two ranges therefore cover every positive defect allowed in the statement. Taking the maximum of their error coefficients proves the uniform estimate.
\end{proof}

\begin{theorem}
\label{thm:spectral-gap}
Fix $C_{\rm ent}<\infty$.  There are $N_{\rm tr}$ and a sequence
$\omega_N\downarrow0$ such that, for $N\ge N_{\rm tr}$, every area-$\pi$
convex $N$-gon in the near-disk polar chart, centered at its polar center and
satisfying
\begin{equation}\label{eq:spectral-rigidity-011}
 \widehat\nu_P(1)=0,\qquad
 \max_j(|Nw_j-1|+|\eta_j|)\le\frac14,
 \qquad B_N(P)\le C_{\rm ent}h\log^3(2N),
\end{equation}
obeys
\begin{equation}\label{eq:spectral-rigidity-012}
 \frac{F(P)-F(R_N)}{F(\mathbb D)}
 \ge\left(\frac14-\omega_N\right)E_N(P).
\end{equation}
In particular, any polygon in this regime with
$F(P)\le F(R_N)$ must be a rigid image of $R_N$.
\end{theorem}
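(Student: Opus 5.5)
The plan is to insert the three available estimates into the exact decomposition \eqref{eq:spectral-rigidity-002}. Let $P$ satisfy \eqref{eq:spectral-rigidity-011}. Exact centering $\widehat\nu_P(1)=0$ gives $E_N(P)=d_{3,N}(\nu_P)$. The identity \eqref{eq:curvature-coordinates-024} gives $Q_N^\circ(P)=4d_{B,N}(\nu_P)$. Theorem~\ref{thm:bessel-coercivity} then yields $Q_N^\circ(P)=4d_{B,N}\ge\frac14 d_{3,N}=\frac14E_N(P)$, with no smallness assumption beyond $N\ge N_{\rm KB}$. Since $B_N=N^4E_N$ and $N^4h^5=(2\pi)^4h$, the hypothesis $B_N(P)\le C_{\rm ent}h\log^3(2N)$ is equivalent to $E_N(P)\le C_0h^5\log^3(2N)$ with $C_0=(2\pi)^{-4}C_{\rm ent}\cdot$const. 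We may enlarge $C_0$ to cover both forms, so both hypotheses of Theorem~\ref{thm:relative-remainder} hold.

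We split on the value of $E_N(P)$. If $E_N(P)>0$, Theorem~\ref{thm:relative-remainder} gives $|D_N^\circ(P)+\widetilde R_N^\circ(P)|\le\omega_N(C_0)E_N(P)$. Together with the coercive bound this gives \eqref{eq:spectral-rigidity-012}. To make the sequence monotone, replace $\omega_N$ by its tail supremum $\sup_{M\ge N}\omega_M(C_0)$. If $E_N(P)=0$, the equality case in Theorem~\ref{thm:bessel-coercivity} (equivalently Theorem~\ref{thm:atomic-rigidity}(i)) forces $\nu_P=\sigma_{N,\gamma}$. Lemma~\ref{lem:polar-equality-reconstruction} then identifies $P$ with $R_N$ up to rotation, so the left side of \eqref{eq:spectral-rigidity-012} vanishes and the inequality holds trivially.

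For the rigidity clause, suppose $F(P)\le F(R_N)$. Then \eqref{eq:spectral-rigidity-012} gives $(\frac14-\omega_N)E_N(P)\le0$. Once $\omega_N<\frac14$, this forces $E_N(P)=0$, and the previous paragraph makes $P$ a rotation of $R_N$. A rigid image follows after undoing the translation to the polar center.

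The main point to check is a hypothesis-matching issue rather than new analysis. Theorem~\ref{thm:relative-remainder} is stated for configurations in the quasi-uniform exact chart. Its two ingredients are Proposition~\ref{prop:geometric-comparison} with Theorem~\ref{thm:material-endpoint-comparison} on the annulus, and Lemma~\ref{lem:vanishing-defect} through the coverage of Theorem~\ref{thm:constraint-chart}. We must confirm that a general convex $N$-gon in the near-disk polar chart meeting \eqref{eq:spectral-rigidity-011} is such a configuration: it needs $N$ distinct nodes, positive weights, and the canonical gauge \eqref{eq:polygonal-transport-002}. Quasi-uniformity gives exactly $N$ atoms with gaps in $[\frac34h,\frac54h]$. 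Rotation and cyclic relabeling do not change $F$, $E_N$, $D_N^\circ$ or $\widetilde R_N^\circ$. These two facts should suffice. The constant $C_0$ must be chosen before $N_{\rm tr}$.
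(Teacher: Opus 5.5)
Your proposal is correct and follows the paper's proof essentially step for step. You convert the $B_N$ bound into $E_N\le Ch^5\log^3(2N)$ via $Nh=2\pi$, bound $D_N^\circ+\widetilde R_N^\circ$ by Theorem~\ref{thm:relative-remainder}, use $Q_N^\circ=4d_{B,N}\ge\frac14E_N$ from Bessel coercivity, monotonize $\omega_N$ by its tail supremum with $\omega_N<\frac14$, and treat $E_N=0$ through atomic rigidity and Lemma~\ref{lem:polar-equality-reconstruction}, exactly as the paper does; your explicit hypothesis-matching check (rotation and relabeling invariance, $N$ distinct quasi-uniform atoms) only makes precise what the paper states as ``Theorem~\ref{thm:relative-remainder} applies.''
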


\begin{proof}
Use $Nh=2\pi$ to convert the assumed bound on $B_N=N^4E_N$ into $E_N\le Ch^5\log^3(2N)$. Theorem~\ref{thm:relative-remainder} applies. Replace its error sequence by its decreasing tail supremum, and take $N_{\rm tr}$ large enough that $\omega_N<1/4$.

Atomic positivity gives $E_N\ge0$. For $E_N>0$, the exact splitting and Bessel coercivity yield
\[
\begin{aligned}
 \frac{F(P)-F(R_N)}{F(\mathbb D)}
 &=Q_N^\circ(P)+D_N^\circ(P)
   +\widetilde{R}_N^\circ(P)\\
 &\ge\left(\frac14-\omega_N\right)E_N(P).
\end{aligned}
\]
The right side is positive. If $E_N=0$, atomic rigidity gives a rotated uniform root measure, and Lemma~\ref{lem:polar-equality-reconstruction} gives its regular polygon. These two cases prove both the inequality and the rigidity statement.
\end{proof}

\begin{proof}[Proof of Theorem~\ref{thm:main}]
Take $N_0$ above the localization and spectral comparison thresholds. For $N\ge N_0$, the relaxed class has a minimizer $P_N^*$. Comparison with the regular polygon gives
\[
 F(P_N^*)\le F(R_N).
\]
Translate the minimizer to its polar center. This preserves the admissible class and the eigenvalue. Proposition~\ref{prop:minimizer-localization} and Theorem~\ref{thm:spectral-gap} then identify it as a rigid image of $R_N$. Therefore every admissible $P$ satisfies
\[
 \lambda_1(P)\ge\lambda_1(P_N^*)=\lambda_1(R_N).
\]
A polygon attaining equality is another minimizer and is identified in the same way. Rigid motions preserve equality, proving the converse.

For a polygon of arbitrary positive area, dilate to area $\pi$. Since $\lambda_1(sP)=s^{-2}\lambda_1(P)$, the product $|P|\lambda_1(P)$ is unchanged. Congruence of the normalized polygons is equivalent to similarity of the original polygons. This gives the scale-invariant formulation and its equality case.
\end{proof}

\appendix

\section{Periodic estimates for the transported mesh}\label{sec:mesh-tools}

The material comparison uses three facts about the periodic mesh: cyclic invariance identifies vanishing first variations, projection onto cellwise constants preserves the relevant energy, and a variation estimate controls the negative half-order norm of cellwise errors. The estimates are stated here independently of the spectral problem. Their constants depend on comparability of cell lengths and remain uniform as the number of cells increases.

\begin{lemma}
\label{lem:regular-cyclic-criticality}
Let \(G_{{\rm inv},N}\) be a \(C^1\) real function of positive weights
\(a=(a_j)\) and cyclically ordered lifted nodes \(\psi=(\psi_j)\) near
\((\mathbf1,(jh)_j)\), where \(h=2\pi/N\). If \(G_{{\rm inv},N}\) is
invariant under cyclic relabeling and common rotation, then
\begin{equation}\label{eq:mesh-tools-001}
DG_{{\rm inv},N}(\mathbf1,jh)[\beta,y]
=c_a\sum_j\beta_j+c_\psi\sum_jy_j,
\end{equation}
for constants \(c_a,c_\psi\) depending on \(G_{{\rm inv},N}\). In
particular the derivative vanishes on every mass- and rotation-tangent
direction \(\sum_j\beta_j=\sum_jy_j=0\).
\end{lemma}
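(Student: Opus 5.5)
The plan is to read \eqref{eq:mesh-tools-001} as a statement about a linear functional invariant under a finite group. Set $\ell[\beta,y]=DG_{{\rm inv},N}(\mathbf1,jh)[\beta,y]$. This is a real linear functional on $\mathbb R^N\times\mathbb R^N$. The first step is to identify how the two symmetries act near the regular point. Cyclic relabeling is the map $\Gamma:(a_j,\psi_j)\mapsto(a_{j+1},\psi_{j+1})$, with the last lifted node shifted by $2\pi$ so that the lift stays increasing. It sends $(\mathbf1,(jh)_j)$ to $(\mathbf1,(jh+h)_j)$, which differs from the base point by the common rotation through $h$. Let $R_h$ denote that rotation. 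The composition $\Gamma_*=R_{-h}\circ\Gamma$ then fixes the regular configuration. Since $G_{{\rm inv},N}$ is invariant under both operations, we have $G_{{\rm inv},N}\circ\Gamma_*=G_{{\rm inv},N}$ on a neighborhood of the base point.

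Next we differentiate this identity at the fixed point. The map $\Gamma_*$ is affine: it is a coordinate permutation plus constant shifts. Its differential is therefore the plain cyclic shift $S$ acting on both components, $S(\beta,y)=((\beta_{j+1})_j,(y_{j+1})_j)$. The added constants $2\pi$ and $-h$ disappear under differentiation. The chain rule gives $\ell\circ S=\ell$. Now write $\ell[\beta,y]=\sum_j(\alpha_j\beta_j+\gamma_jy_j)$. Invariance under $S$ forces $\alpha_{j}=\alpha_{j+1}$ and $\gamma_j=\gamma_{j+1}$ cyclically. So all the $\alpha_j$ equal a common value $c_a$, and all the $\gamma_j$ equal a common value $c_\psi$, which is exactly \eqref{eq:mesh-tools-001}.

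The vanishing on directions with $\sum_j\beta_j=\sum_jy_j=0$ then follows immediately from this formula. Equivalently, averaging gives $\ell=\ell\circ\frac1N\sum_kS^k$, and $\sum_kS^k$ annihilates such directions. Rotation invariance is used only to build the stabilizing map $\Gamma_*$. It also shows that $c_\psi\cdot N=\ell[0,\mathbf1]=0$, so in fact $c_\psi=0$, although this is not needed.

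The only delicate point is the bookkeeping for the lifted nodes. We must check that relabeling combined with rotation really is an affine self-map of a neighborhood of the base point in lifted coordinates, so that its differential is exactly $S$ with no extra term at the wrap-around index. I would settle this first by writing out $\Gamma_*$ explicitly for index $N-1$.
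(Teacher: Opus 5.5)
Your proposal is correct and follows the paper's proof essentially verbatim: you compose the cyclic relabeling with rotation through $-h$ to get a map fixing the regular configuration whose differential is the cyclic shift on both tangent arrays, and invariance then forces equal coefficients on the weight coordinates and, separately, on the angle coordinates. Your extra observation that rotation invariance gives $c_\psi=0$ is also correct, though, as you note, it is not needed.
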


\begin{proof}
A cyclic relabeling followed by rotation through $-h$ fixes the regular configuration and cyclically shifts each tangent array. Invariance forces equal coefficients on all weight coordinates and, separately, on all angle coordinates in the differential. This gives \eqref{eq:mesh-tools-001}. Each sum vanishes on the corresponding zero-mean tangent space.
\end{proof}

\begin{lemma}
\label{lem:periodic-cell-calculus}
Let \(\mathcal I_h\) be a cyclic partition of \(\mathbb T\) whose cell
lengths lie in \([c_0h,C_0h]\), and use normalized Haar measure. The
following estimates have constants depending only on \(c_0,C_0\).

\smallskip
\noindent\textup{\bfseries (i) Cell cancellation.}
If \(f\in L^2\) has integral zero on every cell, then
\begin{equation}\label{eq:mesh-tools-002}
 \|f\|_{\dot H^{-1/2}}\le Ch^{1/2}\|f\|_2.
\end{equation}
If \(f\) has ordinary mean zero and \(I_{\rm per}f\) denotes its
zero-mean primitive, then
\begin{equation}\label{eq:mesh-tools-003}
 \|I_{\rm per}f\|_{\dot H^{1/2}}=\|f\|_{\dot H^{-1/2}}.
\end{equation}
When \(f\) is cellwise centered and bounded, its primitive may be chosen
to vanish at every cell endpoint and
\(\|I_{\rm per}f\|_\infty\le C h\|f\|_\infty\).

\smallskip
\noindent\textup{\bfseries (ii) Regular-step aliases.}
On the regular \(N\)-mesh, \(h=2\pi/N\), let \(q=v_j\) on \(I_j\),
\(\int q=0\), and
\(\widehat v_N(r)=N^{-1}\sum_jv_je^{-irjh}\). Write
\(\operatorname{sinc}s=\sin s/s\), with value one at zero. Then
\begin{equation}\label{eq:mesh-tools-004}
 \widehat q(n)=e^{-inh/2}\operatorname{sinc}(nh/2)\widehat v_N(r),
 \qquad n\equiv r\pmod N.
\end{equation}
Writing \(d(r)=\min(r,N-r)\), \(1\le r<N\),
\begin{equation}\label{eq:mesh-tools-005}
 \sum_{n\equiv r(N)}\frac{|\operatorname{sinc}(nh/2)|^2}{|n|}
 \sim\frac1{d(r)},\qquad
 \sum_{n\equiv r(N)}\frac{|\operatorname{sinc}(nh/2)|}{|n|}
 \le\frac C{d(r)}.
\end{equation}
Consequently
\begin{equation}\label{eq:mesh-tools-006}
 \|I_{\rm per}q\|_\infty
 \le C\sqrt{\log(2N)}\,\|q\|_{\dot H^{-1/2}}.
\end{equation}

\smallskip
\noindent\textup{\bfseries (iii) Products of regular steps.}
If \(u,v\) are regular step functions and \(q_0=\int uv\), then
\begin{equation}\label{eq:mesh-tools-007}
 |q_0|\le\|u\|_2\|v\|_2,
 \qquad
 \|uv-q_0\|_{\dot H^{-1/2}}
 \le C\sqrt{\log(2N)}\,\|u\|_2\|v\|_2.
\end{equation}

\smallskip
\noindent\textup{\bfseries (iv) The centered sawtooth.}
Let \(P_0\) be affine of slope one on every regular cell, vanish in
cell average, and satisfy \(\|P_0\|_\infty\le h/2\). For every regular
step \(a\),
\begin{equation}\label{eq:mesh-tools-008}
\begin{gathered}
 \|aP_0\|_\infty\le Ch\|a\|_\infty,
 \qquad \|aP_0\|_2\le Ch\|a\|_2,\\
 \|aP_0\|_{\dot H^{-1/2}}\le Ch^{3/2}\|a\|_2,
 \qquad \|I_{\rm per}(aP_0)\|_\infty\le Ch^2\|a\|_\infty.
\end{gathered}
\end{equation}
\end{lemma}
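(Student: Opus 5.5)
The plan is to prove the four parts of Lemma~\ref{lem:periodic-cell-calculus} separately, each by a short Fourier-side or primitive-based argument. Throughout, the only geometric input is that cell lengths are comparable to \(h\).

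\emph{Part (i).} For cell cancellation I will argue by duality. Since \(\|f\|_{\dot H^{-1/2}}=\sup\{\langle f,g\rangle:\|g\|_{\dot H^{1/2}}\le1\}\), and \(f\) has zero integral on each cell, I may subtract from \(g\) its cell averages \(\Pi g\). This gives \(\langle f,g\rangle=\langle f,g-\Pi g\rangle\). The key estimate to establish is
\[
\|g-\Pi g\|_2^2\le Ch\,[g]_{\dot H^{1/2}}^2 .
\]
For this I use the Gagliardo form
\[
[g]^2\simeq\iint\frac{|g(x)-g(y)|^2}{|x-y|^2}.
\]
Restricted to pairs in the same cell, where \(|x-y|\le C_0h\), it dominates \((C_0h)^{-2}\cdot|I|\cdot\int_I|g-\bar g_I|^2\). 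With \(|I|\ge c_0h\), this yields the \(h^{1/2}\) gain.

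The isometry \(\|I_{\rm per}f\|_{\dot H^{1/2}}=\|f\|_{\dot H^{-1/2}}\) is immediate from \(\widehat{I_{\rm per}f}(n)=\widehat f(n)/(in)\). For the sup bound in the cellwise centered case, the primitive vanishes at cell endpoints and grows by at most \(\|f\|_\infty\) times the cell length.

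\emph{Part (ii).} Here I will compute \(\widehat q(n)\) directly. On each cell, integrating \(e^{-inx}\) over \(I_j\) gives
\[
h\,e^{-in(jh+h/2)}\operatorname{sinc}(nh/2)/(2\pi).
\]
Summing over \(j\) collapses to the DFT \(\widehat v_N(r)\), which yields \eqref{eq:mesh-tools-004}. For the alias sums \eqref{eq:mesh-tools-005}, I write \(n=mN\pm d\). The term closest to zero contributes \(\asymp1/d\), since sinc stays bounded below when \(|n|h/2\le\pi/2\). The others satisfy \(|\operatorname{sinc}|\le C/(|m|N h)\sim C/|m|\) and \(1/|n|\le C/(|m|N)\), so they sum to \(O(1/N)\le O(1/d)\).

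For \eqref{eq:mesh-tools-006}, I start from \(\|I_{\rm per}q\|_\infty\le\sum_n|\widehat q(n)|/|n|\). I group by residues and apply Cauchy--Schwarz with weights:
\[
\sum_r|\widehat v_N(r)|\,\frac{C}{d(r)}\le\Bigl(\sum_r\frac{|\widehat v_N(r)|^2}{d(r)}\Bigr)^{1/2}\Bigl(\sum_r\frac1{d(r)}\Bigr)^{1/2}.
\]
The first factor is comparable to \(\|q\|_{\dot H^{-1/2}}\) by the first alias relation. The second factor is \(O(\sqrt{\log N})\).

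\emph{Part (iii).} The product \(uv\) is again a regular step function, so part (ii) applies to \(uv-q_0\). It then suffices to bound \(\|uv-q_0\|_{\dot H^{-1/2}}\) by \(\|uv\|_{L^1}\) times a log factor. Using \(\sum_r|\widehat{(uv)}_N(r)|^2/d(r)\le\sup_r|\widehat{(uv)}_N(r)|^2\sum_r1/d(r)\) and \(|\widehat{(uv)}_N|\le\|uv\|_1\le\|u\|_2\|v\|_2\), this gives the stated \(\sqrt{\log}\) bound.

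\emph{Part (iv).} The sawtooth bounds are elementary. \(|P_0|\le h/2\) gives the \(L^\infty\) and \(L^2\) bounds. Because \(aP_0\) is cellwise centered, part (i) gives \(\dot H^{-1/2}\le Ch^{1/2}\cdot h\|a\|_2\). The primitive of \(aP_0\) vanishes at endpoints and is bounded by \(\int_I|a||P_0|\le Ch^2\|a\|_\infty\).

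The main obstacle is the two-sided comparability \(\sim1/d(r)\) in \eqref{eq:mesh-tools-005}, specifically the lower bound near the Nyquist residue. I would handle it by explicitly keeping the two aliases \(\pm d\) and using \(\operatorname{sinc}(s)\ge2/\pi\) on \(|s|\le\pi/2\).
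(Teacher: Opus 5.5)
Your proposal is correct and follows essentially the same route as the paper. The shared steps are: duality after subtracting cell averages, where you make the variance bound $\sum_I\|g-\bar g_I\|_{L^2(I)}^2\le Ch[g]_{\dot H^{1/2}}^2$ explicit through the same-cell part of the Gagliardo form; the cellwise computation of $\widehat q(n)$, followed by residue-class alias sums and Cauchy--Schwarz against $\sum_r d(r)^{-1}\le C\log(2N)$; the uniform bound $|\widehat{(uv)}_N(r)|\le\|u\|_2\|v\|_2$ for products; and cellwise centering of $aP_0$ for the sawtooth. The Nyquist lower bound you flag as the main obstacle is not really one, since the single nearest alias $|n|=d(r)\le N/2$ already has $\operatorname{sinc}(nh/2)\ge 2/\pi$.
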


\begin{proof}
For the cancellation estimate, subtract the average of a smooth test function on each cell. The variance identity and the bound on cell lengths give
\[
 \sum_{I\in\mathcal I_h}\|\phi-\phi_I\|_{L^2(I)}^2
 \le Ch[\phi]_{\dot H^{1/2}}^2.
\]
Duality yields \eqref{eq:mesh-tools-002}. The Fourier multiplier of the zero-mean primitive is $1/(in)$, proving \eqref{eq:mesh-tools-003}. Integration within a cell gives the endpoint normalization and the supremum bound.

For a regular step function, integration against $e^{-inx}$ gives \eqref{eq:mesh-tools-004}. On the residue class $n=r+mN$, sum the bound $|\operatorname{sinc}(nh/2)|\le C\min\{1,N/|n|\}$ to obtain \eqref{eq:mesh-tools-005}. Cauchy--Schwarz and $\sum_{r=1}^{N-1}d(r)^{-1}\le C\log(2N)$ give the primitive estimate. Every nonzero discrete Fourier coefficient of $uv$ is bounded by $\|u\|_2\|v\|_2$; applying the same alias estimate proves the product bound. Finally, $aP_0$ has zero mean on each cell and satisfies $|aP_0|\le(h/2)|a|$. The cell-cancellation estimate and direct integration give all the sawtooth bounds.
\end{proof}

\begin{lemma}\label{lem:finite-mesh-endpoint}

For $\Pi_0^\perp F=F-\overline F$ and \(F\in BV(\mathbb T)\), one has, for \(0<h\le1/2\),

\[
 \|\Pi_0^\perp F\|_{\dot H^{-1/2}}
 \le C\left[
  \sqrt{\log(2+h^{-1})}\,\|F\|_{L^1}
  +h\,\operatorname {TV}(F)
 \right].
\]

Consequently, let \(\mathcal I_h\) be a quasi-uniform partition of the
circle, with all cell lengths between fixed positive multiples of \(h\).
If \(F_h\) is piecewise \(W^{1,1}\), has mean zero, and satisfies

\[
\sum_{I\in\mathcal I_h}\int_I|F_h'|
 +\sum_{x\in\partial\mathcal I_h}|[F_h](x)|
 \le C_0h^{-1}\|F_h\|_{L^1}.
\]

Then

\[
 \|F_h\|_{\dot H^{-1/2}(\mathbb T)}
 \le C_{C_0}\sqrt{\log(2/h)}\,\|F_h\|_{L^1(\mathbb T)}.
\]

\end{lemma}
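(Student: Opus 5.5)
Both assertions will be proved on the Fourier side for the first and by a cell decomposition for the second. Normalize $\|f\|_{\dot H^{-1/2}}^2=\sum_{n\ne0}|\widehat f(n)|^2/|n|$. Split the frequencies of $\Pi_0^\perp F$ at $K\simeq h^{-1}$.

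\emph{Low frequencies.} For $1\le|n|\le K$ the trivial bound $|\widehat F(n)|\le\|F\|_{L^1}$ gives
\[
\sum_{1\le|n|\le K}\frac{|\widehat F(n)|^2}{|n|}\le 2\|F\|_{L^1}^2\sum_{n\le K}\frac1n\le C\log(2+h^{-1})\|F\|_{L^1}^2 .
\]

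\emph{High frequencies.} For $|n|>K$, integrate by parts against the BV measure: $|\widehat F(n)|\le \operatorname{TV}(F)/(2\pi|n|)$. Hence
\[
\sum_{|n|>K}\frac{|\widehat F(n)|^2}{|n|}\le C\operatorname{TV}(F)^2\sum_{|n|>K}|n|^{-3}\le CK^{-2}\operatorname{TV}(F)^2\le Ch^2\operatorname{TV}(F)^2 .
\]
Adding the two pieces and taking square roots gives the first inequality. Subtracting the mean changes only the zero mode, which is absent from the seminorm.

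\emph{Consequence.} Apply the first inequality to $F_h$. Since $F_h$ is piecewise $W^{1,1}$ with jumps only at the cell endpoints, its total variation equals the sum of the cellwise integrals of $|F_h'|$ plus the sum of the jump magnitudes at $\partial\mathcal I_h$. The hypothesis therefore gives $h\operatorname{TV}(F_h)\le C_0\|F_h\|_{L^1}$. Because $F_h$ has mean zero, $\Pi_0^\perp F_h=F_h$. For $0<h\le1/2$ one has $\log(2+h^{-1})\le C\log(2/h)$. Combining these yields
\[
\|F_h\|_{\dot H^{-1/2}}\le C\bigl(\sqrt{\log(2/h)}+C_0\bigr)\|F_h\|_{L^1}\le C_{C_0}\sqrt{\log(2/h)}\,\|F_h\|_{L^1},
\]
where the last step uses $\log(2/h)\ge\log4$.

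The quasi-uniformity of the partition is only needed to ensure that the mesh scale $h$ is the relevant one in the hypothesis. No step is a genuine obstacle. The only care needed is in the Fourier convention: with normalized Haar measure the integration-by-parts constant is $1/(2\pi)$, and the pairing of the BV measure $DF$ with $e^{-inx}$ must be checked on the periodic circle so that no boundary term appears.
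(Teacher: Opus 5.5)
Your proposal is correct and follows essentially the same route as the paper. Both split the frequencies at $K\sim h^{-1}$, using $|\widehat F(n)|\le C\|F\|_{L^1}$ below the cutoff and $|n|\,|\widehat F(n)|\le C\operatorname{TV}(F)$ above it, then identify $\operatorname{TV}(F_h)$ with the cellwise derivative integrals plus jumps and absorb the logarithms using $\log(2/h)\ge\log 4>1$.
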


\begin{proof}
With normalized Fourier coefficients,
\[
\widehat F(n)=\frac1{2\pi}\int_{\mathbb T}F(x)e^{-inx}\,dx,
\qquad
\|\Pi_0^\perp F\|_{\dot H^{-1/2}}^2
=\sum_{n\ne0}\frac{|\widehat F(n)|^2}{|n|}.
\]
For $n\ne0$, the derivative measure satisfies $in\widehat F(n)=(2\pi)^{-1}\int e^{-inx}\,d(DF)(x)$. This and the direct integral bound give
\[
|\widehat F(n)|
 \le C\|F\|_1,
 \qquad
 |n|\,|\widehat F(n)|\le C\operatorname {TV}(F).
\]
Choose $K=\lfloor h^{-1}\rfloor\sim h^{-1}$, which is at least two. Estimate low frequencies by the first inequality and high frequencies by the second:
\[
\begin{aligned}
 \|\Pi_0^\perp F\|_{\dot H^{-1/2}}^2
 &\le C\|F\|_1^2
   \sum_{1\le |n|\le K}\frac1{|n|}
 +C\operatorname {TV}(F)^2
   \sum_{|n|>K}\frac1{|n|^3}\\
 &\le C\log(2+h^{-1})\|F\|_1^2
 +Ch^2\operatorname {TV}(F)^2.
\end{aligned}
\]
Taking square roots proves the first assertion.

For a piecewise $W^{1,1}$ function, integration by parts on each cell gives
\[
\begin{aligned}
\langle DF_h,\varphi\rangle
&=-\sum_j\int_{x_j}^{x_{j+1}}F_h\varphi'\\
&=\sum_j\int_{x_j}^{x_{j+1}}F_h'\varphi
 +\sum_j\bigl(F_h(x_j+)-F_h(x_j-)\bigr)\varphi(x_j).
\end{aligned}
\]
The absolutely continuous derivative and the atoms at cell endpoints are mutually singular. Thus
\[
\operatorname {TV}(F_h)
=\sum_{I\in\mathcal I_h}\int_I|F_h'|
 +\sum_{x\in\partial\mathcal I_h}|[F_h](x)|.
\]
By hypothesis this is at most $C_0h^{-1}\|F_h\|_1$. Substitute into the first assertion and remove $\Pi_0^\perp$ using the zero-mean condition. We obtain
\[
\|F_h\|_{\dot H^{-1/2}}
\le C_{C_0}\sqrt{\log(2/h)}\,\|F_h\|_1.
\]
The logarithms and the additional $L^1$ term are absorbed by
\[
2+h^{-1}\le 2h^{-1},
\qquad \log(2+h^{-1})\le\log(2/h),
\qquad 1\le\sqrt{\log(2/h)},
\]
which hold for $0<h\le1/2$, since $2/h\ge4$ and $\log4>1$.
\end{proof}

\section*{Acknowledgements}
The first and third authors gratefully acknowledge the hospitality of the University of Macau during the workshop ``Mini Symposium on AI for Mathematics,'' held there from June 9 to 12, 2026. The proof scheme underlying the present paper was proposed by Jaume de Dios Pont during the workshop. The authors thank him for stimulating discussions.

\section*{AI statement}
Artificial-intelligence-assisted tools were used in the preparation of this manuscript for language polishing, organization, exploratory checking, and assistance with routine derivations. All central mathematical ideas, the formulation of the main results, and all major proof decisions were human-generated; the authors retained control of the proof strategy and mathematical judgment throughout. As an additional verification layer, the complete proof was checked using the \texttt{qmd-prover} skill, developed by Xiao Ma; for details, see the GitHub project at \url{https://github.com/powergiant/qmd-prover}. Independently, the human authors also verified the correctness of the arguments and conclusions. The authors take full responsibility for the content and mathematical correctness of the manuscript.

\raggedbottom

\end{document}